\RequirePackage{fix-cm}   
\documentclass[11pt]{amsart}

\usepackage{amssymb}
\usepackage{mathtools}
\usepackage{bm}
\usepackage{enumitem}
\usepackage{xcolor}
\usepackage{hyperref}

\DeclareFontFamily{OT1}{cmrbigring}{}
\DeclareFontShape{OT1}{cmrbigring}{bx}{n}{<->s*[1.3]cmbx10}{}
\DeclareSymbolFont{thickringfont}{OT1}{cmrbigring}{bx}{n}
\SetSymbolFont{thickringfont}{bold}{OT1}{cmrbigring}{bx}{n}
\DeclareMathAccent{\thickmathring}{\mathalpha}{thickringfont}{"17}
\let\mathring\thickmathring

\definecolor{darkblue}{rgb}{0.0,0,0.7}
\newcommand{\darkblue}{\color{darkblue}}
\newcommand{\defn}[1]{\emph{\darkblue #1}}
\hypersetup{colorlinks=true, citecolor=darkblue, linkcolor=darkblue}

\setlist[enumerate]{
  label=\textnormal{({\roman*})},
  ref={\roman*}}

\makeatletter
\def\th@plain{%
  \thm@notefont{}%
  \itshape
}
\def\th@definition{%
  \thm@notefont{}%
  \normalfont
}
\makeatother

\theoremstyle{definition}
\newtheorem{definition}{Definition}[section]
\newtheorem{example}[definition]{Example}
\newtheorem{notation}[definition]{Notation}
\newtheorem{remark}[definition]{Remark}
\newtheorem{algorithm}[definition]{Algorithm}

\theoremstyle{plain}
\newtheorem{proposition}[definition]{Proposition}
\newtheorem{theorem}[definition]{Theorem}
\newtheorem{corollary}[definition]{Corollary}
\newtheorem{lemma}[definition]{Lemma}
\newtheorem{conjecture}[definition]{Conjecture}
\newtheorem{theoremA}{Theorem}

\newtheorem{theoremB}{Theorem}

\newtheorem{theoremC}{Theorem}

\numberwithin{figure}{section}
\numberwithin{equation}{section}

\newcommand{\ts}{\thinspace}
\newcommand{\Zge}{\mathbb{Z}_{\ge 0}}
\newcommand{\Rge}{\mathbb{R}_{\ge 0}}
\newcommand{\Rpos}{\mathbb{R}_{> 0}}
\newcommand{\BR}{\mathrm{BR}}
\newcommand{\trop}{\mathrm{trop}}

\newcommand{\heavy}[1]{%
  \pdfliteral direct{2 Tr 0.2 w}#1\pdfliteral direct{0 Tr}}
\let\bmorig\bm
\renewcommand{\bm}[1]{\heavy{\bmorig{#1}}}

\newcommand{\vx}{\bm{x}}
\newcommand{\va}{\bm{\alpha}}
\newcommand{\vg}{\bm{\gamma}}
\newcommand{\vb}{\bm{\beta}}
\newcommand{\vP}{\bm{P}}
\newcommand{\vd}{\bm{d}}
\newcommand{\vw}{\bm{w}}
\newcommand{\vv}{\bm{v}}
\newcommand{\ve}{\bm{e}}
\newcommand{\vrho}{\rho}

\title{Bounded ratios for Lorentzian polynomials}
\author[\ts Aayush Bathija]{Aayush Bathija}
\address[Aayush Bathija]{Oak Park High School, Oak Park, CA 91377.}
\email{\texttt{bathija.aayush@gmail.com}}

\author[\ts Prince Rohatgi]{Prince Rohatgi}
\address[Prince Rohatgi]{Oak Park High School, Oak Park, CA 91377.}
\email{\texttt{aditya.prince.rohatgi@gmail.com}}

\author[\ts Daniel Soskin]{Daniel Soskin}
\address[Daniel Soskin]{Department of Mathematics, UCLA, Los Angeles, CA 90095.}
\email{\texttt{dsoskin@math.ucla.edu}}

\date{}

\begin{document}
\begin{abstract}
We study multiplicative inequalities among the coefficients of Lorentzian
polynomials through the notion of \emph{bounded ratios}. Our main structural result
completely characterizes the cone of bounded ratios for Lorentzian
polynomials of degree $n$ in $k$ variables. We show that the dual of the cone
of bounded ratios is generated by equivalence classes of M-convex functions
modulo affine functions. For ternary Lorentzian forms of arbitrary degree
$n\ge3$, we show that the cone of bounded ratios is generated by triangular
ratios and determine the optimal bounding constant of every bounded ratio.
Furthermore, we characterize the pairs
$(n,k)$ for which the cone of bounded ratios can be computed by tropicalizing
products of $n$ linear forms in $k$ variables with nonnegative coefficients.
\end{abstract}

\maketitle
\vskip-.5cm

\section{Introduction}

A real symmetric matrix has \defn{Lorentzian signature} if it is nonsingular
and has exactly one positive eigenvalue.  Following
\cite[Definition~2.1]{BH20}, a homogeneous polynomial
$f\in\mathbb R[x_1,\dots,x_k]$ of degree $n$ is \defn{strictly Lorentzian} if
all its coefficients are positive and, for every sequence
$i_1,\dots,i_{n-2}\in\{1,\dots,k\}$, the Hessian of
$\partial_{i_1}\cdots\partial_{i_{n-2}}f$ has Lorentzian signature.  A
homogeneous polynomial of degree $n$ in $k$ variables is \defn{Lorentzian} if
it is a coefficientwise limit of strictly Lorentzian polynomials.

Fix integers $n,k\ge2$, and write $\vx=(x_1,\dots,x_k)$.  For
$\va=(\alpha_1,\dots,\alpha_k)\in\Zge^k$, set
\[
  |\va|_1=\alpha_1+\cdots+\alpha_k,
  \qquad
  \vx^{\va}=x_1^{\alpha_1}\cdots x_k^{\alpha_k},
  \qquad
  \va!=\alpha_1!\cdots\alpha_k!,
\]
and let
\[
  H(n,k)=\{\va\in\Zge^k:|\va|_1=n\}.
\]
Let $\ve_1,\dots,\ve_k$ be the standard basis of $\mathbb R^k$. 

Let $\mathring L(n,k)\subseteq\Rpos^{H(n,k)}$ be the set of coefficient
vectors $\vP=(P_{\va})_{\va\in H(n,k)}$ for which
\[
  \sum_{\va\in H(n,k)}P_{\va}\vx^{\va}
\]
is strictly Lorentzian.

We study multiplicative inequalities of the form
\[
  \prod_{\va\in H(n,k)}P_{\va}^{\,m_{\va}}
  \ \le\ C\prod_{\va\in H(n,k)}P_{\va}^{\,m'_{\va}}
  \qquad\text{for all }\vP\in\mathring L(n,k),
\]
where the exponents $m_{\va}$ and $m'_{\va}$ are nonnegative integers and
$C$ is a positive constant.

Mixed volumes provide a geometric source of such inequalities.  For convex
bodies $K_1,\dots,K_n$ in $\mathbb R^n$, their \defn{mixed volume} is
\[
  V(K_1,\dots,K_n)=\frac1{n!}
  \left.\partial_{t_1}\cdots\partial_{t_n}
  \operatorname{Vol}_n(t_1K_1+\cdots+t_nK_n)
  \right|_{t_1=\cdots=t_n=0}.
\]
For a collection $\mathbf K=(K_1,\dots,K_k)$ of convex bodies in
$\mathbb R^n$, set
\[
  V_{\va}(\mathbf K)=V\!\left(
  \underbrace{K_1,\dots,K_1}_{\alpha_1\text{ times}},\dots,
  \underbrace{K_k,\dots,K_k}_{\alpha_k\text{ times}}
  \right).
\]
By Minkowski's theorem \cite{Min03}, the normalized volume
polynomial has the expansion
\[
  f_{\mathbf K}(\vx)
  =\frac1{n!}\operatorname{Vol}_n\!\left(x_1K_1+\cdots+x_kK_k\right)
  =\sum_{\va\in H(n,k)}
  V_{\va}(\mathbf K)\frac{\vx^{\va}}{\va!}.
\]
For $\vb\in H(n-2,k)$, write
$\partial^{\vb}=\partial_1^{\beta_1}\cdots\partial_k^{\beta_k}$.  Then
\[
  \operatorname{Hess}(\partial^{\vb}f_{\mathbf K})
  =\left(V_{\vb+\ve_i+\ve_j}(\mathbf K)\right)_{i,j=1}^k
  =\left(
  V\!\Bigl(K_i,K_j,
  \underbrace{K_1,\dots,K_1}_{\beta_1\text{ times}},\dots,
  \underbrace{K_k,\dots,K_k}_{\beta_k\text{ times}}
  \Bigr)
  \right)_{i,j=1}^k.
\]
The Alexandrov--Fenchel inequality states that 
\[
  V_{\vb+2\ve_i}(\mathbf K)\cdot V_{\vb+2\ve_j}(\mathbf K)
  \le V_{\vb+\ve_i+\ve_j}(\mathbf K)^2.
\]

We study multiplicative inequalities through the notion of bounded ratios. This
approach has previously been applied to multiplicative inequalities for
totally positive matrices in \cite{BF08,SG25}, to the multiplicative inequalities on positive loci of
cluster varieties in \cite{GGS26}, and to multiplicative inequalities among
entries of Lorentzian matrices in \cite{HHSW}.

For an exponent 
$\vg=(\gamma_{\va})_{\va\in H(n,k)}\in\mathbb R^{H(n,k)}$ and a coefficient  
$\vP=(P_{\va})_{\va\in H(n,k)}\in\mathring  L(n,k)$ vectors, we define the Laurent
monomial
\[
  R_{\vg}(\vP)=\prod_{\va\in H(n,k)}P_{\va}^{\,\gamma_{\va}}.
\]
The exponent vector $\vg$ is a \defn{bounded ratio} if there is a constant
$C>0$ such that $R_{\vg}(\vP)\le C$ for every
$\vP\in\mathring L(n,k)$.  We denote the cone of all such exponent vectors by
\[
  \BR_{\mathring L}(n,k)=
  \left\{\vg\in\mathbb R^{H(n,k)}:
  R_{\vg}\text{ is bounded above on }\mathring L(n,k)\right\}.
\]

Any uniform upper bound $C$ for $R_{\vg}$ yields a multiplicative inequality of the
form
\[
  \prod_{\va\in H(n,k)}P_{\va}^{\,\max\{\gamma_{\va},0\}}
  \ \le\ C
  \prod_{\va\in H(n,k)}P_{\va}^{\,\max\{-\gamma_{\va},0\}}
\]
for every $\vP\in\mathring L(n,k)$. Because Lorentzian polynomials are
coefficientwise limits of strictly Lorentzian polynomials, the displayed
multiplicative inequality extends by continuity to every Lorentzian
polynomial. Br\"and\'en and Huh proved that $f_{\mathbf K}$ is Lorentzian
\cite[Theorem~4.1]{BH20}. Consequently, every bounded ratio yields a
multiplicative inequality for volume polynomials.

Coefficient inequalities for volume polynomials have a long history. The
Alexandrov--Fenchel inequality generalizes the isoperimetric and
Brunn--Minkowski inequalities and plays an important role in modern convex
geometry, see \cite{Huh26} and \cite{HMW}. The Khovanskii--Teissier
inequalities are used to prove log-concavity of combinatorial sequences, see
\cite{Huh12,HK12}.
Shenfeld and van Handel gave geometric characterizations of the equality
cases of the Alexandrov--Fenchel inequality for several classes of convex
bodies \cite{SvH23}. Chan and Pak showed
that these equality cases are not in the polynomial hierarchy unless that
hierarchy collapses \cite{CP23}.

Recently, Huang, Huh, Wang, and the third author characterized the cone of
bounded ratios $\BR_{\mathring L}(2,k)$ for Lorentzian polynomials of degree
two in $k$ variables \cite{HHSW}.  They also computed the optimal
upper bounds for quadratic Lorentzian polynomials in three variables.

To state our main structural theorem, we recall the following notion. Following
\cite{Mur03}, a function
\\
$\nu:H(n,k)\to\mathbb R$ is \defn{M-convex} if, for all
$\va,\vb\in H(n,k)$ and every $i$ with $\alpha_i>\beta_i$, there is a $j$ with
$\alpha_j<\beta_j$ such that
\[
  \nu(\va)+\nu(\vb)\ge
  \nu(\va-\ve_i+\ve_j)+\nu(\vb+\ve_i-\ve_j).
\]
For $\vg\in\mathbb R^{H(n,k)}$ and $\nu:H(n,k)\to\mathbb R$, we write
\[
  \langle\vg,\nu\rangle
  =\sum_{\va\in H(n,k)}\gamma_{\va}\nu(\va).
\]
An exponent vector $\vg$ is \defn{balanced} if
\[
  \sum_{\va\in H(n,k)}\gamma_{\va}\va=\bm0,
  \qquad\text{i.e.,}\qquad
  \sum_{\va\in H(n,k)}\gamma_{\va}\alpha_i=0
  \quad\text{for every }i\in\{1,\ldots,k\}.
\]
Let
\[
  V_{n,k}=
  \left\{\vg\in\mathbb R^{H(n,k)}:
  \sum_{\va\in H(n,k)}\gamma_{\va}\va=\bm0\right\}
\]
be the space of balanced exponent vectors.

Our main structural theorem completely characterizes the cone
$\BR_{\mathring L}(n,k)$ for every degree $n$ and every number of variables
$k$. We prove that this cone is a full-dimensional (in the balanced subspace
$V_{n,k}$) polyhedral cone and describe
its dual in terms of classes of M-convex functions modulo affine functions.

\begin{theoremA}[Main structural theorem]\label{thm:main-intro}
For any integers $n\ge2$ and $k\ge2$,
\[
  \BR_{\mathring L}(n,k)=
  \left\{\vg\in V_{n,k}:
  \langle\vg,\nu\rangle\ge0
  \text{ for every M-convex }\nu:H(n,k)\to\mathbb R\right\}.
\]
\end{theoremA}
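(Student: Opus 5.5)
Both inclusions will go through tropicalization. The key input is Brändén–Huh's theorem that the support of a Lorentzian polynomial is M-convex, together with their result relating Lorentzian polynomials over Puiseux series to M-concave (equivalently, sign-flipped M-convex) functions. The heuristic is this. A monomial $R_{\vg}$ is bounded on $\mathring L(n,k)$ exactly when it stays bounded along every curve $\vP(t)$ inside $\mathring L(n,k)$ as $t\to\infty$ (or $t\to 0$). Along such a curve, the exponents $\nu(\va)=-\lim \log P_{\va}(t)/\log t$ form the tropicalization of a Lorentzian polynomial. That tropicalization is exactly the set of M-convex functions, up to the sign convention: with $P_{\va}\sim t^{-\nu(\va)}$, we need $\nu$ M-convex, i.e.\ $-\nu$ M-concave as in \cite{BH20}.

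\textbf{Step 1: necessity.} First I show that every bounded ratio is balanced. The torus action $x_i\mapsto s_ix_i$ preserves $\mathring L(n,k)$ and multiplies $R_{\vg}$ by $\prod_i s_i^{\sum_{\va}\gamma_{\va}\alpha_i}$. The overall scaling $f\mapsto cf$ is already covered, since $|\va|_1=n$ makes it a combination of the coordinate scalings. So boundedness forces $\vg\in V_{n,k}$. Next, given an M-convex $\nu$, I need a family $\vP(t)\in\mathring L(n,k)$ with $P_{\va}(t)=c_{\va}t^{-\nu(\va)}(1+o(1))$ as $t\to\infty$. I will use the Brändén–Huh statement that for M-concave $\mu=-\nu$ the polynomial $\sum_{\va} t^{\mu(\va)}\vx^{\va}/\va!$ is Lorentzian for all $t$ sufficiently large (their ``tropical'' characterization, \cite[Theorem~3.20]{BH20}-type). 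To land in $\mathring L$, I perturb it slightly toward a fixed strictly Lorentzian polynomial, e.g.\ $(x_1+\dots+x_k)^n$ scaled by $t^{-N}$ with $N$ large, using openness of strict Lorentzianity. Then $\log R_{\vg}(\vP(t))=-\langle\vg,\nu\rangle\log t+O(1)$, and boundedness forces $\langle\vg,\nu\rangle\ge0$.

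\textbf{Step 2: sufficiency.} Let $C$ be the cone on the right-hand side. The set of M-convex functions on $H(n,k)$ is a closed polyhedral fan; it is a union of finitely many polyhedral cones, given by the local exchange inequalities on each combinatorial type. It contains the affine functions as a lineality space, and these are orthogonal to $V_{n,k}$. Hence $C$ is a polyhedral cone, and its dual is generated by finitely many M-convex classes modulo affine functions. Suppose $\vg\in C$ but $R_{\vg}$ is unbounded. Then there is a sequence $\vP^{(m)}\in\mathring L(n,k)$ with $\langle\vg,\log\vP^{(m)}\rangle\to\infty$. Set $\ell^{(m)}=\log\vP^{(m)}$ and normalize by $\|\ell^{(m)}\|$ modulo affine functions. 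We may do this because $\vg$ is balanced, so adding an affine function of $\va$, which is a torus rescaling, does not change $R_{\vg}$. Passing to a subsequence, the normalized vectors converge to some $-\nu$. I then need the \emph{key lemma}: every such limit of $\log\vP^{(m)}/r_m$ with $r_m\to\infty$ is M-concave, i.e.\ $\nu$ is M-convex. This is the tropical-limit statement: $\log$-limits of Lorentzian coefficient vectors are M-concave. It should follow from the Brändén–Huh characterization that the normalized coefficients of a Lorentzian polynomial satisfy, up to a constant depending only on $n,k$, the quadratic exchange-type inequalities
\[
P_{\va}P_{\vb}\le c_{n,k}\max_{j} P_{\va-\ve_i+\ve_j}P_{\vb+\ve_i-\ve_j}.
\]
I would derive this from log-concavity of the Lorentzian property under the restriction and derivative operations, reducing to the quadratic case settled in \cite{HHSW}. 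Taking logs, dividing by $r_m$, and passing to the limit gives M-concavity of $-\nu$. Finally, unboundedness must produce $\langle\vg,\nu\rangle<0$, contradicting $\vg\in C$. This needs care: I will apply the normalization argument with the constant $c_{n,k}$ absorbed, and use a standard compactness argument over the finitely many cones of the fan to convert ``$\langle\vg,\cdot\rangle\ge0$ on the limit'' into a uniform bound $\langle\vg,\log\vP\rangle\le K$.

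\textbf{Main obstacle.} The main obstacle is that uniform conversion in Step 2. I need to show that $\log\mathring L(n,k)$ lies within bounded Hausdorff distance of the tropical fan of M-concave functions, plus the affine lineality space. Limits alone do not yield a bounded constant. I plan to obtain this from the approximate exchange inequalities above with an explicit constant. Specifically, I would show that any vector satisfying every exchange inequality up to an additive error $\log c_{n,k}$ lies within distance $O(\log c_{n,k})$ of a genuine M-concave function, using a Lipschitz-type stability of the finitely many linear systems defining the fan's cones, via Hoffman's bound. Then $\langle\vg,\log\vP\rangle\le\|\vg\|\cdot O(1)$ for every $\vP\in\mathring L(n,k)$.
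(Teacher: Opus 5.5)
Your necessity step matches the paper in substance. The paper applies \cite[Theorem~3.14]{BH20} to $F^{\nu}_q=\sum_{\va}q^{\nu(\va)}\vx^{\va}/\va!$ for every $0<q\le1$. It then uses only that a Lorentzian polynomial is a coefficientwise limit of strictly Lorentzian ones, and that $R_{\vg}$ is continuous at positive coefficient vectors. Drop your perturbation toward $t^{-N}(x_1+\cdots+x_k)^n$: it is unnecessary, and adding a multiple of $(x_1+\cdots+x_k)^2$ to the Lorentzian form $(x_1+2x_2)^2$ already produces a Hessian with two positive eigenvalues. Your torus-rescaling proof of balancedness is a valid alternative to the paper's use of $\pm\ell$ for affine $\ell$.

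For sufficiency you take a genuinely different, quantitative route. The paper starts from an unbounded integral $\vg$ and uses curve selection and the growth dichotomy to get a semialgebraic curve along which $R_{\vg}^{-1}\sim c_0t^{r}$ with rational $r>0$. This is exactly what yields the strict inequality $\langle\vg,\nu\rangle=-r<0$, which, as you note, sequential limits cannot give. The paper then transfers the curve to convergent Puiseux series, applies \cite[Theorem~3.20]{BH20}, and reaches all of $\mathcal M_{n,k}^{\vee}$ through its integral generators. Your Hoffman step is sound: the M-convex functions form a finite union of cones $\{A_\sigma\nu\ge0\}$, each containing $0$.

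The gap is the key lemma, which carries all the content of sufficiency, and the derivation you propose for it fails for two reasons.

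First, the derivatives $\partial^{\bm\eta}$ only reach exchanges inside a single quadratic slice, that is, pairs at distance $4$. For arbitrary pairs you need the local-to-global exchange theorem \cite[Theorem~6]{MT03} (Lemma~\ref{lem:localslices} and Proposition~\ref{prop:treemetric}). The Hoffman step should then be run on the local system ``every $d^{\bm\eta}$ is a tree metric'', not on the global exchange axiom.

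Second, for $k\ge4$ that local system contains the four-point condition $p_{ij}p_{lm}\le c\max\{p_{il}p_{jm},\,p_{im}p_{jl}\}$ for $p=\mathcal H^{(\bm\eta)}$, and \cite{HHSW} does not settle it. That paper classifies bounded \emph{monomial} ratios, and no collection of monomial bounds implies this max-type inequality. Put $r_1=p_{ij}p_{lm}/(p_{il}p_{jm})$ and $r_2=p_{ij}p_{lm}/(p_{im}p_{jl})$. The cut criterion of Remark~\ref{rem:recover-HHSW} shows that $r_1^{\theta}r_2^{1-\theta}$ is unbounded for every $\theta\in[0,1]$: the cut $\{i,l\}\mid\{j,m\}$ forces $\theta\le0$, and the cut $\{i,m\}\mid\{j,l\}$ forces $\theta\ge1$. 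On the other hand, a bound on $\min\{r_1,r_2\}$ over the region cut out by all valid monomial bounds, which is convex in logarithmic coordinates, would produce such a $\theta$ by minimax.

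So a non-monomial input is needed. One that works is the following.
\begin{enumerate}
\item Pass by diagonal congruence to unit diagonal; this preserves every exchange inequality.
\item A positive matrix of Lorentzian signature with unit diagonal is then the Minkowski Gram matrix of points of $\mathbb H^{k-1}$, so $p_{uv}=\cosh d_{uv}$ with $d$ the hyperbolic distance.
\item Since $d-\log2\le\log\cosh d\le d$ and $\mathbb H^{k-1}$ is Gromov hyperbolic, the four-point condition holds up to a uniform additive error.
\item The triangle condition follows from $\cosh(a+b)\le2\cosh a\cosh b$.
\end{enumerate}
With this estimate for the local Hessians of $-\log\widehat P$, the local-to-global theorem, and Hoffman's bound, your route closes. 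It gives an explicit bound for every real $\vg$ in the cone and needs no integrality or rational-polyhedrality step. The paper avoids all such estimates by letting semialgebraic geometry and \cite[Theorem~3.20]{BH20} do the work.
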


\medskip

This result extends the dual description of $\BR_{\mathring L}(2,k)$ in
\cite[Theorem~B]{HHSW} from quadratic Lorentzian polynomials to strictly
Lorentzian polynomials of arbitrary degree.
\medskip

In three variables, we show that $\BR_{\mathring L}(n,3)$ is the cone spanned
by the exponent vectors of the following \defn{triangular ratios}:
\[
  R_{\vrho(\vb;i\mid jk)}(\vP)
  =
  \frac{P_{\vb+2\ve_i}\,P_{\vb+\ve_j+\ve_k}}
       {P_{\vb+\ve_i+\ve_j}\,P_{\vb+\ve_i+\ve_k}},
  \qquad
  \vb\in H(n-2,3),\quad \{i,j,k\}=\{1,2,3\}.
\]

For a bounded ratio $\vg$, the \defn{optimal bounding constant} is the least
uniform upper bound for $R_{\vg}$, namely,
\[
  f_{\mathring L}(\vg)
  \;\coloneqq\;
  \inf\bigl\{\, C>0 \;:\; R_{\vg}(\vP)\le C
  \ \text{ for every } \vP\in\mathring L(n,k) \,\bigr\}.
\]

We then compute the optimal upper bound for every element of
$\BR_{\mathring L}(n,3)$ for every $n\ge3$, the cone of bounded ratios for
Lorentzian forms of degree $n$ in three variables. This extends the optimal bounds obtained in
\cite[Theorem~C]{HHSW}.

For $a,b,c\ge0$, set
\[
 m\coloneqq a+b+c,
 \qquad
 \Delta(a,b,c)\coloneqq a^2+b^2+c^2-2ab-2ac-2bc.
\]
Define $\mathfrak m(a,b,c)\coloneqq 1$ when $\Delta(a,b,c)\le0$.  When
$\Delta(a,b,c)>0$, one entry is larger than the sum of the other two.  After
permuting the entries, assume that $a>b+c$ and set
\[
 \mathfrak m(a,b,c)
 \coloneqq 2^m\,
 \frac{a^a b^b c^c(a-b-c)^{a-b-c}}
 {(a-b+c)^{a-b+c}(a+b-c)^{a+b-c}},
 \qquad 0^0=1.
\]
This defines a symmetric function on $\Rge^3$.

For $n\ge3$ and $\vg\in\BR_{\mathring L}(n,3)$, let
\[
 \Lambda_n(\vg)
 \coloneqq
 \left\{\lambda=(\lambda_{\vb,i})\in\Rge^{\,3\binom n2}:\ 
 \vg=\sum_{\vb\in H(n-2,3)}\sum_{i=1}^3
 \lambda_{\vb,i}\cdot\vrho(\vb;i\mid jk)\right\},
\]
where, for each $i$, the indices $j,k$ are the two distinct elements of
$\{1,2,3\}\setminus\{i\}$, in either order.

The set $\Lambda_n(\vg)$ is nonempty for every
$\vg\in\BR_{\mathring L}(n,3)$ because $\BR_{\mathring L}(n,3)$ is
generated by the exponent vectors of triangular ratios. The array $\lambda$ has one
row of three nonnegative entries for each $\vb\in H(n-2,3)$, so there are
$3\binom n2$ entries in total. Thus $\Lambda_n(\vg)$ consists of all
representations of $\vg$ as a nonnegative combination of the exponent
vectors of triangular ratios.

\begin{theoremB}[Optimal bounding constants for ternary Lorentzian forms]
\label{thm:all-ternary-constants-intro}
For every $n\ge3$ and $\vg\in\BR_{\mathring L}(n,3)$,
\[
 f_{\mathring L}(\vg)
 =\sup_{\vP\in\mathring L(n,3)}R_{\vg}(\vP)
 =\left(\prod_{\va\in H(n,3)}(\va!)^{-\gamma_{\va}}\right)
 \min_{\lambda\in\Lambda_n(\vg)}
 \prod_{\vb\in H(n-2,3)}
 \mathfrak m(\lambda_{\vb,1},\lambda_{\vb,2},\lambda_{\vb,3}).
\]
If $\vg\ne0$, the supremum is not
attained on $\mathring L(n,3)$.
\end{theoremB}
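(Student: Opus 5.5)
Throughout I pass to the normalized coefficients $Q_{\va}=\va!\,P_{\va}$. Then for $\vb\in H(n-2,3)$ the Hessian of $\partial^{\vb}f$ is the $3\times3$ matrix $M_{\vb}(\vP)=(Q_{\vb+\ve_i+\ve_j})_{i,j}$, and
\[
R_{\vg}(\vP)=\Bigl(\prod_{\va}(\va!)^{-\gamma_{\va}}\Bigr)\prod_{\va}Q_{\va}^{\gamma_{\va}}.
\]
So the prefactor in the statement only records this change of variables, and it suffices to show that $\sup\prod_{\va} Q_{\va}^{\gamma_{\va}}$ equals $\min_{\lambda}\prod_{\vb}\mathfrak m(\lambda_{\vb})$.

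The key observation is that, by the Br\"and\'en--Huh definition, $\vP\in\mathring L(n,3)$ holds \emph{exactly} when all $Q_{\va}>0$ and every local matrix $M_{\vb}(\vP)$ has Lorentzian signature. Each triangular ratio $\vrho(\vb;i\mid jk)$, written in the $Q$'s, is a ratio of entries of the single matrix $M_{\vb}$. The local input is \cite[Theorem~C]{HHSW}, which I take in the form
\[
\sup_{M}\;\prod_{i}\Bigl(\tfrac{M_{ii}M_{jk}}{M_{ij}M_{ik}}\Bigr)^{\lambda_i}=\mathfrak m(\lambda_1,\lambda_2,\lambda_3),
\]
the supremum running over positive $3\times3$ Lorentzian matrices $M$. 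If HHSW state it only for particular exponent patterns, I would rederive it by the same Lagrange computation. With this in hand, the upper bound is immediate: for any $\lambda\in\Lambda_n(\vg)$,
\[
\prod_{\va} Q_{\va}^{\gamma_{\va}}=\prod_{\vb}\prod_i\rho_{\vb,i}(Q)^{\lambda_{\vb,i}}\le\prod_{\vb}\mathfrak m(\lambda_{\vb}),
\]
and taking the infimum over $\lambda$ gives "$\le$".

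For the reverse inequality I use convex duality. Set $y=\log Q\in\mathbb R^{H(n,3)}$, and let $A_{\vb}y\in\mathbb R^3$ be the vector of the three triangular log-ratios at $\vb$. Let $S\subset\mathbb R^3$ be the set of such log-ratio vectors realized by positive Lorentzian $3\times3$ matrices. Then $h=\log\mathfrak m$ is the support function of $S$ on $\Rge^3$. I would check directly that $h$ is positively homogeneous of degree one (the exponents in $\mathfrak m$ sum correctly), convex, and continuous. The quantity $\min_{\lambda\in\Lambda_n(\vg)}\sum_{\vb}h(\lambda_{\vb})$ is then the value of a conic program over the polyhedral cone $\Lambda$. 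By Fenchel--Rockafellar duality, with the finiteness of $h$ on the orthant as the qualification condition, it equals
\[
\sup\Bigl\{\langle\vg,y\rangle:\ A_{\vb}y\in\overline{\operatorname{conv}}\,S\ \text{for all }\vb\Bigr\}.
\]
The existence of the minimum follows from lower semicontinuity and from the polyhedrality of $\Lambda_n(\vg)$, once the recession directions are checked to cost nonnegatively.

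The main obstacle is to show that this dual supremum is actually approached by points $y=\log Q$ coming from Lorentzian polynomials. Two things are needed. First, the set of $y$ with all $A_{\vb}y\in S$ must be exactly $\log$ of the normalized $\mathring L(n,3)$. Since $\vg$ is balanced and $\langle\vg,\cdot\rangle$ kills the extra degrees of freedom (scaling $x_i$ and the diagonal freedom within each $M_{\vb}$), I would reduce this to showing that any compatible choice of local Lorentzian matrices glues. Second, one must be able to replace $S$ by its closed convex hull. Here I would try to show that $S$ is itself convex, up to closure, via the explicit parametrization of Lorentzian $3\times3$ positive matrices: the log-ratios satisfy a concave-type inequality coming from $\det M\ge0$ in the signature condition. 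If that fails, I would instead approximate extremal points of the hull by tropical (exponential) families $Q_{\va}=t^{\nu(\va)}c_{\va}$, in the spirit of Theorem~\ref{thm:main-intro}.

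Non-attainment for $\vg\neq0$ comes from the local analysis. The HHSW supremum $\mathfrak m(\lambda)$ is approached only along degenerating matrices, that is, when $\det M\to0$ or when an entry ratio tends to $0$ or $\infty$. So equality in the upper bound would force some $M_{\vb}$ with $\lambda_{\vb}\neq0$ to be singular or non-positive, contradicting strictness.
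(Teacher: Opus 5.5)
Your outline follows the paper's proof: normalize to $\va!\,P_{\va}$, bound each row by HHSW, dualize by Fenchel--Rockafellar, and separate the rows. However, the duality step does not go through as you justify it.

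\textbf{The constraint qualification is on the wrong side.} Finiteness of $h=\log\mathfrak m$ on $\Rge^3$ is not a valid constraint qualification for $\min_{\lambda\in\Lambda_n(\vg)}\sum_{\vb}h(\lambda_{\vb})$. Qualifying on the $\lambda$ side needs a representation of $\vg$ in the open orthant. That fails, for instance, when $\vg$ is a single triangular ratio, whose only representation has zeros. Nor can $h$ be extended to a finite convex function across the boundary, since its inward derivative there is $-\infty$ (the $b\log b$ terms). Sublinear objectives that are finite on a polyhedral cone can have duality gaps, so this is not a formality.

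The qualification must be placed on the $y$ side: you need one $y$ with every $A_{\vb}y$ in the interior of $\overline{\operatorname{conv}}S$. The paper constructs it explicitly. With $Q_{\va}=\alpha_1^2+\alpha_2^2+\alpha_3^2$, every rhombus vector pairs with $Q$ to $2$. So $y=-tQ/2$ makes all log-ratios equal to $-t$, which is realized by the matrix with unit diagonal and off-diagonal entries $e^t$ (eigenvalues $1+2e^t,1-e^t,1-e^t$).

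The same point handles a step you skip: $\{y:A_{\vb}y\in\overline S\ \forall\vb\}$ is the closure of $\{y:A_{\vb}y\in S\ \forall\vb\}$, by interpolating toward it. Pairing with $Q$ also gives $\sum_{\vb,i}\lambda_{\vb,i}=\tfrac12\langle\vg,Q\rangle$, so $\Lambda_n(\vg)$ is compact. Your condition that ``recession directions cost nonnegatively'' would not by itself guarantee a minimizer.

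\textbf{Convexity of $S$ is left open, and it is not optional.} Your $S$ is the paper's $\mathcal C$. For $n\ge3$ the rows $A_{\vb}y$ are linearly coupled. If $S$ were not convex, near-optimal dual points could be forced to have rows in $\overline{\operatorname{conv}}S\setminus\overline S$. Such triples are not limits of realizable ones, so no family of Lorentzian forms, tropical or otherwise, could approach the dual value. Tropical families are the wrong tool here anyway: the tropicalization argument behind the main structural theorem sees only valuations, whereas the constants in $\mathfrak m$ are leading-coefficient data.

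The paper proves convexity as follows. It normalizes the diagonal and writes the off-diagonal entries as $\cosh\ell_{uv}$ with $(\ell_{uv})$ a strict triangle. It then passes to $q_{uv}=\log\cosh\ell_{uv}$ and checks that the degenerate-triangle boundary $q_{23}=\psi(q_{12},q_{13})=\log\cosh(\ell_{12}+\ell_{13})$ is strictly concave. Hence $S$ is a linear image of an intersection of three hypographs.

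\textbf{Points that are fine or not obstacles.} Your ``gluing'' concern is not a real obstacle. Three triangular ratios determine a positive symmetric $3\times3$ matrix up to positive diagonal congruence. So $A_{\vb}y\in S$ already forces $\operatorname{Hess}(\partial^{\vb}f)$ to have Lorentzian signature for the single form built from $e^{y}$. Your non-attainment argument is fine once $S$ is known to be open and a minimizer exists: equality would make a nonzero linear functional attain its supremum on the open set $S$ at some row with $\lambda_{\vb}\ne0$.
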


Specializing to $n=3$ gives multiplicative inequalities of
various flavors, including symmetric and asymmetric inequalities and
inequalities on a fixed support. In the list below, we recover the reverse
Khovanskii--Teissier inequality (the first) and the Alexandrov--Fenchel
inequality (the third), together with a few of our other favorite inequalities.
Every ternary Lorentzian cubic satisfies the
following inequalities, and each displayed constant is optimal.
\[
\begin{gathered}
 P_{120}P_{201}\le P_{210}P_{111},
 \qquad
 P_{300}P_{111}\le\frac43 P_{210}P_{201},\\[3pt]
 P_{120}P_{102}\le\frac14 P_{111}^{2},
 \qquad
 P_{300}P_{120}\le\frac13 P_{210}^{2},\\[3pt]
 P_{012}P_{120}P_{201}\le\frac4{27}P_{111}^{3},
 \qquad
 P_{300}P_{030}P_{003}\le\frac1{216}P_{111}^{3},\\[8pt]
 (P_{300}P_{030}P_{003})^2
 \le\frac1{729}
 P_{210}P_{201}P_{120}P_{021}P_{102}P_{012},\\[8pt]
 P_{300}P_{030}^{2}P_{003}^{4}
 \le\frac1{2187}P_{120}P_{102}^{2}P_{012}^{4},\\[8pt]
 P_{030}P_{003}P_{210}P_{201}P_{021}
 \le\frac1{18}P_{120}^{2}P_{102}P_{012}P_{111}.\\[5pt]
\end{gathered}
\]

Although we have not computed the optimal bounding constant for every
element of $\BR_{\mathring L}(3,4)$,
Corollary~\ref{weighted-star-product-lorentzian} gives the following
inequality for every Lorentzian cubic in $k\ge4$ variables. For
$a,b,c\ge0$ and pairwise distinct $i,j,\ell,r\in\{1,\ldots,k\}$,
\[
\begin{gathered}
 P_{\ve_i+\ve_\ell+\ve_r}^{\,a}
 P_{\ve_i+\ve_j+\ve_r}^{\,b}
 P_{\ve_i+\ve_j+\ve_\ell}^{\,c}
 P_{3\ve_i}^{\,a+b+c}
 \le
 C\,P_{2\ve_i+\ve_j}^{\,b+c}
 P_{2\ve_i+\ve_\ell}^{\,a+c}
 P_{2\ve_i+\ve_r}^{\,a+b},\\[4pt]
 \text{where}\quad C=\left(\frac43\right)^{a+b+c}\cdot
 \frac{(a+b+c)^{a+b+c}a^ab^bc^c}
 {(a+b)^{a+b}(a+c)^{a+c}(b+c)^{b+c}}.
\end{gathered}
\]
Here $0^0=1$, and $C$ is optimal for each fixed $a,b,c$ and every $k\ge4$.

In particular, setting $a=b=c=1$ gives the sharp inequality
\[
 P_{\ve_i+\ve_\ell+\ve_r}
 P_{\ve_i+\ve_j+\ve_r}
 P_{\ve_i+\ve_j+\ve_\ell}
 P_{3\ve_i}^{3}
 \le
 P_{2\ve_i+\ve_j}^{2}
 P_{2\ve_i+\ve_\ell}^{2}
 P_{2\ve_i+\ve_r}^{2}.
\]

Finally, we characterize the pairs $(n,k)$ for which
$\BR_{\mathring L}(n,k)$ coincides with the cone of bounded ratios for volume
polynomials arising from families of Cartesian products of intervals. We
denote this cone by $\BR_{\mathring Q}(n,k)$. The corresponding volume
polynomials are products of $n$ linear forms in $k$ variables with
nonnegative coefficients.

\newpage
\begin{theoremC}
\label{thm:comparison-classification-intro}
For all $n,k\ge2$,
\[
 \BR_{\mathring L}(n,k)=\BR_{\mathring Q}(n,k)
\]
if and only if $n=2$, or $k=2$, or $k=3$ and $n\le5$. Equivalently, the
inclusion is strict precisely when $n\ge3$ and $k\ge4$, or $n\ge6$ and
$k\ge3$.
\end{theoremC}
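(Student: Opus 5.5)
Since Theorem~\ref{thm:main-intro} describes $\BR_{\mathring L}(n,k)$ as the dual of the cone of M-convex functions (modulo affine functions) inside $V_{n,k}$, the plan is to get a similar dual description of $\BR_{\mathring Q}(n,k)$ and then compare the two dual cones. For a product $\prod_{t=1}^n(\sum_i c_{t,i}x_i)$ with $c_{t,i}=e^{-s w_{t,i}}$, letting $s\to\infty$ shows that $\vg\in\BR_{\mathring Q}(n,k)$ if and only if $\vg\in V_{n,k}$ and $\langle\vg,\nu\rangle\ge0$ for every tropical product $\nu$. Here a tropical product is a function $\nu(\va)=\max$ (or $\min$, depending on the sign convention) of $\sum_t w_{t,\sigma(t)}$ over assignments $\sigma$ with content $\va$. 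Equivalently, $\nu$ is the valuation of a transversal-matroid-type (linkage) M-convex function, namely a Minkowski sum of $n$ simplex-supported valuated functions. Thus $\BR_{\mathring L}=\BR_{\mathring Q}$ holds exactly when every M-convex function on $H(n,k)$ lies in the closed convex cone generated by these tropical products together with affine functions.

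For the equality cases I would argue as follows.
\begin{itemize}
\item For $k=2$: M-convex functions on $H(n,2)$ are exactly discrete concave sequences. Each such sequence is a tropical product of $n$ binary tropical linear forms, via tropical factorization of univariate concave polynomials.
\item For $n=2$: the statement follows from \cite{HHSW}, where $\BR_{\mathring L}(2,k)$ was computed and quadratics are handled directly. Alternatively, extreme rays of the M-convex cone for $n=2$ (tree-metric-like functions) are sums of products of two tropical linear forms.
\item For $k=3$, $n\le5$: I would use the generation of $\BR_{\mathring L}(n,3)$ by triangular ratios, stated before Theorem~B. It suffices to show that each triangular ratio $\vrho(\vb;i\mid jk)$ is bounded on $\mathring Q$. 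This reduces to a local inequality of the form $P_{\vb+2e_i}P_{\vb+e_j+e_k}\le C P_{\vb+e_i+e_j}P_{\vb+e_i+e_k}$ for products of linear forms, which I would verify by expanding the coefficients as sums over assignments and applying a direct injection argument.
\end{itemize}
On reflection, however, that argument seems to work for every $n$, which cannot be right: triangular ratios being bounded on $\mathring Q$ gives $\BR_{\mathring L}\subseteq\BR_{\mathring Q}$ trivially, and that inclusion holds anyway since $\mathring Q\subseteq\mathring L$. What actually needs proof is the reverse inclusion, i.e.\ the dual statement. So for $k=3$, $n\le5$ I would instead enumerate the extreme rays of the cone of M-convex functions on $H(n,3)$ modulo affine functions, which is a finite polyhedral computation for $n\le5$, and exhibit each extreme ray as a tropical product (or a sum of tropical products).

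For strictness, I need an M-convex $\nu$ outside the cone generated by tropical products, together with a separating $\vg$. This $\vg$ is a ratio bounded on $\mathring Q$ but unbounded on $\mathring L$, witnessed by $\langle\vg,\nu\rangle<0$.
\begin{itemize}
\item For $n=3$, $k=4$, I would take a valuated matroid not realizable as a tropical transversal, for example the valuation of a rank-$3$ uniform matroid on $4$ elements lifted to $H(3,4)$ with multiplicities. Here the separating ratio would be a product across the four "corners" $\va=\ve_i+\ve_j+\ve_\ell$.
\item For $n=6$, $k=3$, I expect a fine mixed subdivision of $6\Delta_2$ that is regular but not induced by a sum of $6$ simplices. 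The example would be a "hexagonal" M-convex function whose cells are not all products of simplex faces.
\end{itemize}
Finally, I would propagate strictness to larger $(n,k)$ by extending counterexamples: adding variables via restriction to faces, and raising degree via multiplication by $x_1^{d}$. Both operations preserve Lorentzian-ness and products of linear forms, and they embed the bounded-ratio cones compatibly.

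The main obstacle is the $k=3$ case. I would need to construct the explicit $n=6$ M-convex function not dominated by tropical products, and, conversely, to certify for $n\le5$ that no such function exists. I would attempt the latter by computer-assisted enumeration of the extreme rays of the dual cone.
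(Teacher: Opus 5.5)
Your framework matches the paper's: the dual description $\BR_{\mathring Q}(n,k)=\mathcal A_{n,k}^\vee$ via tropical products $\mu_D$, the criterion $\mathcal A_{n,k}=\mathcal M_{n,k}$, the $k=2$ argument via concave sequences, the quadratic case from HHSW, persistence in $k$ by restricting to a face, and a finite computation for $k=3$, $n\le5$. Your enumeration of the extreme rays of the hive cone is a legitimate dual of the paper's run of Algorithm~\ref{alg:BR}. The strictness half, however, has two genuine gaps.

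\emph{Raising the degree.} Multiplying by $x_1^d$ only handles the Lorentzian side. If $\vg\notin\BR_{\mathring L}(n,k)$, the translated vector $\vg^{+}$ (indices shifted by $d\ve_1$) is unbounded on $\mathring L(n+d,k)$, because $x_1^dF$ is Lorentzian and $P_{\va+d\ve_1}(x_1^dF)=P_{\va}(F)$. You also need $\vg^{+}\in\BR_{\mathring Q}(n+d,k)$, i.e.\ a bound on every product of $n+d$ positive linear forms. Multiplication by $x_1^d$ sends products of $n$ forms only to a thin subset of these, so it gives the implication in the wrong direction. Restriction $x_{k+1}=0$, by contrast, maps $\mathcal P(n,k+1)$ into $\mathcal P(n,k)$, which is why persistence in $k$ works. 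Dually you would need
\[
 \va\mapsto\mu_{D'}(\va+d\ve_1)=\min_{|W|=d}\Bigl(\sum_{r\in W}d'_{r1}+\mu_{D'_{\overline W}}(\va)\Bigr),
\]
a tropical \emph{sum} of tropical products (the tropicalization of $\partial_1^dQ$), to lie in $\mathcal A_{n,k}$. Nothing in your plan shows this. This is exactly why the paper verifies $R_4$ and $R_5$ separately by exact subtraction-free expansion. It is also why, for $k=3$, it proves boundedness of $R_N$ on products directly for every $N\ge6$, by redistributing the five numerator-type minimizing assignments into five denominator-type ones. Without such a step your plan reaches only $(3,4)$ and $(6,3)$ and their persistence in $k$. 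The cases $(4,k)$ and $(5,k)$ for $k\ge4$, and $(N,k)$ for $N\ge7$, $k\ge3$, remain open.

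\emph{The witnesses.} You never construct them, and your selection criteria cannot certify $[\nu]\notin\mathcal A_{n,k}$, because $\mathcal A_{n,k}$ is a cone and is closed under sums. Failing to be a \emph{single} $\mu_D$ (a non-transversal valuation, or a regular subdivision with cells that are not sums of simplex faces) is not enough. For example, on $H(2,3)$ the function $\nu=\sigma_{\{1\},1}+\sigma_{\{2\},1}+\sigma_{\{3\},1}$ induces the subdivision of $2\Delta_2$ into four unit triangles, one of them inverted and hence not a mixed cell. Yet each $\sigma_{\{i\},1}$ is itself a $\mu_D$, so $[\nu]\in\mathcal A_{2,3}$. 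Your $(3,4)$ candidate also fails on its own terms: $U_{3,4}$ is uniform, hence transversal. Likewise, by the Cayley trick every regular mixed subdivision of $n\Delta_2$ comes from some $\mu_D$. What is actually required is an explicit $\vg$ with $\langle\vg,\mu_D\rangle\ge0$ for all $D$, together with a Lorentzian family on which $R_{\vg}\to\infty$. The paper supplies $R_A$ for $(3,4)$. For $k=3$ it supplies $R_N$ with the family $F_t(x+ty+t^2z)^{N-6}$, on which $R_N\sim t^{-1}/(48(N-2))$.
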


The classification is summarized in the following table.
\[
\begin{gathered}
 \BR_{\mathring L}(n,k)\stackrel{?}{=}\BR_{\mathring Q}(n,k)
 \\[4pt]
\begin{array}{c|ccccccc}
 k\backslash n & 2 & 3 & 4 & 5 & 6 & 7 & \cdots \\ \hline
 2 & = & = & = & = & = & = & \cdots \\
 3 & = & = & = & = & \ne & \ne & \cdots \\
 4 & = & \ne & \ne & \ne & \ne & \ne & \cdots \\
 5 & = & \ne & \ne & \ne & \ne & \ne & \cdots \\
 6 & = & \ne & \ne & \ne & \ne & \ne & \cdots \\
 7 & = & \ne & \ne & \ne & \ne & \ne & \cdots \\
 \vdots & \vdots & \vdots & \vdots & \vdots & \vdots & \vdots & \ddots
\end{array}
\end{gathered}
\]

\subsection*{Concurrent work}

\begingroup
\hypersetup{urlcolor=black,citecolor=black}

Corollary~1.2 of Lorenzo Baldi and Mario Kummer's work~\cite{BK26}
implies Theorem~A of this paper. Theorem~1.1 of \cite{BK26} gives a
description of bounded ratios in a more general setting.

As we recently learned through private communication, Josephine Yu had
previously announced the result stated as Theorem~1.1 in~\cite{BK26}.
Abeer Al Ahmadieh, Felipe Rinc\'on, Cynthia Vinzant, and Josephine Yu plan to
include this result in the next version of their
paper~\cite{ahmadieh2025tropicalizingprincipalminorspositive}.

Dijia Chen, Bowen Gan, Ivy Liu, Zemeng Wang, and Chengzhi Wu have posted two
papers \cite{CGLWW26a}, \cite{CGLWW26b} that overlap substantially with our paper,
mostly with \cite{CGLWW26a}. 

More specifically, we show that for ternary Lorentzian polynomials the cone of bounded ratios is generated by triangular ratios and we provide the optimal upper bound for each such bounded ratio. In \cite{CGLWW26a} the authors solve the same two problems for ternary Lorentzian polynomials restricted to any M-convex supports. It is easy to show that the ratios from the bounded cones for any M-convex support share the same upper bound as if considered as ratios in full supported bounded cone. In \cite{CGLWW26a} the authors continue with the comparison of optimal upper bounds for all Lorentzian polynomials, volume polynomials and basis-generating polynomials of rank-three matroids.  

In \cite{CGLWW26b} the authors determine for which degree and number of variables the cone of bounded ratios for any M-convex set is generated by quadratic-slice ratios. The authors use cut directions to certify that certain bounded ratios cannot be generated by quadratic-slice bounded ratios. These cut directions are either our M-convex split functions, up to sign and an affine term, or the sum of two adjacent split functions. We use split functions to describe a family of the facets of the cone of bounded ratios, while in \cite{CGLWW26b} the author use cut directions for separation argument. Unlike in \cite{CGLWW26b}, we compare the cones of bounded ratios for all Lorentzian polynomials and for volume polynomials which are product of nonnegative linear forms. 

We acknowledge that all of the research groups mentioned above worked
independently.

\endgroup

\section{\texorpdfstring{Dual description of the cone of bounded ratios
$\BR_{\mathring{L}}(n,k)$}{Dual description of the cone of bounded ratios BRL(n,k)}}
\label{sec:BRL}

Fix integers $n\ge2$ and $k\ge2$, and write
$\vx=(x_1,\dots,x_k)$. For
$\va=(\alpha_1,\dots,\alpha_k)\in\Zge^k$, set
\[
  |\va|_1 \;=\; \alpha_1 + \cdots + \alpha_k,
  \qquad
  \vx^{\va} \;=\; x_1^{\alpha_1} \cdots x_k^{\alpha_k},
  \qquad
  \va! \;=\; \alpha_1! \cdots \alpha_k! ,
\]
and we let
\[
  H(n,k) \;=\; \bigl\{\, \va \in \Zge^{k} \;:\; |\va|_1 = n \,\bigr\}
\]
be the set of exponent vectors of the degree-$n$ monomials in
$x_1,\dots,x_k$. We write $\ve_1, \dots, \ve_k$ for the standard basis of
$\mathbb{R}^k$. We use bold font to denote tuples or vectors.

\subsection{Lorentzian polynomials}

A real symmetric matrix has \defn{Lorentzian signature} if it is nonsingular
and has exactly one positive eigenvalue, that is, if its signature is
$(+,-,\dots,-)$.

\begin{definition}[{\rm \defn{Lorentzian polynomials}}, {\cite[Definition~2.1]{BH20}}]\label{def:strictly-lorentzian}
A homogeneous polynomial $f \in \mathbb{R}[x_1, \dots, x_k]$ of degree
$n \ge 2$ is \defn{strictly Lorentzian} if
\begin{enumerate}
  \item all coefficients of $f$ are positive, and
  \item for every sequence $i_1, \dots, i_{n-2} \in \{1, \dots, k\}$, the
        Hessian of $\partial_{i_1} \cdots \partial_{i_{n-2}} f$ has Lorentzian
        signature.
\end{enumerate}
A homogeneous polynomial of degree $n$ is \defn{Lorentzian} if it is a limit of
strictly Lorentzian polynomials of degree $n$ in $k$ variables.
\end{definition}

The polynomial $\partial_{i_1} \cdots \partial_{i_{n-2}} f$ is a quadratic
form, so its Hessian is a matrix with constant entries.

\subsection{Ratios}

\begin{definition}[{\rm \defn{Ratios}}]\label{def:ratio}
For
$\vg=(\gamma_{\va})_{\va\in H(n,k)}\in\mathbb R^{H(n,k)}$ and
$\vP=(P_{\va})_{\va\in H(n,k)}$, define the associated Laurent monomial by
\[
  R_{\vg}
  \;=\;
  \prod_{\va \in H(n,k)} P_{\va}^{\,\gamma_{\va}} .
\]
By abuse of notation, we call both the exponent vector $\vg$ and the Laurent
monomial $R_{\vg}$ a \defn{ratio}. 
\end{definition}

\begin{definition}[{\rm \defn{Bounded ratios and optimal bounding constants}}]\label{def:BR-general}
Let $X \subseteq \Rpos^{H(n,k)}$ be a nonempty set of coefficient vectors. An
exponent vector $\vg=(\gamma_{\va})_{\va\in H(n,k)}$ is a \defn{bounded ratio}
on $X$ if it satisfies the following condition:
\begin{quote}
There is a positive constant $C$ such that
$\displaystyle\prod_{\va \in H(n,k)} P_{\va}^{\,\gamma_{\va}} \le C$
for every $\vP=(P_{\va})\in X$.
\end{quote}
We denote the set of bounded ratios on $X$ by $\BR(X)$.
The \defn{optimal bounding constant} of $\vg \in \BR(X)$, denoted
$f(\vg) = f_X(\vg)$, is the infimum of all constants $C$ satisfying this condition:
\[
  f_{X}(\vg)
  \;=\;
  \inf\bigl\{\, C>0 \;:\; R_{\vg}(\vP)\le C
  \ \text{ for every } \vP\in X \,\bigr\}.
\]
\end{definition}

\begin{definition}[{\rm \defn{The cone $\BR_{\mathring{L}}(n,k)$}}]
\label{def:BR-strict}
Let
\[
  \mathring{L}(n,k)  \;\coloneqq\;
  \Bigl\{\, (P_{\va})_{\va \in H(n,k)} \in \Rpos^{H(n,k)}
  \;:\; \sum_{\va \in H(n,k)} P_{\va}\, \vx^{\va}
  \ \text{is strictly Lorentzian} \,\Bigr\}
\]
be the set of coefficient vectors of strictly Lorentzian polynomials of
degree $n$ in $k$ variables, and put
\[
  \BR_{\mathring{L}}(n,k) \;\coloneqq\; \BR\bigl(\mathring{L}(n,k)\bigr),
  \qquad
  f_{\mathring{L}} \;\coloneqq\; f_{\mathring{L}(n,k)} .
\]
\end{definition}

\begin{remark}\label{rem:normalization}
Replacing the coefficients $P_{\va}$ by the normalized coefficients
$\va!\,P_{\va}$ multiplies $R_{\vg}$ by a positive constant and therefore
does not affect whether the ratio is bounded.
\end{remark}

\subsection{M-convex functions and affine quotients}

\begin{definition}[{\rm \defn{M-convex function}}, {\cite{Mur03}}]\label{def:Mconvex}
A function $\nu \colon H(n,k) \to \mathbb{R}$ is \defn{M-convex} if, for all
$\va, \vb \in H(n,k)$ and every $i$ with $\alpha_i > \beta_i$, there is a $j$
with $\alpha_j < \beta_j$ such that
\[
  \nu(\va) + \nu(\vb) \;\ge\;
  \nu(\va - \ve_i + \ve_j) + \nu(\vb + \ve_i - \ve_j).
\]
\end{definition}

\begin{notation}\label{not:BRL}
Let $\mathbb{R}^{H(n,k)}$ denote the space of tuples indexed by the finite
set $H(n,k)$. We identify a function $\nu \colon H(n,k) \to \mathbb{R}$ with
its tuple $(\nu(\va))_{\va}$. Under this identification, exponent vectors and
functions belong to the same space, equipped with the pairing
$\langle \vg, \nu \rangle = \sum_{\va} \gamma_{\va}\, \nu(\va)$. Let
\[
  V_{n,k} \;:=\; \Bigl\{ \vg \in \mathbb{R}^{H(n,k)} \;:\;
  \textstyle\sum_{\va} \gamma_{\va}\, \va = \bm{0} \Bigr\}
\]
be the space of \defn{balanced} exponent vectors. 

An \defn{affine-linear function} on $\mathbb{R}^{k}$ is a map
$\vx \mapsto c + \sum_i b_i x_i$, where $c, b_1,\dots,b_k \in \mathbb{R}$.
An \defn{affine function} on $H(n,k)$ is the restriction of such a map. Let
$\operatorname{Aff}(H(n,k))$ denote the space of these affine functions.

For an arbitrary function $f:H(n,k) \rightarrow \mathbb{R}$, write $[f]$ for
its class in the quotient vector space
$\mathbb{R}^{H(n,k)}/\operatorname{Aff}(H(n,k))$. Thus $[f] = [g]$ exactly
when $f - g \in \operatorname{Aff}(H(n,k))$. By
Lemma~\ref{lem:Maffine} below, adding an affine function preserves
M-convexity. Hence M-convexity is a property of the class $[\nu]$. Define
\[
  \mathcal{M}_{n,k} \;:=\; \overline{\operatorname{cone}}
  \bigl\{ [\nu] \;|\; \nu \colon H(n,k) \to \mathbb{R} \text{ is M-convex}
  \bigr\}
  \;\subseteq\;
  \mathbb{R}^{H(n,k)} / \operatorname{Aff}(H(n,k)) .
\]
\end{notation}

\begin{definition}[{\rm \defn{Local discrete Hessian and local metric field}}]
\label{def:local-hessian-field}
For $f \in \mathbb{R}^{H(n,k)}$ and $\vb \in H(n-2,k)$, let
$d^{\vb}(f)$ be the symmetric $k \times k$ matrix with entries
\[
  d^{\vb}_{ij}(f) \;=\;
  f(\vb + 2\ve_i) + f(\vb + 2\ve_j) - 2 f(\vb + \ve_i + \ve_j),
  \qquad 1 \le i,j \le k.
\]
We call $d^{\vb}(f)$ the \defn{local discrete Hessian} of $f$ at $\vb$. Its
diagonal vanishes. The family
\[
  (d^{\vb}(f))_{\vb \in H(n-2,k)}
\]
is the \defn{local metric field} of $f$.
\end{definition}

\begin{lemma}\label{lem:affine}
The space $\operatorname{Aff}(H(n,k))$ is spanned by the $k$ coordinate
functions $\va \mapsto \alpha_i$. Moreover,
\[
  \dim \operatorname{Aff}(H(n,k)) = k.
\]
Two pairs
$(c,\bm b)$ and $(c',\bm b')$ define the same affine function on $H(n,k)$ if and
only if
\[
  \bm b' = \bm b + t(1,\dots,1),
  \qquad
  c' = c - nt
  \qquad\text{for some } t \in \mathbb{R}.
\]
\end{lemma}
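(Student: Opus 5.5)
The plan is to study the linear map $\Phi\colon\mathbb R\times\mathbb R^k\to\mathbb R^{H(n,k)}$, $(c,\bm b)\mapsto\bigl(\va\mapsto c+\sum_i b_i\alpha_i\bigr)$. By definition its image is $\operatorname{Aff}(H(n,k))$. All three claims then reduce to computing $\ker\Phi$ and applying rank--nullity.

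For the spanning claim, observe that every $\va\in H(n,k)$ satisfies $\sum_i\alpha_i=n$, so the constant function $1$ equals $\frac1n\sum_i\alpha_i$ on $H(n,k)$; this uses $n\ge2>0$. Hence $c+\sum_i b_i\alpha_i=\sum_i\bigl(b_i+\tfrac cn\bigr)\alpha_i$, and the $k$ coordinate functions span $\operatorname{Aff}(H(n,k))$.

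Next we compute $\ker\Phi$. Certainly $(-nt,\,t(1,\dots,1))\in\ker\Phi$, because $-nt+t\sum_i\alpha_i=0$ on $H(n,k)$. Conversely, let $(c,\bm b)\in\ker\Phi$. Evaluating at $n\ve_i$ gives $c+nb_i=0$, so $b_i=-c/n$ for every $i$. Thus $\bm b=t(1,\dots,1)$ with $t=-c/n$, and $c=-nt$. So $\ker\Phi$ is exactly the line spanned by $(-n,1,\dots,1)$.

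This yields the third claim at once, since $(c,\bm b)$ and $(c',\bm b')$ define the same function if and only if their difference lies in $\ker\Phi$. It also gives the dimension: rank--nullity yields $\dim\operatorname{Aff}(H(n,k))=(k+1)-1=k$. Equivalently, the coordinate functions are linearly independent, since evaluating $\sum_i b_i\alpha_i=0$ at the points $n\ve_j$ forces every $b_j=0$.

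None of these steps is a real obstacle. The only point needing care is that the evaluation points $n\ve_i$ lie in $H(n,k)$ and that $n\neq0$.
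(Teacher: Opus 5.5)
Your proof is correct and follows essentially the same route as the paper. You rewrite the constant as $\frac{c}{n}\sum_i\alpha_i$ to get spanning, and you evaluate at the points $n\ve_j$ to get both the independence of the coordinate functions and the description of when two pairs agree; organizing the last two facts as a computation of $\ker\Phi$ followed by rank--nullity only repackages the paper's direct argument.
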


\begin{proof}
Since every $\va \in H(n,k)$ satisfies $\sum_i \alpha_i = n$, the constant
function with value $c$ can be written on $H(n,k)$ as
$c = (\frac{c}{n})\sum_i \alpha_i$. Thus,
\[
  c + \sum_i b_i \alpha_i
  \;=\; \sum_i \Bigl( b_i + \frac{c}{n} \Bigr) \alpha_i.
\]
Hence the coordinate functions span $\operatorname{Aff}(H(n,k))$. They are
linearly independent. Indeed, if
$\sum_i t_i \alpha_i = 0$ for all $\va \in H(n,k)$, evaluating at 
$\va = n\ve_j$ gives $n t_j = 0$, so $t_j = 0$ for every $j$. Hence the
dimension is $k$.

Suppose
$(c-c') + \sum_i (b_i - b'_i)\alpha_i = 0$ for all $\va \in H(n,k)$. Evaluating
at $\va = n \ve_j$ gives $(c-c') + n(b_j - b'_j) = 0$, that is
$b_j - b'_j = -(c-c')/n$, the same value for every $j$. Define
$t = (c-c')/n$. This gives the stated relation. Conversely, if
$\bm b' = \bm b + t(1,\dots,1)$
and $c' = c - nt$, then on $H(n,k)$
\[
  c' + \sum_i b'_i \alpha_i
  \;=\; c - nt + \sum_i b_i \alpha_i + t\sum_i \alpha_i
  \;=\; c + \sum_i b_i \alpha_i .
\]
\end{proof}
We next prove that affine functions have vanishing local discrete Hessians.
\begin{lemma}
\label{lem:affine-hessian}
Every $\ell \in \operatorname{Aff}(H(n,k))$ satisfies $d^{\vb}_{ij}(\ell) = 0$
for all $\vb \in H(n-2,k)$ and all $i,j$. Consequently, the local metric field
$(d^{\vb}(f))_{\vb}$ depends only on the class $[f]$.
\end{lemma}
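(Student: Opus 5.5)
By Lemma~\ref{lem:affine}, every $\ell\in\operatorname{Aff}(H(n,k))$ can be written as $\ell(\va)=\sum_{m=1}^k t_m\alpha_m$ for some $t_1,\dots,t_k\in\mathbb R$. The plan is therefore to prove vanishing of $d^{\vb}_{ij}$ for each coordinate function $\va\mapsto\alpha_m$ and then extend by linearity. The key observation is that $f\mapsto d^{\vb}_{ij}(f)$ is a linear map $\mathbb R^{H(n,k)}\to\mathbb R$ for each fixed $\vb,i,j$, since it is a fixed integer combination of values of $f$.

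For a linear function $\ell(\va)=\langle \bm t,\va\rangle$, evaluate directly:
\[
  \ell(\vb+2\ve_i)+\ell(\vb+2\ve_j)-2\ell(\vb+\ve_i+\ve_j)
  =\bigl(2\ell(\vb)+2t_i+2t_j\bigr)-2\bigl(\ell(\vb)+t_i+t_j\bigr)=0 .
\]
This works because the three points $\vb+2\ve_i$, $\vb+2\ve_j$, $\vb+\ve_i+\ve_j$ satisfy the affine relation
\[
(\vb+2\ve_i)+(\vb+2\ve_j)=2(\vb+\ve_i+\ve_j),
\]
with coefficients $1+1-2=0$. Hence any function of the form $c+\sum_i b_i\alpha_i$ is annihilated, and one need not even use the reduction of Lemma~\ref{lem:affine}. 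The case $i=j$ is trivially zero, consistent with the vanishing diagonal.

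For the consequence, suppose $[f]=[g]$. Then $f-g\in\operatorname{Aff}(H(n,k))$, so by linearity
\[
d^{\vb}(f)-d^{\vb}(g)=d^{\vb}(f-g)=0
\]
for every $\vb\in H(n-2,k)$. Thus the local metric field is well defined on the quotient $\mathbb R^{H(n,k)}/\operatorname{Aff}(H(n,k))$. There is no real obstacle here; the only point requiring care is to note linearity of $d^{\vb}$ explicitly, and that all evaluation points lie in $H(n,k)$ because $|\vb|_1=n-2$.
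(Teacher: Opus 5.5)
Your proposal is correct and follows essentially the same route as the paper: write $\ell$ as $c+\sum_i b_i\alpha_i$, evaluate the three terms of $d^{\vb}_{ij}(\ell)$ directly to see that they cancel, and conclude via the linearity $d^{\vb}(f+\ell)=d^{\vb}(f)+d^{\vb}(\ell)$. Your evaluation uses $\ell(\vb)$ at a point of $H(n-2,k)$. This is harmless because $\ell$ is the restriction of an affine-linear map on $\mathbb R^k$, and the paper writes the same quantity as $c+\langle \bm b,\vb\rangle$.
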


\begin{proof}
Write $\ell(\va) = c + \sum_i b_i \alpha_i$ and put
$\langle \bm b, \vb \rangle = \sum_i b_i \beta_i$. Then
\[
  \ell(\vb + 2\ve_i) = c + \langle \bm b, \vb\rangle + 2b_i,
  \quad
  \ell(\vb + 2\ve_j) = c + \langle \bm b, \vb\rangle + 2b_j,
  \quad
  \ell(\vb + \ve_i + \ve_j) = c + \langle \bm b, \vb\rangle + b_i + b_j,
\]
so that
\[
  d^{\vb}_{ij}(\ell)
  = \bigl( 2c + 2\langle \bm b, \vb\rangle + 2b_i + 2b_j \bigr)
  - 2\bigl( c + \langle \bm b, \vb\rangle + b_i + b_j \bigr)
  = 0 .
\]
Since $d^{\vb}(f+l)=d^{\vb}(f)+d^{\vb}(l)$, $f$ and $f + \ell$ have the same
local metric field.
\end{proof}
We next record the corresponding orthogonality statement: balanced vectors
annihilate affine functions.
\begin{lemma}
\label{lem:balanced-affine}
Let $\vg \in V_{n,k}$. Thus $\sum_{\va} \gamma_{\va}\,\va = \bm 0$. Then
\[
  \sum_{\va} \gamma_{\va} = 0
  \qquad\text{and}\qquad
  \langle \vg, \ell \rangle = 0
  \quad\text{for every } \ell \in \operatorname{Aff}(H(n,k)).
\]
Consequently, every balanced pairing $\langle \vg, f \rangle$ depends only on
the class $[f]$.
\end{lemma}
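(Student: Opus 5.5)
The plan is to obtain both identities directly from the defining condition $\sum_{\va}\gamma_{\va}\va=\bm0$, using the fact that every $\va\in H(n,k)$ has coordinate sum $n$.

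For the first identity, I take the scalar sum of the coordinates of the vector identity $\sum_{\va}\gamma_{\va}\va=\bm 0$. This gives
\[
0=\sum_{i=1}^k\sum_{\va}\gamma_{\va}\alpha_i=\sum_{\va}\gamma_{\va}|\va|_1=n\sum_{\va}\gamma_{\va}.
\]
Since $n\ge2$, we get $\sum_{\va}\gamma_{\va}=0$.

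For the second identity, I use Lemma~\ref{lem:affine}: every $\ell\in\operatorname{Aff}(H(n,k))$ is a linear combination $\ell(\va)=\sum_i t_i\alpha_i$ of the coordinate functions. Alternatively, one can write $\ell(\va)=c+\sum_i b_i\alpha_i$ directly. In that form,
\[
\langle\vg,\ell\rangle=c\sum_{\va}\gamma_{\va}+\sum_i b_i\sum_{\va}\gamma_{\va}\alpha_i,
\]
and both terms vanish, the first by the identity just proved and the second by balancedness. Either way, the pairing vanishes by linearity.

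For the consequence, suppose $[f]=[g]$. Then $f-g=\ell$ is affine, and bilinearity of the pairing gives $\langle\vg,f\rangle-\langle\vg,g\rangle=\langle\vg,\ell\rangle=0$. Hence the pairing depends only on the class $[f]$.

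I do not expect a real obstacle here. The only point that needs care is dividing by $n$, which is valid because $n\ge2$; this also shows why the constant term of an affine function is already controlled by balancedness.
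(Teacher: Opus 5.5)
Your proposal is correct and follows the paper's proof essentially line for line. Summing the $k$ balance equations gives $n\sum_{\va}\gamma_{\va}=0$, pairing with $\ell(\va)=c+\sum_i b_i\alpha_i$ splits into two vanishing terms, and the dependence only on $[f]$ then follows by linearity.
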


\begin{proof}
The defining relation $\sum_{\va}\gamma_{\va}\va=\bm0$ gives the $k$ scalar
equations
\[
  \sum_{\va} \gamma_{\va}\,\alpha_i = 0
  \qquad (i = 1,\dots,k).
\]
Summing them over $i$ and exchanging the two finite sums gives
\[
  0 \;=\; \sum_{i=1}^{k} \sum_{\va} \gamma_{\va}\,\alpha_i
  \;=\; \sum_{\va} \gamma_{\va} \sum_{i=1}^{k} \alpha_i
  \;=\; \sum_{\va} \gamma_{\va}\,|\va|_1
  \;=\; n \sum_{\va} \gamma_{\va} ,
\]
because $|\va|_1 = n$ for every $\va \in H(n,k)$. Since $n \ge 2$, we have
$\sum_{\va} \gamma_{\va} = 0$.

Now for $\ell(\va) = c + \sum_i b_i \alpha_i$,
\[
  \langle \vg, \ell \rangle
  \;=\; c \sum_{\va} \gamma_{\va}
  \;+\; \sum_{i=1}^{k} b_i \sum_{\va} \gamma_{\va}\,\alpha_i
  \;=\; 0 ,
\]
because both terms vanish. Linearity of the pairing gives
$\langle \vg, f + \ell\rangle = \langle \vg, f\rangle$.
\end{proof}

\begin{lemma}[The dual of the affine quotient]\label{lem:quotientdual}
Recall that the coefficient pairing on $\mathbb R^{H(n,k)}$ is the bilinear
form
\[
 \langle\vg,\nu\rangle=\sum_{\va\in H(n,k)}\gamma_{\va}\,\nu(\va),
\]
the standard inner product in the basis indexed by $H(n,k)$. It is symmetric
and nondegenerate because $\langle\vg,\mathbf1_{\va}\rangle=\gamma_{\va}$ for
the indicator function $\mathbf1_{\va}$ of a point $\va\in H(n,k)$. Hence
$\vg\mapsto\langle\vg,\cdot\rangle$ is injective and, by equality of
dimensions, identifies $\mathbb R^{H(n,k)}$ with its own dual. For a subspace
$A$, recall that its annihilator is
$A^{\perp}=\{\vg:\langle\vg,\ell\rangle=0\ \text{for every }\ell\in A\}$.
Under this pairing, the dual of the quotient by affine functions is
canonically identified with the balanced space
\[
 \left(\mathbb R^{H(n,k)}/\operatorname{Aff}(H(n,k))\right)^*
 \;\cong\;\operatorname{Aff}(H(n,k))^\perp
 \;=\;V_{n,k}.
\]
Accordingly, all polar cones and supporting hyperplanes below are taken with
respect to the pairing between
\[
  \mathbb R^{H(n,k)}/\operatorname{Aff}(H(n,k))
  \qquad\text{and}\qquad
  V_{n,k}.
\]
Throughout, we use the positive-dual convention
\[
 C^\vee=\{y:\langle y,x\rangle\ge0\text{ for every }x\in C\}.
\]
\end{lemma}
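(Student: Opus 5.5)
The plan splits the statement into two claims. First, the dual of the quotient $\mathbb R^{H(n,k)}/\operatorname{Aff}(H(n,k))$ is $\operatorname{Aff}(H(n,k))^\perp$. Second, this annihilator equals $V_{n,k}$. The first claim is standard finite-dimensional linear algebra. The pairing is nondegenerate on $\mathbb R^{H(n,k)}$, since pairing with indicator functions recovers coordinates. So every linear functional on $\mathbb R^{H(n,k)}$ has the form $\langle\vg,\cdot\rangle$ for a unique $\vg$. A linear functional on the quotient is the same thing as a functional on $\mathbb R^{H(n,k)}$ that vanishes on $\operatorname{Aff}(H(n,k))$. Hence the map $\vg\mapsto\bigl([f]\mapsto\langle\vg,f\rangle\bigr)$ is a well-defined, injective, linear map from $\operatorname{Aff}(H(n,k))^\perp$ onto $\bigl(\mathbb R^{H(n,k)}/\operatorname{Aff}(H(n,k))\bigr)^*$. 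Surjectivity holds because any functional on the quotient, composed with the projection, is represented by some $\vg$, and that $\vg$ necessarily annihilates $\operatorname{Aff}(H(n,k))$. This identification is canonical because it uses only the fixed pairing.

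For the equality $\operatorname{Aff}(H(n,k))^\perp=V_{n,k}$, I would prove two inclusions. The inclusion $V_{n,k}\subseteq\operatorname{Aff}(H(n,k))^\perp$ is exactly Lemma~\ref{lem:balanced-affine}. For the reverse inclusion, I would use Lemma~\ref{lem:affine}, which says the coordinate functions $\va\mapsto\alpha_i$ lie in $\operatorname{Aff}(H(n,k))$. If $\vg$ annihilates every affine function, then $\sum_{\va}\gamma_{\va}\alpha_i=0$ for each $i$, which says precisely that $\vg$ is balanced. As a check, Lemma~\ref{lem:affine} also gives a dimension count: $\dim\operatorname{Aff}(H(n,k))=k$, so $\dim\operatorname{Aff}^\perp=|H(n,k)|-k$, which matches the dimension of the quotient's dual.

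The final assertions, about polar cones and the positive-dual convention, are conventions rather than claims, so nothing needs proving there. I would only remark that $\langle\vg,[f]\rangle$ is well defined for $\vg\in V_{n,k}$, again by Lemma~\ref{lem:balanced-affine}. There is no real obstacle. The only point requiring care is surjectivity of the identification, which follows from the nondegeneracy of the pairing on the full space.
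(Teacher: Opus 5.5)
Your proposal is correct and follows the paper's proof essentially step for step. Both arguments identify the dual of the quotient with the annihilator via the projection, using nondegeneracy of the pairing; both then prove $V_{n,k}\subseteq\operatorname{Aff}(H(n,k))^\perp$ by Lemma~\ref{lem:balanced-affine} and the reverse inclusion by pairing with the coordinate functions $\va\mapsto\alpha_i$ (your dimension count is a harmless extra check the paper omits).
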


\begin{proof}
We first recall the standard identification of the dual of a quotient with an
annihilator. Let \(E\) be a finite-dimensional vector space, let
\(A\subseteq E\) be a subspace, and let \(\pi\colon E\to E/A\) be the quotient
map. Composition with \(\pi\) gives the
\emph{pullback}
\[
 \pi^{*}\colon (E/A)^{*}\longrightarrow E^{*},
 \qquad
 \pi^{*}(\lambda)=\lambda\circ\pi ,
\]
which is linear. It is injective. Indeed, if \(\lambda\circ\pi=0\), then \(\lambda\)
vanishes on the image of \(\pi\), which is all of \(E/A\), so \(\lambda=0\). Its
image is contained in \(A^{\perp}\), since \(\pi\) kills \(A\) and therefore
\(\pi^{*}(\lambda)(a)=\lambda(\pi(a))=0\) for \(a\in A\). Conversely, if
\(\mu\in E^{*}\) vanishes on \(A\), then \(\mu\) is constant on the cosets of
\(A\), so \(\lambda(x+A)=\mu(x)\) is a well-defined linear functional on
\(E/A\) with \(\pi^{*}(\lambda)=\mu\). Hence \(\pi^{*}\) is an isomorphism
\((E/A)^{*}\cong A^{\perp}\).

Apply this with \(E=\mathbb R^{H(n,k)}\) and
\(A=\operatorname{Aff}(H(n,k))\) and use the coefficient pairing above to
identify \(E^{*}\) with \(E\). Under that identification \(A^{\perp}\) is the
annihilator as defined above.

It remains to identify the annihilator. If \(\vg\in V_{n,k}\), then
Lemma~\ref{lem:balanced-affine} gives \(\langle\vg,\ell\rangle=0\) for every affine
\(\ell\), so \(V_{n,k}\subseteq A^\perp\).  Conversely, if
\(\vg\in A^\perp\), pairing it with each coordinate function
\(\va\mapsto\alpha_i\) gives
\[
 0=\sum_{\va\in H(n,k)}\gamma_{\va}\alpha_i
 \qquad(i=1,\ldots,k).
\]
These are precisely the coordinates of
\(\sum_{\va}\gamma_{\va}\va=\bm0\). Hence \(\vg\in V_{n,k}\).
\end{proof}

\subsection{Semialgebraic sets and Puiseux series}

\begin{definition}[{\rm \defn{Semialgebraic sets and maps}}]\label{def:semialg}
A subset of $\mathbb{R}^{N}$ is \defn{semialgebraic} if it is a finite union of
sets defined by finitely many polynomial equations and strict inequalities with
real coefficients. A map between semialgebraic sets is \defn{semialgebraic}
if its graph is semialgebraic. The class of semialgebraic sets is closed under finite unions,
finite intersections, and complements. By the Tarski--Seidenberg theorem, it
is also closed under projection. A semialgebraic map is \defn{continuous
semialgebraic} if it is also continuous in the usual sense.
\end{definition}

\begin{definition}[{\rm \defn{Nash function}}, {\cite[Definition~2.9.3]{BCR98}}]\label{def:nash}
Let $U$ be an open semialgebraic subset of $\mathbb R^n$. A function
$f\colon U\to\mathbb R$ is a \defn{Nash function} if it is semialgebraic and
of class $C^\infty$.
\end{definition}

\begin{proposition}[{\cite[Proposition~8.1.8]{BCR98}}]
\label{prop:bcr-nash}
Let $U$ be an open semialgebraic subset of $\mathbb R^n$. A function
$f\colon U\to\mathbb R$ is Nash if and only if it is analytic algebraic on $U$.
\end{proposition}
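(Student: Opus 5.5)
This proposition is quoted from \cite[Proposition~8.1.8]{BCR98}. If I had to reprove it, I would treat the two implications separately. Here ``analytic algebraic'' means that $f$ is real analytic on $U$ and there is a nonzero polynomial $P(\vx,y)$ with $P(\vx,f(\vx))=0$ for all $\vx\in U$.

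\emph{Analytic algebraic $\Rightarrow$ Nash.} Smoothness is immediate, so only semialgebraicity needs proof. Let $Z=\{P=0\}\cap(U\times\mathbb R)$; this is a semialgebraic set containing the graph $\Gamma_f$. I would take a semialgebraic stratification of $Z$ into finitely many connected Nash cells, compatible with the projection to $U$. Over each connected cell $D\subseteq U$ of a cylindrical decomposition adapted to $Z$, the fibre of $Z$ consists of finitely many Nash functions $\xi_1<\dots<\xi_r$, or is all of $\mathbb R$. In the second case $P(\vx,\cdot)\equiv0$, which happens only on a proper algebraic subset; I would handle it by dividing out the content of $P$ in $y$. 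On each open cell $D$ the continuous function $f$ must coincide with one of the $\xi_s$, by connectedness. The open cells are dense in $U$, so $\Gamma_f$ is the closure in $U\times\mathbb R$ of finitely many semialgebraic graphs. Hence $\Gamma_f$ is semialgebraic, being the closure of a semialgebraic set intersected with the semialgebraic set $U\times\mathbb R$.

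\emph{Nash $\Rightarrow$ algebraic equation.} The graph $\Gamma_f$ is a semialgebraic set of dimension $n$ in $\mathbb R^{n+1}$. Its Zariski closure therefore has dimension $n$, so some nonzero polynomial $P$ vanishes on $\Gamma_f$. I would choose $P$ irreducible and of minimal degree in $y$.

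\emph{Nash $\Rightarrow$ analytic.} This is the main obstacle. On the open set where $\partial_yP(\vx,f(\vx))\ne0$, the analytic implicit function theorem shows that $f$ is analytic. By minimality of $P$ this set is dense, since otherwise $\partial_yP$ would vanish on an open piece of $\Gamma_f$ and hence on its Zariski closure. The difficulty is the remaining points, where that derivative vanishes. I would argue locally at such a point $\vx_0$ in three steps.
\begin{enumerate}
\item The Taylor series $T$ of $f$ at $\vx_0$ is a formal power series solution of $P(\vx,T)=0$. By Artin's theorem that formal algebraic power series are convergent (alternatively, via the Henselian property of the ring of algebraic power series), $T$ converges and defines an analytic function $g$ near $\vx_0$.
\item The function $g$ is again semialgebraic, by the first implication applied to $g$. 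Hence $h=f-g$ is a semialgebraic $C^\infty$ function that is flat at $\vx_0$.
\item Restrict $h$ to any line through $\vx_0$. A one-variable semialgebraic function admits a Puiseux expansion $\sum_{q\ge q_0} c_q t^{q/N}$ near $0$, and flatness forces all $c_q=0$. So $h$ vanishes on every line through $\vx_0$, hence in a neighbourhood of $\vx_0$, and therefore $f=g$ is analytic there.
\end{enumerate}

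The delicate input is the convergence of the formal solution $T$ in several variables. If I wanted to avoid invoking Artin approximation, I would first try to reduce to the one-variable Eisenstein-type convergence theorem by a generic linear change of coordinates and Weierstrass preparation in $y$.
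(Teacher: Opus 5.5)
The paper gives no proof of this statement. It is quoted from \cite[Proposition~8.1.8]{BCR98}, and the paper uses only the consequence that a Nash function is real analytic, in Lemma~\ref{lem:semialg-puiseux}. So your sketch has to stand on its own.

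Most of it is sound:
\begin{itemize}
\item the first implication (cylindrical decomposition, $f$ agreeing with one root function on each open cell, and $\Gamma_f$ recovered as the closure in $U\times\mathbb R$ of its part over the dense union of open cells);
\item the existence of a nonzero $P$ with $P(x,f(x))=0$;
\item Steps~1--2. In Step~1, Artin approximation does give convergence: the nonzero polynomial $P(x,Y)$ has only finitely many roots in the domain $\mathbb R[[x-x_0]]$, so a convergent root congruent to $T$ modulo a high power of the maximal ideal must equal $T$.
\end{itemize}

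The genuine gap is the last sentence of Step~3. Your one-variable Puiseux argument shows only that $h$ vanishes on a segment $\{x_0+sv:|s|<\delta_v\}$ whose length depends on the direction $v$. You give no lower bound for $\delta_v$. Without one, vanishing near $x_0$ on every line does not imply vanishing on a neighbourhood of $x_0$. For example, the open semialgebraic set $\{(x_1,x_2):x_1>0,\ x_1^2<x_2<2x_1^2\}$ has the origin in its closure. Yet the line $x_2=mx_1$ meets it only when $m>0$ and $m/2<x_1<m$, and the vertical line misses it. Nothing you have written rules out $\{h\neq0\}$ being such a horn.

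Either of two short repairs closes the gap.
\begin{itemize}
\item[(a)] Propagate the vanishing along whole chords. On a ball $B$ centred at $x_0$ where $h$ is defined, put $\varphi(s)=h(x_0+sv)$. Let $s_1$ be the supremum of those $s$ with $\varphi\equiv0$ on $[0,s]$. If $s_1$ were less than the radius, $\varphi$ would be flat at $s_1$ by continuity of its derivatives. Your Puiseux-plus-flatness argument applied at $s_1$ would then extend the zero set past $s_1$, a contradiction. Hence $h$ vanishes on every chord through $x_0$, and therefore on $B$.
\item[(b)] Use curve selection (Lemma~\ref{lem:curve-selection}). If $x_0\in\overline{\{h\neq0\}}$, take a continuous semialgebraic $\gamma$ with $\gamma(0)=x_0$ and $h(\gamma(t))\neq0$ for $t>0$. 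The growth dichotomy (Lemma~\ref{lem:growth-dichotomy}) gives $|\gamma(t)-x_0|\le Ct^{\beta}$ with $\beta>0$, and $h(\gamma(t))=ct^{\alpha}(1+o(1))$ with $c\neq0$. Flatness gives $|h(x)|\le C_N|x-x_0|^N$, so $h(\gamma(t))=O(t^{\beta N})$ for every $N$, a contradiction.
\end{itemize}

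In both repairs, take the one-variable expansions from the classical Newton--Puiseux theorem or the growth dichotomy. Do not take them from Lemma~\ref{lem:semialg-puiseux}: the paper derives that lemma from the very statement you are proving.

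Finally, drop the detour through an irreducible $P$ and the density of $\{\partial_yP(x,f(x))\neq0\}$. Such a $P$ need not exist: take $U=\mathbb R\setminus\{0\}$ with $f=0$ for $x<0$ and $f=1$ for $x>0$. Steps~1--3 never use it anyway, since they apply at every point $x_0$.
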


We use Proposition~\ref{prop:bcr-nash} only to conclude that a Nash function
is real analytic. Together with the following reparametrization theorem, this
yields a convergent series representation.

\begin{proposition}[{\cite[Proposition~8.1.12]{BCR98}}]
\label{prop:bcr-parametrization}
Let $g\colon[0,\delta)\to\mathbb R$ be a continuous semialgebraic function. There
exist a positive integer $p$, an element $\epsilon\in\mathbb R$ with
$0<\epsilon\le\delta^{1/p}$, and a Nash function
$f\colon(-\epsilon,\epsilon)\to\mathbb R$ such that
$f(t)=g(t^p)$ for every $t\in[0,\epsilon)$.
\end{proposition}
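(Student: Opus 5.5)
The plan is to reduce the statement to the Newton--Puiseux theorem by way of algebraicity of semialgebraic functions. First I would note that the graph $\Gamma=\{(t,g(t)):0\le t<\delta\}\subseteq\mathbb R^2$ is a semialgebraic set of dimension at most one. Hence its Zariski closure is a proper algebraic subset of $\mathbb R^2$, so there is a nonzero polynomial $Q(t,y)\in\mathbb R[t,y]$ with $Q(t,g(t))=0$ for all $t\in[0,\delta)$. Dividing out powers of $t$ (legitimate on $t>0$) and passing to an irreducible factor that vanishes on infinitely many points of $\Gamma$, I may assume $Q$ has positive degree in $y$ and that $Q(t,g(t))=0$ on some interval $(0,\delta_1)$. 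The interval is obtained by the finiteness of semialgebraic subsets of $\mathbb R$: each factor's zero set on $\Gamma$ is a finite union of points and intervals, and the factors together cover $\Gamma$.

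Next I would apply Newton--Puiseux to $Q$ viewed as a polynomial in $y$ over $\mathbb R(t)$. Its roots lie in the field of convergent Puiseux series, so there are $p\ge1$ and finitely many series $\phi_1,\dots,\phi_r$ in $t^{1/p}$, convergent for small $t>0$, such that for small $t>0$ the real roots of $Q(t,\cdot)$ are exactly the real-valued $\phi_m(t)$. Distinct branches have nonzero difference, and a nonzero Puiseux series has isolated zeros. Therefore on some $(0,\delta_2)$ the real branches are pairwise distinct continuous functions. Since $g$ is continuous and takes values among these branches, connectedness forces $g=\phi_m$ on $(0,\delta_2)$ for a single $m$.

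Because $g$ is continuous at $0$, it is bounded near $0$, so $\phi_m$ has no negative exponents. Write $\phi_m(t)=\sum_{i\ge0}a_i t^{i/p}$ and set $f(s)=\sum_{i\ge0}a_i s^i$. This power series converges on some $(-\epsilon,\epsilon)$ with $0<\epsilon\le\min(\delta_2,\delta)^{1/p}$. Then $f(s)=g(s^p)$ for $0<s<\epsilon$, and at $s=0$ we get $f(0)=a_0=\lim_{t\to0^+}g(t)=g(0)$ by continuity. Finally, $f$ is real analytic, and $Q(s^p,f(s))$ is analytic and vanishes on $(0,\epsilon)$, hence vanishes identically. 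So $f$ is analytic algebraic, and by Proposition~\ref{prop:bcr-nash} it is Nash.

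The main obstacle is the middle step: invoking Newton--Puiseux with \emph{convergence} of the branches and their real-valuedness, and isolating a single branch that agrees with $g$ on a full interval $(0,\delta_2)$. I would handle the branch selection exactly as above, using the finiteness of the zero sets of the pairwise differences $\phi_m-\phi_{m'}$ together with the continuity of $g$.
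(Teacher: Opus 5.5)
The paper gives no proof of this statement. It quotes it from \cite[Proposition~8.1.12]{BCR98} and uses it only in the existence part of Lemma~\ref{lem:semialg-puiseux}.

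Your argument is correct, and it is the classical route to this fact. It runs as follows: an algebraic relation $Q(t,g(t))=0$ on some $(0,\delta_1)$; Newton--Puiseux with convergence over $\mathbb R[t]$; selection of a single branch by continuity and connectedness; exclusion of negative exponents by boundedness at $0$; and finally the substitution $t=s^p$ together with Proposition~\ref{prop:bcr-nash}, used in the direction ``analytic algebraic $\Rightarrow$ Nash''.

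One small tidy-up: in the branch selection, it is cleanest to run the connectedness argument over all complex branches at once. The chosen $\phi_m$ is then real-valued on an interval. Since $\overline{\phi_m}$ is also a branch, $\phi_m-\overline{\phi_m}$ is a difference of branches that vanishes on an interval, so it is zero and $\phi_m$ has real coefficients.

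It is worth seeing how your proof sits relative to the way the paper uses the statement. The paper goes from $g$ to a Nash $f$ via the citation. It then uses Proposition~\ref{prop:bcr-nash} in the opposite direction, Nash $\Rightarrow$ analytic, to recover a convergent Puiseux expansion of the germ of $g$. Your second step produces that expansion directly, before any Nash function appears. So in the paper's context your argument would let one bypass both cited BCR results. The citation, in exchange, is shorter and avoids invoking Newton--Puiseux with convergence.
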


\begin{definition}[{\rm \defn{Real convergent Puiseux series and germs}}]\label{def:puiseux}
A \defn{real Puiseux series} is a formal series
\[
 \varphi(t)=\sum_{l\ge l_0}c_l\,t^{\,l/m},
 \qquad
 c_l\in\mathbb R,\quad l_0\in\mathbb Z,\quad m\in\mathbb Z_{>0},
\]
that is, a Laurent series in a rational root of $t$. We work over the field of
real convergent Puiseux series
\[
 K=\bigcup_{m\ge1}\mathbb R((t^{1/m}))_{\mathrm{conv}},
\]
where the subscript means that the series converges for all sufficiently small
$t>0$. 

The field $K$ is real closed by \cite[Section~1.5]{Spe05}.
We order $K$ by declaring a nonzero series positive if its first nonzero
coefficient is positive. For example, $t$ is positive and smaller than every
positive real number.  If the first nonzero term of $\varphi$ is
$c_{l_0}t^{l_0/m}$,
write
\[
 \operatorname{val}\varphi=l_0/m\in\mathbb Q.
\]
Two functions, each defined on an interval $(0,\varepsilon)$, have the same
\defn{germ at $0^+$} if they agree on some interval $(0,\delta)$.
\end{definition}

\begin{lemma}\label{lem:semialg-puiseux}
Let $\mathcal G_{\mathrm{sa}}$ be the $\mathbb R$-algebra of germs at $0^+$
of continuous semialgebraic functions
$\varphi:[0,\varepsilon)\to\mathbb R$, where $\varepsilon>0$ may depend
on $\varphi$. For every $\xi=[\varphi]\in\mathcal G_{\mathrm{sa}}$, there
is a unique element $\widehat\xi\in K$ such that
\[
 \widehat\xi(t)=\varphi(t)
 \qquad\text{for all sufficiently small }t>0.
\]
The resulting map
\[
 \iota:\mathcal G_{\mathrm{sa}}\longrightarrow K,
 \qquad
 \xi\longmapsto\widehat\xi,
\]
is an injective $\mathbb R$-algebra homomorphism. Moreover, every nonzero
$\widehat\xi$ has nonnegative valuation, and if its leading term is $ct^q$,
then
\[
 q=\operatorname{val}(\widehat\xi)\in\mathbb Q_{\ge0},
 \qquad
 \varphi(t)=ct^q(1+o(1))
 \quad(t\to0^+).
\]
\end{lemma}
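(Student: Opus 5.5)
To prove Lemma~\ref{lem:semialg-puiseux}, I would first construct $\widehat\xi$ for a single representative $\varphi\colon[0,\varepsilon)\to\mathbb R$, then check that the construction is independent of choices and is an algebra homomorphism, and finally read off the valuation statement.

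For the construction I apply Proposition~\ref{prop:bcr-parametrization} with $g=\varphi$ and $\delta=\varepsilon$. This gives an integer $p\ge1$, a number $\epsilon>0$, and a Nash function $f$ on $(-\epsilon,\epsilon)$ with $f(s)=\varphi(s^p)$ for $s\in[0,\epsilon)$. By Proposition~\ref{prop:bcr-nash}, $f$ is real analytic at $0$. Hence $f(s)=\sum_{l\ge0}c_l s^l$ with positive radius of convergence. Substituting $s=t^{1/p}$ defines $\widehat\xi=\sum_{l\ge0}c_l t^{l/p}\in\mathbb R[[t^{1/p}]]_{\mathrm{conv}}\subseteq K$. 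This series converges to $\varphi(t)$ for all small $t>0$. Because only nonnegative powers appear, every nonzero $\widehat\xi$ has nonnegative valuation.

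Uniqueness comes from showing that an element $\psi\in K$ whose sum vanishes on some interval $(0,\delta)$ must be zero. Choose a common denominator $m$ and write $\psi(t)=t^{l_0/m}h(t^{1/m})$, where $h$ is a convergent power series. If $\psi\ne0$, we may take $h(0)\ne0$. Then $h$ does not vanish near $0$, so $\psi(t)\ne0$ for small $t>0$, a contradiction. This also shows that $\widehat\xi$ depends only on the germ: two representatives that agree near $0^+$ give series whose difference sums to zero near $0^+$. The same argument applied to series built from different exponents $p$ shows that the choice of $p$ does not matter.

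For the homomorphism property, note that sums and products of convergent Puiseux series converge to the sums and products of their values. So $\widehat{\xi_1}+\widehat{\xi_2}$ and $\widehat{\xi_1}\widehat{\xi_2}$ are elements of $K$ that agree with $\varphi_1+\varphi_2$ and $\varphi_1\varphi_2$ near $0^+$. Uniqueness then forces them to equal $\widehat{\xi_1+\xi_2}$ and $\widehat{\xi_1\xi_2}$. Constants map to constants, so $\iota$ is an $\mathbb R$-algebra homomorphism. Injectivity follows directly: if $\widehat\xi=0$, then $\varphi$ vanishes near $0^+$, so its germ is zero.

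For the asymptotic statement, write the nonzero $\widehat\xi$ as $ct^q\bigl(1+u(t^{1/p})\bigr)$ with $c\ne0$, where $u$ is a convergent power series with $u(0)=0$. Since $u(t^{1/p})\to0$ as $t\to0^+$, we get $\varphi(t)=ct^q(1+o(1))$ and $q=\operatorname{val}(\widehat\xi)\in\mathbb Q_{\ge0}$. The only substantive input is the existence step, which rests entirely on the two cited results from \cite{BCR98}. The main point needing care is the identity principle for Puiseux series, because it is what makes the map well defined and injective.
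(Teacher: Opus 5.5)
Your proposal is correct and follows the paper's proof essentially step for step. Existence comes from the Nash reparametrization of Proposition~\ref{prop:bcr-parametrization} together with analyticity from Proposition~\ref{prop:bcr-nash}; uniqueness comes from a nonzero convergent Puiseux series not vanishing near $0^+$; and the homomorphism, injectivity, and asymptotic claims are read off from uniqueness and the leading term, where your factorization $t^{l_0/m}h(t^{1/m})$ with $h(0)\ne0$ is just a rephrasing of the paper's $(u-v)(t)=ct^q(1+o(1))$.
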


\begin{proof}
Fix $\xi=[\varphi]\in\mathcal G_{\mathrm{sa}}$.
Proposition~\ref{prop:bcr-parametrization}, applied with $g=\varphi$ and
$\delta=\varepsilon$, gives $p\in\mathbb Z_{>0}$,
$0<\eta\le\varepsilon^{1/p}$, and a Nash function
$h:(-\eta,\eta)\to\mathbb R$ such that
\[
 h(s)=\varphi(s^p)\qquad(0\le s<\eta).
\]
By Proposition~\ref{prop:bcr-nash}, the function $h$ is real analytic.
After decreasing $\eta$ if necessary, it therefore has the convergent Taylor
expansion
\[
 h(s)=\sum_{j\ge0}a_js^j \qquad (|s|<\eta).
\]
Consequently,
\[
 \varphi(t)=h(t^{1/p})=\sum_{j\ge0}a_jt^{j/p}
 \qquad(0\le t<\eta^p).
\]
The series on the right belongs to $K$, has nonnegative valuation when it is
nonzero, and represents $\xi$. This proves existence.

We now prove uniqueness. Suppose that $u,v\in K$ represent the same germ. Choose
$M\in\mathbb Z_{>0}$ such that
$u,v\in\mathbb R((t^{1/M}))_{\mathrm{conv}}$. If $u-v\ne0$ and its first
nonzero term is $ct^q$, then convergence gives
\[
 (u-v)(t)=ct^q(1+o(1))
 \qquad(t\to0^+).
\]
Thus $(u-v)(t)\ne0$ for all sufficiently small $t>0$, contradicting that
$u$ and $v$ represent the same germ. Hence $u=v$.

If $\xi=[\varphi]$ and $\zeta=[\psi]$, then
$\widehat\xi+\widehat\zeta$ and
$\widehat\xi\,\widehat\zeta$ represent the germs of
$\varphi+\psi$ and $\varphi\psi$, respectively. Uniqueness shows that
$\iota$ preserves addition and multiplication, and it plainly preserves
real constants. It is injective because $\widehat\xi=0$ precisely when
$\varphi$ vanishes on some interval $(0,\delta)$, that is, precisely when
$\xi$ is the zero germ. Finally, factoring the leading term of a nonzero
$\widehat\xi$ gives the asymptotic formula.
\end{proof}

\begin{definition}[{\rm \defn{Lorentzian polynomials over a real closed field}}]
\label{def:lorK}
Let $F$ be a real closed field. A subset $J\subseteq H(m,k)$ is
\defn{M-convex} if, for all $\va,\va'\in J$ and every $i$ with
$\alpha_i>\alpha'_i$, there is a $j$ with $\alpha_j<\alpha'_j$ such that
\[
 \va-\ve_i+\ve_j\in J
 \qquad\text{and}\qquad
 \va'+\ve_i-\ve_j\in J.
\]
We follow the convention that the empty set is M-convex. This is the symmetric
exchange definition used in \cite{BH20}, which refers to \cite{Mur03}.

For $m\ge0$, let $\mathrm{M}^{m}_{k}(F)$ be the set of homogeneous degree-$m$
polynomials in $F[x_1,\dots,x_k]$ with nonnegative coefficients and M-convex
support. Following \cite[Definition~3.18]{BH20} for the real Puiseux field and
\cite[Definition~8.1]{HLSV24} for arbitrary ordered fields, set
\[
 \mathrm{L}^{0}_{k}(F)=\mathrm{M}^{0}_{k}(F),
 \qquad
 \mathrm{L}^{1}_{k}(F)=\mathrm{M}^{1}_{k}(F),
\]
\[
 \mathrm{L}^{2}_{k}(F)
 =\left\{f\in\mathrm{M}^{2}_{k}(F):
   \operatorname{Hess}(f)\text{ has at most one eigenvalue in }F_{>0}
  \right\},
\]
and, for $m\ge3$,
\[
\mathrm{L}^{m}_{k}(F)
=\left\{f\in\mathrm{M}^{m}_{k}(F):
  \partial^{\vb}f\in\mathrm{L}^{2}_{k}(F)
  \text{ for every }\vb\in H(m-2,k)
 \right\}.
\]
With the empty-support convention above, the zero polynomial belongs to
$\mathrm{L}^{m}_{k}(F)$ for every $m$.
The elements of $\mathrm{L}^{m}_{k}(F)$ are the \defn{Lorentzian polynomials
over $F$}. When $F=\mathbb R$, this algebraic definition agrees with the
coefficientwise-closure definition in Definition~\ref{def:strictly-lorentzian}
by \cite[Theorem~2.25]{BH20}.
\end{definition}

From now on, $K$ denotes the real closed field of
Definition~\ref{def:puiseux}. For a finite index set $I$, let $K^{I}$ denote
the set of tuples $(y_i)_{i\in I}$ with entries in $K$. We write $K^{N}$ when
$I=\{1,\dots,N\}$. By the germ representation in
Lemma~\ref{lem:semialg-puiseux}, applied coordinatewise, every continuous
semialgebraic map
\[
 \varphi\colon[0,\varepsilon)\longrightarrow\mathbb R^{I}
\]
determines a point of $K^{I}$.

\begin{definition}[{\rm \defn{First-order formulas}}]\label{def:firstorder}
The \defn{language of ordered fields} has the symbols $+$, $\cdot$, $-$, $0$,
$1$, $=$, and $<$. An \defn{atomic formula} is an expression $p = q$ or
$p < q$, where $p$ and $q$ are polynomials in the variables with integer
coefficients. A \defn{first-order formula} is built from atomic formulas using
the connectives $\wedge$, $\vee$, and $\neg$ and the quantifiers $\forall x$
and $\exists x$, which range over elements of the field. Allowing real
coefficients yields formulas \defn{with parameters in $\mathbb{R}$}.
Quantifier-free formulas of this kind define precisely the semialgebraic sets
of Definition~\ref{def:semialg}. By the Tarski--Seidenberg theorem, quantifiers
can always be eliminated. Thus definable and semialgebraic sets coincide.
\end{definition}

\begin{remark}\label{rem:readings}
For a field $F$, we use a tuple in $F^{H(n,k)}$ in three equivalent ways: as
a \defn{function} $\nu$ on $H(n,k)$, as in Definition~\ref{def:Mconvex}, or
as an \defn{exponent vector} $\vg$, as in Definition~\ref{def:ratio}. Under the
linear isomorphism
\[
  F^{H(n,k)} \;\xrightarrow{\ \sim\ }\; F[x_1,\dots,x_k]_n,
  \qquad
  (P_{\va})_{\va} \;\longmapsto\; \sum_{\va \in H(n,k)} P_{\va}\,\vx^{\va}
\]
it may also be viewed as the \defn{coefficient vector} of a homogeneous
polynomial of degree $n$. This map is an isomorphism because the monomials
$\vx^{\va}$ with $\va \in H(n,k)$ form a basis of $F[x_1,\dots,x_k]_n$.
\end{remark}

The following specialization principle is stated in \cite[p.~860]{BH20}.

\begin{proposition}\label{prop:eventual-evaluation}
Let $\Phi(x_1,\ldots,x_m)$ be a first-order formula in the language of
ordered fields, and let $s_1(t),\ldots,s_m(t)\in K$. Then
\[
 \Phi(s_1(t),\ldots,s_m(t))\text{ holds in }K
\]
if and only if
\[
 \Phi(s_1(q),\ldots,s_m(q))\text{ holds in }\mathbb R
\]
for all sufficiently small positive real numbers $q$.
\end{proposition}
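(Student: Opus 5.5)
The proof should follow the standard transfer principle for real closed fields. Both directions will be proved by the same method. First, I reduce a formula with parameters in $K$ to a formula with real parameters. Then I evaluate at a real $t$.

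\emph{Step 1: the defining property of $K$.} Each $s_i(t)\in K$ is a convergent Puiseux series. So there are $\varepsilon>0$ and real-valued functions $q\mapsto s_i(q)$ on $(0,\varepsilon)$, given by the convergent sums. They are semialgebraic, hence continuous semialgebraic on $(0,\varepsilon)$. To see this, reparametrize $q=u^M$ and use that $K$ is real closed and algebraic over $\mathbb R(t)$; alternatively, restrict to $s_i$ that arise as germs, as in Lemma~\ref{lem:semialg-puiseux}. I will use the known fact that every element of $K$ is algebraic over $\mathbb R(t)$, since $K$ is the real closure of $\mathbb R(t)$ with the ordering $0<t<r$ for every real $r>0$. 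Each $s_i$ then satisfies a polynomial equation $F_i(t,s_i)=0$ with real coefficients. Together with a finite list of sign conditions separating it from the other roots, this characterizes $s_i$ by a first-order formula $\Psi_i(t,y)$ with real parameters.

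\emph{Step 2: reduction to one variable.} Consider the set
\[
S=\{q\in\mathbb R_{>0}: \Phi(s_1(q),\dots,s_m(q))\}.
\]
It is defined by the formula
\[
\exists y_1\dots y_m\,\bigl(\textstyle\bigwedge_i\Psi_i(q,y_i)\wedge\Phi(y)\bigr),
\]
so $S$ is semialgebraic in $\mathbb R$ by Tarski--Seidenberg. Hence $S$ is a finite union of points and intervals, and therefore either $S$ or its complement contains an interval $(0,\delta)$. By quantifier elimination, this formula is equivalent to a quantifier-free formula $\Theta(q)$, which is a Boolean combination of sign conditions $p(q)>0$ and $p(q)=0$ with $p\in\mathbb R[q]$. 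The same elimination is valid in every real closed field, because the theory of real closed fields is complete. So in $K$, $\Phi(s(t))$ is equivalent to $\Theta(t)$, using that $\Psi_i(t,s_i(t))$ holds in $K$. For a polynomial $p$, the sign of $p(t)$ in $K$ is the sign of its lowest-order coefficient. This agrees with the sign of $p(q)$ for all sufficiently small $q>0$. Hence $\Theta(t)$ holds in $K$ if and only if $\Theta(q)$ holds for all small $q$, and this gives both implications.

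\emph{Main obstacle.} I expect the main obstacle to be Step 1: checking that the real-function specialization $s_i(q)$ of the convergent series is the same root picked out by $\Psi_i(q,\cdot)$ for small $q$. I would handle it with the Puiseux theorem for the algebraic curve $F_i=0$. Near $q=0^+$ its real branches are exactly the convergent Puiseux series roots, and they are eventually ordered consistently in $K$ and in $\mathbb R$. Therefore the sign conditions defining the chosen root transfer as in Step 2. Another option is simply to cite \cite[p.~860]{BH20}, together with the Tarski transfer principle.
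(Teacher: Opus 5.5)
There is a genuine gap in your Step~1. You claim that $K$ is the real closure of $\mathbb R(t)$, so that every element of $K$ is algebraic over $\mathbb R(t)$. That is false. The field in Definition~\ref{def:puiseux} consists of \emph{all} convergent Puiseux series. It contains, for example, $e^t=\sum_{l\ge0}t^l/l!$, which is transcendental over $\mathbb R(t)$. The real closure of $\mathbb R(t)$ is only the proper subfield of algebraic Puiseux series.

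For such an $s_i$ there is no polynomial $F_i$ and no defining formula $\Psi_i(t,y)$. The function $q\mapsto s_i(q)$ is not semialgebraic, and the set $S$ need not be semialgebraic. So your reduction to a one-variable formula $\Theta(q)$ breaks down. As written, your argument covers only algebraic Puiseux series, not the proposition as stated. That restricted case is all Lemma~\ref{lem:transfer} actually uses, since its points come from semialgebraic germs.

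Even in the algebraic case there is a circularity. Checking that $\Psi_i(t,s_i)$ holds in $K$, and that $\Psi_i(q,s_i(q))$ holds for all small $q$, is itself an instance of the statement. It needs the sign-of-a-series argument for polynomial expressions in $s_i$, but you apply that argument only to polynomials in $t$ alone.

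The missing idea is to eliminate quantifiers from $\Phi$ itself rather than from a formula in $q$. Since $\Phi$ has no parameters, Tarski--Seidenberg gives a quantifier-free $\Theta(x_1,\dots,x_m)$ equivalent to $\Phi$ in every real closed field, hence in both $K$ and $\mathbb R$. Here $\Theta$ is a Boolean combination of conditions $p(x)>0$ and $p(x)=0$ with $p\in\mathbb Z[x_1,\dots,x_m]$. The argument then runs as follows.
\begin{enumerate}
\item Choose $M$ and $\varepsilon>0$ so that all $s_i$ lie in $\mathbb R((t^{1/M}))_{\mathrm{conv}}$ and converge on $(0,\varepsilon)$.
\item Evaluation at $q\in(0,\varepsilon)$ is a ring homomorphism on this ring, so $p(s)(q)=p(s(q))$.
\item If $p(s)=0$, then $p(s(q))=0$. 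If $p(s)$ has leading term $ct^r$, then $p(s(q))=cq^r(1+o(1))$, which for all small $q$ has the sign of $c$, that is, the sign of $p(s)$ in $K$.
\item There are only finitely many atomic subformulas, so one threshold works for all of them.
\end{enumerate}
Hence $\Theta(s(t))$ holds in $K$ exactly when $\Theta(s(q))$ holds for all sufficiently small $q$. Both implications follow at once, for arbitrary $s_i\in K$. The paper gives no proof of its own and simply cites \cite[p.~860]{BH20}.
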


To apply Proposition~\ref{prop:eventual-evaluation} to Lorentzianity, we use
the following result of Br\'and\'en and Huh \cite{BH20}.

\begin{proposition}\label{prop:lor-firstorder}
For fixed integers $m\ge0$ and $k\ge1$, there is a first-order formula
\[
 \Lambda_{m,k}\bigl((c_{\va})_{\va\in H(m,k)}\bigr)
\]
in the language of ordered fields, without parameters, such that, for every
real closed field $F$ and every
$(c_{\va})_{\va\in H(m,k)}\in F^{H(m,k)}$, the formula
$\Lambda_{m,k}((c_{\va})_{\va\in H(m,k)})$ holds in $F$ if and only if
\[
 f=\sum_{\va\in H(m,k)}c_{\va}\vx^{\va}
\]
is Lorentzian over $F$ in the sense of Definition~\ref{def:lorK}.
\end{proposition}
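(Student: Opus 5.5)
The statement to prove is Proposition~\ref{prop:lor-firstorder}: membership in $\mathrm{L}^m_k(F)$ is expressed by one parameter-free first-order formula that works for every real closed field $F$. The plan is to follow Definition~\ref{def:lorK} and write each clause of the definition as a formula in the coefficients $(c_{\va})$. We then take $\Lambda_{m,k}$ to be their conjunction. Every clause will be built from integer polynomials in the $c_{\va}$, finite Boolean combinations, and quantifiers over field elements. No real parameters will be introduced, so the same formula can be interpreted in any ordered field.

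First, nonnegativity is the finite conjunction $\bigwedge_{\va}\neg(c_{\va}<0)$. M-convexity of the support is also a finite Boolean condition. Let $S$ range over the finitely many subsets $J\subseteq H(m,k)$. For each $J$, the formula $\bigwedge_{\va\in J}\neg(c_{\va}=0)\wedge\bigwedge_{\va\notin J}c_{\va}=0$ says that the support equals $J$. Taking the disjunction of these formulas over the M-convex subsets $J$ (a combinatorial list fixed in advance, including $\emptyset$) expresses that the support is M-convex. This settles the cases $m=0,1$, where $\mathrm{L}^m_k=\mathrm{M}^m_k$.

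For $m\ge2$ and $\vb\in H(m-2,k)$, the Hessian of $\partial^{\vb}f$ has entries that are fixed integer multiples of coefficients. Explicitly, its $(i,j)$ entry is $N_{ij}c_{\vb+\ve_i+\ve_j}$, where $N_{ij}$ is a positive integer arising from differentiating monomials. It remains to express ``the symmetric matrix $A$ has at most one eigenvalue in $F_{>0}$'' by a first-order formula in the entries of $A$. I would use the variational characterization: $A$ has at least two positive eigenvalues if and only if there exist $u,v\in F^k$ with $u\wedge v\neq0$ such that the form $Q(su+tv)=s^2u^TAu+2st\,u^TAv+t^2v^TAv$ is positive definite. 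This last condition can be written as
\[
u^TAu>0\ \wedge\ u^TAu\,v^TAv-(u^TAv)^2>0 .
\]
The formula then asserts $\neg\exists u\,\exists v$ of this condition. The determinant inequality already forces $u,v$ to be independent.

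The main point to verify is that this characterization holds over an arbitrary real closed field, not just over $\mathbb R$. Over a real closed field, symmetric matrices are orthogonally diagonalizable: the spectral theorem holds because the characteristic polynomial of a symmetric matrix splits, which is a first-order fact true over $\mathbb R$ and hence transfers. Sylvester's law of inertia holds over any ordered field. So the count of positive eigenvalues equals the maximal dimension of a subspace on which $A$ is positive definite, and the two-dimensional criterion above is correct. As an alternative that avoids the transfer argument entirely, one could instead use Descartes' rule of signs on the characteristic polynomial, whose coefficients are integer polynomials in the entries. Since all roots are real, the number of positive eigenvalues with multiplicity equals the number of sign changes. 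The sign-change count is a finite Boolean combination of polynomial sign conditions, hence quantifier-free.

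Finally, for $m=2$ we take the conjunction of the M-convexity clause with the eigenvalue clause for $\vb=\bm0$. For $m\ge3$ we conjoin, over all $\vb\in H(m-2,k)$, the conditions that $\partial^{\vb}f\in\mathrm{L}^2_k(F)$. Each such condition is the $m=2$ formula applied to the coefficients of $\partial^{\vb}f$, which are integer multiples of the $c_{\va}$.
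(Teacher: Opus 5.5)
Your proposal is correct. The paper gives no proof of this proposition: it simply attributes the statement to Br\"and\'en--Huh \cite{BH20}. Your argument is a self-contained verification read directly off Definition~\ref{def:lorK}, so it takes a different (more explicit) route than the paper's bare citation.

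The individual steps hold up:
\begin{itemize}
\item Nonnegativity and M-convexity of the support are finite Boolean combinations of sign conditions.
\item The Hessian entries of $\partial^{\vb}f$ are the integer multiples $(\vb+\ve_i+\ve_j)!\,c_{\vb+\ve_i+\ve_j}$.
\item For $m\ge3$, you apply the full $m=2$ formula to the coefficients of $\partial^{\vb}f$. This correctly includes the support condition on $\partial^{\vb}f$ required by the definition.
\item The only step that is not purely syntactic is expressing ``at most one positive eigenvalue''. You read this with multiplicity, which is the standard reading. Your two-plane criterion handles it correctly, because over any real closed field symmetric matrices are orthogonally diagonalizable and Sylvester's law holds.
\end{itemize}

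Compared with the citation, your route gives an explicit parameter-free formula whose correctness is checked uniformly in every real closed field. The Descartes variant even makes $\Lambda_{m,k}$ quantifier-free.

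One small correction concerns your remark that the Descartes route ``avoids the transfer argument entirely.'' It still needs the fact that the characteristic polynomial of a symmetric matrix splits over $F$. That fact can be proved directly rather than by transfer: an eigenvector $v$ over $F(\sqrt{-1})$ gives $\lambda=\bar v^{\mathsf T}Av/\bar v^{\mathsf T}v\in F$.
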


Applied coordinatewise, Lemma~\ref{lem:semialg-puiseux} turns a continuous
semialgebraic curve into a point over $K$. Together with
Proposition~\ref{prop:eventual-evaluation}, this gives the following transfer
principle.

\begin{lemma}[Transfer]\label{lem:transfer}
Let $\Phi$ be a first-order formula in the language of ordered fields with
parameters in $\mathbb{R}$, and let
\[
 \varphi \colon [0,\varepsilon)\longrightarrow\mathbb{R}^{N}
\]
be continuous semialgebraic. Let $a\in K^N$ be the point determined by its
coordinate germs as in Lemma~\ref{lem:semialg-puiseux}. If
$\Phi(\varphi(t))$ holds for every $t\in(0,\varepsilon)$, then
$\Phi(a)$ holds in $K$.
\end{lemma}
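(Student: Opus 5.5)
Since $\varphi$ is continuous semialgebraic on $[0,\varepsilon)$, Lemma~\ref{lem:semialg-puiseux} applied coordinatewise gives $a=(\widehat{\xi_1},\dots,\widehat{\xi_N})\in K^N$, where $\xi_r$ is the germ of the $r$-th coordinate $\varphi_r$. For each $r$ there is $\delta_r>0$ with $\widehat{\xi_r}(q)=\varphi_r(q)$ for $0<q<\delta_r$. Setting $\delta=\min(\varepsilon,\delta_1,\dots,\delta_N)>0$, we get $a(q)=\varphi(q)$ in $\mathbb R^N$ for every real $q\in(0,\delta)$. This is where the convergence built into the definition of $K$ matters: the series defining each coordinate of $a$ can actually be evaluated at small positive reals.

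It remains to handle the parameters. Write $\Phi(y)=\Psi(y,c)$, where $\Psi$ is a parameter-free first-order formula and $c\in\mathbb R^p$ lists the real coefficients occurring in $\Phi$. Each real number $c_s$ is a constant element of $K$, namely the constant series, and evaluating it at any $q$ returns $c_s$. We apply Proposition~\ref{prop:eventual-evaluation} to $\Psi$ with the $N+p$ elements $a_1,\dots,a_N,c_1,\dots,c_p$ of $K$. Then $\Psi(a,c)$ holds in $K$ if and only if $\Psi(a(q),c)$ holds in $\mathbb R$ for all sufficiently small $q>0$. By the first paragraph, for $q\in(0,\delta)$ the statement $\Psi(a(q),c)$ is exactly $\Phi(\varphi(q))$, which holds by hypothesis. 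Hence $\Phi(a)$ holds in $K$.

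The main point needing care is this last identification of "$\Phi$ with real parameters, interpreted in $K$" with "$\Psi(\cdot,c)$ with $c$ embedded as constants." This holds because $\mathbb R\subseteq K$ as an ordered subfield. If Proposition~\ref{prop:eventual-evaluation} is read as already allowing real parameters, this step is immediate. Otherwise, the constant-series trick above reduces the parameter case to the parameter-free one. Everything else is a direct combination of Lemma~\ref{lem:semialg-puiseux} and Proposition~\ref{prop:eventual-evaluation}.
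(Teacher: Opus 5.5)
Your proposal is correct and follows the paper's proof essentially step for step. You write $\Phi(y)=\Psi(y,c)$ with a parameter-free $\Psi$ and treat the real parameters as constant series in $K$. You then use Lemma~\ref{lem:semialg-puiseux} to get a common threshold $\delta=\min\{\varepsilon,\delta_1,\dots,\delta_N\}$ below which $a(q)=\varphi(q)$, and apply Proposition~\ref{prop:eventual-evaluation} to the tuple $(a_1,\dots,a_N,c_1,\dots,c_p)\in K^{N+p}$.
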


\begin{proof}
Let $r_1,\ldots,r_\ell\in\mathbb R$ be the parameters occurring in $\Phi$.
The list may be empty. Write
\[
 \Phi(y)=\widetilde\Phi(y,r_1,\ldots,r_\ell),
\]
where $\widetilde\Phi(y,z_1,\ldots,z_\ell)$ is a formula without parameters,
and regard each $r_j$ as a constant Puiseux series in $K$.

Write $a=(a_1,\ldots,a_N)$. By
Lemma~\ref{lem:semialg-puiseux}, for each $i$ there is $\delta_i>0$ such that
\[
 a_i(q)=\varphi_i(q)
 \qquad(0<q<\delta_i).
\]
Since there are only finitely many coordinates, a single positive threshold
works for all of them. More explicitly, set
\[
 \delta=\min\{\varepsilon,\delta_1,\ldots,\delta_N\}>0.
\]
Then, for every $q\in(0,\delta)$,
\[
 \widetilde\Phi
 \bigl(a_1(q),\ldots,a_N(q),r_1,\ldots,r_\ell\bigr)
\]
holds in $\mathbb R$ by the hypothesis. Applying
Proposition~\ref{prop:eventual-evaluation} to
\[
 (a_1,\ldots,a_N,r_1,\ldots,r_\ell)\in K^{N+\ell}
\]
shows that
\[
 \widetilde\Phi(a_1,\ldots,a_N,r_1,\ldots,r_\ell)
\]
holds in $K$, which is precisely the assertion that $\Phi(a)$ holds in $K$.
\end{proof}

\begin{definition}[{\rm \defn{Tropicalization}}, {\cite[Definition~3.17]{BH20}}]
\label{def:tropK}
For
\[
 f=\sum_{\va\in H(n,k)}c_{\va}\vx^{\va}
 \in K_{\ge0}[x_1,\dots,x_k]_n,
\]
and $\va\in H(n,k)$, define
\[
 \operatorname{val}(c_{\va})=
 \begin{cases}
  q,
  &\text{if $c_{\va}\ne0$ has first nonzero term $a t^q$,
          with $a>0$ and $q\in\mathbb Q$},\\
  +\infty,
  &\text{if $c_{\va}=0$}.
 \end{cases}
\]
The \defn{tropicalization} of $f$ is
\[
 \trop(f)\colon H(n,k)\longrightarrow\mathbb Q\cup\{+\infty\},
 \qquad
 \trop(f)(\va)=\operatorname{val}(c_{\va}).
\]
\end{definition}

\subsection{Proof of the dual description}

\begin{lemma}\label{lem:Maffine}
Every affine function on $H(n,k)$ is M-convex. Its negative is also M-convex.
If $\nu$ is M-convex and $\ell$ is affine, then $\nu+\ell$ is M-convex.
\end{lemma}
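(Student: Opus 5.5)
The plan is to check the exchange inequality directly, using the fact that an affine function satisfies the relevant identity with equality for every choice of exchange index. First I show that every affine $\ell$ is M-convex, and that $-\ell$ is too. Then I show that adding an affine $\ell$ to an M-convex $\nu$ keeps the exchange inequality, with the same witness index $j$.

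\emph{Step 1: affine functions are additive across exchanges.} Write $\ell(\va)=c+\sum_r b_r\alpha_r$. For any $\va,\vb\in H(n,k)$ and any indices $i,j$,
\[
 \ell(\va-\ve_i+\ve_j)=\ell(\va)-b_i+b_j,
 \qquad
 \ell(\vb+\ve_i-\ve_j)=\ell(\vb)+b_i-b_j.
\]
Adding these gives
\[
 \ell(\va-\ve_i+\ve_j)+\ell(\vb+\ve_i-\ve_j)=\ell(\va)+\ell(\vb).
\]
This holds whenever the arguments lie in the domain. If $\alpha_i>\beta_i$ and $\alpha_j<\beta_j$, then $\va-\ve_i+\ve_j$ and $\vb+\ve_i-\ve_j$ have nonnegative entries and total degree $n$, so they lie in $H(n,k)$. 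The computation is the same one used in Lemma~\ref{lem:affine-hessian}.

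\emph{Step 2: existence of $j$ and M-convexity of $\pm\ell$.} Given $\alpha_i>\beta_i$, I need some $j$ with $\alpha_j<\beta_j$. Because $|\va|_1=|\vb|_1=n$, the differences satisfy $\sum_r(\alpha_r-\beta_r)=0$. Since one difference is positive, another must be negative, so such a $j$ exists. For this $j$, Step 1 gives equality in the exchange inequality, which proves that $\ell$ is M-convex. The function $-\ell$ is also affine, so the same argument shows it is M-convex.

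\emph{Step 3: adding an affine function.} Let $\nu$ be M-convex, and fix $\va,\vb$ and $i$ with $\alpha_i>\beta_i$. M-convexity of $\nu$ gives a witness $j$ with $\alpha_j<\beta_j$ for which the exchange inequality holds for $\nu$. I add the identity from Step 1 for this same $j$ to that inequality. The result is the exchange inequality for $\nu+\ell$ with the same witness, so $\nu+\ell$ is M-convex.

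There is no real obstacle here. The only point to watch is Step 2, where the existence of a valid index $j$ relies on the degree constraint $|\va|_1=|\vb|_1$.
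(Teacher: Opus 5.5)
Your proof is correct and follows essentially the same route as the paper. Both arguments show that the exchange inequality for an affine $\ell$ holds with equality: the paper notes that $\ell(\bm x)+\ell(\bm y)$ depends only on $\bm x+\bm y$, while you compute the coefficients explicitly. Both use $|\va|_1=|\vb|_1$ to produce an admissible $j$, and both conclude that adding $\ell$ leaves every exchange inequality for $\nu$ unchanged.
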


\begin{proof}
Fix $\va \neq \vb\in H(n,k)$ and $i$ with $\alpha_i>\beta_i$.  Since
$|\va|_1=|\vb|_1$, there is at least one $j$ with $\alpha_j<\beta_j$.  For any such
$j$,
\[
 (\va-\ve_i+\ve_j)+(\vb+\ve_i-\ve_j)=\va+\vb.
\]
If $\ell(\vx)=c+\langle\bm b,\vx\rangle$, the sum
$\ell(\bm x)+\ell(\bm y)$ depends only on $\bm x+\bm y$.  Consequently
\[
 \ell(\va)+\ell(\vb)
 =\ell(\va-\ve_i+\ve_j)+\ell(\vb+\ve_i-\ve_j).
\]
Thus every exchange inequality for an affine function holds with equality.
Adding an affine function does not change any exchange inequality for
$\nu$.
\end{proof}

\begin{definition}[{\rm \defn{Rational polyhedral cones}}]\label{def:rational}
Let $E$ be a finite-dimensional real vector space. A \defn{rational structure}
on $E$ is a $\mathbb Q$-subspace $E_{\mathbb Q}$ such that
$E_{\mathbb Q}\otimes_{\mathbb Q}\mathbb R\cong E$. Its \defn{rational dual}
is
\[
 E_{\mathbb Q}^*=\{\lambda\in E^*:\lambda(E_{\mathbb Q})\subseteq\mathbb Q\}.
\]
A cone $C\subseteq E$ is \defn{rational polyhedral} if it has either of the
following equivalent forms:
\[
 C=\operatorname{cone}(v_1,\ldots,v_r),
 \qquad v_i\in E_{\mathbb Q},
\]
or
\[
 C=\{x\in E:\lambda_j(x)\ge0\text{ for }j=1,\ldots,s\},
 \qquad \lambda_j\in E_{\mathbb Q}^*.
\]
The equivalence of these descriptions is the rational
Farkas--Minkowski--Weyl theorem.

Finite Minkowski sums, rational linear images, and polars of rational
polyhedral cones are again rational polyhedral. Indeed,
if
\[
 C=\operatorname{cone}(v_1,\ldots,v_r),
 \qquad
 D=\operatorname{cone}(w_1,\ldots,w_s),
\]
then
\[
 C+D=\operatorname{cone}(v_1,\ldots,v_r,w_1,\ldots,w_s),
 \qquad
 T(C)=\operatorname{cone}(Tv_1,\ldots,Tv_r),
\]
for every rational linear map $T$, and
\[
 C^\vee=\{\lambda\in E^*:\lambda(v_i)\ge0\text{ for }i=1,\ldots,r\}.
\]
In particular, rational polyhedral cones are closed.

The quotient by
$\operatorname{Aff}(H(n,k))$ has the rational structure induced by the standard
rational structure on $\mathbb R^{H(n,k)}$ and the rational subspace
$\operatorname{Aff}(H(n,k))$.  A linear map between the spaces
$\mathbb{R}^{H(n,k)}$, their quotients, and their duals is \defn{rational} if its
matrix in rational bases has rational entries. By Lemma~\ref{lem:affine}, the
space $\operatorname{Aff}(H(n,k))$ is spanned by the integer vectors
$\va \mapsto \alpha_i$, so the quotient map
$\mathbb{R}^{H(n,k)} \to \mathbb{R}^{H(n,k)}/\operatorname{Aff}(H(n,k))$ is
rational.
\end{definition}

\begin{lemma}
\label{lem:Mpolyhedral}
$\mathcal{M}_{n,k}$ is a rational polyhedral cone. In particular, it is
closed and the closure in Notation~\ref{not:BRL} is redundant.  Every element
of $\mathcal{M}_{n,k}$ is a finite sum of classes of M-convex functions.
\end{lemma}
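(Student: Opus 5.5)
The plan is to show directly that the set $\mathcal{N}\subseteq\mathbb R^{H(n,k)}$ of M-convex functions is a \emph{finite union of rational polyhedral cones}. Once that is known, the conical hull becomes a Minkowski sum, and the remaining claims follow from the closure properties recorded in Definition~\ref{def:rational}.

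\emph{Step 1.} I would read Definition~\ref{def:Mconvex} as a finite Boolean combination of linear inequalities. For each triple $\tau=(\va,\vb,i)$ with $\va,\vb\in H(n,k)$ and $\alpha_i>\beta_i$, let $J_\tau=\{j:\alpha_j<\beta_j\}$. This set is nonempty because $|\va|_1=|\vb|_1$. For $j\in J_\tau$ define the closed half-space
\[
 S_{\tau,j}=\bigl\{\nu:\ \nu(\va)+\nu(\vb)-\nu(\va-\ve_i+\ve_j)-\nu(\vb+\ve_i-\ve_j)\ge0\bigr\}.
\]
Each $S_{\tau,j}$ is defined by an integer linear functional, so it is a rational polyhedral cone. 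By definition,
\[
 \mathcal N=\bigcap_\tau\ \bigcup_{j\in J_\tau}S_{\tau,j}.
\]
There are finitely many triples $\tau$, so distributing the intersection over the unions gives
\[
 \mathcal N=\bigcup_{\sigma}C_\sigma,\qquad C_\sigma=\bigcap_\tau S_{\tau,\sigma(\tau)}.
\]
Here $\sigma$ ranges over the finitely many choice functions with $\sigma(\tau)\in J_\tau$. Each $C_\sigma$ is cut out by finitely many rational inequalities, so it is a rational polyhedral cone by the Farkas--Minkowski--Weyl description.

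\emph{Step 2.} Each $C_\sigma$ is a convex cone containing $0$. Hence the convex cone generated by $\mathcal N$ equals the Minkowski sum $\sum_\sigma C_\sigma$. Indeed, a nonnegative combination of elements of the $C_\sigma$ can be regrouped as a sum with one term from each $C_\sigma$, since each $C_\sigma$ is closed under addition and positive scaling. This Minkowski sum is rational polyhedral.

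\emph{Step 3.} Apply the quotient map $\pi:\mathbb R^{H(n,k)}\to\mathbb R^{H(n,k)}/\operatorname{Aff}(H(n,k))$. This map is rational by Lemma~\ref{lem:affine}, as noted in Definition~\ref{def:rational}, and linear maps commute with taking conical hulls. So
\[
 \operatorname{cone}\{[\nu]\}=\pi\bigl(\textstyle\sum_\sigma C_\sigma\bigr)=\sum_\sigma\pi(C_\sigma),
\]
which is a rational polyhedral cone. Rational polyhedral cones are closed, so the closure in Notation~\ref{not:BRL} is redundant and $\mathcal M_{n,k}$ is this cone.

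Finally, any element of $\mathcal M_{n,k}$ has the form $\pi(\sum_\sigma \nu_\sigma)=\sum_\sigma[\nu_\sigma]$ with $\nu_\sigma\in C_\sigma\subseteq\mathcal N$. This is a finite sum of classes of M-convex functions, where M-convexity of a class is well defined by Lemma~\ref{lem:Maffine}.

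I do not expect a serious obstacle. The only point needing care is Step 1: the existential quantifier over $j$ makes $\mathcal N$ non-convex in general. It becomes a finite union only after distributing over all choice functions $\sigma$, and one must check that $J_\tau\neq\emptyset$ so that no $C_\sigma$ is defined by an empty union. Murota's local exchange characterization could be used to reduce the number of inequalities, but it is not needed here.
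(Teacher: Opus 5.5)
Your proposal is correct and follows the paper's proof essentially step for step. Like the paper, you fix a choice of exchange partner $j$ for each triple $(\va,\vb,i)$ to write the set of M-convex functions as a finite union of rational polyhedral cones, identify its conical hull with their Minkowski sum, and push this forward along the rational quotient map $\pi$ to get closedness and the finite-sum representation.
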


\begin{proof}
There are finitely many triples $(\va,\vb,i)$ with
$\va,\vb\in H(n,k)$ and $\alpha_i>\beta_i$.  For each such triple, let
\[
 J(\va,\vb,i)=\{j:\alpha_j<\beta_j\}.
\]
This set is nonempty.  Choose one element
$j(\va,\vb,i)\in J(\va,\vb,i)$ for every triple.  The functions satisfying
all the corresponding chosen exchange inequalities form a rational
polyhedral cone. A function is M-convex if and only if it belongs to at least one
of these cones.  Hence the set of M-convex functions is a finite union
\[
 C_1\cup\cdots\cup C_N
\]
of rational polyhedral cones.

The conical hull of this union is the Minkowski sum
$C_1+\cdots+C_N$. By Definition~\ref{def:rational}, a finite Minkowski sum of
rational polyhedral cones is again rational polyhedral and therefore closed.
Finally, the quotient map
\[
 \pi:\mathbb{R}^{H(n,k)}\longrightarrow
 \mathbb{R}^{H(n,k)}/\operatorname{Aff}(H(n,k))
\]
is rational by Lemma~\ref{lem:affine}, and the image of a rational
polyhedral cone under a rational linear map is rational polyhedral.  Thus
\[
 \mathcal M_{n,k}=\pi(C_1+\cdots+C_N)
\]
is rational polyhedral.  The description as a Minkowski sum also shows that
each of its elements is a finite sum of classes of M-convex functions.
\end{proof}

\begin{definition}[{\rm \defn{The polynomials $F_q^\nu$}}]\label{def:Fq}
For an M-convex function $\nu$ and $0 < q \le 1$, put
\[
  F^{\nu}_q \;=\; \sum_{\va \in H(n,k)}
  q^{\nu(\va)}\, \frac{\vx^{\va}}{\va!} .
\]
This polynomial has positive coefficients and is Lorentzian by
\cite[Theorem~3.14]{BH20}.
\end{definition}

\begin{theorem}[Necessary direction]\label{lem:duality-necessary}
If $\vg\in\BR_{\mathring L}(n,k)$, then $\vg\in V_{n,k}$ and
\[
  \langle\vg,\nu\rangle\ge0
\]
for every M-convex function $\nu:H(n,k)\to\mathbb R$.
\end{theorem}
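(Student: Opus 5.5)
Two separate claims need proof: that $\vg$ is balanced, and that $\langle\vg,\nu\rangle\ge0$ for every M-convex $\nu$. Both follow by testing $R_{\vg}$ along one-parameter families of strictly Lorentzian polynomials and letting the parameter go to $0$.

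\emph{Balancedness.} Fix any $\vP\in\mathring L(n,k)$, say the coefficient vector of $f$. For $\bm s\in\Rpos^k$, the rescaled polynomial $f(s_1x_1,\dots,s_kx_k)$ is again strictly Lorentzian. Indeed, its coefficients $P_{\va}\bm s^{\va}$ are positive. Moreover, $\partial_{i_1}\cdots\partial_{i_{n-2}}$ of the rescaled polynomial equals $s_{i_1}\cdots s_{i_{n-2}}$ times the rescaling of $\partial_{i_1}\cdots\partial_{i_{n-2}}f$. Its Hessian is therefore a positive multiple of $D\,\mathrm{Hess}\,D$ with $D=\operatorname{diag}(\bm s)$, and by Sylvester's law of inertia this matrix has Lorentzian signature.

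Along this family,
\[
R_{\vg}(\vP\cdot\bm s^{\bullet})=R_{\vg}(\vP)\,\bm s^{\sum_{\va}\gamma_{\va}\va}.
\]
Suppose $\bm v=\sum_{\va}\gamma_{\va}\va\ne\bm0$. Choose $\bm s=e^{\tau\bm v}$ and let $\tau\to\infty$; the factor $e^{\tau|\bm v|^2}$ is then unbounded. So boundedness forces $\bm v=\bm0$, i.e.\ $\vg\in V_{n,k}$.

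\emph{Positivity on M-convex functions.} Fix an M-convex $\nu$ and use the polynomials $F^\nu_q$ of Definition~\ref{def:Fq}. These are Lorentzian with positive coefficients. However, they need not be \emph{strictly} Lorentzian, and the bound is only assumed on $\mathring L(n,k)$. The bridge is continuity: since $R_{\vg}$ is continuous on $\Rpos^{H(n,k)}$ and every Lorentzian polynomial with positive coefficients is a coefficientwise limit of strictly Lorentzian ones, the bound $R_{\vg}\le C$ extends to every Lorentzian polynomial with all coefficients positive. Applying this to $F^\nu_q$ gives
\[
R_{\vg}(F^\nu_q)=\Bigl(\prod_{\va}(\va!)^{-\gamma_{\va}}\Bigr)\,q^{\langle\vg,\nu\rangle}\le C
\qquad\text{for all }0<q\le1.
\]
If $\langle\vg,\nu\rangle<0$, the left side tends to $\infty$ as $q\to0^+$, which is a contradiction. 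Hence $\langle\vg,\nu\rangle\ge0$.

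The only delicate point is the closure step: that the positive-coefficient Lorentzian polynomials $F^\nu_q$ lie in the closure of $\mathring L(n,k)$, and that $R_{\vg}$ is continuous there. Both hold because the $F^\nu_q$ have strictly positive coefficients, so no coordinate $P_{\va}$ tends to $0$ along the approximating sequence. This is exactly the definition of Lorentzian as a coefficientwise limit (Definition~\ref{def:strictly-lorentzian}, combined with \cite[Theorem~2.25]{BH20} for the algebraic characterization used in \cite[Theorem~3.14]{BH20}). Balancedness is also consistent with Lemma~\ref{lem:Maffine}: both $\pm\ell$ are M-convex for affine $\ell$, so positivity applied to $\pm\ell$ already forces $\langle\vg,\ell\rangle=0$, which via Lemma~\ref{lem:quotientdual} is the same as $\vg\in V_{n,k}$. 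Either route therefore yields the balancedness claim.
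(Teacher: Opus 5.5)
Your proof is correct. The positivity half is exactly the paper's argument. $F^{\nu}_q$ is Lorentzian with positive coefficients by \cite[Theorem~3.14]{BH20}, and it is a coefficientwise limit of strictly Lorentzian polynomials. Continuity of $R_{\vg}$ at a positive coefficient vector transfers the bound to $F^{\nu}_q$, and letting $q\to0^+$ forces $\langle\vg,\nu\rangle\ge0$.

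You differ from the paper on balancedness. The paper obtains it as a corollary of the positivity statement. It applies that statement to $\pm\ell$ for affine $\ell$, both of which are M-convex by Lemma~\ref{lem:Maffine}, and then specializes to the coordinate functions $\va\mapsto\alpha_i$. You mention this route only as an aside. Your main argument instead uses the torus action $x_i\mapsto s_ix_i$. This action preserves $\mathring L(n,k)$ by Sylvester's law and multiplies $R_{\vg}$ by $\bm{s}^{\sum_{\va}\gamma_{\va}\va}$.

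Your route is self-contained and works directly on the strict locus, with no appeal to M-convexity or to \cite{BH20}. It is also the same column-rescaling argument the paper itself uses in the necessity part of Proposition~\ref{prop:BRQ-polar}. The paper's route is shorter once positivity is in hand, since it reuses the single family $F^{\nu}_q$. It also makes balancedness visibly a special case of the dual inequalities defining $\mathcal M_{n,k}^{\vee}$.
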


\begin{proof}
Fix an M-convex function $\nu$. For each $0<q\le1$, the polynomial
\[
 F_q^{\nu}=\sum_{\va\in H(n,k)}q^{\nu(\va)}\frac{\vx^{\va}}{\va!}
\]
is Lorentzian by \cite[Theorem~3.14]{BH20}. By
Definition~\ref{def:strictly-lorentzian}, it is a
coefficientwise limit of strictly Lorentzian polynomials $f_1,f_2,\dots$.  Let
$C$ be a constant with $R_{\vg}\le C$ on $\mathring L(n,k)$. The coefficient
vector of each $f_m$ lies in $\mathring L(n,k)$ and
$R_{\vg}(f_m)\le C$. Since $\nu$ is real-valued, all coefficients of
$F_q^{\nu}$ are positive. Hence $R_{\vg}(F_q^{\nu})$ is defined, and
$R_{\vg}$ is continuous at its coefficient vector. Letting $m\to\infty$
therefore gives
$R_{\vg}(F_q^{\nu})\le C$.
We have
\[
 R_{\vg}(F_q^{\nu})
 =\left(\prod_{\va}(\va!)^{-\gamma_{\va}}\right)
 q^{\langle\vg,\nu\rangle}.
\]
If $\langle\vg,\nu\rangle<0$, the right-hand side tends to infinity as
$q\to0^+$, contradicting the uniform bound established above. Thus
$\langle\vg,\nu\rangle\ge0$.

By Lemma~\ref{lem:Maffine}, every affine function $\ell$ and its negative are
M-convex. Applying the preceding inequality to $\ell$ and $-\ell$ gives
$\langle\vg,\ell\rangle=0$ for every affine $\ell$.  In particular, it
vanishes on each coordinate function $\va\mapsto\alpha_i$, and hence
$\sum_{\va}\gamma_{\va}\va=0$.  Thus $\vg\in V_{n,k}$.
\end{proof}

For the reverse inclusion, we extract an M-convex valuation from an unbounded
ratio. As in the construction of \cite{SG25}, the divergence is realized along
a direction whose coordinates are exponents of a parameter $t>0$. In the
Lorentzian setting, this direction is encoded by a semialgebraic curve.

We begin by recalling the two semialgebraic results needed for this
construction.

\begin{lemma}[Curve selection lemma {\cite[Lemma~2.1]{LP22}}]
\label{lem:curve-selection}
Let $A\subseteq\mathbb R^N$ be a semialgebraic set, and let
$a\in\overline{A}\setminus A$. Then there exist $\varepsilon>0$ and a
continuous semialgebraic curve
\[
 \varphi\colon[0,\varepsilon)\longrightarrow\mathbb R^N
\]
such that $\varphi(0)=a$ and $\varphi(t)\in A$ for all
$t\in(0,\varepsilon)$.
\end{lemma}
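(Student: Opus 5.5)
The plan is to reduce curve selection to two one-variable facts about semialgebraic functions: definable choice, and piecewise continuity.

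First I set up a parametrized family of nonempty fibers. Consider
\[
 S=\bigl\{(t,x)\in\mathbb R\times\mathbb R^N:\ t>0,\ x\in A,\ |x-a|^2<t^2\bigr\}.
\]
This set is semialgebraic, being cut out from $A\times\mathbb R$ by polynomial inequalities. Its projection to the $t$-axis is all of $(0,\infty)$, because $a\in\overline A$ means every ball around $a$ meets $A$. The Tarski--Seidenberg principle from Definition~\ref{def:semialg} is not even needed for this step.

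Second, I choose a semialgebraic section $t\mapsto x(t)\in S_t$, where $S_t$ is the fiber over $t$. I do this one coordinate at a time, by induction on $N$. Project $S$ to $(t,x_1)$; each fiber over $t$ is a nonempty semialgebraic subset of $\mathbb R$, hence a finite union of points and open intervals. Pick a first-order definable canonical point in it:
\begin{itemize}
 \item its smallest element, if it has one;
 \item otherwise the midpoint of its leftmost bounded interval component;
 \item or $c\pm1$ at an unbounded end $c$, or $0$ if the fiber is all of $\mathbb R$.
\end{itemize}
Each of these clauses is expressible by a formula in the language of ordered fields. So $t\mapsto x_1(t)$ is semialgebraic by Tarski--Seidenberg (Definition~\ref{def:firstorder}). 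Then restrict $S$ to $x_1=x_1(t)$ and repeat for $x_2,\dots,x_N$.

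This definable-choice step is the main obstacle. The care lies in checking that the canonical selection is genuinely given by a single first-order formula uniformly in $t$. The finiteness of the number of components of a one-variable semialgebraic set is what makes "leftmost component" expressible.

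Third, I obtain continuity near $0$. Each coordinate $x_i(t)$ is a semialgebraic function of one variable on $(0,\infty)$. Its graph admits a cylindrical (cell) decomposition, so there are finitely many points outside of which $x_i$ is continuous. Hence for some $\varepsilon>0$ every $x_i$ is continuous on $(0,\varepsilon)$. Set $\varphi(0)=a$ and $\varphi(t)=x(t)$ for $0<t<\varepsilon$. Then $|\varphi(t)-a|<t$ gives continuity at $0$. The graph of $\varphi$ is the union of the semialgebraic graph of $x$ on $(0,\varepsilon)$ and the point $(0,a)$, so $\varphi$ is continuous semialgebraic. Finally $\varphi(t)\in A$ for $t>0$ by construction; the hypothesis $a\notin A$ is not used beyond ensuring the statement is nontrivial.

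As an alternative, one could use a semialgebraic triangulation of $\overline A$ compatible with $A$ and $\{a\}$. One would pick an open simplex contained in $A$ whose closure contains the vertex $a$, and take the image of the segment from $a$ to its barycenter. I would fall back on this if the definable-choice bookkeeping becomes awkward.
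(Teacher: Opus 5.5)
The paper gives no proof of this lemma. It is quoted from \cite[Lemma~2.1]{LP22}; it is the classical semialgebraic curve selection lemma, cf.\ \cite{BCR98}. So there is no internal argument to match your proposal against.

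Your proposal is a correct, self-contained proof along the standard o-minimal route. It has three ingredients. First, the tube set $S$ has nonempty fibers over every $t>0$, because $a\in\overline A$. Second, coordinatewise definable choice gives a semialgebraic section $x(t)\in S_t$. Third, a one-variable semialgebraic function has only finitely many discontinuities: in a cylindrical decomposition adapted to its graph, the graph over each open base cell must be a single section cell, since a band meets each vertical line in infinitely many points. Together with $|x(t)-a|<t$, which gives continuity at $0$, this completes the argument.

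Your selection rule is slightly garbled as worded. A clean version is as follows. Take the least element of the fiber if it exists. Otherwise, take the leftmost connected component, with endpoints $-\infty\le c<d\le\infty$, and choose $(c+d)/2$, $d-1$, $c+1$, or $0$ according to which endpoints are finite. In fact, any rule given by a single formula uniformly in $t$ and always landing in the fiber does the job; finiteness of the number of components is what makes such a rule first-order.

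If you fall back on triangulation, first intersect $\overline A$ with a closed ball around $a$. The semialgebraic triangulation theorem is stated for compact sets.

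Compared with the bare citation, your route relies only on tools the paper already uses, namely Tarski--Seidenberg and one-variable cell decomposition. It also gives the quantitative bound $|\varphi(t)-a|<t$. It produces a continuous rather than an analytic curve, but that is exactly what the escape-curve argument in Lemma~\ref{lem:escape} needs. There, Lemma~\ref{lem:semialg-puiseux} converts each continuous coordinate germ into a Puiseux series.
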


\begin{lemma}[Growth dichotomy lemma {\cite[Lemma~2.2]{LP22}}]
\label{lem:growth-dichotomy}
Let $f\colon(0,\varepsilon)\to\mathbb R$ be a semialgebraic function with
$f(t)\ne0$ for all $t\in(0,\varepsilon)$. Then there exist constants
$a\ne0$ and $\alpha\in\mathbb Q$ such that
\[
 f(t)=at^\alpha+o(t^\alpha)
 \qquad\text{as }t\to0^+.
\]
\end{lemma}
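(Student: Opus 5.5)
The plan is to reduce the statement to Lemma~\ref{lem:semialg-puiseux}, which already gives Puiseux expansions for continuous semialgebraic germs defined on a half-closed interval $[0,\varepsilon)$. Two gaps must be closed first: $f$ is only assumed semialgebraic, not continuous, and $f$ may be unbounded near $0$, so it need not extend continuously to $t=0$.

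\emph{Step 1 (continuity and a limit).} I would invoke the monotonicity theorem for one-variable semialgebraic functions, as in \cite{BCR98}. It provides $0<\delta\le\varepsilon$ such that $f$ is continuous and either strictly monotone or constant on $(0,\delta)$. It follows that the limit $L=\lim_{t\to0^+}f(t)$ exists in $\mathbb R\cup\{\pm\infty\}$. Since $f$ has no zeros, after shrinking $\delta$ it also has constant sign on $(0,\delta)$.

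\emph{Step 2 (finite limit).} Suppose $L\in\mathbb R$. Define $\varphi$ on $[0,\delta)$ by $\varphi(0)=L$ and $\varphi=f$ on $(0,\delta)$. Then $\varphi$ is continuous, and it is semialgebraic because its graph is the graph of $f$ with one point added. Lemma~\ref{lem:semialg-puiseux} gives a series $\widehat\xi\in K$ that agrees with $f$ for small $t>0$. This series is nonzero, since otherwise $f$ would vanish on some interval, contrary to the hypothesis. The asymptotic formula in that lemma then yields $f(t)=ct^q(1+o(1))$ with $c\ne0$ and $q\in\mathbb Q_{\ge0}$. Take $a=c$ and $\alpha=q$.

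\emph{Step 3 (infinite limit).} Suppose $L=\pm\infty$. Apply Step~2 to $g=1/f$, which is semialgebraic, nonvanishing and continuous on $(0,\delta)$, and extend it by $g(0)=0$. This gives $g(t)=ct^q(1+o(1))$ with $c\ne0$, where now $q>0$. Inverting, $f(t)=c^{-1}t^{-q}(1+o(1))$, because $1/(1+o(1))=1+o(1)$. So $a=c^{-1}$ and $\alpha=-q\in\mathbb Q$.

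The main obstacle is Step~1, which is the only input not already recorded in the paper. If I wanted to avoid quoting the monotonicity theorem, I would argue directly. The graph of $f$ is semialgebraic, so $f$ satisfies $P(t,f(t))=0$ for some nonzero polynomial $P$. Over a punctured neighbourhood of $0$ where the leading coefficient and discriminant of $P$ (made squarefree in its second variable) do not vanish, the real roots are given by finitely many continuous branches. Each such branch is a Puiseux series, possibly with negative exponents, by Newton--Puiseux. Since the set where $f$ equals a given branch is semialgebraic, hence a finite union of intervals and points, $f$ coincides with a single branch near $0$. The leading term of that branch then gives the conclusion at once.
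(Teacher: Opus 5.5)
Your proof is correct. It takes a different route from the paper, which does not prove this statement and simply quotes it from \cite[Lemma~2.2]{LP22}.

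You derive it from Lemma~\ref{lem:semialg-puiseux} in three moves:
\begin{enumerate}
\item Use the monotonicity theorem to get continuity on a punctured interval and a one-sided limit $L\in\mathbb R\cup\{\pm\infty\}$.
\item When $L$ is finite, extend $f$ by $L$ at $0$ and apply Lemma~\ref{lem:semialg-puiseux}.
\item When $L$ is infinite, pass to $1/f$, whose limit $0$ forces $q>0$, and invert.
\end{enumerate}
There is no circularity, since Lemma~\ref{lem:semialg-puiseux} rests only on \cite[Propositions~8.1.8 and~8.1.12]{BCR98}.

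Your route makes the argument self-contained. It shows that the growth dichotomy is essentially a corollary of the germ-to-Puiseux lemma the paper already proves, and it costs one extra standard input, the monotonicity theorem. The citation is shorter and covers the general possibly discontinuous or unbounded case without these reductions.

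It is also worth noticing what the paper's only use of the lemma actually requires, in the proof of Lemma~\ref{lem:escape}. There the functions $u$ and $P_{\va}$ are coordinates of the curve produced by Lemma~\ref{lem:curve-selection}. They are therefore already continuous semialgebraic on $[0,\varepsilon)$, finite at $0$, and positive on $(0,\varepsilon)$. Only your Step~2 is needed in that setting. So the growth dichotomy lemma could be replaced outright by Lemma~\ref{lem:semialg-puiseux}, which the paper applies to the same germs a few lines later anyway.

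Your closing Newton--Puiseux alternative is not needed once Step~1 is granted. Standing alone, it would need more care: you would have to show convergence of the real branches and match the ordered real roots with specific real Puiseux series.
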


\begin{lemma}[Semialgebraic escape curve]\label{lem:escape}
Let $\vg\in\mathbb Z^{H(n,k)}$.  If $R_{\vg}$ is unbounded on
$\mathring L(n,k)$, then there is a rational-valued M-convex function
$\nu:H(n,k)\to\mathbb Q$ such that
\[
  \langle\vg,\nu\rangle<0.
\]
\end{lemma}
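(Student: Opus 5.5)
Since $R_{\vg}$ is unbounded on $\mathring L(n,k)$, the plan is to turn this unboundedness into a semialgebraic curve along which $R_{\vg}$ blows up, pass to the Puiseux field $K$, and read off an M-convex function by tropicalizing.

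First, compactify the problem. Consider the semialgebraic set
\[
 A=\Bigl\{(\vP,s)\in\mathring L(n,k)\times\mathbb R_{>0}:\ s\cdot R_{\vg}(\vP)\ge1\Bigr\}.
\]
This set is semialgebraic because $\vg$ is integral: clear the negative exponents. Unboundedness of $R_{\vg}$ means that $A$ contains points with $s$ arbitrarily small. To get a limit point we also need to control $\vP$. Since $\vg$ need not be balanced a priori, we cannot rescale $\vP$ freely. Instead we use the conic change of coordinates $\vP\mapsto \vP/(1+\|\vP\|)$, or equivalently we adjoin $u=1/(1+\|\vP\|)$. The image is bounded, so there is a point $a$ in its closure with $s=0$, and $a\notin A$. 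Apply Lemma~\ref{lem:curve-selection} to get a continuous semialgebraic curve $\varphi\colon[0,\varepsilon)\to\mathbb R^N$ with $\varphi(t)\in A$ for $t>0$. Undoing the coordinate change gives semialgebraic coefficient germs $P_{\va}(t)>0$ with $R_{\vg}(\vP(t))\ge 1/s(t)\to\infty$.

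The $P_{\va}(t)$ may blow up, so they are not continuous at $0$. To handle this, multiply each $P_{\va}(t)$ by a suitable power of $t$ and apply Lemma~\ref{lem:growth-dichotomy} to each nonvanishing semialgebraic coordinate. This gives $P_{\va}(t)=c_{\va}t^{q_{\va}}(1+o(1))$ with $c_{\va}>0$ and $q_{\va}\in\mathbb Q$. Likewise $s(t)\sim c\,t^{\sigma}$ with $\sigma>0$, because $s\to0$ and $s>0$. Then
\[
 R_{\vg}(\vP(t))\sim C\,t^{\langle\vg,\mathbf q\rangle}\to\infty,
\]
so $\langle\vg,\mathbf q\rangle<0$.

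Next, show that $\nu:=\mathbf q$ is M-convex. Each $P_{\va}$, after multiplication by $t^{M}$ for a large integer $M$, extends continuously to $[0,\varepsilon)$, so Lemma~\ref{lem:semialg-puiseux} gives elements $\widehat P_{\va}\in K_{>0}$ with valuations $q_{\va}+M$. Rescaling the polynomial by the positive constant $t^M$ preserves Lorentzianity, so we may assume the curve itself is Lorentzian for every $t>0$. By Proposition~\ref{prop:lor-firstorder}, Lorentzianity is a first-order formula, so Lemma~\ref{lem:transfer} shows that $\sum\widehat P_{\va}\vx^{\va}$ is Lorentzian over $K$.

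Now invoke the tropicalization theorem of Br\"and\'en--Huh: the tropicalization of a Lorentzian polynomial over the Puiseux field is M-convex, and \cite{BH20} proves this for $K$ (Theorem~3.20 there, or its analogue). Normalization matters here. Replacing $P_{\va}$ by $\va!P_{\va}$ changes each valuation by $0$, since the $\va!$ are positive reals. So $\trop=\mathbf q+M$ is M-convex, and hence $\mathbf q$ is M-convex, because adding a constant (an affine function) preserves M-convexity by Lemma~\ref{lem:Maffine}.

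The main obstacle is the first step: producing a curve along which the coordinates stay in the strictly Lorentzian locus while $R_{\vg}$ genuinely diverges, despite possible escape to infinity and boundary behaviour. A subtler point in the same step is that the M-convexity theorem in \cite{BH20} for $K$ must use the same field $K$, or one containing it. If $K$ there is the field of all real Puiseux series rather than the convergent ones, we embed our $K$ into it; valuations are unchanged by this embedding.
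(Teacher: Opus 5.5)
Your argument is correct, but the first step differs from the paper's.

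\textbf{The paper's route.} The paper first disposes of unbalanced $\vg$ directly. If $s_i=\sum_{\va}\gamma_{\va}\alpha_i\neq 0$, the affine, hence M-convex, function $\nu(\va)=-s_i\alpha_i$ gives $\langle\vg,\nu\rangle=-s_i^2<0$. For balanced $\vg$ one has $\sum_{\va}\gamma_{\va}=0$, so $R_{\vg}$ is scale invariant and the problem restricts to the compact slice $\mathcal S=\mathring L(n,k)\cap\{\sum_{\va}P_{\va}=1\}$. Curve selection is applied to $\{(u,\vP):0<u<1,\ \vP\in\mathcal S,\ uN(\vP)=D(\vP)\}$ at a limit point $(0,\vP_*)$. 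The coefficient curve is therefore bounded and continuous at $t=0$, and Lemma~\ref{lem:semialg-puiseux} applies verbatim. Negativity, $\langle\vg,\nu\rangle=-r<0$, comes from comparing leading exponents in the exact identity $uN=D$.

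\textbf{Your route.} You avoid the case split by compactifying with $\vP\mapsto\vP/(1+\|\vP\|)$. This allows coefficients to escape to infinity, which you repair by multiplying the whole curve by a common $t^M$. That is legitimate: positive multiples of strictly Lorentzian polynomials are strictly Lorentzian. You also compute the pairing with the unshifted exponents $\mathbf q$, so the constant shift $M$ matters only for M-convexity, where Lemma~\ref{lem:Maffine} absorbs it.

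\textbf{Comparison.} Your approach is uniform in $\vg$ and never uses balancedness. The paper's approach gains a bounded curve, so no rescaling is needed, and an explicit affine certificate in the unbalanced case, at the cost of the preliminary reduction.

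\textbf{Two small points.}
\begin{enumerate}
\item The image of your set $A$ is not bounded in the $s$-coordinate, since $s$ can be arbitrarily large. Either intersect with $\{s\le 1\}$, or take the limit point directly from the bounded sequence $(w_j,s_j)$ with $s_j\to 0$.
\item The asymptotics of $s(t)$ are unnecessary. The relation $R_{\vg}(\vP(t))\sim C\,t^{\langle\vg,\mathbf q\rangle}\to\infty$ already forces $\langle\vg,\mathbf q\rangle<0$.
\end{enumerate}
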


\begin{proof}
For $i=1,\ldots,k$, put
\[
 s_i=\sum_{\va\in H(n,k)}\gamma_{\va}\alpha_i.
\]
If some $s_i$ is nonzero, then
\[
 \nu(\va)=-s_i\alpha_i
\]
is an integer-valued affine function and hence is M-convex by
Lemma~\ref{lem:Maffine}, while
\[
 \langle\vg,\nu\rangle
 =-s_i\sum_{\va}\gamma_{\va}\alpha_i
 =-s_i^2<0.
\]
We may therefore assume that $s_i=0$ for every $i$, that is,
$\vg\in V_{n,k}$.  Lemma~\ref{lem:balanced-affine} then gives
\[
 \sum_{\va\in H(n,k)}\gamma_{\va}=0.
\]
Consequently, positive scalar multiplication preserves both strict
Lorentzianity and the ratio
\[
 R_{\vg}(c\vP)=c^{\sum_{\va}\gamma_{\va}}R_{\vg}(\vP)
 =R_{\vg}(\vP)
 \qquad(c>0).
\]
The set $\mathring L(n,k)$ is semialgebraic. Positivity of the coefficients is
expressed by a finite collection of strict polynomial inequalities. The
condition that each of the finitely many Hessians
$\operatorname{Hess}(\partial^{\vb} f)$ has signature $(+,-,\ldots,-)$ is also
semialgebraic. By Sylvester's law of inertia, congruence classes of real
symmetric matrices are classified by their signature. Thus, with
$J=\operatorname{diag}(1,-1,\ldots,-1)$, a symmetric matrix $H$ has that
signature if and only if there is a matrix $A$ such that
\[
 (\det A)^{2}>0,\qquad A^{\mathsf T}HA=J.
\]
Here $(\det A)^{2}>0$ is used in place of the equivalent condition
$\det A\ne0$, so both displayed conditions are literally of the form admitted
by Definition~\ref{def:semialg}. They are polynomial conditions, and their
projection to the entries of $H$ is semialgebraic by Tarski--Seidenberg.

Let
\[
 \Delta=\left\{\vP\in\Rge^{H(n,k)}:
              \sum_{\va\in H(n,k)}P_{\va}=1\right\},
 \qquad
 \mathcal S=\mathring L(n,k)\cap\Delta.
\]
The set $\Delta$ is semialgebraic. Indeed, it is the finite union, over subsets
$Z\subseteq H(n,k)$, of the sets cut out by $P_{\va}=0$ for $\va\in Z$, by
$P_{\va}>0$ for $\va\notin Z$, and by $\sum_{\va}P_{\va}-1=0$. It is also
compact. By the scale invariance above, $R_{\vg}$ is
unbounded on $\mathcal S$. There is a sequence
$\vP^{(j)}\in\mathcal S$ such that $R_{\vg}(\vP^{(j)})\to\infty$.  Passing to a
subsequence, we may assume that $\vP^{(j)}\to\vP_*\in\Delta$.

Write
\[
 N(\vP)=\prod_{\gamma_{\va}>0}P_{\va}^{\gamma_{\va}},
 \qquad
 D(\vP)=\prod_{\gamma_{\va}<0}P_{\va}^{-\gamma_{\va}},
\]
where an empty product is $1$. Since $\vg$ is integral, $N$ and $D$ are
polynomials, and $R_{\vg}=N/D$ on $\mathcal S$. For all sufficiently large
$j$, the numbers
\[
 u_j=R_{\vg}(\vP^{(j)})^{-1}
\]
lie in $(0,1)$ and converge to zero. Define
\[
\mathcal Y=\left\{(u,\vP):0<u<1,\ \vP\in\mathcal S,\
                    uN(\vP)=D(\vP)\right\}
\]
Then $(u_j,\vP^{(j)})\in\mathcal Y$ for every such $j$. Moreover,
$\mathcal Y$ is semialgebraic. Indeed, the condition $0<u<1$ consists of two strict
polynomial inequalities, the condition $\vP\in\mathcal S$ defines the
semialgebraic set $\mathbb R\times\mathcal S$ in the variables $(u,\vP)$,
and $uN(\vP)-D(\vP)=0$ is a polynomial equation. Semialgebraic sets are
closed under finite intersections.
Therefore
\[
 (u_j,\vP^{(j)})\longrightarrow(0,\vP_*),
 \qquad
 (0,\vP_*)\in\overline{\mathcal Y}\setminus\mathcal Y.
\]
By Lemma~\ref{lem:curve-selection}, there are $\varepsilon>0$ and a continuous
semialgebraic curve
\[
 [0,\varepsilon)\longrightarrow\mathbb R\times\mathbb R^{H(n,k)},
 \qquad
 t\longmapsto\bigl(u(t),\vP(t)\bigr),
\]
such that $(u(0),\vP(0))=(0,\vP_*)$ and
$(u(t),\vP(t))\in\mathcal Y$ for every $t\in(0,\varepsilon)$.

The functions $u$ and $P_{\va}$ are semialgebraic and positive on
$(0,\varepsilon)$. Apply Lemma~\ref{lem:growth-dichotomy} to $u$ and to the
finitely many coordinate functions $P_{\va}$. After shrinking $\varepsilon$
if necessary, we obtain nonzero constants $c_0,c_{\va}$ and rational numbers
$r,\nu(\va)$ such that
\[
 u(t)=c_0t^r(1+o(1)),
 \qquad
 P_{\va}(t)=c_{\va}t^{\nu(\va)}(1+o(1))
 \quad(\va\in H(n,k)).
\]
Since the functions on the left are positive, their leading constants
$c_0,c_{\va}$ are positive. Moreover, since $u(t)\to0$, we have $r>0$.  The
exact identity
\[
 u(t)N(\vP(t))=D(\vP(t))
\]
holds for every $t\in(0,\varepsilon)$.  Comparing its leading powers of $t$
gives
\[
 r+\sum_{\gamma_{\va}>0}\gamma_{\va}\nu(\va)
 =\sum_{\gamma_{\va}<0}(-\gamma_{\va})\nu(\va),
\]
and therefore
\begin{equation}\label{eq:escape-negative}
 \langle\vg,\nu\rangle=-r<0.
\end{equation}

The growth dichotomy supplies the rational asymptotic exponents and the
negative pairing. We now apply the germ representation of
Lemma~\ref{lem:semialg-puiseux} coordinatewise to place the entire coefficient
curve over $K$, where Lemma~\ref{lem:transfer} applies.

For each $\va$, let $p_{\va}\in K$ be the unique convergent Puiseux series
representing the germ of $P_{\va}$ at $0^+$, as in
Lemma~\ref{lem:semialg-puiseux}.  The preceding expansion gives
\[
 \operatorname{val}p_{\va}=\nu(\va).
\]
Moreover, $p_{\va}>0$ in $K$ because $P_{\va}(t)>0$ for every $t>0$.
Since $H(n,k)$ is finite, all the $p_{\va}$ lie in a single field
$\mathbb R((t^{1/M}))_{\mathrm{conv}}\subseteq K$ for some
$M\in\mathbb Z_{>0}$.  Set
\[
  f_K=\sum_{\va\in H(n,k)}p_{\va}\vx^{\va}\in K[\vx].
\]
For every $t\in(0,\varepsilon)$, the coefficient tuple $\vP(t)$ belongs to
$\mathcal S$, so the corresponding polynomial is strictly Lorentzian and hence Lorentzian
over $\mathbb R$. It remains to verify that $f_K$ is Lorentzian over $K$.
Proposition~\ref{prop:lor-firstorder} expresses Lorentzianity by a first-order
formula, and Lemma~\ref{lem:transfer} transfers this formula from the real
coefficient tuples $\vP(t)$ to their coefficient-germ tuple. Thus, $f_K$ is
Lorentzian over $K$, and
Definition~\ref{def:tropK} gives
\[
 \trop(f_K)(\va)
 =\operatorname{val}p_{\va}=\nu(\va).
\]
Passing to normalized coefficients does not change this valuation because
$\operatorname{val}(\va!)=0$. Indeed, if
$\widehat p_{\va}:=\va!p_{\va}$ denotes the normalized coefficient, then
\[
 \operatorname{val}(\widehat p_{\va})
 =\operatorname{val}(\va!p_{\va})
 =\operatorname{val}(\va!)+\operatorname{val}(p_{\va})
 =\operatorname{val}(p_{\va}).
\]
The tropicalization theorem
\cite[Theorem~3.20]{BH20} now implies that $\nu$ is M-convex.
Together with \eqref{eq:escape-negative}, this completes the proof.
\end{proof}

\begin{lemma}\label{lem:BRcone}
$\BR_{\mathring L}(n,k)$ is a convex cone.  
\end{lemma}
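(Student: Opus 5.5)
We need to show that $\BR_{\mathring L}(n,k)$ contains $\bm 0$, is closed under multiplication by nonnegative scalars, and is closed under addition. Each property will follow directly from the multiplicative structure $R_{\vg+\vg'}=R_{\vg}R_{\vg'}$ and $R_{c\vg}=R_{\vg}^{\,c}$, both valid because every coordinate $P_{\va}$ of a point $\vP\in\mathring L(n,k)$ is strictly positive. No input from the earlier duality results is needed.

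First, the zero vector gives $R_{\bm 0}\equiv1$, which is bounded by $C=1$, so $\bm0\in\BR_{\mathring L}(n,k)$.

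Next, take $\vg\in\BR_{\mathring L}(n,k)$ with bound $C>0$ and a scalar $c>0$. For every $\vP\in\mathring L(n,k)$,
\[
 R_{c\vg}(\vP)=\prod_{\va}P_{\va}^{c\gamma_{\va}}=R_{\vg}(\vP)^{c}\le C^{c},
\]
because $x\mapsto x^c$ is increasing on $\Rpos$. Hence $c\vg$ is bounded with constant $C^c$. The case $c=0$ is the zero vector already handled.

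Finally, take $\vg,\vg'\in\BR_{\mathring L}(n,k)$ with bounds $C$ and $C'$. Then
\[
 R_{\vg+\vg'}(\vP)=R_{\vg}(\vP)\,R_{\vg'}(\vP)\le CC'
\]
for every $\vP\in\mathring L(n,k)$, since both factors are positive and bounded above. Thus $\vg+\vg'$ is bounded. Combining closure under nonnegative scaling and under addition shows that the set is a convex cone.

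The only point needing care is that the exponents may be arbitrary real numbers, so $R_{\vg}$ must be well defined. This is guaranteed by $\mathring L(n,k)\subseteq\Rpos^{H(n,k)}$. There is no genuine obstacle. The statement does not claim closedness or polyhedrality; those will follow later from the duality established via Theorem~\ref{lem:duality-necessary} and Lemma~\ref{lem:escape}.
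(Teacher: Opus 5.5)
Your proof is correct and follows essentially the same route as the paper: the paper uses the multiplicativity $R_{\sum_a c_a\vg_a}=\prod_a R_{\vg_a}^{c_a}$ on positive coefficient vectors and bounds the product by $\prod_a C_a^{c_a}$. You do the same thing with scaling and addition checked separately rather than in one combined step.
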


\begin{proof}
For positive coefficient vectors,
\[
 R_{\sum_a c_a\vg_a}=\prod_a R_{\vg_a}^{c_a}.
\]
If $R_{\vg_a}\le C_a$ on $\mathring L(n,k)$, the right-hand side is at most
$\prod_a C_a^{c_a}$.  This proves the assertion.
\end{proof}

We can now complete the proof of the main structural theorem.

\begin{theoremA}[Main structural theorem]\label{thm:BRL-duality}
One has
\[
  \BR_{\mathring{L}}(n,k) \;=\; \mathcal{M}_{n,k}^{\vee}
  \;=\; \bigl\{ \vg \in V_{n,k} \;:\;
  \langle \vg, \nu \rangle \ge 0 \ \text{for every M-convex } \nu \bigr\} .
\]
\end{theoremA}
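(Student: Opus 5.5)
The plan is to prove the two equalities separately. The second equality, identifying $\mathcal M_{n,k}^{\vee}$ with the displayed set, is formal. The first will come from combining Theorem~\ref{lem:duality-necessary} (one inclusion) with Lemma~\ref{lem:escape} (the other). The one real difficulty is that Lemma~\ref{lem:escape} only applies to integer exponent vectors.

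\emph{Second equality.} By Lemma~\ref{lem:quotientdual}, the polar $\mathcal M_{n,k}^{\vee}$ is taken inside $V_{n,k}$ with respect to the pairing with $\mathbb R^{H(n,k)}/\operatorname{Aff}(H(n,k))$. For $\vg\in V_{n,k}$, Lemma~\ref{lem:balanced-affine} shows that $\langle\vg,[\nu]\rangle=\langle\vg,\nu\rangle$ is well defined on classes. By Lemma~\ref{lem:Mpolyhedral}, every element of $\mathcal M_{n,k}$ is a finite sum of classes of M-convex functions. Hence $\vg\in V_{n,k}$ is nonnegative on $\mathcal M_{n,k}$ if and only if $\langle\vg,\nu\rangle\ge0$ for every M-convex $\nu$.

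\emph{Inclusion $\BR_{\mathring L}(n,k)\subseteq\mathcal M_{n,k}^{\vee}$.} This is exactly Theorem~\ref{lem:duality-necessary}: a bounded ratio is balanced and pairs nonnegatively with every M-convex function.

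\emph{Reverse inclusion.} This is the main obstacle, because Lemma~\ref{lem:escape} is stated only for integer $\vg$, while $\mathcal M_{n,k}^{\vee}$ contains real vectors. I will use rationality.
\begin{enumerate}
\item By Lemma~\ref{lem:Mpolyhedral}, $\mathcal M_{n,k}$ is a rational polyhedral cone in the quotient, whose rational structure is the one induced from $\mathbb Q^{H(n,k)}$.
\item Under the identification of Lemma~\ref{lem:quotientdual}, the rational dual of the quotient is $V_{n,k}\cap\mathbb Q^{H(n,k)}$. This holds because $V_{n,k}$ is cut out by integer linear equations, so the rational structures match.
\item By Definition~\ref{def:rational}, the polar $\mathcal M_{n,k}^{\vee}$ is therefore a rational polyhedral cone, i.e. $\mathcal M_{n,k}^{\vee}=\operatorname{cone}(\vg_1,\dots,\vg_r)$ with each $\vg_s\in V_{n,k}\cap\mathbb Q^{H(n,k)}$.
\item Multiplying by positive integers, I may assume each $\vg_s\in\mathbb Z^{H(n,k)}$.
\end{enumerate}

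For each integral generator $\vg_s$, suppose $R_{\vg_s}$ were unbounded on $\mathring L(n,k)$. Lemma~\ref{lem:escape} would then give an M-convex $\nu$ with $\langle\vg_s,\nu\rangle<0$, contradicting $\vg_s\in\mathcal M_{n,k}^{\vee}$. So each $\vg_s$ lies in $\BR_{\mathring L}(n,k)$.

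Finally, Lemma~\ref{lem:BRcone} says $\BR_{\mathring L}(n,k)$ is a convex cone, so every nonnegative real combination of the $\vg_s$ is a bounded ratio, with bound $\prod_s C_s^{c_s}$. This gives $\mathcal M_{n,k}^{\vee}\subseteq\BR_{\mathring L}(n,k)$.

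The step to check most carefully is the rational-structure claim in item 2: that the polar is generated by rational vectors. Passing to generators avoids any limiting argument, which would otherwise be needed because $\BR_{\mathring L}(n,k)$ is not known in advance to be closed.
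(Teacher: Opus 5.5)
Your proposal is correct and follows the paper's proof essentially step for step. The second equality comes from identifying the quotient's dual with $V_{n,k}$, the first inclusion is Theorem~\ref{lem:duality-necessary}, and the reverse inclusion applies Lemma~\ref{lem:escape} to integral generators of the rational polyhedral polar $\mathcal M_{n,k}^{\vee}$ and then uses Lemma~\ref{lem:BRcone}. Your explicit check that the rational dual of the quotient is $V_{n,k}\cap\mathbb Q^{H(n,k)}$ is correct, and it supplies a detail the paper leaves implicit in ``after clearing denominators.''
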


\begin{proof}
By Lemma~\ref{lem:quotientdual}, the dual of
$\mathbb R^{H(n,k)}/\operatorname{Aff}(H(n,k))$ is canonically identified with
$V_{n,k}$.  Since $\mathcal M_{n,k}$ is the closed conical hull of the classes
of M-convex functions, its polar is therefore
\[
 \mathcal M_{n,k}^{\vee}
 =\bigl\{\vg\in V_{n,k}:\langle\vg,\nu\rangle\ge0
       \text{ for every M-convex function }\nu\bigr\}.
\]
Here the pairing depends only on the class of $\nu$ by
Lemma~\ref{lem:balanced-affine}.  This proves the second equality.  We prove
the first equality by establishing its two inclusions.

\emph{First inclusion.}  Let
$\vg\in\BR_{\mathring L}(n,k)$.  The necessary direction,
Theorem~\ref{lem:duality-necessary}, states that $\vg\in V_{n,k}$ and
$\langle\vg,\nu\rangle\ge0$ for every M-convex function $\nu$.  Hence the
displayed description of the polar gives
$\vg\in\mathcal M_{n,k}^{\vee}$.  Thus
\[
 \BR_{\mathring L}(n,k)\subseteq\mathcal M_{n,k}^{\vee}.
\]

\emph{Reverse inclusion.}  First suppose that
$\vg\in\mathcal M_{n,k}^\vee\cap\mathbb Z^{H(n,k)}$.  If $R_{\vg}$ were
unbounded on $\mathring L(n,k)$, the semialgebraic escape curve lemma,
Lemma~\ref{lem:escape}, would produce a rational-valued M-convex function $\nu$
such that $\langle\vg,\nu\rangle<0$.  This contradicts
$\vg\in\mathcal M_{n,k}^\vee$.  Therefore
\[
 \mathcal M_{n,k}^{\vee}\cap\mathbb Z^{H(n,k)}
 \subseteq\BR_{\mathring L}(n,k).
\]

By Lemma~\ref{lem:Mpolyhedral}, $\mathcal M_{n,k}$ is rational polyhedral.
Definition~\ref{def:rational} therefore shows that its polar is rational
polyhedral as well.  After clearing denominators, there are integral vectors
$\vg_1,\ldots,\vg_r\in\mathcal M_{n,k}^{\vee}$ such that
\[
 \mathcal M_{n,k}^{\vee}=\operatorname{cone}(\vg_1,\ldots,\vg_r).
\]
The preceding paragraph gives
$\vg_1,\ldots,\vg_r\in\BR_{\mathring L}(n,k)$, and
Lemma~\ref{lem:BRcone} shows that $\BR_{\mathring L}(n,k)$ is a convex cone.
Thus $\mathcal M_{n,k}^{\vee}\subseteq\BR_{\mathring L}(n,k)$, completing
the proof.
\end{proof}

\section{The cone of bounded ratios and its facet consequences}\label{sec:facets}

We next derive the facet consequences of Theorem~\ref{thm:BRL-duality}.
Recall from Definition~\ref{def:strictly-lorentzian} that we write
$\partial_i=\partial/\partial x_i$, so that
$\partial_i\vx^{\va}=\alpha_i\vx^{\va-\ve_i}$. Our first ingredient is the
following Bernstein--B\'ezier derivative formula.

\begin{lemma}\label{lem:bernstein}
For a function $a:H(n,k)\to\mathbb R$, define its \defn{Bernstein polynomial} by
\[
 \mathcal B_a(\vx)=
 \sum_{\va\in H(n,k)}\binom{n}{\va} a(\va)\vx^{\va}.
\]
Then, for all $i,j$,
\begin{equation}\label{eq:bernstein}
 (\partial_i-\partial_j)^2\mathcal B_a
 =
 n(n-1)\sum_{\vb\in H(n-2,k)}
 \binom{n-2}{\vb}\,d^{\vb}_{ij}(a)\,\vx^{\vb} .
\end{equation}
\end{lemma}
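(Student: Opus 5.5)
The plan is a direct coefficient computation: expand $(\partial_i-\partial_j)^2=\partial_i^2-2\partial_i\partial_j+\partial_j^2$, apply each second-order operator to $\mathcal B_a$, and reindex so that every term becomes a sum over $\vb\in H(n-2,k)$. For $i\ne j$ this is routine; for $i=j$ both sides vanish, since the operator is zero and $d^{\vb}_{ii}(a)=0$. So we fix $i\ne j$.

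\emph{Step 1.} First we record the key identity for a single monomial:
\[
 \partial_i\partial_j\Bigl(\binom{n}{\va}\vx^{\va}\Bigr)=n(n-1)\binom{n-2}{\va-\ve_i-\ve_j}\vx^{\va-\ve_i-\ve_j},
\]
valid whenever $\alpha_i,\alpha_j\ge1$ (the left side is zero otherwise). We also need the analogous identity for $\partial_i^2$, with $\va-2\ve_i$ in place of $\va-\ve_i-\ve_j$, valid when $\alpha_i\ge2$. Both follow from
\[
 \binom{n}{\va}=\frac{n!}{\va!},\qquad \partial_i\partial_j\vx^{\va}=\alpha_i\alpha_j\vx^{\va-\ve_i-\ve_j},
\]
together with the cancellation $\alpha_i\alpha_j/\va!=1/(\va-\ve_i-\ve_j)!$ and $n!=n(n-1)(n-2)!$.

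\emph{Step 2.} Next we substitute $\va=\vb+\ve_i+\ve_j$ and $\va=\vb+2\ve_i$, respectively. Each map $\vb\mapsto\va$ is a bijection from $H(n-2,k)$ onto the set of $\va\in H(n,k)$ whose relevant derivative is nonzero. This gives
\[
 \partial_i^2\mathcal B_a=n(n-1)\sum_{\vb}\binom{n-2}{\vb}a(\vb+2\ve_i)\vx^{\vb},
\]
and the similar formulas for $\partial_j^2$ and $\partial_i\partial_j$, the latter with $a(\vb+\ve_i+\ve_j)$.

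\emph{Step 3.} Finally we combine the three sums with coefficients $1,-2,1$. The coefficient of $\vx^{\vb}$ becomes
\[
 a(\vb+2\ve_i)+a(\vb+2\ve_j)-2a(\vb+\ve_i+\ve_j)=d^{\vb}_{ij}(a),
\]
which is exactly \eqref{eq:bernstein}.

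There is no real obstacle here. The only care needed is in the multinomial bookkeeping of Step 1 and in checking that the reindexing of Step 2 is bijective, which holds because the boundary terms (those with $\alpha_i<2$ or $\alpha_i\alpha_j=0$) are killed by the derivative.
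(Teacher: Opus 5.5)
Your proposal is correct and follows essentially the same route as the paper. Both dispose of the trivial case $i=j$, compute the contributions of $\partial_i^2$, $\partial_j^2$, and $\partial_i\partial_j$ through the same factorial cancellation, and combine them with weights $1,-2,1$; the only difference is that the paper fixes $\vb$ and extracts the coefficient of $\vx^{\vb}$ (computed as $n!/\vb!$ times the relevant value of $a$), whereas you reindex the whole sums, which is equivalent.
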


\begin{proof}
Both sides of \eqref{eq:bernstein} are homogeneous polynomials of degree
$n-2$. It therefore suffices to compare the coefficient of
$\vx^{\vb}$ for an arbitrary
$\vb=(\beta_1,\ldots,\beta_k)\in H(n-2,k)$.

First suppose that $i=j$.  Then $\partial_i-\partial_j=0$, so the left-hand
side vanishes.  The definition of the local discrete Hessian gives
\[
 d_{ii}^{\vb}(a)
 =a(\vb+2\ve_i)+a(\vb+2\ve_i)-2a(\vb+2\ve_i)
 =0,
\]
so the right-hand side vanishes as well.  It remains to consider
$i\ne j$.  Since $|\vb|_1=n-2$, the three multi-indices
$\vb+2\ve_i$, $\vb+\ve_i+\ve_j$, and $\vb+2\ve_j$ all belong to
$H(n,k)$.  We use
\[
 \binom n{\va}=\frac{n!}{\va!},
 \qquad
 \vb!=\prod_{r=1}^k\beta_r!.
\]

For a polynomial $G$, write $[\vx^{\vb}]\,G$ for the coefficient of
$\vx^{\vb}$ in $G$.  Only the term of $\mathcal B_a$ indexed by
$\vb+2\ve_i$ can produce the monomial $\vx^{\vb}$ after applying
$\partial_i^2$, and
\[
 \partial_i^2\vx^{\vb+2\ve_i}
 =(\beta_i+2)(\beta_i+1)\vx^{\vb}.
\]
Therefore
\begin{align*}
 [\vx^{\vb}]\,\partial_i^2\mathcal B_a
 &=
 \binom n{\vb+2\ve_i}
 (\beta_i+2)(\beta_i+1)a(\vb+2\ve_i)\\
 &=
 \frac{n!}
 {(\beta_i+2)!\prod_{r\ne i}\beta_r!}
 (\beta_i+2)(\beta_i+1)a(\vb+2\ve_i)\\
 &=
 \frac{n!}{\vb!}\,a(\vb+2\ve_i).
\end{align*}
In the last equality, the factors produced by differentiation cancel the
corresponding factors in $(\beta_i+2)!$. Interchanging $i$ and $j$ gives
\[
 [\vx^{\vb}]\,\partial_j^2\mathcal B_a
 =\frac{n!}{\vb!}\,a(\vb+2\ve_j).
\]

For the mixed derivative, the unique contributing term is indexed by
$\vb+\ve_i+\ve_j$.  Since $i\ne j$,
\[
 \partial_i\partial_j\vx^{\vb+\ve_i+\ve_j}
 =(\beta_i+1)(\beta_j+1)\vx^{\vb}.
\]
Hence
\begin{align*}
 [\vx^{\vb}]\,\partial_i\partial_j\mathcal B_a
 &=
 \binom n{\vb+\ve_i+\ve_j}
 (\beta_i+1)(\beta_j+1)
 a(\vb+\ve_i+\ve_j)\\
 &=
 \frac{n!}
 {(\beta_i+1)!(\beta_j+1)!
  \prod_{r\ne i,j}\beta_r!}
 (\beta_i+1)(\beta_j+1)
 a(\vb+\ve_i+\ve_j)\\
 &=
 \frac{n!}{\vb!}\,a(\vb+\ve_i+\ve_j).
\end{align*}

\par\noindent
Again, the factors produced by differentiation cancel the factors
$(\beta_i+1)$ and $(\beta_j+1)$ in the denominator.

Finally, using
$(\partial_i-\partial_j)^2
=\partial_i^2-2\partial_i\partial_j+\partial_j^2$
and combining the three coefficient computations gives
\begin{align*}
 [\vx^{\vb}]\,(\partial_i-\partial_j)^2\mathcal B_a
 &=
 \frac{n!}{\vb!}
 \Bigl(
 a(\vb+2\ve_i)+a(\vb+2\ve_j)
 -2a(\vb+\ve_i+\ve_j)
 \Bigr)\\
 &=
 \frac{n!}{\vb!}\,d_{ij}^{\vb}(a).
\end{align*}
On the other hand, the coefficient of $\vx^{\vb}$ on the right-hand side of
\eqref{eq:bernstein} is
\[
 n(n-1)\binom{n-2}{\vb}d_{ij}^{\vb}(a)
 =
 n(n-1)\frac{(n-2)!}{\vb!}d_{ij}^{\vb}(a)
 =
 \frac{n!}{\vb!}d_{ij}^{\vb}(a).
\]
Thus the coefficients agree for every $\vb\in H(n-2,k)$, and the two
degree-$(n-2)$ polynomials are equal.
\end{proof}

\begin{lemma}\label{lem:hessiankernel}
For $a\colon H(n,k)\to\mathbb R$, the following are equivalent:
\begin{enumerate}
 \item[(i)] $d^{\vb}(a)=0$ for every $\vb\in H(n-2,k)$,
 \item[(ii)] $a\in\operatorname{Aff}(H(n,k))$.
\end{enumerate}
\end{lemma}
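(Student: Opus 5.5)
The implication (ii)$\Rightarrow$(i) is Lemma~\ref{lem:affine-hessian}, so the plan is to prove (i)$\Rightarrow$(ii). I will argue directly with the discrete Hessian; the Bernstein formula of Lemma~\ref{lem:bernstein} is not needed. The idea is to subtract an affine function chosen to make $a$ vanish on a spanning set of points, and then show the remainder is identically zero.

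\emph{Normalization.} By Lemma~\ref{lem:affine}, the affine functions on $H(n,k)$ are exactly the functions $\va\mapsto\sum_i b_i\alpha_i$ with $\bm b\in\mathbb R^k$ arbitrary. Choose $b_i=a(n\ve_i)/n$. Then $a'=a-\sum_i b_i\alpha_i$ vanishes at every vertex $n\ve_i$. Hypothesis (i) still holds for $a'$ by Lemma~\ref{lem:affine-hessian}. It therefore suffices to show that a function satisfying (i) and vanishing at all vertices vanishes identically.

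\emph{One-dimensional step.} Fix $i\ne j$ and restrict to the edge points $\va=(n-m)\ve_i+m\ve_j$, $0\le m\le n$. Taking $\vb=(n-m-1)\ve_i+(m-1)\ve_j$ in $d^{\vb}_{ij}(a')=0$ gives
\[
 a'\bigl((n-m+1)\ve_i+(m-1)\ve_j\bigr)+a'\bigl((n-m-1)\ve_i+(m+1)\ve_j\bigr)
 =2a'\bigl((n-m)\ve_i+m\ve_j\bigr).
\]
So along the edge, $a'$ has vanishing second differences. It is therefore an affine function of $m$, and since it is zero at both endpoints, it is zero along the whole edge.

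\emph{Induction over the rest of $H(n,k)$.} I will use the general identity in the same form: for any $\vb\in H(n-2,k)$ and $i\ne j$, the value at $\vb+\ve_i+\ve_j$ equals the average of the values at $\vb+2\ve_i$ and $\vb+2\ve_j$. I plan an induction on the support size $s=|\{r:\alpha_r>0\}|$ of $\va$; the case $s\le2$ is the edge case above. For $s\ge3$, pick $i\ne j$ in the support and put $\vb=\va-\ve_i-\ve_j$. Then $\va$ is the midpoint of $\vb+2\ve_i$ and $\vb+2\ve_j$, but these still have support size $s$ in general, so a naive support-size induction does not close.

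The fix I intend is a double induction: outer induction on $s$, inner induction on $\alpha_j$ for a fixed $j$ in the support. Consider the line $\va+t(\ve_i-\ve_j)$ for $t\in[-\alpha_i,\alpha_j]$. The relation above, applied at each interior point of this segment, says $a'$ is affine in $t$ along it. Both endpoints have support size $s-1$ (one drops $i$, the other drops $j$), so by the induction hypothesis $a'$ vanishes at both. Hence $a'$ vanishes on the whole segment, in particular at $\va$. This makes the outer induction on $s$ close directly, and the inner induction is unnecessary. I expect this bookkeeping — confirming that each interior relation is an instance of $d^{\vb}_{ij}=0$ with $\vb\in H(n-2,k)$ — to be the main obstacle. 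It holds because every interior point of the segment has both the $i$- and $j$-coordinates positive.
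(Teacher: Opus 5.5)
Your argument is correct, and it takes a genuinely different route from the paper's.

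\emph{The paper's route.} It passes to the Bernstein polynomial. Lemma~\ref{lem:bernstein} turns the hypothesis into $(\partial_p-\partial_q)^2\mathcal B_a=0$, and polarization then kills the Hessian of $\mathcal B_a$ on the tangent space $\sum_r z_r=0$. Hence $\mathcal B_a$ is affine on the hyperplane $x_1+\cdots+x_k=1$. Homogeneity and a density argument give $\mathcal B_a=(x_1+\cdots+x_k)^{n-1}L(\vx)$, and comparing coefficients yields $a(\va)=\frac1n\sum_i b_i\alpha_i$.

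\emph{Your route.} You stay on the lattice. First you subtract the barycentric interpolation $\va\mapsto\frac1n\sum_i\alpha_i\,a(n\ve_i)$ of the vertex values. Then you induct on support size. Along each segment $\va+t(\ve_i-\ve_j)$ with $-\alpha_i\le t\le\alpha_j$, every interior point has both coordinates $i$ and $j$ positive. So $\va+t(\ve_i-\ve_j)-\ve_i-\ve_j\in H(n-2,k)$, and $d_{ij}=0$ at that base point is exactly a vanishing second difference in $t$. The function is therefore an arithmetic progression vanishing at two distinct endpoints of smaller support, hence zero.

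\emph{Comparison.} Your argument is elementary and self-contained, and it avoids the homogeneity and density extension. It also gives the explicit conclusion that $a$ is the linear interpolation of its values at the vertices $n\ve_i$. The paper's route costs more here, but it sets up the Bernstein--B\'ezier derivative calculus once. That calculus is reused in Lemma~\ref{lem:split}, where the same operator identities show that $Q$ depends only on the two block sums.

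\emph{Write-up.} Drop the detour about a double induction. The single induction on support size already closes, with base case $s=1$ (the vertices, which vanish after normalization). Your separate edge step is just the $s=2$ instance of the general segment argument.
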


\begin{proof}
If $a\in\operatorname{Aff}(H(n,k))$, then $d^{\vb}(a)=0$ for every
$\vb\in H(n-2,k)$ by Lemma~\ref{lem:affine-hessian}.  We prove the converse.

Assume that $d^{\vb}(a)=0$ for every $\vb\in H(n-2,k)$.  For a constant
vector $z\in\mathbb R^k$, write
$D_z=\sum_{r=1}^k z_r\partial_r$ for the directional derivative in the
direction $z$. By Lemma~\ref{lem:bernstein}, for every
$p,q\in\{1,\ldots,k\}$ we have the polynomial identity
\[
 D_{\ve_p-\ve_q}^{2}\mathcal B_a
 =(\partial_p-\partial_q)^2\mathcal B_a=0.
\]

We first show that the Hessian of $\mathcal B_a$ vanishes in all directions
tangent to the hyperplanes $\sum_r x_r=\text{constant}$.  For
$i=1,\ldots,k-1$, put $v_i=\ve_i-\ve_k$.  Then
\[
 D_{v_i}=\partial_i-\partial_k,
 \qquad
 D_{v_i-v_j}=\partial_i-\partial_j.
\]
Hence the preceding identities give
\[
 D_{v_i}^{2}\mathcal B_a
 =D_{v_j}^{2}\mathcal B_a
 =D_{v_i-v_j}^{2}\mathcal B_a=0.
\]
Since constant-coefficient directional derivatives commute, polarization
gives
\[
 2D_{v_i}D_{v_j}
 =D_{v_i}^{2}+D_{v_j}^{2}-D_{v_i-v_j}^{2}.
\]
It follows that
\[
 D_{v_i}D_{v_j}\mathcal B_a=0
 \qquad(1\le i,j\le k-1).
\]
The vectors $v_1,\ldots,v_{k-1}$ form a basis of
\[
 T=\left\{z\in\mathbb R^k:\sum_{r=1}^k z_r=0\right\}.
\]
Indeed, if $z\in T$, then
$z=\sum_{i=1}^{k-1}z_i(\ve_i-\ve_k)$.  Since $D_uD_v\mathcal B_a$ is
bilinear in the two direction vectors $u$ and $v$, the preceding identities
therefore imply
\[
 D_uD_v\mathcal B_a=0\qquad\text{for every }u,v\in T.
\]
Equivalently, at every point $\vx\in\mathbb R^k$ and for every $u,v\in T$,
\[
 u^{\mathsf T}\operatorname{Hess}(\mathcal B_a)(\vx)v=0.
\]

Now consider the affine hyperplane
\[
 \Pi=\left\{\vx\in\mathbb R^k:x_1+\cdots+x_k=1\right\}
\]
and parametrize it by
\[
 \Phi(y_1,\ldots,y_{k-1})
 =\left(y_1,\ldots,y_{k-1},1-y_1-\cdots-y_{k-1}\right).
\]
Set $g=\mathcal B_a\circ\Phi$.  Since
$\partial\Phi/\partial y_i=v_i$, the chain rule gives
\[
 \frac{\partial^2g}{\partial y_i\partial y_j}(\bm{y})
 =D_{v_i}D_{v_j}\mathcal B_a\bigl(\Phi(\bm{y})\bigr)=0
 \qquad(1\le i,j\le k-1).
\]
Every second partial derivative of the polynomial $g$ therefore vanishes,
so $g$ has degree at most one.  Consequently, there are
$c,\lambda_1,\ldots,\lambda_{k-1}\in\mathbb R$ such that
\[
 g(\bm{y})=c+\sum_{i=1}^{k-1}\lambda_i y_i.
\]
Define the linear form
\[
 L(\vx)=c\sum_{r=1}^k x_r+\sum_{i=1}^{k-1}\lambda_i x_i
       =\sum_{r=1}^k b_rx_r.
\]
On $\Pi$ we have $\sum_r x_r=1$ and $y_i=x_i$, so
$\mathcal B_a|_{\Pi}=L|_{\Pi}$.

It remains to recover $\mathcal B_a$ away from $\Pi$.  Put
$s=x_1+\cdots+x_k$.  If $s\ne0$, then $\vx/s\in\Pi$.  Since
$\mathcal B_a$ is homogeneous of degree $n$ and $L$ is homogeneous of degree
one,
\[
 \mathcal B_a(\vx)
 =s^n\mathcal B_a(\vx/s)
 =s^nL(\vx/s)
 =s^{n-1}L(\vx).
\]
The set $\{\vx:s\ne0\}$ is dense in $\mathbb R^k$. Since both sides are
polynomials, the equality therefore extends to $s=0$. Consequently,
\[
 \mathcal B_a(\vx)
 =(x_1+\cdots+x_k)^{n-1}\sum_{r=1}^k b_rx_r.
\]

Finally, fix $\va=(\alpha_1,\ldots,\alpha_k)\in H(n,k)$.  By the definition
of the Bernstein polynomial,
\[
 [\vx^{\va}]\mathcal B_a=\binom n\va a(\va)
 =\frac{n!}{\va!}a(\va).
\]
On the other hand, the term $b_ix_i$ can contribute to the coefficient of
$\vx^{\va}$ only when $\alpha_i>0$, and in that case its contribution is
$b_i\binom{n-1}{\va-\ve_i}$.  Hence
\begin{align*}
 [\vx^{\va}]\,(x_1+\cdots+x_k)^{n-1}L(\vx)
 &=\sum_{i:\,\alpha_i>0}b_i\frac{(n-1)!}{(\va-\ve_i)!}\\
 &=\frac{(n-1)!}{\va!}\sum_{i=1}^k b_i\alpha_i,
\end{align*}
where $(\va-\ve_i)!=\va!/\alpha_i$ when $\alpha_i>0$.  Comparing the two
coefficients and cancelling $(n-1)!/\va!$ gives
\[
 a(\va)=\frac1n\sum_{i=1}^k b_i\alpha_i.
\]
Thus $a$ is the restriction to $H(n,k)$ of a linear, and hence affine,
function on $\mathbb R^k$.  Therefore
$a\in\operatorname{Aff}(H(n,k))$.
\end{proof}

\begin{lemma}\label{lem:Mpointed}
The cone $\mathcal M_{n,k}$ is pointed.
\end{lemma}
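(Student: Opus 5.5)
We need to show that if $[\nu]$ and $-[\nu]$ both lie in $\mathcal M_{n,k}$, then $[\nu]=0$. The plan is to find a linear functional on the quotient that is nonnegative on every M-convex class and vanishes on such a class only if the class is trivial. Since $\mathcal M_{n,k}$ is the conical hull of M-convex classes, this forces pointedness.

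The natural functional comes from the local discrete Hessian. By Lemma~\ref{lem:affine-hessian}, the local metric field depends only on the class $[f]$. Define
\[
 \Psi([f])=\sum_{\vb\in H(n-2,k)}\sum_{i\ne j} d^{\vb}_{ij}(f).
\]
This is a well-defined linear functional on $\mathbb R^{H(n,k)}/\operatorname{Aff}(H(n,k))$.

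The first key step is to show that M-convexity forces $d^{\vb}_{ij}(\nu)\ge0$ for every $\vb$ and $i\ne j$. Apply the exchange axiom of Definition~\ref{def:Mconvex} with $\va=\vb+2\ve_i$, $\va'=\vb+2\ve_j$ and coordinate $i$. The only index with $\alpha_{j'}<\alpha'_{j'}$ is $j'=j$. Hence
\[
 \nu(\vb+2\ve_i)+\nu(\vb+2\ve_j)\ge 2\nu(\vb+\ve_i+\ve_j),
\]
which is exactly $d^{\vb}_{ij}(\nu)\ge0$. Consequently $\Psi\ge0$ on every M-convex class. By linearity, $\Psi\ge0$ on all of $\mathcal M_{n,k}$; Lemma~\ref{lem:Mpolyhedral} lets us write its elements as finite sums of M-convex classes.

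Now suppose $x\in\mathcal M_{n,k}$ and $-x\in\mathcal M_{n,k}$. Write $x=\sum_a[\nu_a]$ and $-x=\sum_b[\mu_b]$ with all $\nu_a,\mu_b$ M-convex. Then $\sum_a[\nu_a]+\sum_b[\mu_b]=0$, so $\Psi$ of this sum is zero. Every summand of $\Psi$ is a nonnegative entry $d^{\vb}_{ij}$, so each entry $d^{\vb}_{ij}(\nu_a)$ and $d^{\vb}_{ij}(\mu_b)$ must vanish. In particular, every local discrete Hessian of each $\nu_a$ is zero. Lemma~\ref{lem:hessiankernel} then gives $\nu_a\in\operatorname{Aff}(H(n,k))$, so $[\nu_a]=0$ and therefore $x=0$.

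The only step needing care is the exchange argument. The definition of M-convexity requires the chosen $j$ to satisfy $\alpha_j<\beta_j$, and we must confirm it is uniquely $j$ here. It is, since $\va-\va'=2\ve_i-2\ve_j$. The rest is bookkeeping, with the real content supplied by Lemma~\ref{lem:hessiankernel}.
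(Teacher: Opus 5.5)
Your proof is correct and follows essentially the same route as the paper. The paper likewise derives $d^{\vb}_{ij}(\nu)\ge0$ from the exchange axiom at $\vb+2\ve_i,\vb+2\ve_j$ and extends it to finite sums of M-convex functions. It then compares $d^{\vb}(g)=-d^{\vb}(f)$ to force all local Hessians to vanish and concludes with Lemma~\ref{lem:hessiankernel}. Your packaging of this argument through the functional $\Psi$, which is strictly positive on $\mathcal M_{n,k}\setminus\{0\}$, is only a cosmetic difference.
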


\begin{proof}
We first prove an elementary consequence of the M-convex exchange property.  If
$\nu$ is M-convex, $\vb\in H(n-2,k)$, and $i\ne j$, we apply the axiom to
$\vb+2\ve_i$ and $\vb+2\ve_j$.  The only possible exchange partner for $i$ is
$j$, so
\[
 \nu(\vb+2\ve_i)+\nu(\vb+2\ve_j)
 \ge2\nu(\vb+\ve_i+\ve_j).
\]
Thus every entry of every $d^{\vb}(\nu)$ is nonnegative.  The same is true for
any nonnegative sum of M-convex functions.

Suppose $[f]$ and $-[f]$ both lie in $\mathcal M_{n,k}$.  By
Lemma~\ref{lem:Mpolyhedral}, choose representatives $f$ and $g$, each a finite
sum of M-convex functions, for these two classes.  Since $[g]=-[f]$, there is
an affine function $\ell$ such that $g=-f+\ell$.  Local Hessians annihilate
affine functions, so
\[
 d^{\vb}(g)=-d^{\vb}(f)\qquad\text{for every }\vb.
\]
Since both $d^{\vb}(g)$ and $d^{\vb}(f)$ are entrywise nonnegative, the
preceding identity forces $d^{\vb}(f)=0$ for every $\vb$.
Lemma~\ref{lem:hessiankernel} now implies that
$f$ is affine and $[f]=0$.  Therefore
$\mathcal M_{n,k}\cap(-\mathcal M_{n,k})=\{0\}$.
\end{proof}

For $k\ge4$, M-convex functions are not closed under addition. Accordingly,
although every element of $\mathcal M_{n,k}$ is a finite sum of classes of
M-convex functions by Lemma~\ref{lem:Mpolyhedral}, it need not be represented
by a single M-convex function. On an extreme ray, however, one can always
choose such a representative.

\begin{lemma}
\label{lem:Mextremerep}
Every extreme ray of $\mathcal M_{n,k}$ contains the class of an M-convex
function.
\end{lemma}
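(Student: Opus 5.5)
The plan is to use the description of $\mathcal M_{n,k}$ from Lemma~\ref{lem:Mpolyhedral} as a finite union of cones, followed by a Minkowski sum and a projection. By that lemma the set of M-convex functions equals $C_1\cup\cdots\cup C_N$, where each $C_s$ is a rational polyhedral cone consisting of M-convex functions. Moreover
\[
 \mathcal M_{n,k}=\pi(C_1)+\cdots+\pi(C_N),
\]
with $\pi$ the quotient map. By Lemma~\ref{lem:Mpointed} the cone $\mathcal M_{n,k}$ is pointed, so it is the conical hull of its extreme rays and extreme rays make sense. Fix an extreme ray $\rho$ and a nonzero element $[f]\in\rho$.

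The first step writes $[f]=\sum_s \pi(g_s)$ with $g_s\in C_s$. Each $\pi(g_s)$ lies in $\mathcal M_{n,k}$. Because $\rho$ is an extreme ray of a pointed cone, any decomposition of an element of $\rho$ as a sum of elements of the cone has every summand in $\rho$. This is the face property: if $x+y\in\rho$ with $x,y\in\mathcal M_{n,k}$, then $x,y\in\rho$. Hence each $\pi(g_s)$ is a nonnegative multiple of $[f]$. Since $[f]\neq0$, at least one summand $\pi(g_{s_0})$ is nonzero, and it equals $c[f]$ for some $c>0$.

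The second step concludes. The function $g_{s_0}$ lies in $C_{s_0}$ and is therefore M-convex. Its class $\pi(g_{s_0})=c[f]$ is a nonzero element of $\rho$. This shows that $\rho$ contains the class of an M-convex function. If a representative of $[f]$ itself is wanted, $c^{-1}g_{s_0}$ works, since positive multiples of M-convex functions are M-convex (the exchange inequalities are homogeneous). By Lemma~\ref{lem:Maffine}, adding an affine function preserves M-convexity, so any representative of this class differing by an affine function is also M-convex.

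The only point needing care is the face property for extreme rays in the quotient space, and pointedness from Lemma~\ref{lem:Mpointed} guarantees it. Suppose $x+y=\lambda[f]$ with $x,y\in\mathcal M_{n,k}$. Extremality of $\rho$, meaning that $\rho$ is a one-dimensional face, gives $x,y\in\rho$ directly. For several summands, induct by grouping them: $x=\pi(g_1)$ and $y=\sum_{s\ge2}\pi(g_s)\in\mathcal M_{n,k}$. I expect no real obstacle beyond stating this face property cleanly.
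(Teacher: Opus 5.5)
Your proof is correct and is essentially the paper's argument: decompose a nonzero element of the ray along $\mathcal M_{n,k}=\pi(C_1)+\cdots+\pi(C_N)$, and use extremality to force every nonzero summand $\pi(g_s)$ onto the ray. Pointedness from Lemma~\ref{lem:Mpointed} is not needed for the face property, which follows directly from the definition of an extreme ray, but invoking it is harmless.
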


\begin{proof}
In the notation of the proof of Lemma~\ref{lem:Mpolyhedral},
\[
 \mathcal M_{n,k}=\pi(C_1)+\cdots+\pi(C_N),
\]
where each $C_a$ consists entirely of M-convex functions. Let
$\mathbb R_{\ge0}x$ be an extreme ray and choose a nonzero $x$ on it.  Write
$x=x_1+\cdots+x_N$ with $x_a\in\pi(C_a)$.  Since every $x_a$ and
$x-x_a$ lie in $\mathcal M_{n,k}$, extremality forces each nonzero $x_a$ to
belong to $\mathbb R_{>0}x$. At least one $x_a$ is nonzero. Choosing such an
$x_a$ yields the class of an M-convex function on the ray.
\end{proof}

\begin{corollary}\label{cor:facets}
The following statements hold:
\begin{enumerate}
 \item Every supporting halfspace of $\BR_{\mathring{L}}(n,k)$ is of the form
 $\langle \vg,\nu\rangle\ge0$ for some
 $[\nu]\in\mathcal M_{n,k}$.
 \item For a nonzero class $[\nu]\in\mathcal M_{n,k}$, the hyperplane
 $\langle \vg,\nu\rangle=0$ defines a facet of
 $\BR_{\mathring{L}}(n,k)$ if and only if
 $\mathbb R_{\ge0}[\nu]$ is an extreme ray of $\mathcal M_{n,k}$.
 \item Every extreme ray of $\mathcal M_{n,k}$ has an M-convex
 representative.  Equivalently, modulo affine functions and positive
 scaling, the facet normals are precisely the \defn{convexly indecomposable}
 M-convex functions.
\end{enumerate}
\end{corollary}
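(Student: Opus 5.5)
The corollary is standard polyhedral duality applied to Theorem~\ref{thm:BRL-duality}, together with Lemmas~\ref{lem:Mpolyhedral}, \ref{lem:Mpointed} and \ref{lem:Mextremerep}. I will work in the dual pair $\mathbb R^{H(n,k)}/\operatorname{Aff}(H(n,k))$ and $V_{n,k}$ of Lemma~\ref{lem:quotientdual}.

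By Lemma~\ref{lem:Mpolyhedral}, $\mathcal M_{n,k}$ is a closed (rational) polyhedral cone. Hence the bipolar theorem gives $(\mathcal M_{n,k}^\vee)^\vee=\mathcal M_{n,k}$. Theorem~\ref{thm:BRL-duality} then yields
\[
\BR_{\mathring L}(n,k)^\vee=\mathcal M_{n,k}.
\]

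For (i), a supporting halfspace of $\BR_{\mathring L}(n,k)$ inside $V_{n,k}$ has the form $\{\vg:\langle \vg,\xi\rangle\ge0\}$ for some functional $\xi$ on $V_{n,k}$. The functional is nonnegative on the whole cone, so $\xi$ lies in the dual cone, which is $\mathcal M_{n,k}$. Writing $\xi=[\nu]$ gives the claim, and the pairing is well defined by Lemma~\ref{lem:balanced-affine}.

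For (ii), I use the standard face correspondence for a pair of mutually dual polyhedral cones. Since $\mathcal M_{n,k}$ is pointed by Lemma~\ref{lem:Mpointed}, its dual $\BR_{\mathring L}(n,k)$ is full-dimensional in $V_{n,k}$. The map
\[
F\longmapsto F^{*}=\{\xi\in\mathcal M_{n,k}:\langle\vg,\xi\rangle=0\ \text{for all }\vg\in F\}
\]
is an inclusion-reversing bijection between faces, and it satisfies $\dim F+\dim F^{*}=\dim V_{n,k}$. Facets of the full-dimensional cone $\BR_{\mathring L}(n,k)$ therefore correspond exactly to the one-dimensional faces of $\mathcal M_{n,k}$, which are its extreme rays because $\mathcal M_{n,k}$ is pointed.

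It remains to check that the specific hyperplane defined by $[\nu]$ cuts out a facet only in the extreme case.
\begin{itemize}
\item If $\mathbb R_{\ge0}[\nu]$ is extreme, it equals $F^{*}$ for a facet $F$. Then $\langle\cdot,\nu\rangle=0$ cuts out precisely $F=(F^{*})^{*}$.
\item Conversely, if $\langle\cdot,\nu\rangle=0$ cuts out a facet $F$, then $[\nu]\in F^{*}$, and $F^{*}$ is an extreme ray. Since $[\nu]\ne0$, the ray generated by $[\nu]$ is that extreme ray.
\end{itemize}

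For (iii), the first sentence is Lemma~\ref{lem:Mextremerep}. For the equivalence, call an M-convex $\nu$ convexly indecomposable if $[\nu]$ is not a sum of two classes in $\mathcal M_{n,k}$ that are not proportional to it; this is the same as $\mathbb R_{\ge0}[\nu]$ being an extreme ray. Combining this with (ii) identifies the facet normals, up to affine functions and positive scaling, with the classes of convexly indecomposable M-convex functions.

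The only delicate point is keeping track of the quotient and of pointedness, so that the facet–ray correspondence is a genuine bijection. Lemma~\ref{lem:Mpointed} supplies exactly this, so I do not expect a serious obstacle.
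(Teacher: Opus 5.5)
Your proposal is correct and follows the paper's proof essentially step for step. You get (i) from the bipolar identity $\BR_{\mathring L}(n,k)^\vee=\mathcal M_{n,k}$ using closedness from Lemma~\ref{lem:Mpolyhedral}, (ii) from the standard face duality made available by pointedness in Lemma~\ref{lem:Mpointed}, and (iii) from Lemma~\ref{lem:Mextremerep} together with reading extremality as convex indecomposability; the only difference is that you spell out the dimension count $\dim F+\dim F^{*}=\dim V_{n,k}$ and both directions of the hyperplane identification, which the paper leaves to the ``standard'' correspondence.
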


\begin{proof}
\emph{For (i).}
Under the paired-space identification of
Lemma~\ref{lem:quotientdual}, Theorem~\ref{thm:BRL-duality} and the dual
theorem give
\[
 \BR_{\mathring L}(n,k)^\vee
 =\mathcal M_{n,k}^{\vee\vee}=\mathcal M_{n,k},
\]
because $\mathcal M_{n,k}$ is closed by Lemma~\ref{lem:Mpolyhedral}.  Thus
every linear functional defining a supporting halfspace of
$\BR_{\mathring L}(n,k)$ is represented by a class
$[\nu]\in\mathcal M_{n,k}$, which proves (i).

\par\medskip
\noindent\emph{For (ii).}
By Lemma~\ref{lem:Mpointed}, the standard face-duality correspondence applies.
Facets of $\mathcal M_{n,k}^\vee$ correspond exactly to extreme rays of
$\mathcal M_{n,k}$.  Under the paired-space identification, the face
corresponding to $\mathbb R_{\ge0}[\nu]$ is cut out by
$\langle\vg,\nu\rangle=0$.  This proves (ii).

\par\medskip
\noindent\emph{For (iii).}
Lemma~\ref{lem:Mextremerep} supplies an M-convex representative of every
extreme ray of $\mathcal M_{n,k}$.  Finally, extremality of
$\mathbb R_{\ge0}[\nu]$ says precisely that in any decomposition
$[\nu]=\sum_a[\nu_a]$ into M-convex classes, every nonzero summand is a
positive multiple of $[\nu]$.  Thus the M-convex representative is convexly
indecomposable exactly when its class spans an extreme ray.  Together with
(ii), this proves (iii).
\end{proof}

We now translate M-convexity---and hence the facet normals from
Corollary~\ref{cor:facets}---into the language of local tree metrics.

\begin{definition}[{\rm \defn{Tree metrics}}]\label{def:tree}
A symmetric function $d \colon \{1,\dots,k\}^{2} \to \mathbb{R}$ with
$d_{ii} = 0$ is called a
\defn{metric} if $d_{ij} \ge 0$ and $d_{ij} \le d_{il} + d_{lj}$ for all
$i,j,l$. We allow distinct indices to lie at distance zero. Thus, in the usual
terminology, this is a pseudometric. It is a \defn{tree metric} if there is a
tree with nonnegative edge lengths, together with a map from
$\{1,\dots,k\}$ to its vertices, such that $d_{ij}$ is the length of the path
joining the images of $i$ and $j$. By the four-point condition of Buneman
\cite{Bun74}, a metric
$d$ is a tree metric if and only if
\[
  d_{ij} + d_{lm} \;\le\; \max\{ d_{il} + d_{jm},\ d_{im} + d_{jl} \}
  \qquad (i,j,l,m \in \{1,\dots,k\}).
\]
In particular, the zero function is a tree metric.
\end{definition}

\begin{lemma}\label{lem:localslices}
Let $\nu\colon H(n,k)\to\mathbb R$.  For $\vb\in H(n-2,k)$, the
\defn{quadratic slice} of $\nu$ at $\vb$ is
\[
 \nu_{\vb}:H(2,k)\longrightarrow\mathbb R,
 \qquad \nu_{\vb}(\va)=\nu(\vb+\va).
\]
Then $\nu$ is M-convex on $H(n,k)$ if and only if $\nu_{\vb}$ is
M-convex on $H(2,k)$ for all $\vb\in H(n-2,k)$.
\end{lemma}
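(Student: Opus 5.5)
The forward implication is a direct restriction argument, and the converse reduces to Murota's local exchange theorem. The key observation is that every pair of points at $\ell^1$-distance $4$ in $H(n,k)$ lies in a single quadratic slice.

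\emph{Forward direction.} Suppose $\nu$ is M-convex, and fix $\vb\in H(n-2,k)$ and $\va,\va'\in H(2,k)$ with $\alpha_i>\alpha'_i$. Put $\bm\eta=\vb+\va$ and $\bm\eta'=\vb+\va'$. Then $\eta_r-\eta'_r=\alpha_r-\alpha'_r$ for every $r$. So the admissible exchange indices $j$ (those with $\eta_j<\eta'_j$) are exactly those with $\alpha_j<\alpha'_j$. The exchanged points are
\[
\bm\eta-\ve_i+\ve_j=\vb+(\va-\ve_i+\ve_j),\qquad
\bm\eta'+\ve_i-\ve_j=\vb+(\va'+\ve_i-\ve_j).
\]
Both lie in $\vb+H(2,k)$, because $\va-\ve_i+\ve_j$ and $\va'+\ve_i-\ve_j$ have nonnegative entries ($\alpha_i\ge1$ and $\alpha'_j\ge1$). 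Hence the exchange inequality for $\nu$ at $(\bm\eta,\bm\eta',i)$ is literally the exchange inequality for $\nu_{\vb}$ at $(\va,\va',i)$. Therefore $\nu_{\vb}$ is M-convex.

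\emph{Reverse direction.} Assume every slice $\nu_{\vb}$ is M-convex. I would invoke Murota's local exchange characterization \cite{Mur03}, in the form of the theorem on the local exchange property for functions with M-convex effective domain. Here the domain is all of $H(n,k)$, which is M-convex. That theorem states that $\nu$ is M-convex as soon as the exchange axiom holds for all pairs $\va,\va'$ with $|\va-\va'|_1=4$. Pairs with $|\va-\va'|_1=2$ are automatic, since the only exchange swaps the two points and gives equality.

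For a pair with $|\va-\va'|_1=4$, put $\vb=\min(\va,\va')$ coordinatewise. Then $|\vb|_1=n-2$, so $\vb\in H(n-2,k)$. Both $\va-\vb$ and $\va'-\vb$ lie in $H(2,k)$, so $\va,\va'\in\vb+H(2,k)$. By the computation in the forward direction, the required exchange inequalities for $(\va,\va',i)$ coincide with those for the slice $\nu_{\vb}$, and these hold by hypothesis. Hence $\nu$ is M-convex.

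\emph{Main obstacle.} The only nontrivial ingredient is the local-to-global theorem. If a self-contained argument is preferred, I would prove the global exchange by induction on $|\va-\va'|_1$, following Murota's proof. Suppose a triple $(\va,\va',i)$ violates the axiom at minimal distance. One then perturbs $\nu$ by a suitable linear function $p$, chosen so that the violation persists while $\va'$ can be replaced by a point strictly closer to $\va$. Local exchanges in the neighborhood of $\va'$ supply that closer point. This contradicts minimality. This perturbation bookkeeping is where the care lies, so I would cite \cite{Mur03} rather than reproduce it.
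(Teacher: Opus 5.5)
Your proof is correct and follows the paper's argument: both invoke the local exchange theorem on the M-convex domain $H(n,k)$ (the paper cites Murota--Tamura, you cite Murota's book) and observe that a pair at $\ell^1$-distance $4$ lies in the slice $\vb+H(2,k)$, where $\vb$ is the coordinatewise minimum, so the exchange inequalities coincide. Your direct restriction argument for the forward direction spells out the paper's remark that every nontrivial exchange within a translated slice arises in this way.
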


\begin{proof}
The full simplex $H(n,k)$ is an M-convex set: if $\alpha_i>\alpha'_i$, equality
of the coordinate sums supplies a $j$ with $\alpha_j<\alpha'_j$, and the two
exchanged points remain in $H(n,k)$. The local exchange theorem, stated in
\cite[Theorem~6]{MT03}, gives the following criterion. On an M-convex domain,
the exchange axiom in Definition~\ref{def:Mconvex} is equivalent to the
apparently weaker condition that, for every $\va,\va'$ with
$|\va-\va'|_1=4$, there exist
\[
 i\in\operatorname{supp}^+(\va-\va'),
 \qquad
 j\in\operatorname{supp}^-(\va-\va')
\]
such that the exchange inequality holds.  Here
\[
 \operatorname{supp}^+(\vv)=\{i:v_i>0\},
 \qquad
 \operatorname{supp}^-(\vv)=\{i:v_i<0\}
\]
are the \defn{positive support} and \defn{negative support}, respectively, of
$\vv\in\mathbb Z^{k}$. Thus,
$\operatorname{supp}^+(\va-\va')$ is the set of indices $i$ with
$\alpha_i>\alpha'_i$ and $\operatorname{supp}^-(\va-\va')$ the set of indices
$j$ with $\alpha_j<\alpha'_j$, exactly as in
Definition~\ref{def:Mconvex}. On the full simplex the condition can also be
written
\[
 \nu(\va)+\nu(\va')
 \ge
 \min_{\substack{i\in\operatorname{supp}^+(\va-\va')\\
                 j\in\operatorname{supp}^-(\va-\va')}}
 \bigl\{
 \nu(\va-\ve_i+\ve_j)+\nu(\va'+\ve_i-\ve_j)
 \bigr\}.
\]
If $\va,\va'\in H(n,k)$ have distance $4$, then their coordinatewise minimum
$\vb=\va\wedge\va'$ has degree $n-2$.  Consequently
\[
 \va=\vb+\mu,\qquad \va'=\vb+\mu'
 \quad\text{for some }\mu,\mu'\in H(2,k),
\]
and the local exchange inequality for $\va,\va'$ is exactly the corresponding
exchange inequality for $\nu_{\vb}$. Conversely, every nontrivial exchange
within a translated slice $\vb+H(2,k)$ arises in this way. The cited local
criterion therefore proves the equivalence.
\end{proof}

The following lemma and Proposition~\ref{prop:treemetric} specialize the
quadratic and local tree-metric characterization of Hirai and Murota
\cite{HM04}. We include the details needed here.

\begin{lemma}\label{lem:quadratic-tree}
Let $w:H(2,k)\to\mathbb R$, write
$w_{ij}=w(\ve_i+\ve_j)$ (also when $i=j$), and put
\[
 d_{ij}=w_{ii}+w_{jj}-2w_{ij}.
\]
Then $w$ is M-convex if and only if $d=(d_{ij})$ is a tree metric.
\end{lemma}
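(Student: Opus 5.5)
The plan is to reduce M-convexity of $w$ on $H(2,k)$ to finitely many exchange inequalities, one for each pair of points at distance $4$, and to match these with the metric and four-point conditions on $d$. First I normalize. By Lemma~\ref{lem:Maffine}, M-convexity is unchanged under adding affine functions. By Lemma~\ref{lem:affine-hessian}, $d$ is the local discrete Hessian $d^{\bm 0}(w)$, so it is also unchanged. Subtracting the affine function $\va\mapsto\sum_i \tfrac12 w_{ii}\alpha_i$ makes $w_{ii}=0$ for all $i$, and then $d_{ij}=-2w_{ij}$. So $w$ is determined by $d$, and I only need to express M-convexity in terms of $d$.

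Next I enumerate the exchange conditions. On $H(2,k)$ every pair $\va\ne\va'$ has $|\va-\va'|_1\in\{2,4\}$. Pairs at distance $2$ have a single forced exchange, which swaps the two points, so the inequality holds trivially. By the local exchange criterion used in Lemma~\ref{lem:localslices} (on the full simplex it suffices to check distance-$4$ pairs), M-convexity is equivalent to the following list, up to symmetry.

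\begin{enumerate}
\item $\{2\ve_i,2\ve_j\}$: the only exchange gives $w_{ii}+w_{jj}\ge 2w_{ij}$, which is $d_{ij}\ge0$.
\item $\{2\ve_i,\ve_j+\ve_l\}$ with $i,j,l$ distinct: the two exchanges give $w_{ii}+w_{jl}\ge \min\{w_{ij}+w_{il}\}$ over the available choices. In terms of $d$ this is $d_{jl}\le d_{ij}+d_{il}$, the triangle inequality. One checks the criterion here in its min form, i.e.\ for some choice of $j'\in\{j,l\}$; both choices give the same value, so no subtlety arises.
\item $\{\ve_i+\ve_j,\ve_l+\ve_m\}$ with four distinct indices: the four exchanges produce the pairs $\{\ve_l+\ve_j,\ve_i+\ve_m\}$ and $\{\ve_m+\ve_j,\ve_l+\ve_i\}$. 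The condition becomes $w_{ij}+w_{lm}\ge\min\{w_{il}+w_{jm},\,w_{im}+w_{jl}\}$, and after multiplying by $-2$ it is exactly Buneman's four-point condition.
\end{enumerate}

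Cases with repeated indices among four-point pairs reduce to cases (i) and (ii). Symmetry of $d$ and $d_{ii}=0$ hold automatically. Combining these with Definition~\ref{def:tree} gives the equivalence: the metric axioms come from (i) and (ii), and the four-point condition from (iii).

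The main obstacle is the bookkeeping for the exchange axiom. The definition requires the inequality for \emph{every} $i$ in the positive support, with \emph{some} $j$, rather than the min-over-both form. I would rely on the criterion cited in the proof of Lemma~\ref{lem:localslices}, which already states that on the full simplex the min form at distance $4$ suffices. I would then check that the four-point condition, being symmetric in the choice of the removed index, yields the required exchange for each $i$.
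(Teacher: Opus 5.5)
Your proposal is correct and follows essentially the same route as the paper: you enumerate the three types of distance-$4$ pairs in $H(2,k)$ and match them with nonnegativity, the triangle inequality and the four-point condition, and the repeated-index instances are absorbed by the first two. The differences are cosmetic. The paper skips your affine normalization $w_{ii}=0$ and instead compares the three $d$-sums through a common constant. It also verifies the exchange axiom directly for each prescribed surplus coordinate, so your appeal to the local exchange criterion is unnecessary on $H(2,k)$, where every pair is already at distance at most $4$.
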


\begin{proof}
\emph{The possible exchanges.}
Every point of $H(2,k)$ is either $2\ve_i$ or $\ve_i+\ve_j$ with $i\ne j$.
For $\va,\vb\in H(2,k)$, the distance $|\va-\vb|_1$ is therefore $0$, $2$,
or $4$.  Indeed, because $\va$ and $\vb$ have the same coordinate sum, the
total surplus of $\va$ over $\vb$ equals the total deficit, and the
distance $|\va-\vb|_1$ is twice this common value.  In degree two, that value is
$0$, $1$, or $2$.  At distance $0$ the two points are equal and there is
nothing to check.  At distance $2$ there are unique distinct indices $p,q$
such that
$\va-\vb=\ve_p-\ve_q$.  The exchange in Definition~\ref{def:Mconvex}
replaces $\va$ by $\vb$ and $\vb$ by $\va$, so its inequality holds with
equality.

It remains to consider pairs at distance $4$. Here the total surplus is two,
which is equivalent to the two points having disjoint supports. Up to
interchanging them, there are exactly three possibilities.

First, consider $2\ve_i$ and $2\ve_j$, where $i\ne j$.  The only possible
exchange produces two copies of $\ve_i+\ve_j$, so the required inequality is
\begin{equation}\label{eq:qtree-double}
 w_{ii}+w_{jj}\ge2w_{ij}.
\end{equation}

Second, consider $2\ve_i$ and $\ve_j+\ve_l$, where $i,j,l$ are distinct.
For this ordering, $i$ is the only coordinate in which the first point
exceeds the second.  Its exchange partner can be either $j$ or $l$, but both
choices produce the same two points $\ve_i+\ve_j$ and $\ve_i+\ve_l$.
Consequently, the required inequality is
\begin{equation}\label{eq:qtree-triangle}
 w_{ii}+w_{jl}\ge w_{ij}+w_{il}.
\end{equation}
In the reverse ordering, either prescribed surplus coordinate, $j$ or $l$,
has the unique exchange partner $i$, again yielding
\eqref{eq:qtree-triangle}. Thus this inequality verifies the full exchange
condition for the pair in either ordering.

Third, consider $\ve_i+\ve_j$ and $\ve_l+\ve_m$, where $i,j,l,m$ are
pairwise distinct.  If the prescribed surplus coordinate is $i$, exchanging
it with $l$ or $m$ gives, respectively, the two sums
\[
 w_{jl}+w_{im}
 \qquad\text{and}\qquad
 w_{jm}+w_{il}.
\]
If the prescribed surplus coordinate is $j$, the two choices give the same
two sums in the opposite order.  The same is true after interchanging the two
original points.  Hence an admissible exchange exists for every prescribed
surplus coordinate if and only if
\begin{equation}\label{eq:qtree-fourpoint}
 w_{ij}+w_{lm}
 \ge\min\{w_{il}+w_{jm},\,w_{im}+w_{jl}\}.
\end{equation}
We have now exhausted every exchange that can impose a nontrivial condition
on $w$.

\emph{Translation to conditions on $d$.}
Because $w_{ij}=w_{ji}$, the matrix $d$ is symmetric, and its definition gives
$d_{ii}=0$.  Inequality~\eqref{eq:qtree-double} is exactly
\[
 d_{ij}=w_{ii}+w_{jj}-2w_{ij}\ge0.
\]
For three distinct indices $i,j,l$, direct substitution gives
\[
 d_{ij}+d_{il}-d_{jl}
 =2\bigl(w_{ii}+w_{jl}-w_{ij}-w_{il}\bigr).
\]
Thus \eqref{eq:qtree-triangle} is equivalent to the triangle inequality
\[
 d_{jl}\le d_{ij}+d_{il}.
\]

For four distinct indices $i,j,l,m$, put
\[
 C=w_{ii}+w_{jj}+w_{ll}+w_{mm}.
\]
Then
\begin{align*}
 d_{ij}+d_{lm}&=C-2(w_{ij}+w_{lm}),\\
 d_{il}+d_{jm}&=C-2(w_{il}+w_{jm}),\\
 d_{im}+d_{jl}&=C-2(w_{im}+w_{jl}).
\end{align*}
Because each expression is obtained by subtracting twice one of the three
sums from the same quantity $C$, their order is reversed. Therefore
\eqref{eq:qtree-fourpoint} is equivalent to
\[
 d_{ij}+d_{lm}
 \le\max\{d_{il}+d_{jm},\,d_{im}+d_{jl}\},
\]
which is the four-point inequality.

The preceding calculations used distinct indices, but the repeated-index
cases introduce no additional conditions.  A triangle inequality with a
repeated index is either an equality or has the form $0\le2d_{ij}$.  In the
four-point inequality, if $i=j$, it reduces to
$d_{lm}\le d_{il}+d_{im}$, and similarly if $l=m$.  If an index in the first
pair equals an index in the second pair, one of the two expressions inside
the maximum is exactly the left-hand side.  Thus nonnegativity and the
triangle inequalities above imply every repeated-index instance of the
four-point condition.

\emph{Conclusion.}
If $w$ is M-convex, the three exchange inequalities above show that $d$ is a
metric satisfying the four-point condition.  Hence $d$ is a tree metric by
Definition~\ref{def:tree}.  Conversely, suppose that $d$ is a tree metric.
Its nonnegativity, triangle inequalities, and four-point condition imply
\eqref{eq:qtree-double}, \eqref{eq:qtree-triangle}, and
\eqref{eq:qtree-fourpoint}, respectively, by reversing the preceding
calculations. Exchanges at distance $0$ or $2$ are automatic, and the three
distance-$4$ cases exhaust all remaining pairs.  Therefore the exchange axiom
holds for every prescribed surplus coordinate, and $w$ is M-convex.
\end{proof}

\begin{proposition}[Local tree metrics]\label{prop:treemetric}
With the local discrete Hessians $d^{\vb}(\nu)$ of
Definition~\ref{def:local-hessian-field}, a
function $\nu:H(n,k)\to\mathbb R$ is M-convex if and only if
$d^{\vb}(\nu)$ is a tree metric for every $\vb\in H(n-2,k)$.  Consequently,
the facet normals of $\BR_{\mathring L}(n,k)$ are precisely the convexly
indecomposable classes $[\nu]$ whose local Hessians are all tree metrics.
\end{proposition}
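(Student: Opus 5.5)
The plan is to combine Lemma~\ref{lem:localslices} with Lemma~\ref{lem:quadratic-tree}, slice by slice. By Lemma~\ref{lem:localslices}, $\nu$ is M-convex on $H(n,k)$ if and only if every quadratic slice $\nu_{\vb}\colon H(2,k)\to\mathbb R$, $\vb\in H(n-2,k)$, is M-convex. The main step is therefore to identify the matrix $d$ that Lemma~\ref{lem:quadratic-tree} attaches to $w=\nu_{\vb}$ with the local discrete Hessian $d^{\vb}(\nu)$.

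With $w=\nu_{\vb}$ we have $w_{ij}=\nu(\vb+\ve_i+\ve_j)$, including the case $i=j$, where $w_{ii}=\nu(\vb+2\ve_i)$. Hence
\[
 d_{ij}=w_{ii}+w_{jj}-2w_{ij}
 =\nu(\vb+2\ve_i)+\nu(\vb+2\ve_j)-2\nu(\vb+\ve_i+\ve_j)
 =d^{\vb}_{ij}(\nu),
\]
which is exactly Definition~\ref{def:local-hessian-field}. Lemma~\ref{lem:quadratic-tree} then says that $\nu_{\vb}$ is M-convex if and only if $d^{\vb}(\nu)$ is a tree metric. Quantifying over all $\vb\in H(n-2,k)$ and applying Lemma~\ref{lem:localslices} gives the first assertion in both directions at once.

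For the consequence about facets, I would invoke Corollary~\ref{cor:facets}(iii). Modulo affine functions and positive scaling, the facet normals of $\BR_{\mathring L}(n,k)$ are exactly the convexly indecomposable M-convex classes. The equivalence just proved lets us replace "M-convex" by "all local Hessians are tree metrics". Lemma~\ref{lem:affine-hessian} shows that the local metric field depends only on the class $[\nu]$, so this condition is well defined on classes.

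I expect no real obstacle, since the heavy lifting is in the cited local exchange theorem and in Lemma~\ref{lem:quadratic-tree}. The only points needing care are the index bookkeeping in identifying the diagonal terms $w_{ii}$ with $\nu(\vb+2\ve_i)$, and the class-independence just noted.
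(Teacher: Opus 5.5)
Your proposal is correct and follows the paper's proof: set $w=\nu_{\vb}$, note $w_{ii}+w_{jj}-2w_{ij}=d^{\vb}_{ij}(\nu)$, apply Lemma~\ref{lem:quadratic-tree} to each slice, and quantify over all $\vb$ via Lemma~\ref{lem:localslices}; the facet statement then comes from Corollary~\ref{cor:facets}. Your remark that the tree-metric condition is well defined on classes by Lemma~\ref{lem:affine-hessian} is a harmless detail that the paper leaves implicit.
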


\begin{proof}
For the quadratic slice $w=\nu_{\vb}$ of Lemma~\ref{lem:localslices}, one has
\[
 w_{ij}=\nu(\vb+\ve_i+\ve_j)
 \quad\text{and}\quad
 w_{ii}+w_{jj}-2w_{ij}=d^{\vb}_{ij}(\nu).
\]
Lemma~\ref{lem:quadratic-tree} therefore identifies M-convexity of this slice
with the tree-metric condition on $d^{\vb}(\nu)$.  Applying
Lemma~\ref{lem:localslices} to all $\vb$ proves the first assertion. The
second is Corollary~\ref{cor:facets}.
\end{proof}

\begin{remark}\label{rem:recover-HHSW}
For $n=2$, the index set $H(n-2,k)=H(0,k)$ is a single point. Hence
Proposition~\ref{prop:treemetric} reduces to Lemma~\ref{lem:quadratic-tree}.
By Lemma~\ref{lem:affine-hessian}, the map
\[
 \mathbb R^{H(2,k)}/\operatorname{Aff}(H(2,k))
 \longrightarrow \mathbb R^{\binom{k}{2}},
 \qquad
 [\nu]\longmapsto
 \bigl(d^{\bm0}_{ij}(\nu)\bigr)_{1\le i<j\le k}
 =\bigl(\nu(2\ve_i)+\nu(2\ve_j)
       -2\nu(\ve_i+\ve_j)\bigr)_{i<j},
\]
is well defined on the affine quotient and identifies $\mathcal M_{2,k}$ with
the cone generated by the tree metrics on $k$ points. This is the \defn{cut
cone} $\mathrm{Cut}_k$, since every split metric $\delta_S$ is a tree metric
and every tree metric is a nonnegative sum of split metrics.

To describe the corresponding dual coordinates, let
$a_{ij}=\gamma_{\ve_i+\ve_j}$ for $i<j$. Since $\vg$ is balanced,
\[
 \gamma_{2\ve_i}=-\frac12\sum_{j\ne i}a_{ij}.
\]
It follows that
\[
 \langle\vg,\nu\rangle
 =-\frac12\sum_{i<j}a_{ij}\,d^{\bm0}_{ij}(\nu).
\]
Thus, in the standard reduced coordinates $a=(a_{ij})$, Theorem
\ref{thm:BRL-duality} gives
\[
 \BR_{\mathring L}(2,k)=-\mathrm{Cut}_k^\vee.
\]
The minus sign is solely a matter of convention. In
\cite[Theorem~B]{HHSW}, the supporting inequalities are written as
\[
 \sum_{i<j}a_{ij}d_{ij}\le0
 \qquad\bigl(d\in\mathrm{Cut}_k\bigr).
\]
Throughout this paper, however, we use the positive-dual convention
\[
 C^\vee=\{y:\langle y,x\rangle\ge0\text{ for every }x\in C\}.
\]
Thus the inequalities used in \cite{HHSW} define
$-\mathrm{Cut}_k^\vee$ in our convention.

Equivalently, after transporting the dual coordinates by
$b_{ij}=-a_{ij}/2$, this cone becomes $\mathrm{Cut}_k^\vee$. This is the
cut-cone description of \cite[Theorem~B]{HHSW}. The quadratic case of
Proposition~\ref{prop:treemetric} therefore recovers that result.
Proposition~\ref{prop:treemetric} extends it to arbitrary degree by passing
from a single tree metric to a field of tree metrics indexed by $H(n-2,k)$.
\end{remark}

We next introduce the split construction and analyze it using the local
tree-metric description.

\begin{definition}[{\rm \defn{Split metric}}]\label{def:split-metric}
Let $\varnothing \neq S \subsetneq \{1,\dots,k\}$. The \defn{split metric} of
the partition $\{S, S^{c}\}$ is
\[
  (\delta_S)_{ij} \;=\;
  \begin{cases}
    1, & \text{exactly one of } i,j \text{ lies in } S,\\
    0, & \text{otherwise.}
  \end{cases}
\]
This is a tree metric realized by a single edge of length one separating $S$
from $S^{c}$.
\end{definition}

\begin{definition}[{\rm \defn{Split function}}]\label{def:split}
Let $\varnothing \neq S \subsetneq \{1,\dots,k\}$ and $1 \le r \le n-1$.
For $\va=(\alpha_1,\ldots,\alpha_k)\in H(n,k)$, write
$\va(S) = \sum_{i \in S} \alpha_i$. The \defn{split function} is
\[
  \sigma_{S,r}\colon H(n,k)\longrightarrow\mathbb R,
  \qquad
  \sigma_{S,r}(\va) \;=\; \max\{0, \va(S) - r\} .
\]
\end{definition}

\begin{definition}[{\rm \defn{Split field}}]\label{def:split-field}
Let $\varnothing \neq S \subsetneq \{1,\dots,k\}$.
A field $(d^{\vb})_{\vb \in H(n-2,k)}$ of local Hessians is a \defn{split field}
for $S$ if each $d^{\vb}$ is a multiple of $\delta_S$. Equivalently, for every
$\vb$, the matrix $d^{\vb}$ vanishes on pairs on the same side of the partition
and has one common value on all cross-pairs. 
\end{definition}

\begin{lemma}\label{lem:splitmetric}
Let $\sigma_{S,r}$ be the split function.  Its
local discrete Hessians, in the sense of
Definition~\ref{def:local-hessian-field}, are
\[
  d^{\vb}(\sigma_{S,r}) \;=\;
  \begin{cases}
    \delta_S, & \vb(S) = r-1,\\
    0, & \text{otherwise,}
  \end{cases}
  \qquad \vb \in H(n-2,k).
\]
In particular $\sigma_{S,r}$
is M-convex.
\end{lemma}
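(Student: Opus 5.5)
The plan is to compute $d^{\vb}_{ij}(\sigma_{S,r})$ directly from Definition~\ref{def:local-hessian-field}, using that $\sigma_{S,r}(\va)=g(\va(S))$ with $g(m)=\max\{0,m-r\}$ a convex piecewise-linear function of one integer variable whose only kink is at $m=r$. Fix $\vb\in H(n-2,k)$ and put $m=\vb(S)$. The three points entering $d^{\vb}_{ij}$ are $\vb+2\ve_i$, $\vb+2\ve_j$ and $\vb+\ve_i+\ve_j$. Their $S$-masses are determined by whether $i$ and $j$ lie in $S$. I will split into the cases by the membership of $i$ and $j$.

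If $i,j$ lie on the same side of the partition, then all three points have the same $S$-mass: $m+2$ if both lie in $S$, and $m$ if neither does. In either case the three values of $\sigma_{S,r}$ coincide, so $d^{\vb}_{ij}=g+g-2g=0$. This also covers $i=j$. If exactly one of $i,j$ lies in $S$, say $i\in S$, the three $S$-masses are $m+2$, $m$ and $m+1$. Then
\[
 d^{\vb}_{ij}=g(m+2)+g(m)-2g(m+1),
\]
which is the second difference of $g$ at $m+1$. Since $g$ is linear on $(-\infty,r]$ and on $[r,\infty)$, this second difference vanishes unless $m+1=r$. When $m=r-1$ it equals $1+0-0=1$. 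Thus $d^{\vb}=\delta_S$ exactly when $\vb(S)=r-1$, and $d^{\vb}=0$ otherwise. This proves the displayed formula.

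For M-convexity, each $d^{\vb}(\sigma_{S,r})$ is either $\delta_S$ or $0$. Both are tree metrics: $\delta_S$ by Definition~\ref{def:split-metric}, and $0$ by the final remark of Definition~\ref{def:tree}. Proposition~\ref{prop:treemetric} then gives that $\sigma_{S,r}$ is M-convex.

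There is no genuine obstacle here. The only point needing care is the bookkeeping of $S$-masses in the cross case, including the fact that the two mixed points are symmetric in the roles of $i$ and $j$.
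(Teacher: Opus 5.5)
Your proposal is correct and follows essentially the same route as the paper: you write $\sigma_{S,r}=g(\va(S))$ with $g(u)=\max\{0,u-r\}$, show the same-side entries vanish and the cross entries equal the second difference of $g$, which is $1$ exactly when $\vb(S)=r-1$. You then deduce M-convexity from Proposition~\ref{prop:treemetric}, since both $\delta_S$ and $0$ are tree metrics.
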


\begin{proof}
Put
\[
 g(u)=\max\{0,u-r\},
\]
so that $\sigma_{S,r}(\va)=g(\va(S))$.  Fix
$\vb\in H(n-2,k)$ and write $t=\vb(S)$.  Note that $t$ is an integer.  For
each $i\in\{1,\ldots,k\}$, let
\[
 \epsilon_i=\mathbf 1_S(i)=
 \begin{cases}
  1,&i\in S,\\
  0,&i\notin S.
 \end{cases}
\]
Then
\[
 (\vb+2\ve_i)(S)=t+2\epsilon_i,
 \qquad
 (\vb+\ve_i+\ve_j)(S)=t+\epsilon_i+\epsilon_j.
\]
Substituting these identities into the definition of the local discrete
Hessian gives
\begin{align*}
 d^{\vb}_{ij}(\sigma_{S,r})
 &=\sigma_{S,r}(\vb+2\ve_i)
   +\sigma_{S,r}(\vb+2\ve_j)
   -2\sigma_{S,r}(\vb+\ve_i+\ve_j)\\
 &=g(t+2\epsilon_i)+g(t+2\epsilon_j)
   -2g(t+\epsilon_i+\epsilon_j).
\end{align*}

We evaluate this expression according to the positions of $i$ and $j$
relative to the partition $\{S,S^c\}$.  If $i$ and $j$ lie on the same side,
then $\epsilon_i=\epsilon_j$.  All three arguments of $g$ in the last
expression are therefore equal: they are all $t+2$ if $i,j\in S$, and all
$t$ if $i,j\in S^c$.  Hence
\[
 d^{\vb}_{ij}(\sigma_{S,r})=0.
\]

Suppose instead that exactly one of $i,j$ lies in $S$.  Then
$\{\epsilon_i,\epsilon_j\}=\{0,1\}$, and 
\[
 d^{\vb}_{ij}(\sigma_{S,r})
 =g(t+2)-2g(t+1)+g(t).
\]
There are three cases.  If $t\le r-2$, then $t,t+1,t+2\le r$, so
\[
 g(t)=g(t+1)=g(t+2)=0,
\]
and the second difference is zero.  If $t=r-1$, then
\[
 g(t)=g(r-1)=0,\qquad
 g(t+1)=g(r)=0,\qquad
 g(t+2)=g(r+1)=1,
\]
so the second difference is one.  Finally, if $t\ge r$, then
$g(u)=u-r$ at each of $u=t,t+1,t+2$, and therefore
\[
 g(t+2)-2g(t+1)+g(t)
 =(t+2-r)-2(t+1-r)+(t-r)=0.
\]
Consequently,
\[
 d^{\vb}_{ij}(\sigma_{S,r})=
 \begin{cases}
  1,&\vb(S)=r-1\text{ and exactly one of }i,j\text{ lies in }S,\\
  0,&\text{otherwise.}
 \end{cases}
\]
By the definition of the split metric, this entrywise calculation says
precisely that
\[
 d^{\vb}(\sigma_{S,r})=
 \begin{cases}
  \delta_S,&\vb(S)=r-1,\\
  0,&\vb(S)\ne r-1.
 \end{cases}
\]

The split metric $\delta_S$ is a tree metric and the
zero matrix is also a tree metric.  Thus every local discrete Hessian of
$\sigma_{S,r}$ is a tree metric.  Proposition~\ref{prop:treemetric} now
implies that $\sigma_{S,r}$ is M-convex.
\end{proof}

\begin{lemma}[Integrability of a split field]\label{lem:split}
Let $\varnothing\ne S\subsetneq\{1,\ldots,k\}$ and
$f:H(n,k)\to\mathbb R$.  Suppose that for any
$\vb\in H(n-2,k)$ there is a real number $h_{\vb}$ such that
\[
 d^{\vb}(f)=h_{\vb}\,\delta_S.
\]
Then
\begin{enumerate}
\item The number $h_{\vb}$ depends only on $\vb(S)$.
\item If $h_t$ denotes its common value on the layer $\vb(S)=t$, then
\[
 f=\ell+\sum_{t=0}^{n-2}h_t\,\sigma_{S,t+1}
\]
for some affine function $\ell$.
\end{enumerate}
\end{lemma}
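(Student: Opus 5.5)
Part (ii) follows directly from part (i) once we have Lemma~\ref{lem:hessiankernel}. So the plan is to prove (i) first, then compare Hessian fields.

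\emph{Step 1 (part (i)).} We show that $h_{\vb}$ is locally constant along moves that preserve $\vb(S)$, and that such moves connect each layer. Take $\vb\in H(n-3,k)$, i.e.\ a point of degree $n-3$; this requires $n\ge3$, and for $n=2$ there is only one $\vb$, so (i) is trivial. The key tool is a third-order compatibility identity. For indices $p,q,i,j$, write $\nu=f$ and consider the discrete "third differences"
\[
 d^{\vb+\ve_p}_{ij}(f)-d^{\vb+\ve_q}_{ij}(f).
\]
Expanding each term by the definition of $d$, this difference can be rewritten as a combination of $d^{\vb+\ve_i}$ and $d^{\vb+\ve_j}$ entries involving $p,q$. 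The cleanest such relation should be
\[
 d^{\vb+\ve_p}_{iq}+d^{\vb+\ve_i}_{pq}
 =d^{\vb+\ve_q}_{ip}+\bigl(\text{terms}\bigr),
\]
which I would verify by brute expansion into values of $f$ at points $\vb+(\text{three unit vectors})$.

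Rather than guess the exact form, I would specialize. Take $i,p\in S$ and $j\in S^c$, and compare $\vb+\ve_i$ with $\vb+\ve_p$; both have the same $S$-mass. Expanding $d^{\vb+\ve_i}_{pj}$ and $d^{\vb+\ve_p}_{ij}$ in values of $f$, and using that the same-side entries $d_{ip}^{\cdot}$ vanish, should give $h_{\vb+\ve_i}=h_{\vb+\ve_p}$. The mechanism is that the vanishing of $d^{\vb+\ve_j}_{ip}$ makes $f$ affine along the $i$--$p$ direction at the relevant points. The same argument with $i,p\in S^c$ and $j\in S$ handles moves within $S^c$.

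Since any two points of $H(n-2,k)$ with equal $\vb(S)$ are connected by single transfers $\ve_i\to\ve_p$ within $S$ or within $S^c$, this proves (i). The main obstacle is getting the three-point expansion identity right, including degenerate cases where $i=p$ or $\beta$-coordinates are zero; all points involved remain in $H(n,k)$, so the identity is purely formal.

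\emph{Step 2 (part (ii)).} Define $g=\sum_{t=0}^{n-2}h_t\sigma_{S,t+1}$. By Lemma~\ref{lem:splitmetric}, the function $\sigma_{S,t+1}$ contributes $\delta_S$ exactly on the layer $\vb(S)=t$. Linearity of $d^{\vb}$ therefore gives $d^{\vb}(g)=h_{\vb(S)}\delta_S=d^{\vb}(f)$ for all $\vb$. Then $d^{\vb}(f-g)=0$ everywhere, and Lemma~\ref{lem:hessiankernel} gives $f-g=\ell$ affine, as claimed.
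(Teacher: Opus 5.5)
Part (ii) of your proposal is exactly the paper's argument. Your reduction of part (i) is also correct: same-side unit transfers connect every layer $\{\vb(S)=t\}$. So it suffices to show $h_{\bm{c}+\ve_i}=h_{\bm{c}+\ve_p}$ for every $\bm{c}\in H(n-3,k)$ and every $i\ne p$ on the same side of $\{S,S^c\}$.

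\textbf{The gap.} That local equality is the whole content of (i), and you do not prove it. The relation with ``(terms)'' is a placeholder, and the mechanism you describe does not suffice. Write $f_{abc}=f(\bm{c}+\ve_a+\ve_b+\ve_c)$ and take $j$ on the other side. The quantity that must vanish is
\[
 d^{\bm{c}+\ve_i}_{pj}-d^{\bm{c}+\ve_p}_{ij}=f_{ipp}+f_{ijj}-f_{iip}-f_{pjj}.
\]
By contrast, $d^{\bm{c}+\ve_j}_{ip}=0$ only says $f_{iij}+f_{ppj}=2f_{ipj}$. That is a relation along a line passing through none of these four points. Vanishing of same-side entries does not close the step; you must also use that the two cross entries agree within a slice.

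\textbf{The fix.} The missing step is the formal identity
\[
 d^{\bm{c}+\ve_i}_{pj}-d^{\bm{c}+\ve_p}_{ij}
 =\tfrac12\bigl(d^{\bm{c}+\ve_i}_{ip}+d^{\bm{c}+\ve_i}_{pj}-d^{\bm{c}+\ve_i}_{ij}\bigr)
 -\tfrac12\bigl(d^{\bm{c}+\ve_j}_{ip}+d^{\bm{c}+\ve_j}_{ij}-d^{\bm{c}+\ve_j}_{pj}\bigr).
\]
You can check it by expansion. The two brackets equal $2(f_{ipp}-f_{iip}-f_{ipj}+f_{iij})$ and $2(f_{iij}-f_{ipj}-f_{ijj}+f_{pjj})$. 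In the notation of Definition~\ref{def:triangular}, these are $2\langle\vrho(\bm{c}+\ve_i;p\mid ij),f\rangle$ and $2\langle\vrho(\bm{c}+\ve_j;i\mid pj),f\rangle$.

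Under the hypothesis $d^{\vb}(f)=h_{\vb}\delta_S$, each bracket is $0+h-h=0$. Hence $h_{\bm{c}+\ve_i}=h_{\bm{c}+\ve_p}$. The required $j$ exists because $S$ and $S^c$ are both nonempty. With this identity inserted, and noting that every layer $0\le t\le n-2$ is nonempty so $h_t$ is defined, your proof is complete.

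\textbf{Comparison with the paper.} Your argument is the coefficient-level version of the paper's. The coefficient of $\vx^{\bm{c}}$ in $(\partial_i-\partial_p)Q$ is $n(n-1)\frac{(n-2)!}{\bm{c}!}\,(h_{\bm{c}+\ve_i}-h_{\bm{c}+\ve_p})$. So the paper's identity $(\partial_i-\partial_p)Q=0$ is precisely your local claim. The paper then replaces your connectivity argument by showing that $Q$ is a polynomial in the block sums $X,Y$.
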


\begin{proof}
Put $N=n-2$ and choose $p\in S$ and $q\in S^c$. The proof uses the following
polynomial derived from the Bernstein polynomial of $f$:
\[
 Q=(\partial_p-\partial_q)^2\mathcal B_f.
\]
We prove (i) in two steps and then prove (ii).

\emph{Proof of (i), Step 1: $Q$ depends only on the two block sums.}
Since $(\delta_S)_{pq}=1$, Lemma~\ref{lem:bernstein} and the hypothesis
$d^{\vb}(f)=h_{\vb}\delta_S$ give
\begin{equation}\label{eq:split-Q-coefficients}
 Q=n(n-1)\sum_{\vb\in H(N,k)}
 \binom{N}{\vb}h_{\vb}\,\vx^{\vb}.
\end{equation}
Applying the same argument to an arbitrary pair of indices yields the identity
\begin{equation}\label{eq:split-Q-pairs}
 (\partial_a-\partial_b)^2\mathcal B_f
 =(\delta_S)_{ab}Q.
\end{equation}
Indeed, Lemma~\ref{lem:bernstein} says that the coefficient of
$\vx^{\vb}$ on the left is
\[
 n(n-1)\binom{N}{\vb}h_{\vb}(\delta_S)_{ab}.
\]
Thus the second derivative is zero for a pair on the same side and equals $Q$
for a cross-pair.

We next show that redistributing the variables within either block does not
change $Q$.  Fix $i\in S\setminus\{p\}$.  The pair $(i,p)$ is on the same
side, while $(i,q)$ and $(p,q)$ are cross-pairs.  Hence
\eqref{eq:split-Q-pairs} gives
\[
 (\partial_i-\partial_p)^2\mathcal B_f=0,
 \qquad
 (\partial_i-\partial_q)^2\mathcal B_f
 =(\partial_p-\partial_q)^2\mathcal B_f=Q.
\]
Using
\[
 \partial_i-\partial_q
 =(\partial_i-\partial_p)+(\partial_p-\partial_q)
\]
and expanding the square, we obtain
\[
 (\partial_i-\partial_p)(\partial_p-\partial_q)\mathcal B_f=0.
\]
All these differential operators have constant coefficients and therefore
commute.  Applying $\partial_p-\partial_q$ once more gives
\begin{equation}\label{eq:split-Q-S-directions}
 (\partial_i-\partial_p)Q=0
 \qquad(i\in S\setminus\{p\}).
\end{equation}

An analogous calculation applies to the other block, with the sign in the
decomposition reversed. If $j\in S^c\setminus\{q\}$, then
\[
 (\partial_j-\partial_q)^2\mathcal B_f=0,
 \qquad
 (\partial_j-\partial_p)^2\mathcal B_f
 =(\partial_p-\partial_q)^2\mathcal B_f=Q,
\]
and
\[
 \partial_j-\partial_p
 =(\partial_j-\partial_q)-(\partial_p-\partial_q).
\]
Expanding the square gives
\[
 (\partial_j-\partial_q)(\partial_p-\partial_q)\mathcal B_f=0.
\]
Applying $\partial_p-\partial_q$ once more gives
\begin{equation}\label{eq:split-Q-Sc-directions}
 (\partial_j-\partial_q)Q=0
 \qquad(j\in S^c\setminus\{q\}).
\end{equation}

Write
\[
 X=\sum_{i\in S}x_i,
 \qquad
 Y=\sum_{j\in S^c}x_j.
\]
For an arbitrary $\vx\in\mathbb R^k$, put
\[
 \vw=\vx-X(\vx)\ve_p-Y(\vx)\ve_q.
\]
This vector is a linear combination of the directions appearing in
\eqref{eq:split-Q-S-directions} and
\eqref{eq:split-Q-Sc-directions}, because
\[
 \vw=
 \sum_{i\in S\setminus\{p\}}x_i(\ve_i-\ve_p)
 +\sum_{j\in S^c\setminus\{q\}}x_j(\ve_j-\ve_q).
\]
If one block is a singleton, the corresponding sum is empty.  By linearity
of directional differentiation, \eqref{eq:split-Q-S-directions} and
\eqref{eq:split-Q-Sc-directions} give the polynomial identity
\[
 D_{\vw}Q
 =
 \sum_{i\in S\setminus\{p\}}x_i(\partial_i-\partial_p)Q
 +
 \sum_{j\in S^c\setminus\{q\}}x_j(\partial_j-\partial_q)Q
 =0.
\]
For fixed $\vx$, the quantities $X(\vx)$, $Y(\vx)$, and $\vw$ are fixed as
$\lambda$ varies.  Hence the chain rule gives
\[
 \frac{d}{d\lambda}
 Q\bigl(X(\vx)\ve_p+Y(\vx)\ve_q+\lambda\vw\bigr)
 =D_{\vw}Q\bigl(X(\vx)\ve_p+Y(\vx)\ve_q+\lambda\vw\bigr)
 =0
\]
for every $\lambda\in\mathbb R$.  Therefore the function
\[
 \lambda\longmapsto
 Q\bigl(X(\vx)\ve_p+Y(\vx)\ve_q+\lambda\vw\bigr)
\]
is constant.  At $\lambda=0$, its value is
\[
 Q\bigl(X(\vx)\ve_p+Y(\vx)\ve_q\bigr).
\]
At $\lambda=1$, the definition of $\vw$ gives
\begin{align*}
 X(\vx)\ve_p+Y(\vx)\ve_q+\vw
 &=X(\vx)\ve_p+Y(\vx)\ve_q
   +\vx-X(\vx)\ve_p-Y(\vx)\ve_q\\
 &=\vx,
\end{align*}
so the value is $Q(\vx)$.  Since the function is constant, these two values
are equal.  Therefore
\[
 Q(\vx)=Q\bigl(X(\vx)\ve_p+Y(\vx)\ve_q\bigr).
\]
Thus $Q$ depends only on the two block sums $X$ and $Y$.  Since $Q$ is
homogeneous of degree $N$, there are real numbers $c_0,\ldots,c_N$ such that
\begin{equation}\label{eq:split-Q-blocks}
 Q=\sum_{t=0}^{N}c_tX^tY^{N-t}.
\end{equation}

\emph{Proof of (i), Step 2: $h_{\vb}$ is constant on each layer.}
Fix $\vb=(\beta_1,\ldots,\beta_k)\in H(N,k)$ and put $t=\vb(S)$.  From
\eqref{eq:split-Q-coefficients},
\[
 [\vx^{\vb}]Q
 =n(n-1)\frac{N!}{\vb!}\,h_{\vb},
 \qquad \vb!=\prod_{i=1}^k\beta_i!.
\]
On the other hand, every monomial in $X^sY^{N-s}$ has total degree $s$ in
the variables indexed by $S$.  Hence $\vx^{\vb}$ occurs in
\eqref{eq:split-Q-blocks} only in the term $c_tX^tY^{N-t}$.  The multinomial
theorem gives
\[
 [\vx^{\vb}]Q
 =c_t
 \frac{t!}{\prod_{i\in S}\beta_i!}
 \frac{(N-t)!}{\prod_{j\in S^c}\beta_j!}
 =c_t\frac{t!(N-t)!}{\vb!}.
\]
Equating the two coefficient formulas and cancelling $\vb!$ yields
\[
 h_{\vb}
 =\frac{c_t\,t!(N-t)!}{n(n-1)N!}
 =\frac{c_t}{n(n-1)\binom Nt}.
\]
The right-hand side depends on $\vb$ only through $t=\vb(S)$, proving (i).
Every layer is nonempty: for $0\le t\le N$, the vector
\[
 t\ve_p+(N-t)\ve_q
\]
belongs to $H(N,k)$ and has $S$-sum $t$. Thus the common value $h_t$ on each
layer is well defined. When $n=2$, this simply gives the unique layer
$t=N=0$.

\emph{Proof of (ii): reconstruction from the split functions.}
Define
\[
 \widetilde f=\sum_{t=0}^{N}h_t\sigma_{S,t+1}.
\]
The indices $t+1$ range from $1$ to $n-1$, as required in
Definition~\ref{def:split}.  Fix $\vb\in H(N,k)$ and put $t=\vb(S)$.  By
Lemma~\ref{lem:splitmetric}, only the term indexed by this value of $t$ has a
nonzero local Hessian at $\vb$.  Hence
\[
 d^{\vb}(\widetilde f)
 =h_t\delta_S
 =h_{\vb}\delta_S
 =d^{\vb}(f).
\]
It follows that every local Hessian of $f-\widetilde f$ is zero.
Lemma~\ref{lem:hessiankernel} therefore implies that $f-\widetilde f$ is
affine.  Writing $f-\widetilde f=\ell$ and recalling that $N=n-2$, we obtain
\[
 f=\ell+\sum_{t=0}^{n-2}h_t\sigma_{S,t+1}.
\]
This proves (ii).
\end{proof}

\begin{lemma}\label{lem:splitrigidity}
Let $d_1,\ldots,d_m$ be tree metrics such that
\[
 \delta_S=d_1+\cdots+d_m.
\]
Then, for every $a$, there exists $c_a\ge0$ such that $d_a=c_a\delta_S$.
\end{lemma}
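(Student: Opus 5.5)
The plan is a direct entrywise argument that uses only nonnegativity and the triangle inequality of each $d_a$. It does not need the four-point condition, so it in fact proves the statement for arbitrary (pseudo)metrics.

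\emph{Same-side pairs.} If $i,j$ lie on the same side of $\{S,S^c\}$, then $(\delta_S)_{ij}=0$. Since every $(d_a)_{ij}\ge0$ and they sum to zero, each one vanishes. Thus every $d_a$ is zero on all same-side pairs.

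\emph{Cross-pairs.} Fix $a$, and let $(i,j)$ and $(i',j')$ be two cross-pairs with $i,i'\in S$ and $j,j'\in S^c$. By the triangle inequality for $d_a$,
\[
 (d_a)_{ij}\le (d_a)_{ii'}+(d_a)_{i'j'}+(d_a)_{j'j}.
\]
The pairs $(i,i')$ and $(j',j)$ are same-side, so the first and last terms vanish by the previous step, giving $(d_a)_{ij}\le(d_a)_{i'j'}$. Exchanging the roles of the two pairs gives the reverse inequality. Hence $d_a$ takes a single value $c_a$ on all cross-pairs, and $c_a\ge0$ by nonnegativity.

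\emph{Conclusion.} Each $d_a$ is therefore zero on same-side pairs and equal to $c_a$ on cross-pairs, so $d_a=c_a\delta_S$. Summing over $a$ also gives $\sum_a c_a=1$, although this is not required. The only delicate point is the degenerate case where $S$ or $S^c$ is a singleton. There the chain of triangle inequalities simply has trivially vanishing same-side terms with $i=i'$ or $j=j'$, and the argument goes through unchanged. I do not expect any real obstacle; the key step is the comparison of two cross-pairs through the triangle inequality.
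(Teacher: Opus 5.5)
Your proposal is correct and follows essentially the same route as the paper: nonnegativity forces every $d_a$ to vanish on same-side pairs, and the triangle inequality then makes all cross-distances of each $d_a$ equal. The only difference is cosmetic. The paper moves one endpoint at a time, while you chain two triangle inequalities to move both endpoints at once. Like the paper's proof, yours uses only the metric axioms and not the four-point condition.
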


\begin{proof}
All entries of a tree metric are nonnegative.  Since $\delta_S$ vanishes on
pairs contained in $S$ and on pairs contained in $S^c$, every $d_a$ vanishes
on those pairs as well.  Fix $a$.  If $p,p'\in S$ and $q\in S^c$, the triangle
inequality gives
\[
 d_a(p,q)\le d_a(p,p')+d_a(p',q)=d_a(p',q),
\]
and interchanging $p,p'$ gives equality. Repeating the argument for the
endpoint in $S^c$ shows that every cross-distance of $d_a$ has the same value
$c_a\ge0$.  Every same-side distance is zero.  Hence
$d_a=c_a\delta_S$.
\end{proof}

\begin{theorem}[Split facets]\label{thm:splits}
For every nonempty $S\subsetneq\{1,\ldots,k\}$ and every $1\le r\le n-1$, the
class of
\[
 \sigma_{S,r}(\va)=\max\{0,\va(S)-r\}
\]
spans an extreme ray of $\mathcal M_{n,k}$.  Equivalently,
\[
 \sum_{\va\in H(n,k)}
 \gamma_{\va}\max\{0,\va(S)-r\}\ge0
\]
defines a facet of $\BR_{\mathring L}(n,k)$.
\end{theorem}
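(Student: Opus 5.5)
The plan is to verify extremality directly from the definition, using the local tree-metric description, and then to invoke Corollary~\ref{cor:facets}(ii) to convert the extreme ray into a facet.

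First, $[\sigma_{S,r}]$ is nonzero. By Lemma~\ref{lem:splitmetric}, $d^{\vb}(\sigma_{S,r})=\delta_S\neq0$ at any $\vb$ with $\vb(S)=r-1$. Such $\vb$ exist: take $\vb=(r-1)\ve_p+(n-1-r)\ve_q$ with $p\in S$, $q\notin S$. Lemma~\ref{lem:affine-hessian} then shows that $\sigma_{S,r}$ is not affine.

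Now suppose $[\sigma_{S,r}]=[x]+[y]$ with $[x],[y]\in\mathcal M_{n,k}$. By Lemma~\ref{lem:Mpolyhedral}, each of $x,y$ is represented by a finite sum of M-convex functions. Hence the class $[\sigma_{S,r}]$ equals $\sum_a[\nu_a]$ for finitely many M-convex $\nu_a$, and it suffices to show that each $[\nu_a]$ is a nonnegative multiple of $[\sigma_{S,r}]$.

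Taking local Hessians, which are well defined on classes by Lemma~\ref{lem:affine-hessian}, gives for every $\vb$
\[
 d^{\vb}(\sigma_{S,r})=\sum_a d^{\vb}(\nu_a),
\]
and each summand is a tree metric by Proposition~\ref{prop:treemetric}. We split into two cases according to the layer of $\vb$.
\begin{itemize}
\item If $\vb(S)\neq r-1$, the left side is $0$. Since tree metrics are entrywise nonnegative, every $d^{\vb}(\nu_a)=0$.
\item If $\vb(S)=r-1$, the left side is $\delta_S$, and Lemma~\ref{lem:splitrigidity} gives $d^{\vb}(\nu_a)=c_{a,\vb}\delta_S$ with $c_{a,\vb}\ge0$.
\end{itemize}
Thus each $\nu_a$ has a split field for $S$. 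Lemma~\ref{lem:split}(i) shows that $c_{a,\vb}$ depends only on $\vb(S)$, so it is a single constant $c_a\ge0$ on the layer $r-1$, and it vanishes on all other layers. Lemma~\ref{lem:split}(ii) then yields $\nu_a=\ell_a+c_a\sigma_{S,r}$ with $\ell_a$ affine, so $[\nu_a]=c_a[\sigma_{S,r}]$.

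Summing over the $\nu_a$ making up $x$ shows that $[x]$ is a nonnegative multiple of $[\sigma_{S,r}]$, and likewise for $[y]$. This is exactly extremality of the ray. Pointedness (Lemma~\ref{lem:Mpointed}) is not even needed for this step, but it is what makes Corollary~\ref{cor:facets}(ii) applicable. That corollary then converts the extreme ray into the facet $\langle\vg,\sigma_{S,r}\rangle\ge0$ of $\BR_{\mathring L}(n,k)$, which is the displayed inequality.

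The only delicate point is that elements of $\mathcal M_{n,k}$ are sums of M-convex classes rather than single M-convex functions. The local Hessian argument is designed to handle this: it is additive, and it forces every summand into the split form, so no single-representative assumption is needed.
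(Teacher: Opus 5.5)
Your proposal is correct and follows the paper's proof essentially step for step. The steps are: nonvanishing via the local Hessian at $\vb_0=(r-1)\ve_p+(n-r-1)\ve_q$; writing $x,y$ as sums of M-convex classes; off-layer vanishing and on-layer rigidity via Lemma~\ref{lem:splitrigidity}; integration via Lemma~\ref{lem:split}; and finally Corollary~\ref{cor:facets}(ii). The only addition needed is to state explicitly, as the paper does in its Step~1 using Lemma~\ref{lem:splitmetric}, that $\sigma_{S,r}$ is M-convex, so that $[\sigma_{S,r}]$ actually lies in $\mathcal M_{n,k}$.
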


\begin{proof}
Write
\[
 \sigma=\sigma_{S,r},
 \qquad
 \delta=\delta_S.
\]
We divide the proof into three steps.

\emph{Step 1: $[\sigma]$ is a nonzero element of $\mathcal M_{n,k}$.}
Choose $p\in S$ and $q\in S^c$, and set
\[
 \vb_0=(r-1)\ve_p+(n-r-1)\ve_q.
\]
Because $1\le r\le n-1$, both coefficients are nonnegative and their sum is
$n-2$.  Thus $\vb_0\in H(n-2,k)$.  Moreover, $p\in S$ and $q\notin S$, so
\[
 \vb_0(S)=r-1.
\]
Lemma~\ref{lem:splitmetric} now gives
\[
 d^{\vb_0}(\sigma)=\delta.
\]
This matrix is nonzero because $\delta_{pq}=1$.  Since affine functions have
zero local Hessians, $\sigma$ is not affine and hence $[\sigma]\ne0$.
Lemma~\ref{lem:splitmetric} also shows that $\sigma$ is M-convex.  Therefore
\[
 [\sigma]\in\mathcal M_{n,k}\setminus\{0\}.
\]

\emph{Step 2: every decomposition of $[\sigma]$ stays on its ray.}
Suppose that
\[
 [\sigma]=x+y,
 \qquad
 x,y\in\mathcal M_{n,k}.
\]
By Lemma~\ref{lem:Mpolyhedral}, after reindexing there are M-convex functions
$f_1,\ldots,f_m$ and an integer $0\le u\le m$ such that
\[
 x=\sum_{a=1}^{u}[f_a],
 \qquad
 y=\sum_{a=u+1}^{m}[f_a].
\]
Consequently $[\sigma]=\sum_{a=1}^{m}[f_a]$.  Equality in the affine quotient
means that $\sigma-\sum_{a=1}^{m}f_a$ is affine.  Local Hessians are linear
and vanish on affine functions, so for every $\vb\in H(n-2,k)$,
\[
 \sum_{a=1}^{m}d^{\vb}(f_a)
 =d^{\vb}(\sigma)
 =
 \begin{cases}
  \delta,&\vb(S)=r-1,\\
  0,&\vb(S)\ne r-1,
 \end{cases}
\]
where the last equality is Lemma~\ref{lem:splitmetric}.  Since every $f_a$ is
M-convex, Proposition~\ref{prop:treemetric} shows that each
$d^{\vb}(f_a)$ is a tree metric and therefore has nonnegative entries.

If $\vb(S)\ne r-1$, the preceding display expresses the zero matrix as a sum
of entrywise nonnegative matrices.  Hence
\[
 d^{\vb}(f_a)=0
 \qquad(a=1,\ldots,m).
\]
If $\vb(S)=r-1$, the same display becomes
\[
 \delta=\sum_{a=1}^{m}d^{\vb}(f_a).
\]
For every $a$, Lemma~\ref{lem:splitrigidity} gives a number
$\lambda_{a,\vb}\ge0$ such that
$d^{\vb}(f_a)=\lambda_{a,\vb}\delta$.  For uniformity, set
$\lambda_{a,\vb}=0$ whenever $\vb(S)\ne r-1$.  We then have
\[
 d^{\vb}(f_a)=\lambda_{a,\vb}\delta
 \qquad
 \text{for every $a$ and every $\vb\in H(n-2,k)$.}
\]

Fix $a$.  Lemma~\ref{lem:split}(i) shows that $\lambda_{a,\vb}$ depends only on
$\vb(S)$.  Therefore the values $\lambda_{a,\vb}$ are equal for all $\vb$ in
the active layer $\vb(S)=r-1$, even though that layer may contain many
multi-indices.  Denote this common value by $\lambda_a\ge0$.  Since
$\lambda_{a,\vb}=0$ on every other layer, the reconstruction formula in
Lemma~\ref{lem:split}(ii) reduces to
\[
 f_a=\ell_a+\lambda_a\sigma
\]
for some affine function $\ell_a$.  Thus
\[
 [f_a]=\lambda_a[\sigma],
 \qquad \lambda_a\ge0.
\]
Substituting these identities into the decompositions of $x$ and $y$ gives
\[
 x=\left(\sum_{a=1}^{u}\lambda_a\right)[\sigma],
 \qquad
 y=\left(\sum_{a=u+1}^{m}\lambda_a\right)[\sigma].
\]
Both coefficients are nonnegative.  Since $x+y=[\sigma]$ and
$[\sigma]\ne0$, their sum is $1$.  Hence every decomposition of $[\sigma]$
in $\mathcal M_{n,k}$ has both summands on
$\mathbb R_{\ge0}[\sigma]$, proving that this ray is extreme.

\emph{Step 3: the corresponding facet.}
Since $[\sigma]$ is nonzero and spans an extreme ray,
Corollary~\ref{cor:facets}(ii) shows that
\[
 \langle\vg,\sigma\rangle=0
\]
cuts out a facet of $\BR_{\mathring L}(n,k)$.  Its supporting inequality is
\[
 \langle\vg,\sigma\rangle
 =\sum_{\va\in H(n,k)}
 \gamma_{\va}\max\{0,\va(S)-r\}\ge0,
\]
as claimed.
\end{proof}

\begin{proposition}
\label{prop:splitclasses}
The rays spanned by two split functions modulo affine functions satisfy
\[
 \mathbb R_{\ge0}[\sigma_{S,r}]
 =\mathbb R_{\ge0}[\sigma_{T,s}]
 \quad\Longleftrightarrow\quad
 (T,s)=(S,r)\ \text{or}\ (T,s)=(S^c,n-r).
\]
Consequently $\BR_{\mathring L}(n,k)$ has exactly
\[
 (2^{k-1}-1)(n-1)
\]
split facets.
\end{proposition}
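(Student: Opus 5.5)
The plan is to compare rays through their local metric fields. The field is a linear invariant of the class $[\nu]$ (Lemma~\ref{lem:affine-hessian}), and by Lemma~\ref{lem:hessiankernel} it determines the class. By Lemma~\ref{lem:splitmetric}, the field of $\sigma_{S,r}$ is supported exactly on the layer $\{\vb\in H(n-2,k):\vb(S)=r-1\}$, and on that layer it equals $\delta_S$.

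For the implication ``$\Leftarrow$'', first note that $\delta_{S^c}=\delta_S$, since the split metric depends only on the partition $\{S,S^c\}$. Second, every $\vb\in H(n-2,k)$ satisfies $\vb(S^c)=n-2-\vb(S)$. Hence $\vb(S^c)=(n-r)-1$ holds if and only if $\vb(S)=r-1$. The two split functions $\sigma_{S,r}$ and $\sigma_{S^c,n-r}$ therefore have identical local metric fields. By Lemma~\ref{lem:hessiankernel} their difference is affine, so $[\sigma_{S,r}]=[\sigma_{S^c,n-r}]$. The requirement $1\le n-r\le n-1$ holds automatically. As a sanity check, one can verify this directly: $\max\{0,\va(S)-r\}-\max\{0,n-\va(S)-(n-r)\}=\va(S)-r$, which is affine.

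For ``$\Rightarrow$'', suppose $[\sigma_{T,s}]=c[\sigma_{S,r}]$ with $c>0$; both classes are nonzero by Step~1 of Theorem~\ref{thm:splits}. Taking local fields gives $d^{\vb}(\sigma_{T,s})=c\,d^{\vb}(\sigma_{S,r})$ for every $\vb$. At any $\vb$ with $\vb(S)=r-1$, which exists as in Step~1 of Theorem~\ref{thm:splits}, the right side is $c\,\delta_S\neq0$. So the left side must be $\delta_T$, and $\delta_T=c\,\delta_S$. Comparing entries, which are all $0$ or $1$, forces $c=1$ and $\{T,T^c\}=\{S,S^c\}$. After possibly replacing $(T,s)$ by $(T^c,n-s)$, which changes nothing by the first paragraph, we may assume $T=S$. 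The supports of the two fields are then the layers $\vb(S)=s-1$ and $\vb(S)=r-1$. These coincide, and the layers are nonempty for every value in $\{0,\dots,n-2\}$, so $s=r$. Undoing the replacement gives $(T,s)\in\{(S,r),(S^c,n-r)\}$.

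For the count, the pairs $(S,r)$ with $\varnothing\ne S\subsetneq\{1,\dots,k\}$ and $1\le r\le n-1$ number $(2^k-2)(n-1)$. The involution $(S,r)\mapsto(S^c,n-r)$ has no fixed points because $S\ne S^c$. By the equivalence just proved, its orbits are exactly the distinct rays, giving $(2^{k-1}-1)(n-1)$ distinct split rays. Each spans an extreme ray by Theorem~\ref{thm:splits}. Since distinct extreme rays give distinct facets under the correspondence of Corollary~\ref{cor:facets}(ii), this is the number of split facets. The only delicate point is the entrywise comparison $\delta_T=c\,\delta_S$ used to identify the partition, and it is elementary.
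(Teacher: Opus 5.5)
Your proposal is correct and follows essentially the paper's own argument. For ``$\Rightarrow$'' you evaluate the proportional local metric fields at a $\vb$ with $\vb(S)=r-1$, obtain $\delta_T=c\,\delta_S$, force $c=1$ and $\{T,T^c\}=\{S,S^c\}$, and then determine $s$; the count via the fixed-point-free involution $(S,r)\mapsto(S^c,n-r)$ is also the same. The differences are cosmetic: the paper proves ``$\Leftarrow$'' directly from the identity $\sigma_{S,r}(\va)-\sigma_{S^c,n-r}(\va)=\va(S)-r$, which you only give as a sanity check, and it reads off $s$ from $\vb(T)=s-1$ at the chosen $\vb$ rather than by comparing the supports of the two fields.
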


\begin{proof}
Since $\va(S^c)=n-\va(S)$ and
$\max\{0,u\}-\max\{0,-u\}=u$, one has
\[
 \sigma_{S,r}(\va)-\sigma_{S^c,n-r}(\va)=\va(S)-r,
\]
which is affine. Thus either case displayed on the right yields the same ray.

Conversely, suppose
$[\sigma_{S,r}]=\lambda[\sigma_{T,s}]$ for some $\lambda>0$.  Local Hessians
annihilate affine functions, so the two local metric fields are proportional.
Choose $\vb\in H(n-2,k)$ with $\vb(S)=r-1$. At this index, the first field is
$\delta_S\ne0$, so the second field must also be active:
\[
 \vb(T)=s-1,\qquad \delta_S=\lambda\delta_T.
\]
All nonzero entries of a split metric equal $1$, so $\lambda=1$.  Equality of
the two split metrics means that their unordered bipartitions agree.  Thus
$T=S$ or $T=S^c$.  In the first case
$s-1=\vb(S)=r-1$, so $s=r$.  In the second,
\[
 s-1=\vb(S^c)=(n-2)-(r-1)=n-r-1,
\]
so $s=n-r$.

There are $(2^k-2)(n-1)$ pairs $(S,r)$.  The complement involution
$(S,r)\mapsto(S^c,n-r)$ has no fixed point, so the number of equivalence
classes, and hence of split facets by Theorem~\ref{thm:splits}, is
$(2^k-2)(n-1)/2=(2^{k-1}-1)(n-1)$.
\end{proof}

We conclude the section by specializing the local description to three
variables. In this case every metric is a tree metric, so the resulting
inequalities can be dualized explicitly.

\begin{definition}[{\rm \defn{Triangular ratios}}]\label{def:triangular}
Let $p=(p_{uv})$ be a symmetric matrix with positive entries.  For pairwise
distinct $i,j,l$, the \defn{triangular ratio} of $p$ at $(i\mid jl)$ is
\[
 T_{(i\mid jl)}(p)
 =\frac{p_{ii}\,p_{jl}}{p_{ij}\,p_{il}}.
\]
If $p$ is Lorentzian, then $T_{(i\mid jl)}(p)\le2$
\cite[Example~1.4]{HHSW}.  In geometric settings this is a special case of
the reverse Khovanskii--Teissier inequality
\cite[Theorem~5.7]{LX17}.

Now write
\[
 F(\vx)=\sum_{\va\in H(n,k)}P_{\va}\vx^{\va},
 \qquad \widehat P_{\va}=\va!P_{\va},
\]
and, for $\vb\in H(n-2,k)$, put
\[
 \mathcal H^{(\vb)}=\operatorname{Hess}(\partial^{\vb}F).
\]
Then
\[
 \mathcal H^{(\vb)}_{uv}=\widehat P_{\vb+\ve_u+\ve_v},
\]
so the triangular ratio of this Hessian is
\[
 T_{(\vb;i\mid jl)}
 :=T_{(i\mid jl)}(\mathcal H^{(\vb)})
 =\frac{
 \widehat P_{\vb+2\ve_i}\,
 \widehat P_{\vb+\ve_j+\ve_l}}
 {\widehat P_{\vb+\ve_i+\ve_j}\,
 \widehat P_{\vb+\ve_i+\ve_l}}.
\]
Rewriting this expression in the original coefficients gives
\[
 T_{(\vb;i\mid jl)}
 =\kappa_{\vb,i}
 \frac{P_{\vb+2\ve_i}\,P_{\vb+\ve_j+\ve_l}}
 {P_{\vb+\ve_i+\ve_j}\,P_{\vb+\ve_i+\ve_l}},
 \qquad
 \kappa_{\vb,i}
 =\frac{\beta_i+2}{\beta_i+1}.
\]
The positive factor $\kappa_{\vb,i}$ depends only on $\vb$ and $i$, not on
$F$. Although it changes the numerical value and the optimal bounding
constant, it leaves the exponent vector unchanged. We define the
\defn{triangular ratio} $\vrho(\vb;i\mid jl)$ to be the exponent
vector of the coefficient ratio
\[
 R_{\vrho(\vb;i\mid jl)}(\vP)
 =
  \frac{P_{\vb+2\ve_i}\; P_{\vb+\ve_j+\ve_l}}
       {P_{\vb+\ve_i+\ve_j}\; P_{\vb+\ve_i+\ve_l}} .
\]
Thus
\[
 T_{(\vb;i\mid jl)}
 =\kappa_{\vb,i}R_{\vrho(\vb;i\mid jl)}(\vP).
\]
Interchanging $j$ and $l$ does not change this ratio, so the pair
$\{j,l\}$ is always regarded as unordered.
\end{definition}

\begin{lemma}[Three-variable rhombus inequalities]
\label{lem:three-rhombus}
A function $\nu:H(n,3)\to\mathbb R$ is M-convex if and only if
 \[
 \langle\vrho(\vb;i\mid jk),\nu\rangle\ge0
 \quad\text{for all }\vb\in H(n-2,3),\quad i\in\{1,2,3\},\quad
 \{j,k\}=\{1,2,3\}\setminus\{i\}.
 \]
\end{lemma}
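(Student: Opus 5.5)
The plan is to reduce to quadratic slices and then identify the triangular pairings with the triangle inequalities for the local Hessians. By Proposition~\ref{prop:treemetric}, $\nu$ is M-convex if and only if, for every $\vb\in H(n-2,3)$, the local discrete Hessian $d^{\vb}(\nu)$ is a tree metric on $\{1,2,3\}$. So the first step is to show that for three points the tree-metric condition is the same as the metric condition.

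The four-point condition of Definition~\ref{def:tree} has four indices drawn from a three-element set, so some index is repeated. The repeated-index analysis at the end of the proof of Lemma~\ref{lem:quadratic-tree} shows that every such instance follows from nonnegativity and the triangle inequalities. Hence $d^{\vb}(\nu)$ is a tree metric exactly when it is a metric. That means nonnegative entries and, for $\{i,j,k\}=\{1,2,3\}$, the inequality $d^{\vb}_{jk}\le d^{\vb}_{ij}+d^{\vb}_{ik}$.

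The second step is to compute the pairing. By the definition of $\vrho(\vb;i\mid jk)$,
\[
 \langle\vrho(\vb;i\mid jk),\nu\rangle
 =\nu(\vb+2\ve_i)+\nu(\vb+\ve_j+\ve_k)-\nu(\vb+\ve_i+\ve_j)-\nu(\vb+\ve_i+\ve_k).
\]
The identity in the proof of Lemma~\ref{lem:quadratic-tree}, applied to the slice $w=\nu_{\vb}$, then gives
\[
 d^{\vb}_{ij}(\nu)+d^{\vb}_{ik}(\nu)-d^{\vb}_{jk}(\nu)=2\langle\vrho(\vb;i\mid jk),\nu\rangle .
\]
So the triangle inequalities at $\vb$ are exactly the three conditions $\langle\vrho(\vb;i\mid jk),\nu\rangle\ge0$ for $i=1,2,3$.

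The one remaining issue, and the only real obstacle, is nonnegativity: it must follow from the triangle inequalities alone. Adding the two triangle inequalities with apex $i$ and apex $j$ gives
\[
 d_{jk}+d_{ik}\le d_{ij}+d_{ik}+d_{ij}+d_{jk},
\]
which simplifies to $2d_{ij}\ge0$. This works for every pair because all three indices are present.

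Combining the steps: all $3\binom n2$ triangular pairings are nonnegative if and only if every $d^{\vb}(\nu)$ is a metric on three points, hence a tree metric. By Proposition~\ref{prop:treemetric}, this holds if and only if $\nu$ is M-convex.
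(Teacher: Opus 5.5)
Your proof is correct and follows the paper's argument. Both reduce to the local Hessians via Proposition~\ref{prop:treemetric}, use the identity $d^{\vb}_{ij}+d^{\vb}_{ik}-d^{\vb}_{jk}=2\langle\vrho(\vb;i\mid jk),\nu\rangle$, and get nonnegativity by adding two triangle inequalities. The one difference is how you show that a metric on three labels is a tree metric: you use Buneman's four-point condition, where some index is forced to repeat, while the paper directly builds the star tree with edge lengths $\tfrac12(d_{ij}+d_{ik}-d_{jk})$; both work.
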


\begin{proof}
\emph{Step 1: reduction to the local metric fields.}
For each $\vb\in H(n-2,3)$, let
\[
 d^{\vb}=d^{\vb}(\nu).
\]
By Proposition~\ref{prop:treemetric}, the function $\nu$ is M-convex if and
only if every matrix $d^{\vb}$ is a tree metric on the three labels
$\{1,2,3\}$.  Thus it remains to translate the tree-metric condition on each
$d^{\vb}$ into inequalities involving the values of $\nu$.

\smallskip
\noindent
\emph{Step 2: tree metrics on three labels.}
A symmetric matrix $d=(d_{uv})_{u,v=1}^3$ with zero diagonal is a tree metric
if and only if it satisfies the three triangle inequalities
\begin{equation}\label{eq:three-triangle}
 d_{ij}+d_{ik}-d_{jk}\ge0,
 \qquad
 i\in\{1,2,3\},\quad
 \{j,k\}=\{1,2,3\}\setminus\{i\}.
\end{equation}
Indeed, if these inequalities hold, attach the three labels to a central
vertex by edges of lengths
\[
 \frac{d_{ij}+d_{ik}-d_{jk}}2,\qquad
 \frac{d_{ij}+d_{jk}-d_{ik}}2,\qquad
 \frac{d_{ik}+d_{jk}-d_{ij}}2.
\]
These lengths are nonnegative by \eqref{eq:three-triangle}, and the resulting
tree has pairwise distances $d_{ij},d_{ik},d_{jk}$.  The converse follows
from the triangle inequality for distances in a tree.

No separate nonnegativity assumption on the entries of $d$ is needed.  For
example, adding
\[
 d_{12}\le d_{13}+d_{23}
 \qquad\text{and}\qquad
 d_{13}\le d_{12}+d_{23}
\]
gives $d_{23}\ge0$.  The other two entries are handled in the same way.

\smallskip
\noindent
\emph{Step 3: each triangle inequality is a rhombus
inequality.}
Fix $\vb\in H(n-2,3)$ and distinct $i,j,k$.  By the definition of the local
discrete Hessian,
\[
\begin{aligned}
 d^{\vb}_{ij}(\nu)
 &=\nu(\vb+2\ve_i)+\nu(\vb+2\ve_j)
   -2\nu(\vb+\ve_i+\ve_j),\\
 d^{\vb}_{ik}(\nu)
 &=\nu(\vb+2\ve_i)+\nu(\vb+2\ve_k)
   -2\nu(\vb+\ve_i+\ve_k),\\
 d^{\vb}_{jk}(\nu)
 &=\nu(\vb+2\ve_j)+\nu(\vb+2\ve_k)
   -2\nu(\vb+\ve_j+\ve_k).
\end{aligned}
\]
Subtracting the third line from the sum of the first two cancels the terms
$\nu(\vb+2\ve_j)$ and $\nu(\vb+2\ve_k)$ and gives
\[
\begin{aligned}
 &d^{\vb}_{ij}(\nu)+d^{\vb}_{ik}(\nu)-d^{\vb}_{jk}(\nu)\\
 &\quad=2\bigl(
 \nu(\vb+2\ve_i)+\nu(\vb+\ve_j+\ve_k)
 -\nu(\vb+\ve_i+\ve_j)-\nu(\vb+\ve_i+\ve_k)
 \bigr)\\
 &\quad=2\langle\vrho(\vb;i\mid jk),\nu\rangle.
\end{aligned}
\]
Since the factor $2$ is positive, the triangle inequality
\[
 d^{\vb}_{ij}(\nu)+d^{\vb}_{ik}(\nu)-d^{\vb}_{jk}(\nu)\ge0
\]
holds if and only if
\[
 \langle\vrho(\vb;i\mid jk),\nu\rangle\ge0.
\]
Allowing $i$ to range over $\{1,2,3\}$ gives exactly the three triangle
inequalities for $d^{\vb}(\nu)$.  Combining Steps 1--3 proves the lemma.
\end{proof}

\begin{lemma}[The three-variable M-convex cone]
\label{lem:three-rhombus-cone}
The M-convex functions on $H(n,3)$ form the rational
polyhedral cone
\[
 \begin{aligned}
 \mathcal C_{n,3}
 =\bigl\{\nu\in\mathbb R^{H(n,3)}:\;&
 \langle\vrho(\vb;i\mid jk),\nu\rangle\ge0
 \text{ for all }\vb\in H(n-2,3),\\
 &i\in\{1,2,3\},\quad
 \{j,k\}=\{1,2,3\}\setminus\{i\}\bigr\}.
 \end{aligned}
\]
This cone is cut out by the $3\binom n2$ hive inequalities, also called
rhombus inequalities.
Moreover, if
\[
 \pi\colon\mathbb R^{H(n,3)}
 \longrightarrow
 \mathbb R^{H(n,3)}/\operatorname{Aff}(H(n,3))
\]
is the quotient map, then
\[
 \mathcal M_{n,3}=\pi(\mathcal C_{n,3}).
\]
\end{lemma}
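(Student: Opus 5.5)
The statement has three parts, and each follows quickly from earlier results. The first part says the set of M-convex functions on $H(n,3)$ equals $\mathcal C_{n,3}$. This is exactly Lemma~\ref{lem:three-rhombus}. That lemma says $\nu$ is M-convex if and only if $\langle\vrho(\vb;i\mid jk),\nu\rangle\ge0$ for all $\vb\in H(n-2,3)$ and all $i$. So I would cite it directly.

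Next, $\mathcal C_{n,3}$ is a rational polyhedral cone. Each $\vrho(\vb;i\mid jk)$ is an integer vector with entries in $\{0,\pm1\}$, or $\pm2$ if points coincide. Hence $\mathcal C_{n,3}$ is cut out by finitely many rational linear inequalities. That is the second form in Definition~\ref{def:rational}.

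For the count of inequalities, I would count the pairs $(\vb,i)$. We have $|H(n-2,3)|=\binom{n}{2}$, and there are three choices of $i$. The unordered pair $\{j,k\}$ is determined by $i$. This gives $3\binom n2$ inequalities. I would also note that these are the classical rhombus (hive) inequalities. Each one compares the two diagonals of a unit rhombus in the triangular grid $H(n,3)$.

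The final claim is $\mathcal M_{n,3}=\pi(\mathcal C_{n,3})$. By Notation~\ref{not:BRL}, $\mathcal M_{n,3}$ is the closed conical hull of $\pi(\mathcal C_{n,3})$. Since $\mathcal C_{n,3}$ is itself a convex cone, the set $\pi(\mathcal C_{n,3})$ is already a convex cone. So the conical hull adds nothing. Here the key point is that in three variables M-convex functions are closed under sums and nonnegative scalings, which holds because the defining conditions are linear.

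It remains to handle the closure. Since $\pi$ is a rational linear map, $\pi(\mathcal C_{n,3})$ is rational polyhedral by Definition~\ref{def:rational}, and hence closed. Therefore the closure is redundant and $\mathcal M_{n,3}=\pi(\mathcal C_{n,3})$.

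There is essentially no obstacle. The only care needed is to observe that convexity of $\mathcal C_{n,3}$ is what lets us drop the conical hull. This is the contrast with $k\ge4$, where the M-convex functions form only a union of cones, as in Lemma~\ref{lem:Mpolyhedral}.
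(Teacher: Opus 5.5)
Your proposal is correct and follows the paper's proof essentially step for step. You cite Lemma~\ref{lem:three-rhombus} to identify the M-convex functions with $\mathcal C_{n,3}$, count the $3\binom n2$ pairs $(\vb,i)$, and observe that $\pi(\mathcal C_{n,3})$ is already a convex rational polyhedral (hence closed) cone, so the conical hull and closure in the definition of $\mathcal M_{n,3}$ add nothing. One harmless aside: the four points in each $\vrho(\vb;i\mid jk)$ never coincide, since $i,j,k$ are distinct, so the entries are exactly $0,\pm1$.
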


\begin{proof}
Lemma~\ref{lem:three-rhombus} shows that the set of M-convex functions is exactly
the intersection of the displayed linear half-spaces.  It is therefore a
closed convex rational polyhedral cone.  There are
\[
 3|H(n-2,3)|=3\binom n2
\]
displayed inequalities: one for each $\vb\in H(n-2,3)$ and each choice of the
distinguished index $i\in\{1,2,3\}$.

This set is already a closed convex cone, and its image under the quotient map
$\pi$ is a polyhedral cone and hence closed. Therefore, taking
the conical hull and then the closure in the definition of $\mathcal M_{n,3}$
adds nothing, and
\[
 \mathcal M_{n,3}=\pi(\mathcal C_{n,3}).
\]
\end{proof}

\begin{lemma}[Irredundancy of the rhombus inequalities]
\label{lem:three-irredundant}
Each inequality in Lemma~\ref{lem:three-rhombus} defines a distinct facet of
the M-convex cone modulo affine functions.  Equivalently, the vectors
$\vrho(\vb;i\mid jk)$ span distinct extreme rays of its dual.
\end{lemma}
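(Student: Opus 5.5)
For each fixed pair $(\vb_0,i_0)$ I will construct a function $\nu$ that satisfies every rhombus inequality strictly, except the one indexed by $(\vb_0,i_0)$, which it satisfies with equality. I will also check that the $3\binom n2$ vectors $\vrho(\vb;i\mid jk)$ are pairwise non-proportional and nonzero modulo the relevant quotient. In a polyhedral cone $\{\nu:\langle\rho_a,\nu\rangle\ge0\}$, an inequality defines a facet exactly when some point makes it tight while keeping all the others strict. Such a point exists precisely when $\rho_a$ is not a nonnegative combination of the remaining $\rho_b$, i.e.\ when it spans its own extreme ray of the dual. Affine functions pair to zero with every $\vrho$, because each $\vrho$ is balanced. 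So everything passes to the quotient by $\operatorname{Aff}(H(n,3))$, and $\mathcal M_{n,3}$ is full-dimensional there, being pointed with polar cone full-dimensional in $V_{n,3}$. Hence it suffices to produce the witness $\nu$ and to check distinctness of the rays.

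By Step 3 of the proof of Lemma~\ref{lem:three-rhombus}, $2\langle\vrho(\vb;i\mid jk),\nu\rangle=d^{\vb}_{ij}+d^{\vb}_{ik}-d^{\vb}_{jk}$. It is therefore enough to build a local metric field in which every triangle is strictly non-degenerate, except that at $\vb_0$ the triangle is degenerate exactly at the vertex $i_0$. This field must also be integrable, i.e.\ of the form $d^{\vb}(\nu)$ for some $\nu$.

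I plan two building blocks.
\begin{itemize}
\item[(a)] A strictly "interior" function $\nu_0(\va)=\sum_{u}\alpha_u^2$, or more generally any strictly convex separable function. Its local Hessian is $d^{\vb}_{uv}=4$ for all $u\ne v$: with $\nu_0=\sum\alpha_u^2$ one gets $d_{uv}=(2\beta_u+2)\cdot 2-\dots$, a constant equilateral metric. Every triangle inequality then holds with slack $4>0$.
\item[(b)] A perturbation $\mu$, supported near $\vb_0$, that makes the $(\vb_0,i_0)$ triangle tight. The candidate is $\mu=\mathbf 1_{\vb_0+\ve_j+\ve_k}$, the indicator of the single lattice point $\vb_0+\ve_j+\ve_k$ with $\{j,k\}=\{1,2,3\}\setminus\{i_0\}$.
\end{itemize}

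The pairing $\langle\vrho(\vb;i\mid jk),\mu\rangle$ equals the coefficient of $\vrho(\vb;i\mid jk)$ at that point. This coefficient is $+1$ for $(\vb_0,i_0)$, and it is also $+1$ whenever that point appears as a "$2\ve_i$" vertex, namely $\vb=\vb_0+\ve_j-\ve_k$ with $i=j$ and the symmetric case. It is $-1$ for the rhombi in which that point is a side vertex $\vb+\ve_i+\ve_j$. So $\nu=c\,\nu_0-s\mu$ with $s=2c$ would make the $(\vb_0,i_0)$ inequality tight, but it might push some of the $-1$ rhombi to negative values. Point indicators therefore need adjusting.

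The fix is to use the negated indicator $\mu'=-\mathbf 1_{\vb_0+2\ve_{i_0}}$. Its only negative contributions come from rhombi where $\vb_0+2\ve_{i_0}$ is the apex $2\ve_i$. Those are the rhombus $(\vb_0,i_0)$ and possibly the rhombi $(\vb_0+\ve_{i_0}-\ve_u,\,i_0)$ in degenerate boundary positions. This is where I expect the main obstacle: showing that the point perturbation can be chosen so that exactly one rhombus loses slack.

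If point perturbations always hit several rhombi, I will instead use the dual characterization directly. I will show that $\vrho(\vb_0;i_0\mid jk)$ is not in the cone generated by the other $\vrho$'s by exhibiting a suitable $\nu$, namely a small modification of the split-type functions. For instance, $\sigma_{\{i_0\},r}$ with $r=(\vb_0)_{i_0}+1$ is tight exactly on the layer $\vb(\{i_0\})=r-1$ for all rhombi except those with apex $i_0$, where it is $1$. Adding $\varepsilon\nu_0$ and then subtracting an appropriate combination of indicators localized to that layer should isolate $\vb_0$.

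Finally, distinctness of the rays is immediate. Each $\vrho(\vb;i\mid jk)$ has the apex $\vb+2\ve_i$ with coefficient $+1$, and different $(\vb,i)$ give different signed supports, so no two of these vectors are positively proportional.
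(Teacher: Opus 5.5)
\textbf{The gap: the witness is never constructed.} You correctly flag it as the main obstacle, but neither of your routes produces it, and the first one cannot.

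With $\nu_0=\sum_u\alpha_u^2$ every rhombus pairing equals $2$. Subtracting a multiple of $\mathbf 1_x$, with $x=\vb_0+2\ve_{i_0}$, lowers the pairing of \emph{every} rhombus in which $x$ is a positive vertex, not only those in which it is the apex. Besides the target, these rhombi are:
\begin{itemize}
\item the apex incidences $\vrho(\vb_0+2\ve_{i_0}-2\ve_\ell;\ell\mid\cdot)$;
\item the opposite-vertex incidences $\vrho(\vb_0+2\ve_{i_0}-\ve_p-\ve_q;i_0\mid pq)$ and $\vrho(\vb_0+\ve_{i_0}-\ve_m;\ell\mid i_0m)$, with $\{p,q\}=\{\ell,m\}=\{1,2,3\}\setminus\{i_0\}$;
\end{itemize}
in each case whenever the base point lies in $H(n-2,3)$. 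Your list omits the opposite-vertex incidences. In the rhombi $\vrho(\vb_0+\ve_{i_0}-\ve_u;i_0\mid\cdot)$ that you name, $x$ is actually a side vertex with coefficient $-1$.

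The failure already occurs for $n=3$. Take $\vb_0=\ve_2$ and $i_0=1$. Then $x=(2,1,0)$ is the opposite vertex of $\vrho(\ve_1;3\mid12)$, which starts with the same slack $2$ as the target. Any multiple of $\mathbf 1_x$ that makes the target tight makes this inequality tight too.

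Your split-function fallback has the sign backwards. The function $\sigma_{\{i_0\},r}+\varepsilon\nu_0$ gives the apex-$i_0$ rhombi on the layer slack $1+2\varepsilon$ and every other rhombus slack only $2\varepsilon$. So the target has \emph{more} slack than the rhombi sharing its vertices, and the ``appropriate combination of indicators'' is never specified.

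\textbf{The missing idea: separability.} For a separable $\nu_0=\sum_m f_m(\alpha_m)$, the pairing with $\vrho(\bm{\eta};\ell\mid pq)$ is exactly the second difference
\[
f_\ell(\eta_\ell+2)-2f_\ell(\eta_\ell+1)+f_\ell(\eta_\ell).
\]
So the slack can be prescribed independently for each pair (apex index, layer). The paper's proof takes this slack to be $1$ for $(i_0,(\vb_0)_{i_0})$ and $2$ otherwise. Equivalently, $\nu_0=\sum_m\alpha_m^2-\sigma_{\{i_0\},(\vb_0)_{i_0}+1}$: you subtract the split function rather than add it. Then set $\nu=\nu_0-\mathbf 1_x$ with coefficient $1$.
\begin{itemize}
\item Every pairing drops by at most $1$, so all remain nonnegative.
\item Equality forces slack $1$, hence $\ell=i_0$ and $\eta_{i_0}=(\vb_0)_{i_0}$.
\item Equality also forces $x$ to be a positive vertex. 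It cannot be the opposite vertex, whose $i_0$-coordinate is $(\vb_0)_{i_0}$ rather than $(\vb_0)_{i_0}+2$. So $x$ is the apex and $\bm{\eta}=\vb_0$.
\end{itemize}

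\textbf{Two smaller points.} First, pointedness of $\mathcal M_{n,3}$ gives full-dimensionality of its polar, not of $\mathcal M_{n,3}$ itself. The correct reason for full-dimensionality is that your $\sum_u\alpha_u^2$ satisfies every rhombus inequality strictly. Second, your distinctness argument via signed supports is fine, and it also follows from the witnesses once they exist.
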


\begin{proof}
First, note that every rhombus normal annihilates affine functions.  Indeed,
the sum of its four coefficients is zero, and the corresponding weighted sum
of the indices is
\[
 (\bm\eta+2\ve_\ell)+(\bm\eta+\ve_p+\ve_q)
 -(\bm\eta+\ve_\ell+\ve_p)
 -(\bm\eta+\ve_\ell+\ve_q)=\bm0.
\]
Thus every rhombus inequality is well defined on
$\mathbb R^{H(n,3)}/\operatorname{Aff}(H(n,3))$.

Fix one of the rhombus inequalities, say
\begin{equation}\label{eq:chosen-rhombus}
 \langle\vrho(\vb;i\mid jk),\nu\rangle\ge0.
\end{equation}
We construct a function that satisfies \eqref{eq:chosen-rhombus} with equality
and every other rhombus inequality strictly.  The construction proceeds in
four steps.

\smallskip
\noindent
\emph{(i) Construct a strictly feasible function.}
For $\ell\in\{1,2,3\}$ and $0\le t\le n-2$, define
\[
 s_{\ell,t}=
 \begin{cases}
  1,&(\ell,t)=(i,\beta_i),\\
  2,&\text{otherwise}.
 \end{cases}
\]
Thus all the numbers $s_{\ell,t}$ are positive and exactly one of them is
equal to $1$.

For each $\ell\in\{1,2,3\}$, define
$f_\ell:\{0,\ldots,n\}\to\mathbb R$ by
\[
 f_\ell(0)=f_\ell(1)=0
\]
and the recurrence
\begin{equation}\label{eq:second-difference-witness}
 f_\ell(t+2)-2f_\ell(t+1)+f_\ell(t)=s_{\ell,t},
 \qquad 0\le t\le n-2.
\end{equation}
The initial values and the recurrence determine $f_\ell$ uniquely.  Now put
\[
 \nu_0(\va)=\sum_{m=1}^3 f_m(\alpha_m),
 \qquad \va=(\alpha_1,\alpha_2,\alpha_3)\in H(n,3).
\]

Consider an arbitrary rhombus normal
$\vrho(\bm\eta;\ell\mid pq)$, where
$\{\ell,p,q\}=\{1,2,3\}$.  Its pairing with a function $w$ is
\begin{equation}\label{eq:rhombus-pairing-expanded}
\begin{aligned}
 \langle\vrho(\bm\eta;\ell\mid pq),w\rangle
 ={}&w(\bm\eta+2\ve_\ell)
     +w(\bm\eta+\ve_p+\ve_q)\\
 &-w(\bm\eta+\ve_\ell+\ve_p)
  -w(\bm\eta+\ve_\ell+\ve_q).
\end{aligned}
\end{equation}
Upon substituting $w=\nu_0$, the terms involving $f_p$ cancel in pairs, as do
those involving $f_q$. The remaining terms are
\[
\begin{aligned}
 \langle\vrho(\bm\eta;\ell\mid pq),\nu_0\rangle
 &=f_\ell(\eta_\ell+2)
   -2f_\ell(\eta_\ell+1)+f_\ell(\eta_\ell)\\
 &=s_{\ell,\eta_\ell}>0
\end{aligned}
\]
by \eqref{eq:second-difference-witness}.  Hence every rhombus inequality is
strict at $\nu_0$.  Since there are only finitely many rhombus inequalities,
all their values remain positive under every sufficiently small perturbation
of $\nu_0$.  Therefore $[\nu_0]$ is an interior point of the cone in
$\mathbb R^{H(n,3)}/\operatorname{Aff}(H(n,3))$.  In particular, this cone is
full-dimensional in the quotient.

\smallskip
\noindent
\emph{(ii) Move the chosen inequality to equality.}
Let
\[
 x=\vb+2\ve_i\in H(n,3),
 \qquad
 \nu=\nu_0-\mathbf1_x.
\]
Here $\mathbf1_x$ is the indicator function of the lattice point $x$.
For an arbitrary rhombus normal, linearity and Step~(i) give
\begin{equation}\label{eq:witness-slack}
 \langle\vrho(\bm\eta;\ell\mid pq),\nu\rangle
 =s_{\ell,\eta_\ell}
  -\langle\vrho(\bm\eta;\ell\mid pq),\mathbf1_x\rangle.
\end{equation}
By \eqref{eq:rhombus-pairing-expanded}, the second term is simply the
coefficient of $w(x)$ in that formula.  Since the four points there are
distinct, this coefficient is $1$, $-1$, or $0$.  It is $1$ precisely when
\[
 x=\bm\eta+2\ve_\ell
 \qquad\text{or}\qquad
 x=\bm\eta+\ve_p+\ve_q.
\]
For the chosen normal, $x=\vb+2\ve_i$ is the first of these two positive
points, and $s_{i,\beta_i}=1$.  Hence
\[
 \langle\vrho(\vb;i\mid jk),\nu\rangle=1-1=0.
\]

\smallskip
\noindent
\emph{(iii) Every other inequality remains strict.}
We have $s_{\ell,\eta_\ell}\ge1$ and
$\langle\vrho(\bm\eta;\ell\mid pq),\mathbf1_x\rangle\le1$, so
\eqref{eq:witness-slack} is always nonnegative.  Equality can occur only if
\begin{equation}\label{eq:possible-second-zero}
 s_{\ell,\eta_\ell}=1
 \qquad\text{and}\qquad
 \langle\vrho(\bm\eta;\ell\mid pq),\mathbf1_x\rangle=1.
\end{equation}
The first equality in \eqref{eq:possible-second-zero} forces
\[
 \ell=i,
 \qquad
 \eta_i=\beta_i.
\]
The second equality says that $x$ is one of the two points having positive
coefficient in $\vrho(\bm\eta;i\mid pq)$.  Hence either
\[
 x=\bm\eta+2\ve_i
 \qquad\text{or}\qquad
 x=\bm\eta+\ve_p+\ve_q.
\]
The second possibility is impossible: its $i$-coordinate is
$\eta_i=\beta_i$, whereas the $i$-coordinate of
$x=\vb+2\ve_i$ is $\beta_i+2$.  Therefore
\[
 x=\bm\eta+2\ve_i.
\]
Since also $x=\vb+2\ve_i$, we obtain $\bm\eta=\vb$.  Thus the only
inequality that is active at $\nu$ is the chosen inequality
\eqref{eq:chosen-rhombus}.  All the other inequalities are strict.

\par\smallskip
\noindent
\emph{(iv) Deduce the facet and distinctness statements.}
Every rhombus normal annihilates affine functions, so every rhombus inequality
defines a half-space in
$\mathbb R^{H(n,3)}/\operatorname{Aff}(H(n,3))$. By Step~(i), their intersection
is full-dimensional in this quotient.  At the point $\nu$, the chosen
inequality is an equality and all the others are strict.  Consequently, a
sufficiently small neighborhood of $[\nu]$ inside the hyperplane
\[
 \langle\vrho(\vb;i\mid jk),\,\cdot\,\rangle=0
\]
remains in the cone.  Thus this face contains a relatively open neighborhood
in the displayed hyperplane.  Since $\vrho(\vb;i\mid jk)\ne0$, the
hyperplane has codimension one in the quotient.  The face therefore also has
codimension one and is a facet.

The same argument applies to every choice of $\vb$ and $i$.  Finally, two
distinct displayed normals cannot be positive scalar multiples of one
another: if they were, their inequalities would be equalities at exactly the
same points, contradicting the witness above at which only one of them is
active.  Hence the resulting facets, and equivalently the dual extreme rays,
are distinct.
\end{proof}

\begin{corollary}
\label{cor:threevariable}
\[
 \BR_{\mathring L}(n,3)
 =
 \operatorname{cone}\bigl\{
 \vrho(\vb;i\mid jk):
 \vb\in H(n-2,3),\ i\in\{1,2,3\},\
 \{j,k\}=\{1,2,3\}\setminus\{i\}
 \bigr\}, ~~~\qquad n\geq2.
\]
All the displayed generators span distinct extreme rays.  Hence
$\BR_{\mathring L}(n,3)$ has exactly
\[
 3|H(n-2,3)|=3\binom n2
\]
triangular-ratio extreme rays.  Their facet normals are the extreme
rays of the hive cone modulo affine functions.
\end{corollary}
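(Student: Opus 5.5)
The plan is to combine Theorem~\ref{thm:BRL-duality} with Lemma~\ref{lem:three-rhombus-cone} and to dualize. By Theorem~\ref{thm:BRL-duality}, $\BR_{\mathring L}(n,3)=\mathcal M_{n,3}^{\vee}$, with the polar taken for the pairing between the affine quotient and $V_{n,3}$ (Lemma~\ref{lem:quotientdual}). By Lemma~\ref{lem:three-rhombus-cone}, $\mathcal M_{n,3}=\pi(\mathcal C_{n,3})$, and $\mathcal C_{n,3}$ is cut out by the rhombus functionals $\langle\vrho(\vb;i\mid jk),\cdot\rangle\ge0$. The first step of the proof of Lemma~\ref{lem:three-irredundant} shows that each $\vrho(\vb;i\mid jk)$ annihilates affine functions, so each lies in $V_{n,3}$. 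Each therefore defines a linear functional on the quotient, and $\mathcal M_{n,3}$ is the intersection of these half-spaces in the quotient. Since $\mathcal C_{n,3}$ contains $\operatorname{Aff}(H(n,3))$ by Lemma~\ref{lem:Maffine}, we have $\pi^{-1}(\pi(\mathcal C_{n,3}))=\mathcal C_{n,3}$, and so this description holds exactly.

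Next apply Farkas--Minkowski--Weyl, in the form recalled in Definition~\ref{def:rational}, inside the finite-dimensional space $V_{n,3}$. If a polyhedral cone $C$ equals $\{x:\langle y_a,x\rangle\ge0\ \text{for all } a\}$ with finitely many $y_a$, then $C^\vee=\operatorname{cone}\{y_a\}$. This is the bipolar theorem, and it uses that finitely generated cones are closed. Applying it with $y_a$ ranging over the triangular ratios gives the displayed equality $\BR_{\mathring L}(n,3)=\operatorname{cone}\{\vrho(\vb;i\mid jk)\}$. The statement holds for every $n\ge2$; for $n=2$ the index set $H(0,3)$ is a single point and nothing changes.

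For extremality, use Lemma~\ref{lem:three-irredundant}. Each rhombus inequality defines a distinct facet of $\mathcal M_{n,3}$, which is full-dimensional in the quotient by step (i) of that lemma's proof. Under the face duality already invoked in Corollary~\ref{cor:facets}(ii), facets of $\mathcal M_{n,3}$ correspond bijectively to extreme rays of $\mathcal M_{n,3}^\vee$, and that correspondence needs pointedness from Lemma~\ref{lem:Mpointed}. Hence each $\vrho(\vb;i\mid jk)$ spans an extreme ray, and the rays are pairwise distinct. One can also see this directly: if some generator were a nonnegative combination of the others, its facet would be implied by the other inequalities, contradicting the witness $\nu$ of Lemma~\ref{lem:three-irredundant}, at which only that one inequality is tight.

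Counting is immediate. There is one generator for each pair $(\vb,i)$, giving $3|H(n-2,3)|=3\binom n2$, since $|H(n-2,3)|=\binom{n}{2}$. The last sentence of the statement is the reformulation that these facet normals are the extreme rays of the dual of the hive cone modulo affine functions, which is the content of Lemma~\ref{lem:three-irredundant}.

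The main point requiring care is making sure the dualization happens in the correct ambient space. One must check that the rhombus vectors lie in $V_{n,3}$, so the cone they generate sits inside the space where $\BR_{\mathring L}(n,3)$ lives, and that no extra lineality or equations enter. Both follow from the annihilation of affine functions together with Lemma~\ref{lem:quotientdual}.
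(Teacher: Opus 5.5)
Your argument for the cone equality, for the extremality of the generators, and for the count $3\binom n2$ is correct and follows the paper's proof. You combine Theorem~\ref{thm:BRL-duality} with $\mathcal M_{n,3}=\pi(\mathcal C_{n,3})$ from Lemma~\ref{lem:three-rhombus-cone} and Farkas duality to obtain the generators, and Lemma~\ref{lem:three-irredundant} then gives distinct extreme rays.

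Two points need correcting.

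First, the extremality step cites the wrong hypothesis. The correspondence you invoke there is between facets of $\mathcal M_{n,3}$ and extreme rays of $\mathcal M_{n,3}^\vee$. It rests on $\mathcal M_{n,3}$ being full-dimensional in the quotient, which you do cite, and not on the pointedness of Lemma~\ref{lem:Mpointed}. Your direct witness argument settles extremality without either hypothesis in any case.

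Second, you have misread the last sentence of the statement. It concerns the facets of $\BR_{\mathring L}(n,3)$: it says their normals are the extreme rays of $\mathcal M_{n,3}$, which is the hive cone modulo affine functions. It does not say that the rhombus vectors are extreme rays of a dual cone; that reading only restates what you had already proved. The paper derives the actual claim from Corollary~\ref{cor:facets}(i)--(ii). Because $\mathcal M_{n,3}$ is pointed by Lemma~\ref{lem:Mpointed}, the facets of $\mathcal M_{n,3}^\vee=\BR_{\mathring L}(n,3)$ correspond exactly to the extreme rays of $\mathcal M_{n,3}$. Lemma~\ref{lem:three-rhombus-cone} then identifies $\mathcal M_{n,3}$ with $\pi(\mathcal C_{n,3})$. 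This step, not the extremality step, is where pointedness enters, and it is not supplied by Lemma~\ref{lem:three-irredundant}.
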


\begin{proof}
Lemma~\ref{lem:three-rhombus-cone} presents the M-convex functions themselves as a
convex cone, so in three variables this cone is already
$\mathcal M_{n,3}$, rather than merely a generating subset of it.  Theorem
\ref{thm:BRL-duality} and elementary polyhedral duality give
\[
 \BR_{\mathring L}(n,3)=\mathcal M_{n,3}^{\vee}
 =\operatorname{cone}\{\vrho(\vb;i\mid jk)\}.
\]
Lemma~\ref{lem:three-irredundant} says exactly that these generators span
distinct extreme rays.  Finally,
$|H(n-2,3)|=\binom n2$, which gives the count.  The statement about facet
normals follows from Corollary~\ref{cor:facets}.
\end{proof}

The role of the hypothesis $k=3$ extends beyond the fact that every metric on
three labels is a tree metric. On four labels, the split metrics
$\delta_{\{1,2\}}$ and
$\delta_{\{1,3\}}$ are tree metrics, but their sum is not. Its three
four-point sums are $2,2,4$.

This obstruction persists in every degree $n\ge2$ and for every $k\ge4$.
Indeed, put
\[
 S=\{1,2\},\qquad T=\{1,3\},\qquad
 \vb=(n-2)\ve_4.
\]
By Lemma~\ref{lem:splitmetric}, the split functions
$\sigma_{S,1}$ and $\sigma_{T,1}$ are M-convex, while
\[
 d^{\vb}(\sigma_{S,1}+\sigma_{T,1})
 =\delta_S+\delta_T.
\]
The restriction of this local discrete Hessian to the first four labels
violates the four-point condition. Proposition~\ref{prop:treemetric} therefore
shows that $\sigma_{S,1}+\sigma_{T,1}$ is not M-convex. Hence, for every
$n\ge2$ and $k\ge4$, the M-convex functions are not closed under addition,
and the rhombus inequalities alone do not permit the preceding dualization.
The three-variable conclusion also fails numerically for $(n,k)=(2,5)$:
$\BR_{\mathring L}(2,5)$ has forty extreme rays---the thirty nondegenerate
triangular rays and ten pentagonal rays \cite[Theorem~A]{HHSW}.

\section{Optimal upper bounds for ternary Lorentzian cubics}
\label{sec:optimal-ternary}

We now determine the optimal bounding constant for every bounded ratio on
strictly Lorentzian ternary cubics. We first solve the problem for triangular
ratios coming from a single Hessian based on the proof of
\cite[Theorem~C]{HHSW}. Then we study the compatibility conditions linking all
nine triangular ratios from the three Hessians of a Lorentzian ternary cubic.
We apply the Fenchel--Rockafellar sum theorem to untie the dependence among all
nine ratios in the minimization process.

\subsection{One-Hessian optimization}

\begin{definition}[{\rm \defn{The function $\mathfrak m$}}]\label{def:m-function}
For $a,b,c \ge 0$, set $m = a+b+c$ and
$\Delta(a,b,c) = a^2+b^2+c^2-2ab-2ac-2bc$. Define $\mathfrak{m}(a,b,c) = 1$ if
$\Delta(a,b,c) \le 0$. If $\Delta(a,b,c) > 0$, then the largest of the three
entries is unique and exceeds the sum of the other two. After permuting the
entries, we may assume that this largest entry is $a$, so that $a > b+c$. Set
\[
  \mathfrak{m}(a,b,c) \;=\;
  2^{m}\,
  \frac{a^a\, b^b\, c^c\, (a-b-c)^{a-b-c}}
       {(a-b+c)^{a-b+c}\,(a+b-c)^{a+b-c}},
  \qquad 0^0 = 1 .
\]
The right-hand side is symmetric in $b$ and $c$ and is therefore independent
of the ordering of the two smaller entries. Hence this construction defines
$\mathfrak{m}$ as a symmetric function on all of $\Rge^{3}$.
\end{definition}

\medskip
\noindent
For $s,i\in\{1,2,3\}$, let $j,k$ denote, in either order, the two indices
different from $i$. Specializing Definition~\ref{def:triangular}, the
corresponding triangular ratio is
\[
 R_{\vrho(\ve_s;i\mid jk)}
 =\frac{P_{\ve_s + 2\ve_i}\; P_{\ve_s + \ve_j + \ve_k}}
       {P_{\ve_s + \ve_i + \ve_j}\; P_{\ve_s + \ve_i + \ve_k}}.
\]
As $s$ and $i$ vary, Corollary~\ref{cor:threevariable} shows that these
exponent vectors are the nine generators of $\BR_{\mathring L}(3,3)$.
\medskip

\begin{definition}[{\rm \defn{One-Hessian logarithmic ratios}}]
\label{def:one-hessian-data}
For a symmetric $3\times3$ matrix $\mathcal H$ with positive entries, define
\[
 T_i(\mathcal H)=\frac{\mathcal H_{ii}\,\mathcal H_{jk}}
                      {\mathcal H_{ij}\,\mathcal H_{ik}},
 \qquad
 i\in\{1,2,3\},\quad
 \{j,k\}=\{1,2,3\}\setminus\{i\}.
\]
The quantity
\[
 t_i(\mathcal H)=\log T_i(\mathcal H)
\]
is the \defn{logarithmic triangular ratio} of $\mathcal H$ at $i$. Put
\[
 t(\mathcal H)=\bigl(t_1(\mathcal H),t_2(\mathcal H),t_3(\mathcal H)\bigr)
\]
and let $\mathcal C$ be the set of all $t(\mathcal H)$ as $\mathcal H$
ranges over symmetric $3\times3$ matrices with positive entries and
Lorentzian signature.

For a set $A\subseteq\mathbb R^d$, its \defn{support function} is
\[
 \sigma_A(w)=\sup_{x\in A}\langle w,x\rangle.
\]
Thus $\sigma_A(w)$ is the supremum of the corresponding weighted sums of
the coordinates of points of $A$.
\end{definition}

For cubics with positive coefficients, strict Lorentzianity is equivalent to
requiring every first-derivative Hessian to have Lorentzian signature. The
next two lemmas describe the logarithmic triangular ratios of a single such
Hessian.

\begin{lemma}[Strict-triangle parametrization]
\label{lem:strict-triangle-model}
Positive diagonal congruence preserves the triangular ratios and the
signature. After normalizing the diagonal of a symmetric $3\times3$ matrix
with positive entries to $1$, the matrix has Lorentzian signature if and
only if its off-diagonal entries are
\[
 \cosh\ell_{12},\qquad \cosh\ell_{13},\qquad \cosh\ell_{23},
\]
where $\ell_{12},\ell_{13},\ell_{23}$ are the side lengths of a strict
triangle.
\end{lemma}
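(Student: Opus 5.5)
First I would dispose of the invariance claims. For a positive diagonal matrix $D=\operatorname{diag}(d_1,d_2,d_3)$, the congruence $\mathcal H\mapsto D\mathcal H D$ multiplies $\mathcal H_{uv}$ by $d_ud_v$. In $T_i=\mathcal H_{ii}\mathcal H_{jk}/(\mathcal H_{ij}\mathcal H_{ik})$ the numerator acquires $d_i^2d_jd_k$ and so does the denominator, so every triangular ratio is unchanged. Sylvester's law of inertia shows the signature is unchanged. Choosing $d_u=\mathcal H_{uu}^{-1/2}$ normalizes the diagonal to $1$, so it suffices to treat matrices
\[
 \mathcal H=\begin{pmatrix}1&a&b\\ a&1&c\\ b&c&1\end{pmatrix},\qquad a,b,c>0.
\]

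Next I would characterize Lorentzian signature for this normalized matrix. The top-left $1\times1$ minor is $1>0$. Consider the restriction of $\mathcal H$ to $e_1^\perp$ with respect to the form. This is the Schur complement $\begin{pmatrix}1-a^2&c-ab\\ c-ab&1-b^2\end{pmatrix}$. Signature $(+,-,-)$ is equivalent to this complement being negative definite. That in turn is equivalent to $1-a^2<0$ together with $\det>0$. The determinant of the complement equals $\det\mathcal H=1+2abc-a^2-b^2-c^2$.

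In particular Lorentzian signature forces $a>1$, and symmetrically (permuting roles) $b,c>1$. For the converse direction, if $a,b,c>1$ and $\det\mathcal H>0$, then the complement has negative $(1,1)$ entry and positive determinant, hence is negative definite. So Lorentzian signature is equivalent to $a,b,c>1$ together with $\det\mathcal H>0$. Writing $a=\cosh\ell_{12}$, $b=\cosh\ell_{13}$, $c=\cosh\ell_{23}$ with $\ell_{uv}>0$ is then a bijection onto the positive reals.

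The main step is the identity
\[
 1+2abc-a^2-b^2-c^2=-(c-\cosh(\ell_{12}+\ell_{13}))(c-\cosh(\ell_{12}-\ell_{13})).
\]
To verify it, view the left side as a quadratic in $c$ with roots $ab\pm\sqrt{(a^2-1)(b^2-1)}=ab\pm\sinh\ell_{12}\sinh\ell_{13}$. By the addition formula these roots are $\cosh(\ell_{12}\pm\ell_{13})$. Hence $\det\mathcal H>0$ exactly when $\cosh(\ell_{12}-\ell_{13})<\cosh\ell_{23}<\cosh(\ell_{12}+\ell_{13})$. Since $\cosh$ is even and increasing on $[0,\infty)$, this is $|\ell_{12}-\ell_{13}|<\ell_{23}<\ell_{12}+\ell_{13}$, which is exactly the strict triangle inequalities.

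Conversely, side lengths of a strict triangle are automatically positive. So the cosh values exceed $1$ and the determinant is positive, which gives Lorentzian signature. The only delicate point is the one just handled: the requirement $a,b,c>1$ is needed to take arccosh, and the $(1,1)$ entry of the Schur complement supplies it.
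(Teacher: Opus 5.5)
Your proof is correct. The invariance step and the key identity
$\det\mathcal H=-(c-\cosh(\ell_{12}+\ell_{13}))(c-\cosh(\ell_{12}-\ell_{13}))$ are the same as in the paper. Your root computation $ab\pm\sqrt{(a^2-1)(b^2-1)}$ simply verifies the factorization that the paper states.

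Where you differ is in how you characterize the signature. You eliminate the first row and column by a congruence, so that $\mathcal H$ is congruent to $(1)\oplus S$ with $\det S=\det\mathcal H$. From this you read off in one step that Lorentzian signature is equivalent to $a>1$ and $\det\mathcal H>0$, which covers both directions at once; $b,c>1$ then follow from the $(2,2)$ entry of $S$ and a relabeling.

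The paper proves the two directions separately. For necessity of $h_{uv}>1$, it applies Cauchy interlacing to each principal $2\times2$ submatrix, which gives all three inequalities without any relabeling. For sufficiency, it uses a parity argument: $\det\mathcal H>0$ forces an even number of negative eigenvalues, and the negative principal $2\times2$ minors rule out positive definiteness.

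Your Schur-complement route is more economical. It also gives a slightly sharper criterion: one off-diagonal entry exceeding $1$ together with a positive determinant already suffices. The paper's route avoids choosing a pivot and treats the three entries uniformly.
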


\begin{proof}
To verify the invariance of the triangular ratios, let
$D=\operatorname{diag}(d_1,d_2,d_3)$ with every $d_i>0$. Then
\[
 \frac{(D\mathcal HD)_{ii}(D\mathcal HD)_{jk}}
      {(D\mathcal HD)_{ij}(D\mathcal HD)_{ik}}
 =
 \frac{d_i^2h_{ii}\,d_jd_kh_{jk}}
      {d_id_jh_{ij}\,d_id_kh_{ik}}
 =T_i(\mathcal H).
\]
Signature invariance follows from Sylvester's law of inertia. Taking
$d_i=h_{ii}^{-1/2}$ and retaining the notation $\mathcal H=(h_{uv})$, we may
therefore assume that $h_{11}=h_{22}=h_{33}=1$.

Fix a principal $2\times2$ submatrix of the normalized matrix.
By Cauchy interlacing, its smaller eigenvalue is at most the second-largest
eigenvalue of $\mathcal H$, which is negative. Its trace is $2$, so its
other eigenvalue is positive. Its determinant is therefore negative. Thus
\[
 1-h_{uv}^2<0.
\]
Since the entries are positive, we have $h_{uv}>1$. Consequently, there is a
unique number $\ell_{uv}>0$ such that
\[
 h_{uv}=\cosh\ell_{uv}.
\]
The Lorentzian signature of $\mathcal H$ also gives $\det\mathcal H>0$.

Expanding and factoring the determinant gives
\[
 \det\mathcal H
 =-\bigl(\cosh\ell_{23}-\cosh(\ell_{12}+\ell_{13})\bigr)
   \bigl(\cosh\ell_{23}-\cosh(\ell_{12}-\ell_{13})\bigr).
\]
It follows that $\det\mathcal H>0$ precisely when
\[
 \cosh|\ell_{12}-\ell_{13}|<\cosh\ell_{23}
 <\cosh(\ell_{12}+\ell_{13}).
\]
Since $\cosh$ is strictly increasing on $[0,\infty)$, this is equivalent to
\[
 |\ell_{12}-\ell_{13}|<\ell_{23}<\ell_{12}+\ell_{13}.
\]
These are precisely the three strict triangle inequalities for
$\ell_{12},\ell_{13},\ell_{23}$.

Conversely, suppose that $\ell_{12},\ell_{13},\ell_{23}>0$ satisfy the strict
triangle inequalities. Form the normalized symmetric matrix with
off-diagonal entries $\cosh\ell_{12}$, $\cosh\ell_{13}$, and
$\cosh\ell_{23}$. The factorization above gives $\det\mathcal H>0$. The
matrix is not positive definite because each of its principal $2\times2$
minors is
\[
 1-\cosh^2\ell_{uv}<0.
\]
Since the matrix has positive determinant, it is nonsingular and has an even
number of negative eigenvalues. That number cannot be zero because the
matrix is not positive definite. It therefore has exactly one positive and
two negative eigenvalues.
We conclude that the normalized matrices with positive entries and
Lorentzian signature are exactly those obtained from strict triangles in
this way.
\end{proof}

\begin{lemma}[The one-Hessian region]
\label{lem:one-hessian-region}
The set $\mathcal C$ is nonempty, open, and convex. Its support function is
\[
 \sigma_{\mathcal C}(a,b,c)=
 \begin{cases}
  \log\mathfrak m(a,b,c),&a,b,c\ge0,\\
  +\infty,&\text{otherwise.}
 \end{cases}
\]
\end{lemma}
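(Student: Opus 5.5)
We would work entirely in the normalized coordinates of Lemma~\ref{lem:strict-triangle-model}. Write $x_i=\ell_{jk}$ for the side opposite the label $i$. Since each $T_i$ is invariant under positive diagonal congruence, every point of $\mathcal C$ has the form
\[
 \Phi(x)=\bigl(\,\log\cosh x_i-\log\cosh x_j-\log\cosh x_k\,\bigr)_{i=1,2,3},
\]
with $x$ ranging over the open cone $\mathcal T\subseteq\mathbb R^3_{>0}$ of strict triangles. Conversely, every $x\in\mathcal T$ arises. Nonemptiness is then immediate.

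Openness should follow from the inverse function theorem. With $u_i=\log\cosh x_i$, the map $x\mapsto u$ is a diffeomorphism of $\mathbb R_{>0}^3$ onto its image. The linear map $u\mapsto\Phi$ has matrix $2I-J$, where $J$ is the all-ones matrix; its eigenvalues are $2,2,-1$, so it is invertible. Hence $\Phi$ is an injective local diffeomorphism and $\mathcal C=\Phi(\mathcal T)$ is open.

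For the support function we pair with $w=(a,b,c)$. This gives
\[
 \langle w,\Phi(x)\rangle=\sum_i\kappa_i\log\cosh x_i,
 \qquad
 \kappa_1=a-b-c,\quad \kappa_2=b-a-c,\quad \kappa_3=c-a-b.
\]

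\emph{Unboundedness when some weight is negative.} Suppose, say, $c<0$. Take $x_1=x_2=L$ and $x_3=\varepsilon$ small. Since $\log\cosh L=L-\log2+o(1)$, the pairing is $-2cL+O(1)\to+\infty$. Hence $\sigma_{\mathcal C}=+\infty$ off $\Rge^3$.

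\emph{The case $a,b,c\ge0$.} Letting $x\to0$ inside $\mathcal T$ gives the lower bound $\sigma_{\mathcal C}\ge0=\log 1$. If $\Delta\le0$, the coefficients $\kappa_i$ satisfy the triangle-type inequalities that make the objective nonpositive on $\mathcal T$. To check this we would compare with the extreme rays $(1,1,0)$, $(1,0,1)$, $(0,1,1)$ of $\overline{\mathcal T}$ and use the subadditivity of $\log\cosh$ on triangles. If instead $a>b+c$, then $\kappa_1>0$ while $\kappa_2,\kappa_3<0$, and the objective increases in $x_1$. So the supremum is approached on the degenerate face $x_1=x_2+x_3$. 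On that face we would maximize the two-variable function
\[
 \kappa_1\log\cosh(x_2+x_3)+\kappa_2\log\cosh x_2+\kappa_3\log\cosh x_3 .
\]
Its critical equations are linear in the variables $\tanh x_2$, $\tanh x_3$ and $\tanh(x_2+x_3)$, and the addition formula for $\tanh$ makes them explicit. Substituting the critical point and using $\cosh^2=1/(1-\tanh^2)$ should produce exactly $\log\mathfrak m(a,b,c)$, with its factors $(a\pm b\mp c)$ and $(a-b-c)$. Since the face $x_1=x_2+x_3$ lies outside the open cone $\mathcal T$, this also explains why the supremum is not attained.

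\emph{Convexity, which we expect to be the main obstacle.} We would prove that $\mathcal C$ is the interior of the set
\[
 \{t:\langle w,t\rangle\le\log\mathfrak m(w)\ \text{for every } w\in\Rge^3\}.
\]
The plan is to describe $\partial\mathcal C$ as the image under $\Phi$ of the boundary of $\mathcal T$, namely the degenerate triangles together with the limit $x\to0$. For each boundary piece we would check that the tangent plane is a supporting plane with normal $w\in\Rge^3$, and that $\mathcal C$ lies on one side. Concretely, we would compute the second fundamental form of the parametrized surface $\Phi(x_2+x_3,x_2,x_3)$ and show that it is definite. If this computation is unwieldy, our fallback is a direct argument: given a point $t$ satisfying all the strict inequalities $\langle w,t\rangle<\log\mathfrak m(w)$, we would solve $\Phi(x)=t$ explicitly for a strict triangle $x$. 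Since $u\mapsto\Phi$ is linear, this reduces to inverting $\log\cosh$ coordinatewise and then checking the triangle inequalities. Either route identifies $\mathcal C$ with an open convex set whose support function is the one computed above.
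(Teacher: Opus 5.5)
Your reduction to the coordinates $u_i=\log\cosh x_i$ is right: in them $\Phi$ becomes the linear map $2I-J$. Nonemptiness, openness, and the $+\infty$ branch of the support function are also fine.

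\textbf{Convexity is not proved.} Definiteness of the second fundamental form of $\Phi(x_2+x_3,x_2,x_3)$ is a local statement. It does not show that $\mathcal C$ lies on one side of each tangent plane. You would still need a local-to-global theorem, plus a separate analysis at the edges (images of $x_k=0$, $x_i=x_j$) and at the vertex $x=0$. Your fallback is circular: inverting $\Phi$ and then deriving $x_1<x_2+x_3$ from the inequalities $\langle w,t\rangle<\log\mathfrak m(w)$ is exactly the convexity assertion.

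The missing idea, which is how the paper argues, is to read each triangle inequality as a hypograph in your $u$-coordinates. The condition $x_1<x_2+x_3$ is equivalent to $u_1<\psi(u_2,u_3)$, where $\psi(u_2,u_3)=\log\cosh\bigl(\operatorname{arccosh}e^{u_2}+\operatorname{arccosh}e^{u_3}\bigr)$. A direct Hessian computation shows that $\psi$ is strictly concave on $\mathbb R_{>0}^2$. The $u$-region is therefore an intersection of three hypographs of concave functions (inside $\mathbb R^3_{>0}$), hence convex, and $\mathcal C$ is its linear image.

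\textbf{The support-function computation has two gaps.} The paper simply quotes \cite[Theorem~C]{HHSW} and uses homogeneity. Your direct computation is a legitimate alternative, and its algebra checks out: the critical equations give $\tanh^2(x_2+x_3)=\Delta/(a-b-c)^2$, and the critical value is exactly $\log\mathfrak m(a,b,c)$. Two steps fail as written.

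\emph{The $\Delta\le0$ argument does not work.} The objective $\sum_i\kappa_i\log\cosh x_i$ is not linear in $x$, so its sign on the extreme rays of $\overline{\mathcal T}$ says nothing about the interior. For $w=(1,0,0)$ it is $\le0$ on all three rays, yet the supremum is $\log 2$. Also, $\log\cosh$ is superadditive, not subadditive, on $[0,\infty)$. Finally, $\Delta\le0$ does not force $\kappa_i\le0$: $w=(1,\tfrac14,\tfrac14)$ has $\Delta=0$ but $\kappa_1=\tfrac12$, and this case is tight to second order at $x=0$. Split by the signs of the $\kappa_i$ instead. When $\kappa_1>0$, use the same face reduction as for $\Delta>0$; there is then no interior critical point when $\Delta\le0$.

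\emph{You never show the critical point is the global maximizer on the face.} You also do not treat $bc=0$, where the supremum is only approached at infinity. An elementary fix, for $b,c>0$:
\begin{itemize}
\item The face objective is at most $-2cx_2-2bx_3+2a\log2$, so it tends to $-\infty$ at infinity.
\item It is $\le0$ on the coordinate axes.
\item Its quadratic part at $0$ has determinant $-\Delta$.
\end{itemize}
So for $\Delta>0$ the function takes positive values, and its maximum is at the unique critical point. The case $bc=0$ then follows by continuity of the closed convex function $\sigma_{\mathcal C}$ along segments in its domain. Alternatively, the concavity of $\psi$ settles everything at once, because it makes the face objective concave in $(u_2,u_3)$.
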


\begin{proof}
\emph{(i) Logarithmic triangle coordinates.}
Normalize the diagonal of $\mathcal H=(h_{uv})$ to $1$ as in
Lemma~\ref{lem:strict-triangle-model}, and put
$q_{uv}=\log h_{uv}>0$. Thus
$q_{uv}=\log\cosh\ell_{uv}$. We first rewrite the triangle inequality
$\ell_{23}<\ell_{12}+\ell_{13}$ in these logarithmic coordinates. The
strict monotonicity of $\ell\mapsto\log\cosh\ell$ on $[0,\infty)$ gives
\[
 \ell_{23}<\ell_{12}+\ell_{13}
 \quad\Longleftrightarrow\quad
 q_{23}<\log\cosh(\ell_{12}+\ell_{13}).
\]
It remains to express the right-hand side in terms of $q_{12}$ and $q_{13}$.
We have $\cosh\ell_{12}=e^{q_{12}}$ and
$\cosh\ell_{13}=e^{q_{13}}$. Since $\ell_{12},\ell_{13}>0$, it follows that
\[
 \sinh\ell_{12}=e^{q_{12}}\sqrt{1-e^{-2q_{12}}},
 \qquad
 \sinh\ell_{13}=e^{q_{13}}\sqrt{1-e^{-2q_{13}}}.
\]
The hyperbolic addition formula therefore yields
\begin{align*}
 \cosh(\ell_{12}+\ell_{13})
 &=\cosh\ell_{12}\cosh\ell_{13}
   +\sinh\ell_{12}\sinh\ell_{13}\\
 &=e^{q_{12}+q_{13}}
   \Bigl(1+\sqrt{(1-e^{-2q_{12}})(1-e^{-2q_{13}})}\Bigr),
\end{align*}
and hence
\[
 \log\cosh(\ell_{12}+\ell_{13})
 =q_{12}+q_{13}
  +\log\Bigl(1+\sqrt{(1-e^{-2q_{12}})(1-e^{-2q_{13}})}\Bigr).
\]
We denote this expression by
\[
 \psi(q_{12},q_{13})
 :=q_{12}+q_{13}
   +\log\Bigl(1+\sqrt{(1-e^{-2q_{12}})(1-e^{-2q_{13}})}\Bigr).
\]
Thus $q_{23}<\psi(q_{12},q_{13})$. In other words,
$\psi(q_{12},q_{13})$ is precisely the boundary value of $q_{23}$
corresponding to the degenerate triangle
$\ell_{23}=\ell_{12}+\ell_{13}$.

The key analytic point is the concavity of $\psi$. To verify it directly,
set
\[
 \xi=\sqrt{1-e^{-2q_{12}}},\qquad
 \eta=\sqrt{1-e^{-2q_{13}}},\qquad
 d=1+\xi\eta.
\]
A direct differentiation gives
\[
 \frac{\partial^2\psi}{\partial q_{12}^2}
 =-\frac{\eta(1-\xi^2)(1+\xi^2+2\xi\eta)}
         {\xi^3d^2}<0
\]
and
\[
 \det\operatorname{Hess}\psi
 =\frac{2(1-\xi^2)(1-\eta^2)(\xi+\eta)^2}
        {\xi^2\eta^2d^3}>0.
\]
Thus the Hessian of $\psi$ is negative definite, so $\psi$ is strictly
concave on $\mathbb R_{>0}^2$. Applying the same calculation cyclically,
the set
\[
 \mathcal Q=\Bigl\{q\in\mathbb R_{>0}^3:
 q_{23}<\psi(q_{12},q_{13}),\quad
 q_{13}<\psi(q_{12},q_{23}),\quad
 q_{12}<\psi(q_{13},q_{23})\Bigr\}
\]
is open. It is convex because the region below the graph of a concave
function is convex, and $\mathcal Q$ is the intersection of three such
regions.

\emph{(ii) The region $\mathcal C$.}
With the diagonal normalized, put
$t(\mathcal H)=(t_1,t_2,t_3)$. Then
\[
 \begin{pmatrix}t_1\\t_2\\t_3\end{pmatrix}
 =
 \begin{pmatrix}-1&-1&1\\-1&1&-1\\1&-1&-1\end{pmatrix}
 \begin{pmatrix}q_{12}\\q_{13}\\q_{23}\end{pmatrix}.
\]
This map is invertible. Explicitly,
\[
 q_{12}=-\frac{t_1+t_2}{2},\qquad
 q_{13}=-\frac{t_1+t_3}{2},\qquad
 q_{23}=-\frac{t_2+t_3}{2}.
\]
Lemma~\ref{lem:strict-triangle-model} now shows that $\mathcal C$ is the
invertible linear image of $\mathcal Q$. In particular, $\mathcal C$ is a
nonempty open convex subset of $\mathbb R^3$.

\emph{(iii) The support function.}
Definition~\ref{def:one-hessian-data} gives
\[
 \sigma_{\mathcal C}(a,b,c)
 =
 \sup_{\mathcal H}
 \log\bigl(T_1(\mathcal H)^aT_2(\mathcal H)^b
                    T_3(\mathcal H)^c\bigr),
\]
where $\mathcal H$ ranges over symmetric matrices with positive entries and
Lorentzian signature.
For $a,b,c\ge0$ with $a+b+c=1$, \cite[Theorem~C]{HHSW} identifies this
supremum with $\log\mathfrak m(a,b,c)$. The matrix class in the cited theorem
is the closure of our strict-signature locus. By
Lemma~\ref{lem:strict-triangle-model}, this closure is obtained by replacing the strict
triangle inequalities by weak ones. Every weak triangle is a limit of
strict triangles. Hence the supremum is unchanged.

Now let $w=(a,b,c)\ge0$ be nonzero and put $M=a+b+c$. The positive
homogeneity of the support function and of $\log\mathfrak m$ gives
\[
 \sigma_{\mathcal C}(w)
 =M\,\sigma_{\mathcal C}(w/M)
 =\log\mathfrak m(w).
\]
Here
\[
 \log\mathfrak m(\tau w)=\tau\log\mathfrak m(w)
 \qquad(\tau\ge0).
\]
The latter homogeneity follows directly from the formula in
Definition~\ref{def:m-function}.
For $\tau>0$, scaling by $\tau$ preserves the sign of $\Delta$, and all
powers of $\tau$ cancel between the numerator and denominator. At $w=0$,
both sides are zero.
Finally, \cite{HHSW} shows that the supremum is finite exactly when all three
coordinates of $w$ are nonnegative. This proves the displayed formula in
the statement.

In particular, for every $t=(t_1,t_2,t_3)\in\mathcal C$ and every
$(a,b,c)\ge0$,
\begin{equation}\label{eq:one-hessian-bound}
 at_1+bt_2+ct_3
 \le \log\mathfrak m(a,b,c).
\end{equation}
We will apply this one-row inequality to each of the three Hessians of a
ternary cubic.
\end{proof}

\begin{lemma}\label{lem:m-at-least-one}
For every $a,b,c\ge0$,
\[
 \mathfrak m(a,b,c)\ge1.
\]
\end{lemma}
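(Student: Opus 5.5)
The cleanest route uses Lemma~\ref{lem:one-hessian-region}: $\log\mathfrak m(a,b,c)=\sigma_{\mathcal C}(a,b,c)$ for $a,b,c\ge0$. So it suffices to show that $\sigma_{\mathcal C}(w)\ge0$ for every $w\in\Rge^3$. Since $\sigma_{\mathcal C}(w)=\sup_{t\in\mathcal C}\langle w,t\rangle$, it is enough to exhibit, for each nonnegative $w$, points $t\in\mathcal C$ (or a limit of such points) with $\langle w,t\rangle$ arbitrarily close to something nonnegative. The simplest choice is to show that $0\in\overline{\mathcal C}$; then $\sigma_{\mathcal C}(w)\ge\langle w,0\rangle=0$ for every $w$, and $\mathfrak m(w)\ge1$ follows.

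To see that $0\in\overline{\mathcal C}$, I would use the strict-triangle parametrization of Lemma~\ref{lem:strict-triangle-model}. Take the equilateral triangle with sides $\ell_{12}=\ell_{13}=\ell_{23}=\ell$, so that $h_{uv}=\cosh\ell$ off the diagonal and $h_{uu}=1$. Then each triangular ratio is
\[
 T_i=\frac{1\cdot\cosh\ell}{\cosh\ell\cdot\cosh\ell}=\frac1{\cosh\ell},
\]
so $t(\mathcal H)=(-\log\cosh\ell)(1,1,1)$. Letting $\ell\to0^+$ gives points of $\mathcal C$ converging to $0$. Hence, for $w\ge0$, $\langle w,t\rangle=-(a+b+c)\log\cosh\ell\to0$. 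This yields $\sigma_{\mathcal C}(w)\ge0$, and so $\mathfrak m(w)\ge1$.

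Alternatively, I could give a direct check from Definition~\ref{def:m-function}. When $\Delta\le0$ the value is $1$ by definition. When $a>b+c$, the claim is
\[
 m\log2+a\log a+b\log b+c\log c+(a-b-c)\log(a-b-c)\ge(a-b+c)\log(a-b+c)+(a+b-c)\log(a+b-c).
\]
This could be handled by joint convexity arguments, but it is messier. I would present the support-function argument as the proof and regard the only subtle point as making sure the cited identity in Lemma~\ref{lem:one-hessian-region} covers all of $\Rge^3$, including boundary cases where some entry is zero. Those cases are handled there by homogeneity and the $0^0=1$ convention, and at $w=0$ both sides equal $0$.
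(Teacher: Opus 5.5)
Your proposal is correct and is essentially the paper's own proof. The paper likewise takes the unit-diagonal matrix with all off-diagonal entries $\cosh\varepsilon$ (the equilateral strict triangle), whose logarithmic triangular ratios all equal $-\log\cosh\varepsilon$, applies the one-Hessian bound \eqref{eq:one-hessian-bound} (which is exactly the inequality $\sigma_{\mathcal C}\le\log\mathfrak m$ you use), and lets $\varepsilon\to0^+$.
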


\begin{proof}
For $\varepsilon>0$, let $\mathcal H_\varepsilon$ be the symmetric matrix
whose diagonal entries are $1$ and whose off-diagonal entries all equal
$\cosh\varepsilon$. The triple
$(\varepsilon,\varepsilon,\varepsilon)$ satisfies the strict triangle
inequalities, so Lemma~\ref{lem:strict-triangle-model} shows that
$\mathcal H_\varepsilon$ has Lorentzian signature. Its three triangular
ratios are all $1/\cosh\varepsilon$. Applying
\eqref{eq:one-hessian-bound} to $\mathcal H_\varepsilon$ gives
\[
 -(a+b+c)\log\cosh\varepsilon
 \le \log\mathfrak m(a,b,c).
\]
Letting $\varepsilon\to0^+$ yields
$\log\mathfrak m(a,b,c)\ge0$, and hence
$\mathfrak m(a,b,c)\ge1$.
\end{proof}

\subsection{Compatible Hessian triples and convex duality}

\begin{definition}[{\rm \defn{Compatible Hessian arrays}}]
\label{def:compatible-hessian-arrays}
For a ternary cubic $F$ with positive coefficients and coefficient vector
$\vP=(P_{\va})_{\va\in H(3,3)}$, put
\[
 \widehat P_{\va}=\va!\,P_{\va},
 \qquad
 \mathcal H^{(\ve_s)}=\operatorname{Hess}(\partial_sF).
\]
For $s,i\in\{1,2,3\}$, define
\[
 T_{(\ve_s;i\mid jk)}=T_i(\mathcal H^{(\ve_s)}),
\]
where $j,k$ are the two indices different from $i$.

Define
\[
 L\colon\mathbb R^{H(3,3)}\longrightarrow\mathbb R^{3\times3},
 \qquad
 L(z)_{s,i}=\langle\vrho(\ve_s;i\mid jk),z\rangle,
\]
and
\[
 L^*\colon\mathbb R^{3\times3}\longrightarrow\mathbb R^{H(3,3)},
 \qquad
 L^*(\lambda)=\sum_{s,i}\lambda_{s,i}\cdot\vrho(\ve_s;i\mid jk).
\]
We identify $\mathbb R^{3\times3}$ with $\mathbb R^9$ and denote its
standard basis by $E_{s,i}$. Let
\[
 S=
 \left\{x\in\mathbb R^{3\times3}:
 \begin{aligned}
  x_{2,1}+x_{2,3}&=x_{1,3}+x_{3,1},\\
  x_{1,2}+x_{1,3}&=x_{2,3}+x_{3,2}
 \end{aligned}
 \right\}.
\]
Finally, let
\[
 \mathcal Z=
 \Bigl\{(\log\widehat P_{\va})_{\va\in H(3,3)}:
 \vP\in\mathring L(3,3)\Bigr\}.
\]
We regard $\mathcal C^3$ as the set of $3\times3$ arrays whose three rows
belong to $\mathcal C$.
\end{definition}

Because the three Hessians share the same normalized coefficients, their
rows of logarithmic triangular ratios cannot be chosen independently. The
next lemma characterizes exactly which triples of rows can occur together.

\begin{lemma}[Compatible Hessian triples]
\label{lem:compatible-hessian-triples}
We have
\[
 \operatorname{im}L=S,
 \qquad
 S^\perp=\ker L^*,
 \qquad
 L(\mathcal Z)=S\cap\mathcal C^3.
\]
\end{lemma}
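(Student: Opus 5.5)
The plan is to establish the three assertions in order: first compute $\operatorname{im}L$ by a rank count, then read off $S^\perp=\ker L^*$ by linear algebra, and finally identify $L(\mathcal Z)$ using the cubic-specific fact that strict Lorentzianity is a condition on the three first-derivative Hessians.

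\emph{Image of $L$.} First check $\operatorname{im}L\subseteq S$. The two defining equations of $S$ should be identities among the nine rhombus functionals on $H(3,3)$, so it suffices to expand
\[
 \vrho(\ve_2;1\mid 3)+\vrho(\ve_2;3\mid 1)-\vrho(\ve_1;3\mid 2)-\vrho(\ve_3;1\mid 2)
\]
and the analogous combination for the second equation, and verify that each vanishes termwise on the ten monomials. For example, the first one contains $P_{310}$-type terms that cancel across the shifts $\ve_1,\ve_2,\ve_3$. Next compare dimensions. The two equations are independent, so $\dim S=7$. On the other side, $\ker L\supseteq\operatorname{Aff}(H(3,3))$, which has dimension $3$ by Lemma~\ref{lem:affine}. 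Conversely, if $L(z)=0$, then each local triangle combination vanishes, so each $d^{\ve_s}(z)$ satisfies all three triangle inequalities with equality. Adding two of these equalities gives $d^{\ve_s}_{jk}(z)=0$, so $d^{\ve_s}(z)=0$ for every $s$, and Lemma~\ref{lem:hessiankernel} shows that $z$ is affine. Hence $\operatorname{rank}L=10-3=7$, and therefore $\operatorname{im}L=S$.

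\emph{Annihilator.} This part is formal. Since $L^*$ is the adjoint of $L$ with respect to the standard pairings, we have $\ker L^*=(\operatorname{im}L)^\perp=S^\perp$.

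\emph{The set $L(\mathcal Z)$.} For $z=\log\widehat P$, the definitions give $L(z)_{s,i}=\log T_{(\ve_s;i\mid jk)}-\log\kappa_{\ve_s,i}$. To remove these constants, I would use the $\widehat P$-form of the rhombus pairing. The Hessian entries of $\partial_sF$ are exactly $\widehat P_{\ve_s+\ve_u+\ve_v}$, so $\langle\vrho(\ve_s;i\mid jk),\log\widehat P\rangle=t_i(\mathcal H^{(\ve_s)})$. Thus row $s$ of $L(z)$ is $t(\mathcal H^{(\ve_s)})$. For a cubic with positive coefficients, strict Lorentzianity means precisely that each $\mathcal H^{(\ve_s)}$ has Lorentzian signature. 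This gives $L(\mathcal Z)\subseteq S\cap\mathcal C^3$, where $S$ is used via the first step.

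For the reverse inclusion, take $x\in S\cap\mathcal C^3$. Since $x\in S=\operatorname{im}L$, pick $z$ with $L(z)=x$ and set $\widehat P=e^{z}$. Then the $s$-th Hessian has logarithmic triangular ratios $x_{s,\cdot}\in\mathcal C$. The main obstacle is this step: $t(\mathcal H)\in\mathcal C$ only says that \emph{some} Lorentzian matrix has the same triangular ratios, not that $\mathcal H^{(\ve_s)}$ itself is Lorentzian. To bridge this, I would show that a positive symmetric $3\times3$ matrix is determined by its triangular ratios up to positive diagonal congruence. After normalizing the diagonal, the map $(q_{12},q_{13},q_{23})\mapsto t$ is invertible, as in Lemma~\ref{lem:one-hessian-region}. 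Hence $\mathcal H^{(\ve_s)}=D\mathcal H'D$ for a Lorentzian $\mathcal H'$ with positive diagonal $D$, and by Lemma~\ref{lem:strict-triangle-model} it is Lorentzian. Therefore $\vP\in\mathring L(3,3)$ and $x=L(z)\in L(\mathcal Z)$.
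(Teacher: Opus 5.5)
Your proof is correct. The third part matches the paper's argument: the forward inclusion uses $\langle\vrho(\ve_s;i\mid jk),\log\widehat P\rangle=t_i(\mathcal H^{(\ve_s)})$, and the reverse inclusion uses uniqueness of the unit-diagonal matrix with prescribed triangular ratios together with invariance of signature under positive diagonal congruence.

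Where you genuinely differ is the rank count behind $\operatorname{im}L=S$.

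\emph{The paper's route.} It asserts the two relations among the nine vectors $\vrho(\ve_s;i\mid jk)$. It then argues that these vectors span a $7$-dimensional space because they generate $\BR_{\mathring L}(3,3)$ (Corollary~\ref{cor:threevariable}), which is full-dimensional in $V_{3,3}$ by Lemma~\ref{lem:Mpointed} and Theorem~\ref{thm:BRL-duality}. This identifies $\ker L^*$ as the span of the two relations, and then $\operatorname{im}L=(\ker L^*)^\perp=S$.

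\emph{Your route.} You compute $\ker L$ directly. The identity $d^{\vb}_{ij}(z)+d^{\vb}_{ik}(z)-d^{\vb}_{jk}(z)=2\langle\vrho(\vb;i\mid jk),z\rangle$ from Lemma~\ref{lem:three-rhombus} shows that $L(z)=0$ forces every $d^{\ve_s}(z)=0$. Lemma~\ref{lem:hessiankernel} then gives $\ker L=\operatorname{Aff}(H(3,3))$, so $\operatorname{rank}L=7=\dim S$, and $\operatorname{im}L\subseteq S$ becomes an equality.

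\emph{Comparison.} Your argument is purely linear-algebraic. It does not invoke Theorem~A, with its semialgebraic and tropical machinery, for what is a statement of linear algebra. The paper's route has the convenience of presenting the basis of $\ker L^*$ directly as the relation space among the generators, which it reuses in Corollary~\ref{cor:paired-ratios33}. Nothing is lost on your side, though: your $S^\perp$ is spanned by the normals of the two defining equations of $S$, which are exactly those two vectors.

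Two cosmetic slips:
\begin{enumerate}
\item For $z=\log\widehat P$ one has $L(z)_{s,i}=\log T_{(\ve_s;i\mid jk)}$ exactly. The shift by $\log\kappa_{\ve_s,i}$ arises only for $z=\log P$, which is in effect what your next sentence corrects.
\item There are no ``$P_{310}$-type'' coefficients in degree three.
\end{enumerate}
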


\begin{proof}
\emph{(i) The compatibility equations.}
For every $\lambda\in\mathbb R^{3\times3}$ and
$z\in\mathbb R^{H(3,3)}$, we have
\[
\begin{aligned}
 \langle\lambda,L(z)\rangle
 &=\sum_{s,i}\lambda_{s,i}L(z)_{s,i}\\
 &=\sum_{s,i}\lambda_{s,i}
   \langle\vrho(\ve_s;i\mid jk),z\rangle\\
 &=\left\langle
   \sum_{s,i}\lambda_{s,i}\cdot\vrho(\ve_s;i\mid jk),z
   \right\rangle\\
 &=\langle L^*(\lambda),z\rangle.
\end{aligned}
\]
The first equality expands the Euclidean inner product on
$\mathbb R^{3\times3}$. The remaining equalities follow from the definitions
of $L$ and $L^*$ and the linearity of the inner product. Thus $L^*$ is the
adjoint of $L$, and
\[
 \operatorname{im}L=(\ker L^*)^\perp.
\]
Here $L(z)$ is a $3\times3$ array indexed by $(s,i)$, whereas
$L^*(\lambda)$ is a vector indexed by $\va\in H(3,3)$. In particular,
$\ker L^*$ records the linear relations among the nine exponent vectors.

The nine exponent vectors satisfy the following two relations:
\[
 \begin{aligned}
 \vrho(\ve_2;1\mid23)+\vrho(\ve_2;3\mid12)
 &=\vrho(\ve_1;3\mid12)+\vrho(\ve_3;1\mid23),\\
 \vrho(\ve_1;2\mid13)+\vrho(\ve_1;3\mid12)
 &=\vrho(\ve_2;3\mid12)+\vrho(\ve_3;2\mid13).
 \end{aligned}
\]
These relations are independent. By Lemma~\ref{lem:Mpointed} and
Theorem~\ref{thm:BRL-duality},
$\BR_{\mathring L}(3,3)$ is full-dimensional in $V_{3,3}$. This space has
dimension $|H(3,3)|-3=10-3=7$. Corollary~\ref{cor:threevariable} shows that
the nine exponent vectors generate this cone, so they span a
$7$-dimensional space. Hence there are exactly two independent relations,
and the displayed ones form a basis of the entire relation space:
\begin{equation}\label{eq:kernel-Lstar}
 \ker L^*
 =
 \operatorname{span}\left\{
 \begin{aligned}
  &E_{1,3}-E_{2,1}-E_{2,3}+E_{3,1},\\
  &-E_{1,2}-E_{1,3}+E_{2,3}+E_{3,2}
 \end{aligned}\right\}.
\end{equation}
The orthogonal complement of this space is precisely the subspace $S$
defined by the two compatibility equations. Therefore
$\operatorname{im}L=S$ and $S^\perp=\ker L^*$.

For later reference, the $(s,i)$-entry of $L(z)$ is
\[
 L(z)_{s,i}
 =z_{\ve_s+2\ve_i}+z_{\ve_s+\ve_j+\ve_k}
  -z_{\ve_s+\ve_i+\ve_j}-z_{\ve_s+\ve_i+\ve_k}.
\]
For a cubic $F$, direct differentiation gives
\[
 \mathcal H^{(\ve_s)}_{uv}
 =\widehat P_{\ve_s+\ve_u+\ve_v}.
\]
Consequently, when $z=\log\widehat P$, the array $L(z)$ consists of the
logarithmic triangular ratios. Equivalently, the ratios themselves satisfy
\[
\begin{aligned}
 T_{(\ve_2;1\mid23)}T_{(\ve_2;3\mid12)}
 &=T_{(\ve_1;3\mid12)}T_{(\ve_3;1\mid23)},\\
 T_{(\ve_1;2\mid13)}T_{(\ve_1;3\mid12)}
 &=T_{(\ve_2;3\mid12)}T_{(\ve_3;2\mid13)}.
\end{aligned}
\]

\emph{(ii) The exact image.}
For the forward inclusion, take
$z=\log\widehat P\in\mathcal Z$. For every $s$ and $i$, we have
\[
 L(z)_{s,i}
 =\langle\vrho(\ve_s;i\mid jk),\log\widehat P\rangle
 =\log\widehat P^{\,\vrho(\ve_s;i\mid jk)}
 =\log T_{(\ve_s;i\mid jk)}.
\]
Thus the $s$-th row of $L(z)$ is the triple of logarithmic triangular ratios
of $\mathcal H^{(\ve_s)}$. Since the cubic is strictly Lorentzian, each
$\mathcal H^{(\ve_s)}$ has Lorentzian signature, and hence each row belongs to
$\mathcal C$. We also have $L(z)\in S$ because
$\operatorname{im}L=S$. Therefore
$L(z)\in S\cap\mathcal C^3$.

For the reverse inclusion, take $x\in S\cap\mathcal C^3$. Since
$S=\operatorname{im}L$, there is some $z\in\mathbb R^{H(3,3)}$ such that
\[
 L(z)=x.
\]
The vector $z$ is, at this stage, an arbitrary preimage of $x$. We show that
it is the vector of logarithms of the normalized coefficients of a strictly
Lorentzian cubic.
Define
\[
 \widehat P_{\va}=e^{z_{\va}},
 \qquad
 P_{\va}=\frac{\widehat P_{\va}}{\va!}.
\]
These coefficients are positive and define the cubic
$F=\sum_{\va\in H(3,3)}P_{\va}\vx^{\va}$. For each $s$, let
$\mathcal H^{(\ve_s)}=\operatorname{Hess}(\partial_sF)$. Then
\[
 \mathcal H^{(\ve_s)}_{uv}
 =\widehat P_{\ve_s+\ve_u+\ve_v}.
\]
The matrix $\mathcal H^{(\ve_s)}$ is symmetric because the expression on the
right is unchanged when $u$ and $v$ are interchanged, and all of its entries
are positive.

Since $\widehat P_{\va}=e^{z_{\va}}$ and $L(z)=x$, the three logarithmic
triangular ratios of $\mathcal H^{(\ve_s)}$ are
\[
\begin{aligned}
 \log T_{(\ve_s;1\mid23)}
 &=\log\frac{
   \widehat P_{\ve_s+2\ve_1}
   \widehat P_{\ve_s+\ve_2+\ve_3}}
  {\widehat P_{\ve_s+\ve_1+\ve_2}
   \widehat P_{\ve_s+\ve_1+\ve_3}}
 =L(z)_{s,1}=x_{s,1},\\
 \log T_{(\ve_s;2\mid13)}
 &=\log\frac{
   \widehat P_{\ve_s+2\ve_2}
   \widehat P_{\ve_s+\ve_1+\ve_3}}
  {\widehat P_{\ve_s+\ve_1+\ve_2}
   \widehat P_{\ve_s+\ve_2+\ve_3}}
 =L(z)_{s,2}=x_{s,2},\\
 \log T_{(\ve_s;3\mid12)}
 &=\log\frac{
   \widehat P_{\ve_s+2\ve_3}
   \widehat P_{\ve_s+\ve_1+\ve_2}}
  {\widehat P_{\ve_s+\ve_1+\ve_3}
   \widehat P_{\ve_s+\ve_2+\ve_3}}
 =L(z)_{s,3}=x_{s,3}.
\end{aligned}
\]
Thus the $s$-th row of $x$ is explicitly
\[
 (x_{s,1},x_{s,2},x_{s,3})
 =\bigl(\log T_{(\ve_s;1\mid23)},
         \log T_{(\ve_s;2\mid13)},
         \log T_{(\ve_s;3\mid12)}\bigr).
\]

It remains to show that the constructed cubic is strictly Lorentzian. Fix
$s$ and put
\[
 T_i=e^{x_{s,i}},\qquad i=1,2,3.
\]
The $s$-th row of $x$ belongs to $\mathcal C$. By the definition of
$\mathcal C$, there is a positive symmetric matrix $\mathcal K$ of Lorentzian
signature whose three triangular ratios are $T_1,T_2,T_3$.

Normalize the diagonals of both $\mathcal H^{(\ve_s)}$ and $\mathcal K$ to $1$
by positive diagonal congruence. Lemma~\ref{lem:strict-triangle-model}
shows that this operation preserves both the triangular ratios and the
signature. For any symmetric matrix with positive entries and diagonal
entries $1$,
write
\[
 a=h_{12},\qquad b=h_{13},\qquad c=h_{23}.
\]
Its triangular ratios satisfy
\[
 T_1=\frac{c}{ab},\qquad
 T_2=\frac{b}{ac},\qquad
 T_3=\frac{a}{bc},
\]
and therefore
\[
 a=(T_1T_2)^{-1/2},\qquad
 b=(T_1T_3)^{-1/2},\qquad
 c=(T_2T_3)^{-1/2}.
\]
Thus the three ratios uniquely determine a symmetric matrix with positive
entries and diagonal entries $1$. The normalized matrices obtained from
$\mathcal H^{(\ve_s)}$ and $\mathcal K$ have the same three ratios, so they
coincide. The normalized form of $\mathcal H^{(\ve_s)}$ therefore has Lorentzian
signature. Positive diagonal congruence preserves the numbers of positive
and negative eigenvalues, so the original matrix $\mathcal H^{(\ve_s)}$ has the
same signature.

This argument applies to each $s\in\{1,2,3\}$. Hence the cubic with
coefficients $P_{\va}$ is strictly Lorentzian, which means that
$z\in\mathcal Z$. Since
$L(z)=x$, we conclude that $x\in L(\mathcal Z)$. This proves
\begin{equation}\label{eq:three-hessian-image}
 L(\mathcal Z)=S\cap\mathcal C^3.
\end{equation}
\end{proof}

\begin{definition}[{\rm \defn{Optimization data}}]
\label{def:ternary-optimization-data}
Put
\[
 \mathcal D=(\overline{\mathcal C})^3.
\]
For $\vg\in\BR_{\mathring L}(3,3)$, define
\[
 \Lambda(\vg)
 =\{\lambda\in\Rge^{\,3\times3}:L^*(\lambda)=\vg\}
\]
and, for $\lambda\in\Rge^{\,3\times3}$,
\[
 \Phi(\lambda)
 =\sum_{s=1}^3
 \log\mathfrak m(\lambda_{s,1},\lambda_{s,2},\lambda_{s,3}).
\]
Thus $\Lambda(\vg)$ is the set of all nonnegative representations
\[
 \vg=\sum_{s=1}^{3}\bigl(
    \lambda_{s,1}\cdot\vrho(\ve_s;1\mid23)
   +\lambda_{s,2}\cdot\vrho(\ve_s;2\mid13)
   +\lambda_{s,3}\cdot\vrho(\ve_s;3\mid12)
 \bigr).
\]

For a closed convex set $A\subseteq\mathbb R^{3\times3}$, its
\defn{indicator function} is
\[
 \delta_A(y)=
 \begin{cases}
  0,&y\in A,\\
  +\infty,&y\notin A.
 \end{cases}
\]
An extended-real function is \defn{proper} if it never takes the value
$-\infty$ and is finite at some point. For an extended-real convex function
$f$, its \defn{Fenchel conjugate} is
\[
 f^*(w)=\sup_y\bigl(\langle w,y\rangle-f(y)\bigr).
\]
\end{definition}

\begin{lemma}[Compatible-row optimization]
\label{lem:compatible-row-optimization}
The set $S\cap\mathcal C^3$ is dense in $S\cap\mathcal D$. Moreover, for
$\vg\in\BR_{\mathring L}(3,3)$ and
$\lambda^0\in\Lambda(\vg)$,
\[
 \sigma_{S\cap\mathcal C^3}(\lambda^0)
 =\inf_{\lambda\in\Lambda(\vg)}\Phi(\lambda).
\]
\end{lemma}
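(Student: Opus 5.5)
The plan is Fenchel--Rockafellar duality applied to $\delta_S+\delta_{\mathcal C^3}$, after reducing to closures by density.

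\emph{Density.} Since $\mathcal C$ is open and convex (Lemma~\ref{lem:one-hessian-region}), $\mathcal D=(\overline{\mathcal C})^3$ is the closure of the open convex set $\mathcal C^3$. I will first show that $S\cap\mathcal C^3\neq\varnothing$. By Lemma~\ref{lem:compatible-hessian-triples}, $L(\mathcal Z)=S\cap\mathcal C^3$, and $\mathcal Z$ is nonempty; for instance, $(x_1+x_2+x_3)^3$ perturbed slightly is strictly Lorentzian. For an open convex set $U$ meeting a subspace $S$, one has $\overline{S\cap U}=S\cap\overline U$. To see this, take $y\in S\cap\overline U$ and $x_0\in S\cap U$. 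The points $(1-\theta)y+\theta x_0$ lie in $S$, and they lie in $U$ for $\theta\in(0,1]$ by the standard line-segment property of convex sets with nonempty interior. Letting $\theta\to0$ gives density.

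\emph{Duality.} Since support functions are unchanged by taking closures, $\sigma_{S\cap\mathcal C^3}=\sigma_{S\cap\mathcal D}=(\delta_S+\delta_{\mathcal D})^*$. Both functions are proper, closed, and convex, and $\operatorname{ri}(\operatorname{dom}\delta_S)\cap\operatorname{ri}(\operatorname{dom}\delta_{\mathcal D})=S\cap\mathcal C^3\neq\varnothing$. Rockafellar's sum theorem therefore gives
\[
 \sigma_{S\cap\mathcal D}(\lambda^0)=\min_{\mu+w=\lambda^0}\bigl(\delta_S^*(\mu)+\delta_{\mathcal D}^*(w)\bigr)
 =\min_{\mu\in S^\perp}\sigma_{\mathcal D}(\lambda^0-\mu),
\]
with the infimum attained whenever it is finite. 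This uses $\delta_S^*=\delta_{S^\perp}$.

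\emph{Evaluating the two sides.} By Lemma~\ref{lem:one-hessian-region}, $\sigma_{\mathcal D}(w)=\sum_s\sigma_{\mathcal C}(w_{s,\cdot})=\sum_s\log\mathfrak m(w_{s,1},w_{s,2},w_{s,3})$ when $w\ge0$ entrywise, and $\sigma_{\mathcal D}(w)=+\infty$ otherwise. So $\sigma_{\mathcal D}(w)=\Phi(w)$ on $\Rge^{3\times3}$. By Lemma~\ref{lem:compatible-hessian-triples}, $S^\perp=\ker L^*$. Hence, as $\mu$ ranges over $S^\perp$, the point $w=\lambda^0-\mu$ ranges over $\{w:L^*(w)=L^*(\lambda^0)=\vg\}$, and restricting to points where the value is finite means $w\ge0$, which is exactly $\Lambda(\vg)$. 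Therefore $\sigma_{S\cap\mathcal C^3}(\lambda^0)=\inf_{\lambda\in\Lambda(\vg)}\Phi(\lambda)$. This is finite because $\Lambda(\vg)\ni\lambda^0$ and $\Phi\ge0$ by Lemma~\ref{lem:m-at-least-one}.

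\emph{Main obstacle.} The delicate step is verifying the constraint qualification for the sum theorem, namely that $S$ meets the interior of $\mathcal D$, together with the closedness and properness of $\delta_{\mathcal D}$. Both reduce to $S\cap\mathcal C^3\neq\varnothing$, which the nonemptiness of $\mathring L(3,3)$ supplies through $L(\mathcal Z)=S\cap\mathcal C^3$. A second point to check is that the support function of $\mathcal D$ is the rowwise sum of support functions of $\overline{\mathcal C}$, with $\sigma_{\overline{\mathcal C}}=\sigma_{\mathcal C}$. This holds because $\mathcal D$ is a product of three copies of $\overline{\mathcal C}$.
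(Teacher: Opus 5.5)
Your proposal is correct and is essentially the paper's proof. It uses the same line-segment density argument, the same application of the Fenchel--Rockafellar sum theorem to $\delta_S+\delta_{\mathcal D}$ with $\delta_S^*=\delta_{S^\perp}$ and $S^\perp=\ker L^*$, and the same rowwise evaluation through Lemma~\ref{lem:one-hessian-region}.

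The only difference is where the interior point comes from. You obtain $S\cap\mathcal C^3\ne\varnothing$ from $L(\mathcal Z)=S\cap\mathcal C^3$ and $\mathring L(3,3)\ne\varnothing$, whereas the paper checks directly that the array with all entries $-\log 2$ lies in $S\cap\mathcal C^3$. One caution on your example: an arbitrary small perturbation of $(x_1+x_2+x_3)^3$ need not be strictly Lorentzian, since its derivative Hessians have rank one. A specific perturbation does work, for instance subtracting $\varepsilon(x_1+x_2+x_3)(x_1^2+x_2^2+x_3^2)$ for small $\varepsilon>0$.
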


\begin{proof}
\emph{(i) Passing to the closure.}
We first establish the density assertion and, in the process, the
interior-point condition needed for convex duality.
Because $\mathcal C$ is nonempty, open, and convex,
$\operatorname{int}\overline{\mathcal C}=\mathcal C$, and hence
$\operatorname{int}\mathcal D=\mathcal C^3$. We begin by constructing a point in
$S\cap\mathcal C^3$. Consider the positive symmetric matrix
\[
 \mathcal H^\circ=
 \begin{pmatrix}
  1&2&2\\
  2&1&2\\
  2&2&1
 \end{pmatrix}.
\]
The vector $(1,1,1)$ is an eigenvector with eigenvalue $5$. Every vector
whose coordinates sum to $0$ is an eigenvector with eigenvalue $-1$.
Thus the eigenvalues are $5,-1,-1$, so $\mathcal H^\circ$ has Lorentzian
signature. For each $i\in\{1,2,3\}$, its triangular ratio is
\[
 T_i(\mathcal H^\circ)=\frac{1\cdot2}{2\cdot2}=\frac12.
\]
Hence its three logarithmic triangular ratios form the vector
\[
 t(\mathcal H^\circ)=(-\log2,-\log2,-\log2)\in\mathcal C.
\]
Now define
\[
 x^\circ=-\log2
 \begin{pmatrix}
  1&1&1\\
  1&1&1\\
  1&1&1
 \end{pmatrix}.
\]
Every row of $x^\circ$ equals $t(\mathcal H^\circ)$, so
$x^\circ\in\mathcal C^3$. Because all its entries are equal to $-\log2$,
the two compatibility equations defining $S$ become
\[
\begin{aligned}
 x^\circ_{2,1}+x^\circ_{2,3}
 &=x^\circ_{1,3}+x^\circ_{3,1}=-2\log2,\\
 x^\circ_{1,2}+x^\circ_{1,3}
 &=x^\circ_{2,3}+x^\circ_{3,2}=-2\log2.
\end{aligned}
\]
Thus
\[
 x^\circ\in S\cap\mathcal C^3
 =S\cap\operatorname{int}\mathcal D.
\]

We claim that
\begin{equation}\label{eq:section-closure}
 \overline{S\cap\mathcal C^3}=S\cap\mathcal D.
\end{equation}
The forward inclusion is immediate because $S\cap\mathcal D$ is closed and
contains $S\cap\mathcal C^3$. For the reverse inclusion, fix
$x\in S\cap\mathcal D$ and $0<\varepsilon<1$, and set
\[
 x_\varepsilon=(1-\varepsilon)x+\varepsilon x^\circ.
\]
Because $S$ is a linear subspace and both $x$ and $x^\circ$ belong to $S$, we
have $x_\varepsilon\in S$. Since
$x^\circ\in\operatorname{int}\mathcal D$, we may choose $r>0$ such that
$B(x^\circ,r)\subseteq\mathcal D$. Moreover,
\[
\begin{aligned}
 (1-\varepsilon)x+\varepsilon B(x^\circ,r)
 &=\{x_\varepsilon+\varepsilon u:\|u\|<r\}\\
 &=B(x_\varepsilon,\varepsilon r).
\end{aligned}
\]
Every point in the first set is a convex combination of $x\in\mathcal D$ and
a point of $B(x^\circ,r)\subseteq\mathcal D$. Hence the convexity of
$\mathcal D$ gives
\[
 B(x_\varepsilon,\varepsilon r)\subseteq\mathcal D.
\]
Thus a full open ball around $x_\varepsilon$ lies in $\mathcal D$, so
$x_\varepsilon\in\operatorname{int}\mathcal D=\mathcal C^3$. Together with
$x_\varepsilon\in S$, this yields
\[
 x_\varepsilon\in S\cap\mathcal C^3.
\]
Finally,
\[
 \|x_\varepsilon-x\|=\varepsilon\|x^\circ-x\|\longrightarrow0
 \qquad\text{as }\varepsilon\longrightarrow0^+.
\]
This proves \eqref{eq:section-closure}. In particular, every linear
functional has the same supremum on $S\cap\mathcal C^3$ and
$S\cap\mathcal D$.

\emph{(ii) Weak duality.}
Since $L^*(\lambda^0)=\vg$ and
$S^\perp=\ker L^*$, all real representations of $\vg$ are
\[
 \lambda^0+S^\perp
 =\{\lambda\in\mathbb R^{3\times3}:L^*(\lambda)=\vg\}.
\]
If $\eta\in S^\perp$ and $x\in S$, then
$\langle\eta,x\rangle=0$. Hence
\[
 \langle\lambda^0,x\rangle
 =\langle\lambda^0+\eta,x\rangle
 \le\sigma_{\mathcal D}(\lambda^0+\eta)
 \qquad(x\in S\cap\mathcal D).
\]
Taking the supremum over $x\in S\cap\mathcal D$ and then the infimum over
$\eta\in S^\perp$ gives
\[
 \sigma_{S\cap\mathcal D}(\lambda^0)
 \leq\inf_{\eta\in S^\perp}\sigma_{\mathcal D}(\lambda^0+\eta).
\]

\emph{(iii) Equality by convex duality.}
It follows directly from Definition~\ref{def:ternary-optimization-data} that
\[
 \delta_A^*(w)=\sup_{y\in A}\langle w,y\rangle=\sigma_A(w).
\]
We apply the Fenchel--Rockafellar sum theorem. In the form needed here, it
states that if $f$ and $g$ are proper lower semicontinuous convex functions
and $g$ is continuous at a point where $f$ is finite, then
\[
 (f+g)^*(w)
 =\inf_u\bigl(f^*(u)+g^*(w-u)\bigr).
\]
The hypotheses hold for $f=\delta_S$ and $g=\delta_{\mathcal D}$. Indeed,
$S$ and $\mathcal D$ are nonempty closed convex sets, so their indicator
functions are proper lower semicontinuous and convex. Part~(i) gives
\[
 x^\circ\in S\cap\operatorname{int}\mathcal D.
\]
Hence some open ball around $x^\circ$ is contained in $\mathcal D$. The
function $\delta_{\mathcal D}$ is identically zero on this ball and is
therefore continuous at $x^\circ$. At the same point,
$\delta_S(x^\circ)=0$.

Since
\[
 \delta_S+\delta_{\mathcal D}=\delta_{S\cap\mathcal D},
\]
because both sides are zero on $S\cap\mathcal D$ and $+\infty$ elsewhere,
the sum theorem gives
\[
\begin{aligned}
 \sigma_{S\cap\mathcal D}(\lambda^0)
 &=\delta_{S\cap\mathcal D}^*(\lambda^0)\\
 &=(\delta_S+\delta_{\mathcal D})^*(\lambda^0)\\
 &=\inf_u\bigl(
      \delta_S^*(u)+\sigma_{\mathcal D}(\lambda^0-u)
    \bigr).
\end{aligned}
\]
It remains to compute $\delta_S^*$. By the definition of the conjugate,
\[
 \delta_S^*(u)=\sup_{y\in S}\langle u,y\rangle.
\]
Recall that $S^\perp$ consists of all $u\in\mathbb R^{3\times3}$ such that
$\langle u,y\rangle=0$ for every $y\in S$.
If $u\in S^\perp$, then every inner product in the supremum is zero, so
$\delta_S^*(u)=0$. Now suppose that $u\notin S^\perp$. There is then a
$y_0\in S$ such that $\langle u,y_0\rangle\ne0$. Since $S$ is a linear
subspace, $-y_0\in S$. After replacing $y_0$ by $-y_0$ if necessary, we
may assume that $\langle u,y_0\rangle>0$. Moreover, $t y_0\in S$ for every
$t>0$, and
\[
 \langle u,t y_0\rangle
 =t\langle u,y_0\rangle\longrightarrow+\infty
 \qquad\text{as }t\longrightarrow+\infty.
\]
Consequently,
\[
 \delta_S^*(u)
 =
 \begin{cases}
  0,&u\in S^\perp,\\
  +\infty,&u\notin S^\perp.
 \end{cases}
\]
Thus only $u\in S^\perp$ contributes to the infimum, and
\[
 \sigma_{S\cap\mathcal D}(\lambda^0)
 =\inf_{u\in S^\perp}\sigma_{\mathcal D}(\lambda^0-u).
\]
Finally, $S^\perp$ is a linear subspace, so $-u$ ranges over $S^\perp$ as
$u$ does. Replacing $-u$ by $\eta$ gives
\begin{equation}\label{eq:support-section-duality}
 \sigma_{S\cap\mathcal D}(\lambda^0)
 =\inf_{\eta\in S^\perp}\sigma_{\mathcal D}(\lambda^0+\eta).
\end{equation}
The interior point constructed in part~(i) is precisely what rules out a
duality gap in this equality.

\emph{(iv) Independent row maximizations.}
Because $\mathcal D=(\overline{\mathcal C})^3$, its support function
separates into three independent row maximizations.\footnote{This
independence would not hold on
$S\cap\mathcal D$, because membership in $S$ imposes compatibility equations
between the rows. It becomes available only after convex duality replaces the
constrained support function by an infimum of support functions of
$\mathcal D$.} Taking the closure does not change the row suprema, so
\[
 \sigma_{\mathcal D}(\lambda)
 =\sum_{s=1}^3
   \sigma_{\mathcal C}(\lambda_{s,1},\lambda_{s,2},\lambda_{s,3}).
\]
Lemma~\ref{lem:one-hessian-region} shows that a row supremum is finite
exactly when its three weights are nonnegative. In that case, it is the
logarithm of the corresponding $\mathfrak m$-term. Thus shifts
$\lambda^0+\eta$ with a negative entry contribute $+\infty$ and may be
discarded. The remaining shifts are exactly the nonnegative representations
$\Lambda(\vg)$. Using \eqref{eq:section-closure} and
\eqref{eq:support-section-duality}, we obtain
\begin{equation}\label{eq:dual-log-constant}
 \sup_{x\in S\cap\mathcal C^3}\langle\lambda^0,x\rangle
 =
 \inf_{\lambda\in\Lambda(\vg)}
 \sum_{s=1}^3
 \log\mathfrak m(\lambda_{s,1},\lambda_{s,2},\lambda_{s,3}).
\end{equation}
Let $M$ denote the finite common value in \eqref{eq:dual-log-constant}. For
every $\delta>0$, there is an $x\in S\cap\mathcal C^3$ such that
\[
 \langle\lambda^0,x\rangle>M-\delta.
\]
Thus $M$ is not merely an upper bound: it is the smallest possible upper
bound, even when the supremum is not attained.
\end{proof}

\begin{lemma}[The representation polytope]
\label{lem:representation-polytope}
For every $\vg\in\BR_{\mathring L}(3,3)$, the set $\Lambda(\vg)$ is a
nonempty compact polytope, and $\Phi$ attains its minimum on
$\Lambda(\vg)$.
\end{lemma}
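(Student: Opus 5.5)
Nonemptiness follows from Corollary~\ref{cor:threevariable}: since $\vg\in\BR_{\mathring L}(3,3)$ is a nonnegative combination of the nine triangular exponent vectors $\vrho(\ve_s;i\mid jk)$, some $\lambda\in\Rge^{\,3\times3}$ satisfies $L^*(\lambda)=\vg$. The set $\Lambda(\vg)$ is then the intersection of the affine space $\lambda^0+\ker L^*=\lambda^0+S^\perp$ (Lemma~\ref{lem:compatible-hessian-triples}) with the nonnegative orthant. It is therefore a polyhedron, cut out by finitely many linear equations and inequalities, and so it is closed and convex.

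Compactness is the main point. I would show that the recession cone $\ker L^*\cap\Rge^{\,3\times3}$ is $\{0\}$. By \eqref{eq:kernel-Lstar}, every $\eta\in\ker L^*$ has the form
\[
 \eta=a(E_{1,3}-E_{2,1}-E_{2,3}+E_{3,1})+b(-E_{1,2}-E_{1,3}+E_{2,3}+E_{3,2}).
\]
Reading off entries, $\eta_{2,1}=-a$ and $\eta_{3,1}=a$, so $\eta\ge0$ forces $a=0$. Similarly, $\eta_{1,2}=-b$ and $\eta_{3,2}=b$ force $b=0$. Hence $\eta=0$.

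An alternative, coordinate-free argument works as well. Pairing with the strictly feasible function $\nu_0$ of Lemma~\ref{lem:three-irredundant}(i) gives $\langle L^*(\eta),\nu_0\rangle=\sum_{s,i}\eta_{s,i}\,s_{\cdot}$ with all weights positive. So $L^*(\eta)=0$ and $\eta\ge0$ force $\eta=0$. This yields the explicit bound
\[
 \sum\lambda_{s,i}\le\langle\vg,\nu_0\rangle
\]
on $\Lambda(\vg)$, since every weight is at least $1$. Either way, the polyhedron is bounded, hence a compact polytope.

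For attainment, I would show that $\Phi$ is continuous on $\Rge^{\,3\times3}$, or at least lower semicontinuous, and then apply Weierstrass. By Lemma~\ref{lem:one-hessian-region}, each row term $\log\mathfrak m=\sigma_{\mathcal C}$ is a finite-valued convex function on the closed orthant $\Rge^3$. It is therefore lower semicontinuous there, being a supremum of linear functions.

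That is already enough, but continuity can also be checked directly from the formula. On $\{\Delta\le0\}$ the value is $0$. On $\{\Delta>0\}$ the expression is continuous with the convention $0^0=1$, using that $x\log x\to0$. At the boundary $a=b+c$, the factors $(a-b+c)^{a-b+c}=(2c)^{2c}$ and $(a+b-c)^{a+b-c}=(2b)^{2b}$ cancel $2^m b^bc^c(b+c)^{b+c}$ to give $1$, so the two pieces agree. Thus $\Phi$, a sum of three such terms, attains its minimum on the nonempty compact set $\Lambda(\vg)$.
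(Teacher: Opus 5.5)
Your proof is correct. For attainment you argue as the paper does: each row term of $\Phi$ is $\sigma_{\mathcal C}$ restricted to $\Rge^3$, hence lower semicontinuous, so $\Phi$ attains its minimum on the nonempty compact set $\Lambda(\vg)$.

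The boundedness step takes a different route. The paper argues by contradiction. From an unbounded sequence in $\Lambda(\vg)$ it normalizes and extracts a limit $\mu\ge0$ with $\|\mu\|=1$ and $L^*(\mu)=0$. That would put a nonzero generator and its negative in $\BR_{\mathring L}(3,3)$, contradicting pointedness, which the paper gets from the full-dimensionality of $\mathcal M_{3,3}$ in Lemma~\ref{lem:three-irredundant}.

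You instead show directly that the recession cone $\ker L^*\cap\Rge^{3\times3}$ is trivial, in one of two ways:
\begin{itemize}
\item Read the sign pattern off the basis in \eqref{eq:kernel-Lstar}. The $(2,1),(3,1)$ entries are $-a,a$ and the $(1,2),(3,2)$ entries are $-b,b$. This is the quickest check, but it depends on the explicit kernel computation, which is special to $n=3$.
\item Pair with $\nu_0$. This is the quantitative content of pointedness: a functional strictly positive on every generator. It gives the explicit bound $\sum_{s,i}\lambda_{s,i}\le\langle\vg,\nu_0\rangle$ and works in every degree. It is exactly the device the paper itself uses later, with $Q_{\va}=\alpha_1^2+\alpha_2^2+\alpha_3^2$, in Lemma~\ref{lem:all-ternary-compact}.
\end{itemize}

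Your optional ``direct'' continuity check at the end is wrong and should be deleted or redone; the proof does not need it.

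\begin{itemize}
\item The two pieces of $\mathfrak m$ do not meet along $a=b+c$. Since $\Delta=(a-b-c)^2-4bc$, on $a=b+c$ you have $\Delta=-4bc<0$ whenever $bc>0$. There $\mathfrak m=1$ by definition, and the formula is not in force.
\item The claimed cancellation is false. Substituting $a=b+c$ into the formula gives $(b+c)^{b+c}/(b^bc^c)$, which exceeds $1$ for $b,c>0$; for example it equals $4$ at $b=c=1$.
\item The true interface $\{\Delta=0\}$, with $a$ largest, is $\sqrt a=\sqrt b+\sqrt c$. Write $b=p^2$, $c=q^2$, $a=(p+q)^2$. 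Then $a-b-c=2pq$, $a-b+c=2q(p+q)$ and $a+b-c=2p(p+q)$. The coefficients of $\log 2$, $\log(p+q)$, $\log p$ and $\log q$ in $\log\mathfrak m$ each vanish, so the formula does equal $1$ there.
\end{itemize}

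So $\Phi$ is in fact continuous, but by a different calculation from the one you gave.
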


\begin{proof}
The set $\Lambda(\vg)$ is nonempty by
Corollary~\ref{cor:threevariable} and is closed by definition. It remains to
show that it is bounded.

The proof of Lemma~\ref{lem:three-irredundant} shows that
$\mathcal M_{3,3}$ is full-dimensional modulo affine functions. Its dual
$\BR_{\mathring L}(3,3)$ is therefore pointed.

Suppose, to the contrary, that there are
$\lambda^{(r)}\in\Lambda(\vg)$ with
$\|\lambda^{(r)}\|\to\infty$. After passing to a subsequence,
\[
 \mu^{(r)}=\frac{\lambda^{(r)}}{\|\lambda^{(r)}\|}
 \longrightarrow\mu
\]
for some $\mu\ge0$ with $\|\mu\|=1$. Since
$L^*(\lambda^{(r)})=\vg$, division by $\|\lambda^{(r)}\|$ and passage to the
limit give
\[
 L^*(\mu)
 =\sum_{s,i}\mu_{s,i}\cdot\vrho(\ve_s;i\mid jk)=0.
\]
Choose $(s_0,i_0)$ with $\mu_{s_0,i_0}>0$, and let $j_0,k_0$ be the two
indices different from $i_0$. Then
\[
 y=\mu_{s_0,i_0}\cdot\vrho(\ve_{s_0};i_0\mid j_0k_0)
\]
is a nonzero element of $\BR_{\mathring L}(3,3)$, while the preceding
relation gives
\[
 -y=\sum_{(s,i)\ne(s_0,i_0)}
       \mu_{s,i}\cdot\vrho(\ve_s;i\mid jk)
 \in\BR_{\mathring L}(3,3).
\]
This is impossible because the cone is pointed. It cannot contain both a
nonzero vector and its negative. Thus $\Lambda(\vg)$ is bounded. Since it
is defined by finitely many linear equalities and inequalities, it is a
compact polytope.

It remains to prove that the minimum is attained. By
Lemma~\ref{lem:one-hessian-region}, each
summand of $\Phi$ is the restriction to $\Rge^3$ of the support function
$\sigma_{\mathcal C}$. A support function is a supremum of continuous
linear functions, so it is convex and lower semicontinuous. Hence $\Phi$
is lower semicontinuous on the compact set $\Lambda(\vg)$ and attains its
minimum there. Exponentiating shows that the product of the three
$\mathfrak m$-terms also attains its minimum.
\end{proof}

\subsection{Optimal constants and sharp inequalities}

\begin{theorem}[Optimal bounding constants for ternary Lorentzian cubics]
\label{thm:constants33}
For $\vg\in\BR_{\mathring L}(3,3)$, the optimal bounding constant of $\vg$
on the strictly Lorentzian cubics is
\[
 f_{\mathring L}(\vg)
 =
 \Bigl(\prod_{\va\in H(3,3)}(\va!)^{-\gamma_{\va}}\Bigr)
 \min_{\lambda\in\Lambda(\vg)}
 \prod_{s=1}^{3}
 \mathfrak m(\lambda_{s,1},\lambda_{s,2},\lambda_{s,3}).
\]
The supremum defining $f_{\mathring L}(\vg)$ is not attained for
$\vg\ne0$.
\end{theorem}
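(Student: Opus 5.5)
The plan is to reduce the statement to Lemma~\ref{lem:compatible-row-optimization} by passing to logarithms of normalized coefficients. Fix $\vg\in\BR_{\mathring L}(3,3)$ and some $\lambda^0\in\Lambda(\vg)$, which exists by Corollary~\ref{cor:threevariable}. For $\vP\in\mathring L(3,3)$ write $z=(\log\widehat P_{\va})_{\va}\in\mathcal Z$ with $\widehat P_{\va}=\va!P_{\va}$. Then
\[
 \log R_{\vg}(\vP)=\langle\vg,\log\vP\rangle
 =\langle\vg,z\rangle-\sum_{\va}\gamma_{\va}\log\va!.
\]
Since $\vg=L^*(\lambda^0)$ and $L^*$ is the adjoint of $L$, we have $\langle\vg,z\rangle=\langle\lambda^0,L(z)\rangle$. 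By Lemma~\ref{lem:compatible-hessian-triples}, as $\vP$ ranges over $\mathring L(3,3)$, the array $L(z)$ ranges exactly over $S\cap\mathcal C^3$. Hence
\[
 \log\sup_{\vP}R_{\vg}(\vP)
 =\sigma_{S\cap\mathcal C^3}(\lambda^0)-\sum_{\va}\gamma_{\va}\log\va!.
\]
Lemma~\ref{lem:compatible-row-optimization} identifies $\sigma_{S\cap\mathcal C^3}(\lambda^0)$ with $\inf_{\Lambda(\vg)}\Phi$. Lemma~\ref{lem:representation-polytope} says this infimum is attained, so it is a minimum. Exponentiating then gives the displayed formula, because $\exp\Phi(\lambda)=\prod_s\mathfrak m(\lambda_{s,1},\lambda_{s,2},\lambda_{s,3})$. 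The optimal constant is by definition this supremum, since the supremum is the least upper bound.

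For non-attainment, suppose $\vg\neq0$ and the supremum is attained at some $\vP$, so that $x=L(z)\in S\cap\mathcal C^3$ maximizes the linear functional $\langle\lambda^0,\cdot\rangle$ over $S\cap\mathcal C^3$. I would derive a contradiction from openness. The set $\mathcal C^3$ is open by Lemma~\ref{lem:one-hessian-region}, so $S\cap\mathcal C^3$ is relatively open in $S$. A linear functional that attains its maximum at a relatively interior point must vanish on $S$. So $\lambda^0\in S^\perp=\ker L^*$, which gives $\vg=L^*(\lambda^0)=0$, a contradiction. This is where the strict locus matters: openness of $\mathcal C$ comes from the strict triangle inequalities in Lemma~\ref{lem:strict-triangle-model}.

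I expect the main subtlety to be the bookkeeping in the first step rather than any new estimate. One must check that $\langle\vg,z\rangle=\langle\lambda^0,L(z)\rangle$ holds for every $\vP$, independently of which representation $\lambda^0$ is chosen; this follows because the difference of two representations lies in $\ker L^*=S^\perp$ and $L(z)\in S$. One must also keep the factorial normalization sign correct, namely the factor $\prod(\va!)^{-\gamma_{\va}}$.
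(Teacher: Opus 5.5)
Your proposal is correct and follows the paper's proof: the same logarithmic reduction via the adjoint identity, with the exact image from Lemma~\ref{lem:compatible-hessian-triples}, the duality identity of Lemma~\ref{lem:compatible-row-optimization}, and attainment of the minimum from Lemma~\ref{lem:representation-polytope}. The only variation is in non-attainment. The paper uses openness of $\mathcal Z$ in coefficient space and perturbs $z$ to $z+\varepsilon\vg$. You instead use openness of $\mathcal C^3$ relative to $S$ to force $\lambda^0\in S^\perp=\ker L^*$, and hence $\vg=0$; this argument is equally valid.
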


\begin{proof}
Let $F$ be a strictly Lorentzian ternary cubic with coefficient vector
$\vP$. For each fixed $s$, the three ratios $T_{(\ve_s;i\mid jk)}$ are formed
from the entries of the same Hessian
$\mathcal H^{(\ve_s)}=\operatorname{Hess}(\partial_sF)$. The nine ratios thus
form three rows, one for each value of $s$. These rows cannot be chosen
independently because their entries use the same normalized coefficients.
Concretely,
\[
 \mathcal H^{(\ve_s)}_{uv}
 =\mathcal H^{(\ve_u)}_{sv}
 =\widehat P_{\ve_s+\ve_u+\ve_v}.
\]

We first relate the Hessian ratios to the coefficient ratios. The preceding
identity gives
\[
\begin{aligned}
 T_{(\ve_s;i\mid jk)}
 &=
 \frac{\mathcal H^{(\ve_s)}_{ii}\,\mathcal H^{(\ve_s)}_{jk}}
      {\mathcal H^{(\ve_s)}_{ij}\,\mathcal H^{(\ve_s)}_{ik}}\\
 &=
 \frac{\widehat P_{\ve_s+2\ve_i}\,
       \widehat P_{\ve_s+\ve_j+\ve_k}}
      {\widehat P_{\ve_s+\ve_i+\ve_j}\,
       \widehat P_{\ve_s+\ve_i+\ve_k}}
 =\widehat P^{\,\vrho(\ve_s;i\mid jk)}.
\end{aligned}
\]
This is the triangular ratio of $\mathcal H^{(\ve_s)}$ at $(i\mid jk)$ in the
sense of Definition~\ref{def:triangular}. Specializing the conversion factor
from that definition to $\vb=\ve_s$ gives
\[
 T_{(\ve_s;i\mid jk)}
 =\kappa_{\ve_s,i}R_{\vrho(\ve_s;i\mid jk)},
 \qquad
 \kappa_{\ve_s,i}=
 \begin{cases}
  \tfrac32,&s=i,\\[2pt]
  2,&s\ne i.
 \end{cases}
\]
Indeed, the $i$-th coordinate of $\ve_s$ is $1$ when $s=i$ and is $0$
otherwise. Substituting these two values into the formula
$\kappa_{\vb,i}=(\beta_i+2)/(\beta_i+1)$ gives $3/2$ and $2$, respectively.

Now let $\lambda\in\Lambda(\vg)$. By definition,
\[
 \vg=\sum_{s=1}^3\sum_{i=1}^3
 \lambda_{s,i}\cdot\vrho(\ve_s;i\mid jk),
\]
where in each summand $j,k$ are the two indices different from $i$.
Therefore
\[
\begin{aligned}
 \log\widehat P^{\,\vg}
 &=\sum_{s=1}^3\sum_{i=1}^3
   \lambda_{s,i}\log T_{(\ve_s;i\mid jk)},\\
 \widehat P^{\,\vg}
 &=\prod_{s=1}^3\prod_{i=1}^3
   T_{(\ve_s;i\mid jk)}^{\lambda_{s,i}}.
\end{aligned}
\]
Since $\widehat P_{\va}=\va!P_{\va}$, this gives
\begin{equation}\label{eq:master-hessian-ratio}
 P^{\vg}
 =
 \left(\prod_{\va\in H(3,3)}(\va!)^{-\gamma_{\va}}\right)
 \prod_{s=1}^3\prod_{i=1}^3
 T_{(\ve_s;i\mid jk)}^{\lambda_{s,i}}.
\end{equation}
The factorial correction depends only on $\vg$, not on its representation
$\lambda$.

We next derive the row-by-row upper bound. Put
\[
 z=\log\widehat P,
 \qquad
 x=L(z).
\]
The $s$-th row of $x$ is the triple of logarithmic triangular ratios of
$\mathcal H^{(\ve_s)}$, so it belongs to $\mathcal C$. We have
\[
\begin{aligned}
 \log\widehat P^{\,\vg}
 &=\sum_{s=1}^3\sum_{i=1}^3\lambda_{s,i}x_{s,i}\\
 &=\sum_{s=1}^3
   \bigl(\lambda_{s,1}x_{s,1}
        +\lambda_{s,2}x_{s,2}
        +\lambda_{s,3}x_{s,3}\bigr).
\end{aligned}
\]
Applying \eqref{eq:one-hessian-bound} separately to the three rows gives
\begin{equation}\label{eq:rowwise-upper-bound}
 \log\widehat P^{\,\vg}
 \le
 \sum_{s=1}^3
 \log\mathfrak m(\lambda_{s,1},\lambda_{s,2},\lambda_{s,3}).
\end{equation}
Thus every nonnegative representation of $\vg$ gives a valid upper bound.

We now identify the exact supremum. Fix
$\lambda^0\in\Lambda(\vg)$. For $z=\log\widehat P$ and $x=L(z)$, the adjoint
identity from Lemma~\ref{lem:compatible-hessian-triples} gives
\begin{align*}
 \langle\lambda^0,x\rangle
 &=\langle\lambda^0,L(z)\rangle
 =\langle L^*(\lambda^0),z\rangle\\
 &=\langle\vg,z\rangle\\
 &=\sum_{\va}\gamma_{\va}\log\widehat P_{\va}
 =\log\widehat P^{\,\vg}.
\end{align*}
Thus the quantity being maximized does not depend on the choice of
representation $\lambda^0$. Equation~\eqref{eq:three-hessian-image} shows
that, as $\vP$ ranges over the strictly Lorentzian cubics, $x$ ranges over
all of $S\cap\mathcal C^3$. Hence Lemma~\ref{lem:compatible-row-optimization}
gives
\[
 \sup_{\vP\in\mathring L(3,3)}\log\widehat P^{\,\vg}
 =
 \inf_{\lambda\in\Lambda(\vg)}
 \sum_{s=1}^3
 \log\mathfrak m(\lambda_{s,1},\lambda_{s,2},\lambda_{s,3}).
\]
Lemma~\ref{lem:representation-polytope} shows that this infimum is a
minimum. The identity $\widehat P_{\va}=\va!P_{\va}$ gives
\[
 P^{\vg}
 =\widehat P^{\,\vg}
  \prod_{\va\in H(3,3)}(\va!)^{-\gamma_{\va}}.
\]
Exponentiating proves the formula in the theorem.

Finally, we prove the nonattainment assertion. The set $\mathcal Z$ is an
open subset of $\mathbb R^{H(3,3)}$. Indeed, positivity of the coefficients
is preserved by small perturbations, and a nonsingular symmetric matrix with
one positive and two negative eigenvalues retains these signs under
sufficiently small perturbations. Thus strict Lorentzianity is an open
condition. Multiplying the coordinates by the fixed factorials and then
taking logarithms preserves openness.

Suppose that $\vg\ne0$ and take any $z\in\mathcal Z$. Since $\mathcal Z$ is
open, for every sufficiently small $\varepsilon>0$ we also have
$z+\varepsilon\vg\in\mathcal Z$. But
\[
 \langle\vg,z+\varepsilon\vg\rangle
 =\langle\vg,z\rangle+\varepsilon\|\vg\|^2
 >\langle\vg,z\rangle.
\]
Thus no point of $\mathcal Z$ can maximize the linear function
$z\mapsto\langle\vg,z\rangle$, which is precisely
$\log\widehat P^{\,\vg}$. Passing from $\widehat P^{\,\vg}$ to
$P^{\vg}$ only multiplies the ratio by the fixed factorial correction.
Therefore the supremum defining $f_{\mathring L}(\vg)$ is not attained.
\end{proof}

\begin{corollary}[Nine sharp multiplicative coefficient inequalities]
\label{cor:paired-ratios33}
Let $F$ be a Lorentzian ternary cubic with coefficient vector
$\vP=(P_{\va})_{\va\in H(3,3)}$. Then
\[
\begin{gathered}
 P_{120}P_{201}\le P_{210}P_{111},
 \qquad
 P_{300}P_{111}\le\frac43 P_{210}P_{201},\\[3pt]
 P_{120}P_{102}\le\frac14 P_{111}^{2},
 \qquad
 P_{300}P_{120}\le\frac13 P_{210}^{2},\\[3pt]
 P_{012}P_{120}P_{201}\le\frac4{27}P_{111}^{3},
 \qquad
 P_{300}P_{030}P_{003}\le\frac1{216}P_{111}^{3},\\[8pt]
 (P_{300}P_{030}P_{003})^2
 \le\frac1{729}
 P_{210}P_{201}P_{120}P_{021}P_{102}P_{012},\\[8pt]
 P_{300}P_{030}^{2}P_{003}^{4}
 \le\frac1{2187}P_{120}P_{102}^{2}P_{012}^{4},\\[8pt]
 P_{030}P_{003}P_{210}P_{201}P_{021}
 \le\frac1{18}P_{120}^{2}P_{102}P_{012}P_{111}.\\[5pt]
\end{gathered}
\]
The constants $1$, $\frac43$, $\frac14$, $\frac13$, $\frac4{27}$,
$\frac1{216}$, $\frac1{729}$, $\frac1{2187}$, and $\frac1{18}$ are optimal.
If $F$ is strictly Lorentzian, then all nine inequalities are strict.
In particular, the first and third inequalities are the reverse
Khovanskii--Teissier inequality and the Alexandrov--Fenchel inequality,
respectively.
\end{corollary}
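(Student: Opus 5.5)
The plan is to derive all nine inequalities from Theorem~\ref{thm:constants33}. For each displayed inequality, let $\vg$ be the exponent vector of the ratio (left side over right side). Then:
\begin{enumerate}
\item I exhibit an explicit nonnegative representation $\lambda\in\Lambda(\vg)$ as a combination of the nine generators $\vrho(\ve_s;i\mid jk)$. This shows $\vg\in\BR_{\mathring L}(3,3)$ by Corollary~\ref{cor:threevariable}.
\item I compute the factorial correction $\prod_{\va}(\va!)^{-\gamma_{\va}}$.
\item I determine $\min_{\lambda\in\Lambda(\vg)}\prod_s\mathfrak m(\lambda_{s,\cdot})$, and check that the product with the correction equals the stated constant.
\end{enumerate}
Optimality and strictness on $\mathring L(3,3)$ then follow immediately from the theorem: the constant is the supremum, and it is not attained for $\vg\ne0$. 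Extension to all Lorentzian cubics follows by continuity, as in the introduction, once the inequalities are written in polynomial (non-ratio) form.

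For the representation step, $\Lambda(\vg)$ is $\lambda^0+\ker L^*$ intersected with $\Rge^{3\times3}$. By \eqref{eq:kernel-Lstar}, $\ker L^*$ is spanned by the two explicit relations, so $\Lambda(\vg)$ is a polytope of dimension at most two. I find one $\lambda^0$ by matching supports. For example:
\begin{itemize}
\item $P_{120}P_{201}/(P_{210}P_{111})$ is a single generator $\vrho(\ve_1;2\mid13)$. Its row is $(0,1,0)$, giving $\mathfrak m=1$, and the correction $(2!\,1!)/(2!\cdot1)=1$ yields the constant $1$.
\item $P_{300}P_{111}/(P_{210}P_{201})=R_{\vrho(\ve_1;1\mid23)}$. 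Again $\mathfrak m=1$, and the correction $2!\,2!/3!=2/3$ must be combined with the normalization. Here I will recheck which convention ($\widehat P$ versus $P$) yields $\tfrac43$. The factor $\kappa$ from Definition~\ref{def:triangular} enters as $T=\kappa R$, with the bound $T\le2$, so $R\le2/\kappa=4/3$, consistent with the claim.
\end{itemize}
The other single-row cases (the third and fourth inequalities) are handled the same way.

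For the multi-row inequalities (the fifth through ninth), I will solve the linear system for $\lambda^0$ and parametrize $\Lambda(\vg)$ by the two kernel coordinates $(u,v)$. Each row's $\mathfrak m$ then becomes an explicit function of $(u,v)$. Since each row term is $\log\mathfrak m=\sigma_{\mathcal C}$ restricted to $\Rge^3$, the objective $\Phi$ is convex on this polytope. So the minimum can be located by checking first-order conditions: in the region where all $\Delta\le0$, $\Phi\equiv0$; otherwise one differentiates the explicit entropy-like formula for $\log\mathfrak m$. I expect symmetric ratios (e.g.\ $(P_{300}P_{030}P_{003})^2$ over the six mixed terms) to have a symmetric minimizer by convexity and averaging over the $S_3$ symmetry, reducing the search to one parameter.

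The main obstacle is this minimization for the asymmetric ratios, such as $P_{300}P_{030}^2P_{003}^4$ and the last inequality. There $\Lambda(\vg)$ is a genuine two-dimensional polygon, and the minimizer may lie on a boundary edge, or in a region where some rows have $\Delta>0$ and others do not. I will first try vertices and edge-restricted critical points, using convexity to certify global minimality via a subgradient check. Finally, I will verify that the resulting number times the factorial correction equals $\tfrac1{2187}$ and $\tfrac1{18}$, respectively.
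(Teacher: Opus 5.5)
Your route is the same as the paper's: an explicit $\lambda\in\Lambda(\vg)$, the factorial correction, and the minimal $\mathfrak m$-product from Theorem~\ref{thm:constants33}, followed by nonattainment and a limiting argument. However, your sample evaluations are wrong.

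\begin{itemize}
\item A row with a single nonzero entry has $\Delta(1,0,0)=1>0$. It therefore lies on the nontrivial branch, and $\mathfrak m(1,0,0)=\mathfrak m(0,1,0)=2$, not $1$. This value is exactly the Hessian bound $T<2$ you invoke later.
\item The correction $\prod_{\va}(\va!)^{-\gamma_{\va}}$ for $P_{120}P_{201}/(P_{210}P_{111})$ is $\frac{2\cdot1}{2\cdot2}=\frac12$, not $1$. Your constant $1$ therefore comes from two cancelling errors. With $\mathfrak m=1$, your second computation would give $\frac23$ instead of $\frac43$.
\item Once the value is $2$, you also need that $\Lambda(\vg)$ is the single point $E_{1,2}$ (respectively $E_{1,1}$). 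This holds because each ratio spans an extreme ray of a pointed cone. A product equal to $2$ is not automatically minimal.
\end{itemize}

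Evaluating $\mathfrak m$ correctly on the branch $\Delta>0$ is essential in the fifth inequality, the only case that requires a genuine minimization. There $\Lambda(\vg)$ is a triangle. The ratio is only cyclically invariant, since a transposition sends it to $P_{102}P_{210}P_{021}/P_{111}^3$. Convexity and cyclic averaging show that the unique cyclically invariant point of $\Lambda(\vg)$ is a minimizer. Its rows are permutations of $(0,\frac23,\frac13)$, all with $\Delta>0$. This gives $\mathfrak m(0,2,1)=\frac{32}{27}$ and, after the correction $\frac18$, the constant $\frac4{27}$.

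Conversely, the cases you single out as the main obstacle are trivial. The missing ingredient is Lemma~\ref{lem:m-at-least-one}, which gives $\mathfrak m\ge1$ on $\Rge^3$. Hence any point of $\Lambda(\vg)$ all of whose rows satisfy $\Delta\le0$ is a minimizer with product $1$, and no two-dimensional optimization or subgradient check is needed. You need this lemma anyway: knowing $\Phi\equiv0$ where every $\Delta\le0$ certifies nothing without $\Phi\ge0$.

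Such points are easy to exhibit:
\begin{itemize}
\item third and fourth inequalities: first row $(0,1,1)$ or $(1,1,0)$, other rows zero;
\item sixth: all rows $(1,1,1)$;
\item seventh: rows $(2,1,1),(1,2,1),(1,1,2)$;
\item eighth: rows $(1,1,1),(1,2,1),(1,2,4)$;
\item ninth: rows $(0,0,0),(1,1,0),(1,1,1)$.
\end{itemize}
The correction then equals $\prod_{s,i}\kappa_{\ve_s,i}^{-\lambda_{s,i}}$, with $\kappa=\frac32$ when $s=i$ and $\kappa=2$ otherwise. For example, in the eighth inequality the diagonal and off-diagonal weights both total $7$, so the correction is $(\frac32)^{-7}2^{-7}=\frac1{2187}$.
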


\begin{proof}
We first assume that $F$ is strictly Lorentzian, so all relevant
coefficients are positive. We use the coefficient-to-Hessian conversion
from Definition~\ref{def:triangular}. If
$\lambda\in\Lambda(\vg)$, then
\[
 \prod_{\va\in H(3,3)}(\va!)^{-\gamma_{\va}}
 =
 \prod_{s=1}^3\prod_{i=1}^3
 \kappa_{\ve_s,i}^{-\lambda_{s,i}},
 \qquad
 \kappa_{\ve_s,i}=
 \begin{cases}
  \tfrac32,&s=i,\\
  2,&s\ne i.
 \end{cases}
\]
Indeed,
\[
 \prod_{s,i}T_{(\ve_s;i\mid jk)}^{\lambda_{s,i}}
 =
 \left(\prod_{s,i}\kappa_{\ve_s,i}^{\lambda_{s,i}}\right)
 \prod_{s,i}R_{\vrho(\ve_s;i\mid jk)}(\vP)^{\lambda_{s,i}}
 =
 \left(\prod_{s,i}\kappa_{\ve_s,i}^{\lambda_{s,i}}\right)P^{\vg}.
\]
Comparing this identity with \eqref{eq:master-hessian-ratio} gives the stated
factorial correction.

\emph{(i) The first and second inequalities.}
The two quotients are
\[
 \frac{P_{120}P_{201}}{P_{210}P_{111}}
 =R_{\vrho(\ve_1;2\mid13)}(\vP),
 \qquad
 \frac{P_{300}P_{111}}{P_{210}P_{201}}
 =R_{\vrho(\ve_1;1\mid23)}(\vP).
\]
By Corollary~\ref{cor:threevariable}, these exponent vectors span distinct
extreme rays of $\BR_{\mathring L}(3,3)$. An extreme generator cannot be
expressed as a nonnegative sum of generators on other rays. Consequently,
the only nonnegative representations are $E_{1,2}$ and $E_{1,1}$,
respectively.
Definition~\ref{def:m-function} gives the corresponding minimizing
$\mathfrak m$-products from Theorem~\ref{thm:constants33}:
\[
 \mathfrak m(0,1,0)=2,
 \qquad
 \mathfrak m(1,0,0)=2.
\]
Here the two zero rows contribute
$\mathfrak m(0,0,0)=1$.
For the first quotient, $s=1\ne2=i$, so the factorial correction is
$2^{-1}$. Its optimal constant is therefore
\[
 2\cdot\frac12=1.
\]
For the second quotient, $s=i=1$, so the factorial correction is
$(\frac32)^{-1}=\frac23$. Its optimal constant is therefore
\[
 2\cdot\frac23=\frac43.
\]
The same two computations can be read directly from the Hessian
normalization:
\[
 T_{(\ve_1;2\mid13)}
 =2\frac{P_{120}P_{201}}{P_{210}P_{111}},
 \qquad
 T_{(\ve_1;1\mid23)}
 =\frac32\frac{P_{300}P_{111}}{P_{210}P_{201}}.
\]
Thus the optimal Hessian bound $T<2$ becomes, respectively,
$R<2/2=1$ and $R<2/(\frac32)=\frac43$.

\emph{(ii) The third inequality.}
The triangular ratios give
\[
 \frac{P_{120}P_{102}}{P_{111}^{2}}
 =R_{\vrho(\ve_1;2\mid13)}(\vP)\,
  R_{\vrho(\ve_1;3\mid12)}(\vP).
\]
Thus its exponent vector has a representation whose first row is
$(0,1,1)$ and whose other two rows are zero. Every row has $\Delta\le0$, so
the corresponding $\mathfrak m$-product is $1$.
Lemma~\ref{lem:m-at-least-one} shows that this is the minimum. Both factors
have $s\ne i$, so the factorial correction is
\[
 \frac1{2\cdot2}=\frac14.
\]
Theorem~\ref{thm:constants33} therefore gives the optimal constant
$\frac14$.

\emph{(iii) The fourth inequality.}
Here
\[
 \frac{P_{300}P_{120}}{P_{210}^{2}}
 =R_{\vrho(\ve_1;1\mid23)}(\vP)\,
  R_{\vrho(\ve_1;2\mid13)}(\vP).
\]
The corresponding representation has first row $(1,1,0)$, while its other
two rows are zero. As above, its $\mathfrak m$-product is the minimum value
$1$. The first factor has $s=i$, while the second has $s\ne i$. Hence the
factorial correction is
\[
 \frac1{(\frac32)\cdot2}=\frac13.
\]
Theorem~\ref{thm:constants33} therefore gives the optimal constant
$\frac13$.

\emph{(iv) The fifth inequality.}
The ratio identity is
\[
 \frac{P_{012}P_{120}P_{201}}{P_{111}^{3}}
 =R_{\vrho(\ve_2;3\mid12)}(\vP)\,
  R_{\vrho(\ve_3;1\mid23)}(\vP)\,
  R_{\vrho(\ve_3;2\mid13)}(\vP).
\]
Put
\[
 \vg=\vrho(\ve_2;3\mid12)
     +\vrho(\ve_3;1\mid23)
     +\vrho(\ve_3;2\mid13).
\]
Starting from the representation
\[
 \begin{pmatrix}
  0&0&0\\
  0&0&1\\
  1&1&0
 \end{pmatrix},
\]
every real representation is obtained by adding a linear combination of the
two matrices in the basis of $\ker L^*$ from
\eqref{eq:kernel-Lstar}. Setting the two coefficients equal to $-a$ and
$-b$ gives
\[
 \Lambda(\vg)=
 \left\{
 \begin{pmatrix}
  0&b&b-a\\
  a&0&1+a-b\\
  1-a&1-b&0
 \end{pmatrix}
 :0\le a\le b\le1
 \right\}.
\]
Indeed, the inequalities $0\le a\le b\le1$ are exactly the conditions that
all nine entries in the displayed matrix are nonnegative.
A cyclic permutation of the variables simultaneously permutes the rows and
columns of these matrices. Both $\vg$ and the logarithm of the objective are
invariant under this operation. Hence the two cyclic images of any
$\lambda\in\Lambda(\vg)$ also belong to $\Lambda(\vg)$. By convexity,
averaging $\lambda$ with its two cyclic images cannot increase the logarithm
of the objective. The unique cyclically invariant point
in the displayed fiber is
\[
 \lambda^*=\frac13
 \begin{pmatrix}
  0&2&1\\
  1&0&2\\
  2&1&0
 \end{pmatrix}.
\]
Thus $\lambda^*$ is a minimizer. Its rows are permutations of
$(0,\frac23,\frac13)$. By the homogeneity of $\log\mathfrak m$,
\[
\begin{aligned}
 \prod_{s=1}^3
 \mathfrak m(\lambda^*_{s,1},\lambda^*_{s,2},\lambda^*_{s,3})
 &=\mathfrak m\left(0,\frac23,\frac13\right)^3\\
 &=\mathfrak m(0,2,1)
 =\frac{2^3\cdot2^2}{3^3}
 =\frac{32}{27}.
\end{aligned}
\]
All three factors in the ratio identity have $s\ne i$. Thus the factorial
correction is $2^{-3}=\frac18$, and Theorem~\ref{thm:constants33} gives the
optimal constant
\[
 \frac18\cdot\frac{32}{27}=\frac4{27}.
\]

\emph{(v) The sixth and seventh inequalities.}
Consider the two weight matrices
\[
 \lambda^{(1)}=
 \begin{pmatrix}
  1&1&1\\
  1&1&1\\
  1&1&1
 \end{pmatrix},
 \qquad
 \lambda^{(2)}=
 \begin{pmatrix}
  2&1&1\\
  1&2&1\\
  1&1&2
 \end{pmatrix},
\]
and let
\[
 \vg_r=
 \sum_{s=1}^3\bigl(
  \lambda^{(r)}_{s,1}\cdot\vrho(\ve_s;1\mid23)
 +\lambda^{(r)}_{s,2}\cdot\vrho(\ve_s;2\mid13)
 +\lambda^{(r)}_{s,3}\cdot\vrho(\ve_s;3\mid12)
 \bigr),
 \qquad r=1,2.
\]
Every entry in each matrix is positive, so both representations use all nine
extreme rays. For $\lambda^{(1)}$, multiplying first within each row
and then across the three rows gives
\[
\begin{aligned}
 \prod_{s=1}^3\prod_{i=1}^3
 R_{\vrho(\ve_s;i\mid jk)}(\vP)
 &=
 \frac{P_{300}P_{120}P_{102}}{P_{210}P_{201}P_{111}}\,
 \frac{P_{030}P_{210}P_{012}}{P_{120}P_{021}P_{111}}\,
 \frac{P_{003}P_{201}P_{021}}{P_{102}P_{012}P_{111}}\\
 &=\frac{P_{300}P_{030}P_{003}}{P_{111}^3}.
\end{aligned}
\]
The matrix $\lambda^{(2)}$ adds one more copy of each of the three diagonal
rays. Their product is
\[
\begin{aligned}
 \prod_{s=1}^3 R_{\vrho(\ve_s;s\mid jk)}(\vP)
 &=
 \frac{P_{300}P_{111}}{P_{210}P_{201}}\,
 \frac{P_{030}P_{111}}{P_{120}P_{021}}\,
 \frac{P_{003}P_{111}}{P_{102}P_{012}}\\
 &=
 \frac{P_{300}P_{030}P_{003}P_{111}^3}
 {P_{210}P_{201}P_{120}P_{021}P_{102}P_{012}}.
\end{aligned}
\]
Multiplying the last two displayed expressions proves
\[
 R_{\vg_1}
 =\frac{P_{300}P_{030}P_{003}}{P_{111}^3},
 \qquad
 R_{\vg_2}
 =\frac{(P_{300}P_{030}P_{003})^2}
 {P_{210}P_{201}P_{120}P_{021}P_{102}P_{012}}.
\]
The rows of $\lambda^{(1)}$ are $(1,1,1)$, while the rows of
$\lambda^{(2)}$ are permutations of $(2,1,1)$. Since
\[
 \Delta(1,1,1)=-3,
 \qquad
 \Delta(2,1,1)=-4,
\]
Definition~\ref{def:m-function} gives
$\mathfrak m(1,1,1)=\mathfrak m(2,1,1)=1$.
Lemma~\ref{lem:m-at-least-one} then shows that both displayed representations
minimize the product in Theorem~\ref{thm:constants33}. For
$\lambda^{(1)}$, the product of the three conversion factors in each row is
$(\frac32)\cdot2\cdot2=6$. For $\lambda^{(2)}$, it is
$(\frac32)^2\cdot2\cdot2=9$. The corresponding factorial corrections are
\[
 \frac1{6^3}=\frac1{216},
 \qquad
 \frac1{9^3}=\frac1{729}.
\]
Theorem~\ref{thm:constants33} therefore gives the sixth and seventh constants
and proves their optimality.

\emph{(vi) The eighth inequality.}
Consider the weight matrix
\[
 \lambda=
 \begin{pmatrix}
  1&1&1\\
  1&2&1\\
  1&2&4
 \end{pmatrix}.
\]
Every entry of $\lambda$ is positive, so all nine triangular-ratio
generators occur. Multiplying within each row gives
\[
\begin{aligned}
 \prod_{i=1}^3
 R_{\vrho(\ve_1;i\mid jk)}^{\lambda_{1,i}}
 &=\frac{P_{300}P_{120}P_{102}}
         {P_{210}P_{201}P_{111}},\\
 \prod_{i=1}^3
 R_{\vrho(\ve_2;i\mid jk)}^{\lambda_{2,i}}
 &=\frac{P_{210}P_{030}^{2}P_{012}}
         {P_{120}^{2}P_{021}^{2}},\\
 \prod_{i=1}^3
 R_{\vrho(\ve_3;i\mid jk)}^{\lambda_{3,i}}
 &=\frac{P_{201}P_{021}^{2}P_{003}^{4}P_{111}}
         {P_{102}^{3}P_{012}^{5}}.
\end{aligned}
\]
Multiplying these three expressions and cancelling common factors yields
\[
 \prod_{s=1}^3\prod_{i=1}^3
 R_{\vrho(\ve_s;i\mid jk)}^{\lambda_{s,i}}
 =\frac{P_{300}P_{030}^{2}P_{003}^{4}}
        {P_{120}P_{102}^{2}P_{012}^{4}}.
\]
The rows of $\lambda$ are $(1,1,1)$, $(1,2,1)$, and $(1,2,4)$, and
\[
 \Delta(1,1,1)=-3,
 \qquad
 \Delta(1,2,1)=-4,
 \qquad
 \Delta(1,2,4)=-7.
\]
Hence all three $\mathfrak m$-factors equal $1$. By
Lemma~\ref{lem:m-at-least-one}, this is the minimum in
Theorem~\ref{thm:constants33}. The total diagonal weight is $7$, and the total
off-diagonal weight is also $7$. Thus the factorial correction is
\[
 \left(\frac32\right)^{-7}2^{-7}
 =\frac1{3^7}
 =\frac1{2187}.
\]
Theorem~\ref{thm:constants33} proves the eighth inequality and the optimality
of its constant.

\emph{(vii) The ninth inequality.}
Consider the weight matrix
\[
 \lambda=
 \begin{pmatrix}
  0&0&0\\
  1&1&0\\
  1&1&1
 \end{pmatrix}.
\]
The two nonzero rows give
\[
\begin{aligned}
 R_{\vrho(\ve_2;1\mid23)}
 R_{\vrho(\ve_2;2\mid13)}
 &=\frac{P_{030}P_{210}}{P_{120}^{2}},\\
 R_{\vrho(\ve_3;1\mid23)}
 R_{\vrho(\ve_3;2\mid13)}
 R_{\vrho(\ve_3;3\mid12)}
 &=\frac{P_{003}P_{201}P_{021}}
         {P_{102}P_{012}P_{111}}.
\end{aligned}
\]
Their product is therefore
\[
 \frac{P_{030}P_{003}P_{210}P_{201}P_{021}}
      {P_{120}^{2}P_{102}P_{012}P_{111}}.
\]
The rows of $\lambda$ are $(0,0,0)$, $(1,1,0)$, and $(1,1,1)$, and
\[
 \Delta(0,0,0)=0,
 \qquad
 \Delta(1,1,0)=0,
 \qquad
 \Delta(1,1,1)=-3.
\]
Hence all three $\mathfrak m$-factors equal $1$. By
Lemma~\ref{lem:m-at-least-one}, this is the minimum in
Theorem~\ref{thm:constants33}. There are two diagonal factors and three
off-diagonal factors. Thus the factorial correction is
\[
 \left(\frac32\right)^{-2}2^{-3}=\frac1{18}.
\]
Theorem~\ref{thm:constants33} proves the ninth inequality and the optimality
of its constant.

All nine exponent vectors considered above are nonzero. The nonattainment
assertion in Theorem~\ref{thm:constants33} therefore shows that equality is
impossible in each of the nine inequalities when $F$ is strictly
Lorentzian.

\emph{(viii) Passing to all Lorentzian cubics.}
Now let $F$ be an arbitrary Lorentzian ternary cubic. By
Definition~\ref{def:strictly-lorentzian}, there is a sequence of strictly
Lorentzian cubics $F_r$ converging coefficientwise to $F$. Apply the nine
strict inequalities established above to $F_r$ and let $r\to\infty$.
Each side is a monomial in the coefficients, so it is continuous under
coefficientwise convergence. The resulting non-strict inequalities are
exactly the nine inequalities stated above. This argument remains valid
when some coefficients of $F$ vanish.
Since the constants are already optimal on the strictly Lorentzian subclass,
they remain optimal on the full Lorentzian class.
\end{proof}

\begin{remark}\label{rem:entropy}
On its nontrivial branch, $\mathfrak m$ is an exponentiated mutual information.
In the $2\times2$ contingency-table model of \cite{Har14}, the quantity
$2\log\mathfrak m(a,b,c)$ is the likelihood-ratio statistic $G^2$ for testing
independence.
\end{remark}

\section{Ternary Lorentzian forms of arbitrary degree}
\label{sec:further-optimal-constants}
\label{subsec:all-ternary-constants}

We extend Theorem~\ref{thm:constants33} to ternary Lorentzian forms of
arbitrary degree, using the same one-Hessian function $\mathfrak m$ as in
Definition~\ref{def:m-function}.

Fix $n\ge3$. For $\vb\in H(n-2,3)$ and $i\in\{1,2,3\}$, the indices
$j,k$ will always be the two elements of
$\{1,2,3\}\setminus\{i\}$. We retain the
triangular ratios of Definition~\ref{def:triangular}, namely,
\[
 R_{\vrho(\vb;i\mid jk)}(\vP)
 =\frac{P_{\vb+2\ve_i}P_{\vb+\ve_j+\ve_k}}
 {P_{\vb+\ve_i+\ve_j}P_{\vb+\ve_i+\ve_k}}.
\]
There are $|H(n-2,3)|=\binom n2$ choices of $\vb$ and three choices of
$i$ for each $\vb$. By Corollary~\ref{cor:threevariable}, these
$3\binom n2$ vectors generate $\BR_{\mathring L}(n,3)$.

For $\vg\in\BR_{\mathring L}(n,3)$, define
\begin{equation}\label{eq:all-ternary-representations}
 \Lambda_n(\vg)=
 \left\{\lambda=(\lambda_{\vb,i})\in\Rge^{\,3\binom n2}:\ 
 \vg=\sum_{\vb\in H(n-2,3)}\sum_{i=1}^3
 \lambda_{\vb,i}\cdot\vrho(\vb;i\mid jk)\right\}.
\end{equation}
Thus a point of $\Lambda_n(\vg)$ is an array with one row of three
nonnegative entries for every $\vb\in H(n-2,3)$. Different arrays may
represent the same exponent vector.

\begin{theoremB}[Optimal bounding constants for ternary Lorentzian forms]
\label{thm:all-ternary-constants}
For every $n\ge3$ and $\vg\in\BR_{\mathring L}(n,3)$,
\begin{equation}\label{eq:all-ternary-constant}
 f_{\mathring L}(\vg)
 =\left(\prod_{\va\in H(n,3)}(\va!)^{-\gamma_{\va}}\right)
 \min_{\lambda\in\Lambda_n(\vg)}
 \prod_{\vb\in H(n-2,3)}
 \mathfrak m(\lambda_{\vb,1},\lambda_{\vb,2},\lambda_{\vb,3}).
\end{equation}
If $\vg\ne0$, the supremum
defining $f_{\mathring L}(\vg)$ is not attained on
$\mathring L(n,3)$.
\end{theoremB}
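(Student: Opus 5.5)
The plan is to repeat the proof of Theorem~\ref{thm:constants33} with the three first-derivative Hessians replaced by the $\binom n2$ Hessians $\mathcal H^{(\vb)}=\operatorname{Hess}(\partial^{\vb}F)$, $\vb\in H(n-2,3)$. The upper bound is formal. By Definition~\ref{def:triangular} we have $\mathcal H^{(\vb)}_{uv}=\widehat P_{\vb+\ve_u+\ve_v}$, so for $z=\log\widehat P$ the array
\[
 L_n(z)_{\vb,i}=\langle\vrho(\vb;i\mid jk),z\rangle
\]
has $\vb$-th row equal to the logarithmic triangular ratios of $\mathcal H^{(\vb)}$, and this row lies in $\mathcal C$. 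For any $\lambda\in\Lambda_n(\vg)$, the adjoint identity gives $\log\widehat P^{\,\vg}=\langle\lambda,L_n(z)\rangle$. Applying \eqref{eq:one-hessian-bound} row by row yields
\[
 \log\widehat P^{\,\vg}\le\sum_{\vb}\log\mathfrak m(\lambda_{\vb,1},\lambda_{\vb,2},\lambda_{\vb,3}).
\]
We then convert from $\widehat P$ to $P$ by the factor $\prod_{\va}(\va!)^{-\gamma_{\va}}$.

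For sharpness we first generalize Lemma~\ref{lem:compatible-hessian-triples}. Put $S_n=\operatorname{im}L_n$, so that $S_n^\perp=\ker L_n^*$ by adjointness. We claim $L_n(\mathcal Z_n)=S_n\cap\mathcal C^{\binom n2}$, where $\mathcal Z_n$ is the set of $\log\widehat P$ over $\mathring L(n,3)$. The proof is the same as before. Given $x$ in the right-hand side, pick any preimage $z$ and set $\widehat P=e^{z}$. Each $\mathcal H^{(\vb)}$ is then a positive symmetric matrix whose triangular ratios are a row of $x$, hence in $\mathcal C$. Since the three ratios determine the diagonally normalized matrix, each $\mathcal H^{(\vb)}$ has Lorentzian signature, so $F$ is strictly Lorentzian. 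This needs no explicit basis of $\ker L_n^*$, which is an advantage, since describing that kernel explicitly for general $n$ would be unpleasant.

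Next we rerun Lemma~\ref{lem:compatible-row-optimization} with $\mathcal D=(\overline{\mathcal C})^{\binom n2}$. The only nonformal input is an interior point $x^\circ\in S_n\cap\mathcal C^{\binom n2}$. I will take $x^\circ=L_n(z^\circ)$ for $z^\circ=\log\widehat P$ of any strictly Lorentzian form of degree $n$. Such forms exist, and by the forward inclusion above their images lie in $S_n\cap\mathcal C^{\binom n2}$, which is contained in $S_n\cap\operatorname{int}\mathcal D$. This avoids guessing an explicit point like $x^\circ=-\log 2\cdot\mathbf 1$. With the interior point in hand, the density argument, the Fenchel--Rockafellar sum theorem, and the row separation of $\sigma_{\mathcal D}$ through Lemma~\ref{lem:one-hessian-region} go through verbatim. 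They give
\[
 \sup\log\widehat P^{\,\vg}=\inf_{\lambda\in\Lambda_n(\vg)}\sum_{\vb}\log\mathfrak m(\lambda_{\vb,\cdot}).
\]

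It remains to show that the infimum is a minimum, as in Lemma~\ref{lem:representation-polytope}. Here $\Lambda_n(\vg)$ is nonempty by Corollary~\ref{cor:threevariable}. It is bounded because $\BR_{\mathring L}(n,3)$ is pointed: it is dual to $\mathcal M_{n,3}$, which is full-dimensional in the quotient by step (i) of Lemma~\ref{lem:three-irredundant}. Given boundedness, the lower semicontinuous convex objective attains its minimum. Nonattainment follows from the openness of $\mathcal Z_n$ exactly as before: for $\vg\ne0$, perturbing any $z\in\mathcal Z_n$ to $z+\varepsilon\vg$ strictly increases $\langle\vg,z\rangle$.

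The main obstacle is the image characterization and the interior point for general $n$. The proof of Lemma~\ref{lem:compatible-hessian-triples} leaned on an explicit two-dimensional kernel, and I will replace that dependence entirely by the abstract $S_n=\operatorname{im}L_n$. I will also check that the cited one-Hessian region $\mathcal C$ is used only through its support function and its openness, which carries over unchanged.
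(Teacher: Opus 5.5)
Your proposal is correct and follows the paper's proof of Theorem~B essentially step for step: the row-by-row upper bound, the exact image $L_n(\mathcal Z_n)=S_n\cap\mathcal C^{H(n-2,3)}$ with $S_n=\operatorname{im}L_n$, Fenchel--Rockafellar duality at an interior point, row separation of $\sigma_{\mathcal D_n}$, compactness of $\Lambda_n(\vg)$, and openness of $\mathcal Z_n$ for nonattainment. The differences are minor and both versions are valid: the paper builds the interior point explicitly as $L_n(-tQ/2)$ with $Q_{\va}=\alpha_1^2+\alpha_2^2+\alpha_3^2$ (every entry equals $-t$) and bounds $\Lambda_n(\vg)$ via $\sum_{\vb,i}\lambda_{\vb,i}=\tfrac12\langle\vg,Q\rangle$, whereas you take the image of an arbitrary strictly Lorentzian form (which requires $\mathring L(n,3)\neq\varnothing$) and obtain boundedness from pointedness of the cone.
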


The proof follows the argument for ternary cubics in
Theorem~\ref{thm:constants33}. We verify the compatibility statement and
construct a common interior point in arbitrary degree. The remaining
optimization steps then follow from the corresponding parts of the cubic
proof. We describe the compatibility space as the image of a linear map,
so that its equations need not be listed separately for each $n$.

For $z\in\mathbb R^{H(n,3)}$, define
\begin{align}
 L_n(z)_{\vb,i}
 &=\langle\vrho(\vb;i\mid jk),z\rangle\notag\\
 &=z_{\vb+2\ve_i}+z_{\vb+\ve_j+\ve_k}
 -z_{\vb+\ve_i+\ve_j}-z_{\vb+\ve_i+\ve_k}.
 \label{eq:all-ternary-linear-map}
\end{align}
Put
\[
 S_n=\operatorname{im}L_n,
 \qquad
 \mathcal Z_n=
 \left\{(\log(\va!P_{\va}))_{\va\in H(n,3)}:
 \vP\in\mathring L(n,3)\right\}.
\]
All pairings on arrays mean the sum of the products of corresponding
entries. The adjoint computation in part~(i) of the proof of
Lemma~\ref{lem:compatible-hessian-triples} gives
\begin{equation}\label{eq:all-ternary-adjoint}
 L_n^*(\lambda)=
 \sum_{\vb\in H(n-2,3)}\sum_{i=1}^3
 \lambda_{\vb,i}\cdot\vrho(\vb;i\mid jk).
\end{equation}
In particular,
\[
 S_n^\perp=\ker L_n^*.
\]
We use the one-Hessian region $\mathcal C$ from
Definition~\ref{def:one-hessian-data}. The notation
$\mathcal C^{H(n-2,3)}$ denotes the set of arrays whose $\vb$-th row
belongs to $\mathcal C$ for every $\vb$.

\begin{lemma}[Compatibility in arbitrary ternary degree]
\label{lem:all-ternary-image}
The exact image of the strictly Lorentzian locus is
\begin{equation}\label{eq:all-ternary-exact-image}
 L_n(\mathcal Z_n)=S_n\cap\mathcal C^{H(n-2,3)}.
\end{equation}
\end{lemma}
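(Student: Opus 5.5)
The argument follows the cubic case, part (ii) of the proof of Lemma~\ref{lem:compatible-hessian-triples}. It proves the two inclusions of \eqref{eq:all-ternary-exact-image} separately. The key identity is that for $F=\sum_{\va}P_{\va}\vx^{\va}$ and $\vb\in H(n-2,3)$, direct differentiation gives
\[
 \operatorname{Hess}(\partial^{\vb}F)_{uv}=\widehat P_{\vb+\ve_u+\ve_v},
 \qquad \widehat P_{\va}=\va!\,P_{\va}.
\]
This is the displayed formula for $\mathcal H^{(\vb)}$ in Definition~\ref{def:triangular}. Consequently, for $z=\log\widehat P$, the $\vb$-th row of $L_n(z)$ is exactly the triple $t(\mathcal H^{(\vb)})$ of logarithmic triangular ratios, in the sense of Definition~\ref{def:one-hessian-data}. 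I will check this entrywise from \eqref{eq:all-ternary-linear-map}.

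\emph{Forward inclusion.} Let $z\in\mathcal Z_n$, coming from $\vP\in\mathring L(n,3)$. Then $L_n(z)\in S_n$ by the definition of $S_n$. For each $\vb$, strict Lorentzianity says that $\mathcal H^{(\vb)}$ has Lorentzian signature, and its entries are positive. So row $\vb$ of $L_n(z)$ lies in $\mathcal C$, and hence $L_n(z)\in S_n\cap\mathcal C^{H(n-2,3)}$.

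\emph{Reverse inclusion.} Let $x\in S_n\cap\mathcal C^{H(n-2,3)}$. Choose any preimage $z$ with $L_n(z)=x$, and set $\widehat P_{\va}=e^{z_{\va}}$ and $P_{\va}=\widehat P_{\va}/\va!$. These coefficients are all positive, so condition (i) of Definition~\ref{def:strictly-lorentzian} holds.

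For condition (ii), fix $\vb$. The matrix $\mathcal H^{(\vb)}$ is symmetric with positive entries, and its triangular ratios are $e^{x_{\vb,i}}$. Since row $\vb$ of $x$ lies in $\mathcal C$, some positive symmetric matrix $\mathcal K$ of Lorentzian signature has the same three triangular ratios.

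I then normalize both diagonals to $1$ by positive diagonal congruence. By Lemma~\ref{lem:strict-triangle-model}, this preserves both the triangular ratios and the signature. For a diagonal-one positive matrix, the off-diagonal entries are recovered from the ratios by
\[
 h_{12}=(T_1T_2)^{-1/2},\qquad h_{13}=(T_1T_3)^{-1/2},\qquad h_{23}=(T_2T_3)^{-1/2}.
\]
Hence the two normalized matrices coincide, and $\mathcal H^{(\vb)}$ has Lorentzian signature.

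Every sequence $i_1,\dots,i_{n-2}$ in Definition~\ref{def:strictly-lorentzian} gives $\partial_{i_1}\cdots\partial_{i_{n-2}}F=\partial^{\vb}F$ for the corresponding $\vb$. Therefore $F$ is strictly Lorentzian and $z\in\mathcal Z_n$, which proves $x=L_n(z)\in L_n(\mathcal Z_n)$.

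No step is a genuine obstacle. The only point needing care is the bookkeeping identity between the $\vb$-th row of $L_n(\log\widehat P)$ and the ratios of $\operatorname{Hess}(\partial^{\vb}F)$, including the factorial normalization. This is a direct coefficient computation of the kind done in Lemma~\ref{lem:bernstein}: only the monomial $\vx^{\vb+\ve_u+\ve_v}$ contributes to the constant entry $(u,v)$, and the contribution is $(\vb+\ve_u+\ve_v)!\,P_{\vb+\ve_u+\ve_v}$. Unlike the cubic case, no explicit equations for $S_n$ are needed, since $S_n$ is defined as $\operatorname{im}L_n$.
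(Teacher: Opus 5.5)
Your proposal is correct and follows the paper's proof essentially step for step. The shared ingredients are the Hessian-entry identity $\operatorname{Hess}(\partial^{\vb}F)_{uv}=\widehat P_{\vb+\ve_u+\ve_v}$, the forward inclusion read off from the definition of $\mathcal C$, and the reverse inclusion via a single preimage $z$ of $x$ combined with the diagonal-normalization uniqueness argument from part~(ii) of Lemma~\ref{lem:compatible-hessian-triples}. Your explicit remark that every derivative sequence $i_1,\dots,i_{n-2}$ reduces to some $\partial^{\vb}$ is a small detail the paper leaves implicit.
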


\begin{proof}
Let $F=\sum_{\va\in H(n,3)}P_{\va}\vx^{\va}$, with all
$P_{\va}>0$, and set $\widehat P_{\va}=\va!P_{\va}$.
For $\vb\in H(n-2,3)$, the polynomial $\partial^{\vb}F$ is quadratic.
The differentiation identity in Definition~\ref{def:triangular} gives
\begin{equation}\label{eq:all-ternary-hessian-entries}
 \mathcal H^{(\vb)}_{uv}
 :=\operatorname{Hess}(\partial^{\vb}F)_{uv}
 =\widehat P_{\vb+\ve_u+\ve_v}.
\end{equation}
If $z_{\va}=\log\widehat P_{\va}$, then
\[
 L_n(z)_{\vb,i}
 =\log\frac{\mathcal H^{(\vb)}_{ii}\mathcal H^{(\vb)}_{jk}}
 {\mathcal H^{(\vb)}_{ij}\mathcal H^{(\vb)}_{ik}}.
\]
If $F$ is strictly Lorentzian, every $\mathcal H^{(\vb)}$ has
Lorentzian signature. Hence every row of $L_n(z)$ belongs to
$\mathcal C$, proving the forward inclusion.

Conversely, take $x\in S_n\cap\mathcal C^{H(n-2,3)}$.
Since $S_n=\operatorname{im}L_n$, choose a single vector
$z\in\mathbb R^{H(n,3)}$ such that $L_n(z)=x$. Define
\[
 P_{\va}=\frac{e^{z_{\va}}}{\va!},
 \qquad
 F=\sum_{\va\in H(n,3)}P_{\va}\vx^{\va}.
\]
All coefficients are positive. Formula~\eqref{eq:all-ternary-hessian-entries}
constructs every derivative Hessian from this same coefficient vector.
For each $\vb$, its three triangular ratios are
$T_i=e^{x_{\vb,i}}$.

As shown in part~(ii) of the proof of
Lemma~\ref{lem:compatible-hessian-triples}, these three ratios determine
a positive symmetric $3\times3$ matrix up to positive diagonal congruence.
Since the $\vb$-th row of $x$ belongs to $\mathcal C$, it follows that
$\mathcal H^{(\vb)}$ has Lorentzian signature. This holds for every
$\vb\in H(n-2,3)$, so the single form $F$ constructed above is strictly
Lorentzian. Thus $z\in\mathcal Z_n$, proving the reverse inclusion.
\end{proof}

\begin{lemma}[A common interior point and closure]
\label{lem:all-ternary-interior}
Put
\[
 \mathcal D_n=(\overline{\mathcal C})^{H(n-2,3)}.
\]
Then $S_n\cap\operatorname{int}\mathcal D_n$ is nonempty, and
\begin{equation}\label{eq:all-ternary-section-closure}
 \overline{S_n\cap\mathcal C^{H(n-2,3)}}=S_n\cap\mathcal D_n.
\end{equation}
\end{lemma}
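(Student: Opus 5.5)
We need a point in $S_n\cap\mathcal C^{H(n-2,3)}$, since $\operatorname{int}\mathcal D_n=\mathcal C^{H(n-2,3)}$. The cubic case gives the template: there $\mathcal C$ is open, convex and nonempty, so $\operatorname{int}\overline{\mathcal C}=\mathcal C$, and the same holds for finite products. A point of $S_n=\operatorname{im}L_n$ whose rows all lie in $\mathcal C$ is, by Lemma~\ref{lem:all-ternary-image}, the same thing as $L_n(z)$ for some $z\in\mathcal Z_n$. So it suffices to exhibit one strictly Lorentzian ternary form of degree $n$ with positive coefficients.

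The simplest candidate is
\[
F=(x_1+x_2+x_3)^n,
\qquad
\widehat P_{\va}=\va!\binom{n}{\va}=n!.
\]
It is constant in normalized coordinates, so every Hessian $\operatorname{Hess}(\partial^{\vb}F)$ equals $n!$ times the all-ones matrix, which is singular. We therefore perturb it. Take
\[
z_{\va}=-\tfrac12\sum_{m=1}^3\alpha_m^2 .
\]
Then
\[
\mathcal H^{(\vb)}_{uv}=\exp\bigl(-\tfrac12|\vb+\ve_u+\ve_v|^2\bigr)
=c_{\vb}\,d_ud_v\,e^{-\delta_{uv}},
\]
with $d_u=e^{-\beta_u-1/2}$ and $c_{\vb}>0$ a constant. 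Up to positive diagonal congruence and scaling, this is the matrix with diagonal $e^{-1}$ and off-diagonal entries $1$. That matrix has eigenvalues $e^{-1}+2>0$ and $e^{-1}-1<0$ (twice), so it is Lorentzian. Equivalently, $L_n(z)_{\vb,i}=-1$ for every $(\vb,i)$, independently of $\vb$; this mirrors the cubic point $x^\circ$, whose entries were all $-\log 2$.

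Hence every row of $x^\circ_n:=L_n(z)$ is $(-1,-1,-1)\in\mathcal C$. Also $x^\circ_n\in S_n$ automatically, since it lies in the image of $L_n$. This gives $x^\circ_n\in S_n\cap\operatorname{int}\mathcal D_n$.

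For the closure identity \eqref{eq:all-ternary-section-closure}, repeat part (i) of Lemma~\ref{lem:compatible-row-optimization} verbatim. The inclusion $\subseteq$ holds because $S_n\cap\mathcal D_n$ is closed. For $\supseteq$, given $x\in S_n\cap\mathcal D_n$, set $x_\varepsilon=(1-\varepsilon)x+\varepsilon x^\circ_n$. Then $x_\varepsilon\in S_n$ by linearity, and $x_\varepsilon$ lies in the interior of the convex set $\mathcal D_n$ because it contains the ball $B(x_\varepsilon,\varepsilon r)$, where $B(x^\circ_n,r)\subseteq\mathcal D_n$. Finally $x_\varepsilon\to x$.

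The only real obstacle is the interior point, namely making sure every Hessian is nondegenerate Lorentzian simultaneously. The quadratic-exponent (Gaussian) perturbation handles this uniformly in $\vb$, because the $\vb$-dependence factors out as a diagonal congruence.
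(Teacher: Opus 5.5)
Your proof is correct and is essentially the paper's argument. The paper takes $z^\circ=-tQ/2$ with $Q_{\va}=\alpha_1^2+\alpha_2^2+\alpha_3^2$ for an arbitrary $t>0$ (you take $t=1$), checks directly that $\langle\vrho(\vb;i\mid jk),Q\rangle=2$ so that every entry of $L_n(z^\circ)$ equals $-t$, and then uses the same interpolation argument for the closure. Your diagonal-congruence factorization of $\mathcal H^{(\vb)}$ is just an equivalent way of carrying out that same computation.
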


\begin{proof}
Define $Q\in\mathbb R^{H(n,3)}$ by
\[
 Q_{\va}=\alpha_1^2+\alpha_2^2+\alpha_3^2.
\]
For every $\vb,i$, direct expansion gives
\begin{align*}
 \langle\vrho(\vb;i\mid jk),Q\rangle
 &=(4\beta_i+4)+(2\beta_j+2\beta_k+2)\\
 &\quad -(2\beta_i+2\beta_j+2)
 -(2\beta_i+2\beta_k+2)=2.
\end{align*}
The four copies of $\sum_r\beta_r^2$ cancel in this expansion.
Choose $t>0$ and put $z^\circ=-tQ/2$. Then
$x^\circ=L_n(z^\circ)$ has every entry equal to $-t$.
The matrix with diagonal entries $1$ and off-diagonal entries $e^t$
has eigenvalues $1+2e^t,1-e^t,1-e^t$. It has positive entries and
Lorentzian signature, and its triangular ratios are all $e^{-t}$.
Consequently $(-t,-t,-t)\in\mathcal C$, so
\[
 x^\circ\in S_n\cap\mathcal C^{H(n-2,3)}.
\]
By Lemma~\ref{lem:one-hessian-region}, $\mathcal C$ is nonempty,
open, and convex. For a nonempty open convex set, the interior of its
closure is the original set. Taking finite products therefore gives
$\operatorname{int}\mathcal D_n=\mathcal C^{H(n-2,3)}$.
This proves the required interior-point assertion.

The closure assertion now follows from the interpolation argument in
part~(i) of the proof of Lemma~\ref{lem:compatible-row-optimization}.
Explicitly, for $x\in S_n\cap\mathcal D_n$,
\[
 x_\varepsilon=(1-\varepsilon)x+\varepsilon x^\circ,
 \qquad 0<\varepsilon<1,
\]
belongs to $S_n\cap\operatorname{int}\mathcal D_n$ and converges to $x$.
The opposite inclusion follows because $S_n\cap\mathcal D_n$ is closed.
\end{proof}

\begin{lemma}[The representation set]
\label{lem:all-ternary-compact}
For $\vg\in\BR_{\mathring L}(n,3)$, the set $\Lambda_n(\vg)$
is a nonempty compact polytope. The function
\[
 \Phi_n(\lambda)=
 \sum_{\vb\in H(n-2,3)}
 \log\mathfrak m(\lambda_{\vb,1},\lambda_{\vb,2},\lambda_{\vb,3})
\]
attains its minimum on $\Lambda_n(\vg)$.
\end{lemma}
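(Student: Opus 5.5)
The argument copies the proof of Lemma~\ref{lem:representation-polytope}, with the nine generators replaced by the $3\binom n2$ triangular ratios $\vrho(\vb;i\mid jk)$.

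\emph{Nonemptiness and closedness.} Corollary~\ref{cor:threevariable} says that these vectors generate $\BR_{\mathring L}(n,3)$, so $\Lambda_n(\vg)\ne\varnothing$. The set $\Lambda_n(\vg)$ is cut out by the finitely many linear equations $L_n^*(\lambda)=\vg$ from \eqref{eq:all-ternary-adjoint} together with $\lambda\ge0$. It is therefore a closed polyhedron, and it remains only to show that it is bounded.

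\emph{Pointedness.} Step~(i) of the proof of Lemma~\ref{lem:three-irredundant} works for every $n$. It produces $\nu_0$ on which every rhombus inequality is strict, so by Lemma~\ref{lem:three-rhombus-cone} the cone $\mathcal M_{n,3}$ is full-dimensional in $\mathbb R^{H(n,3)}/\operatorname{Aff}(H(n,3))$. By Theorem~\ref{thm:BRL-duality}, $\BR_{\mathring L}(n,3)=\mathcal M_{n,3}^\vee$. The dual of a full-dimensional cone is pointed, so $\BR_{\mathring L}(n,3)$ is pointed.

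\emph{Boundedness.} Suppose, for contradiction, that $\lambda^{(r)}\in\Lambda_n(\vg)$ with $\|\lambda^{(r)}\|\to\infty$. After passing to a subsequence, the normalized arrays converge to some $\mu\ge0$ with $\|\mu\|=1$ and $L_n^*(\mu)=0$. Pick an entry $\mu_{\vb_0,i_0}>0$ and set $y=\mu_{\vb_0,i_0}\vrho(\vb_0;i_0\mid j_0k_0)$. The relation $L_n^*(\mu)=0$ shows that $-y$ is a nonnegative combination of the other generators, so both $y$ and $-y$ lie in $\BR_{\mathring L}(n,3)$. The vector $y$ is nonzero because triangular ratios are nonzero exponent vectors, which contradicts pointedness. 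Hence $\Lambda_n(\vg)$ is bounded, and so it is a compact polytope.

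\emph{Attainment.} By Lemma~\ref{lem:one-hessian-region}, each summand $\log\mathfrak m(\lambda_{\vb,1},\lambda_{\vb,2},\lambda_{\vb,3})$ is the restriction to $\Rge^3$ of the support function $\sigma_{\mathcal C}$. A support function is a supremum of linear functions, so it is lower semicontinuous and convex, and it is finite on $\Rge^3$. Therefore $\Phi_n$ is a finite lower semicontinuous function on the compact set $\Lambda_n(\vg)$ and attains its minimum there.

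The only step needing real care is pointedness. Its main input is that the witness $\nu_0$ built from the second-difference recurrences in Lemma~\ref{lem:three-irredundant} is valid for arbitrary $n$, and that construction was already carried out for general $n$.
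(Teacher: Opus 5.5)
Your proof is correct. Nonemptiness, closedness and attainment match the paper; only the boundedness step takes a different route.

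For boundedness you transplant the compactness-by-contradiction argument of Lemma~\ref{lem:representation-polytope}. You get pointedness of $\BR_{\mathring L}(n,3)$ from full-dimensionality of $\mathcal M_{n,3}$, via the witness $\nu_0$ and Theorem~\ref{thm:BRL-duality}, and then run a normalized subsequence argument.

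The paper is more direct. It pairs the identity $\vg=\sum_{\vb,i}\lambda_{\vb,i}\vrho(\vb;i\mid jk)$ with the function $Q_{\va}=\alpha_1^2+\alpha_2^2+\alpha_3^2$ from Lemma~\ref{lem:all-ternary-interior}. Since $Q$ pairs to $2$ with every triangular ratio, this gives
\[
 \sum_{\vb,i}\lambda_{\vb,i}=\tfrac12\langle\vg,Q\rangle .
\]
Each nonnegative coordinate is therefore bounded by this fixed number.

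Both arguments rest on the same fact. Up to the constant $n$, $Q$ is the function $\nu_0$ of step~(i) of Lemma~\ref{lem:three-irredundant} with every $s_{\ell,t}=2$, so it is an interior point of $\mathcal M_{n,3}$. You use this point abstractly, through duality, to get pointedness and then argue by contradiction. The paper uses it concretely as a linear functional that is strictly positive on all generators. That is one line, gives an explicit bound on $\Lambda_n(\vg)$, and needs neither the subsequence argument nor the identification $\BR_{\mathring L}(n,3)=\mathcal M_{n,3}^\vee$ for this step.

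Your version is equally valid. It has the advantage of making clear that boundedness of these fibres is exactly pointedness of the cone generated by the triangular ratios.
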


\begin{proof}
Nonemptiness follows from Corollary~\ref{cor:threevariable}.
The set is defined by finitely many linear equalities and nonnegativity
conditions, so it is closed. Pair the defining representation with the
vector $Q$ from Lemma~\ref{lem:all-ternary-interior}. Since every
generator pairs with $Q$ to give $2$, we obtain
\begin{equation}\label{eq:all-ternary-weight-sum}
 \sum_{\vb,i}\lambda_{\vb,i}=\frac12\langle\vg,Q\rangle.
\end{equation}
Every coordinate is nonnegative and is at most the fixed sum on the
right. Thus $\Lambda_n(\vg)$ is bounded and hence compact.
The lower-semicontinuity argument in the final paragraph of the proof of
Lemma~\ref{lem:representation-polytope} applies to any finite number of
rows. Thus $\Phi_n$ attains its minimum on $\Lambda_n(\vg)$.
\end{proof}

\begin{proof}[Proof of Theorem~\ref{thm:all-ternary-constants}]
Fix $\lambda^0\in\Lambda_n(\vg)$. For a strictly Lorentzian form,
write $z=\log\widehat P$ and $x=L_n(z)$. As in the proof of
Theorem~\ref{thm:constants33}, the adjoint identity gives
\[
 \log\widehat P^{\vg}
 =\langle\vg,z\rangle=\langle\lambda^0,x\rangle.
\]
Lemmas~\ref{lem:all-ternary-image} and
\ref{lem:all-ternary-interior} therefore yield
\begin{equation}\label{eq:all-ternary-primal-support}
 \sup_{\vP\in\mathring L(n,3)}\log\widehat P^{\vg}
 =\sigma_{S_n\cap\mathcal D_n}(\lambda^0).
\end{equation}

Apply the convex-duality argument in parts~(ii)--(iii) of the proof of
Lemma~\ref{lem:compatible-row-optimization}, with $S_n$, $\mathcal D_n$,
and $L_n$ in place of $S$, $\mathcal D$, and $L$. Both sets are closed
and convex, and Lemma~\ref{lem:all-ternary-interior} supplies the common
interior point required there. Thus the same argument that proves
\eqref{eq:support-section-duality} gives
\[
 \sigma_{S_n\cap\mathcal D_n}(\lambda^0)
 =\inf_{\eta\in S_n^\perp}
 \sigma_{\mathcal D_n}(\lambda^0+\eta).
\]
Since $S_n^\perp=\ker L_n^*$, the arrays $\lambda^0+\eta$ in this
infimum are exactly the real arrays satisfying $L_n^*(\lambda)=\vg$.

Part~(iv) of the same proof evaluates the support function of a Cartesian
product by optimizing its rows independently. Applied to the rows indexed
by $H(n-2,3)$, it gives
\[
 \sigma_{\mathcal D_n}(\lambda)
 =\sum_{\vb\in H(n-2,3)}
 \sigma_{\mathcal C}(\lambda_{\vb,1},\lambda_{\vb,2},\lambda_{\vb,3}).
\]
By Lemma~\ref{lem:one-hessian-region}, this value is $\Phi_n(\lambda)$
when every entry is nonnegative, and is $+\infty$ otherwise. The finite
part of the dual infimum is consequently $\Lambda_n(\vg)$, so
\begin{equation}\label{eq:all-ternary-duality}
 \sup_{\vP\in\mathring L(n,3)}\log\widehat P^{\vg}
 =\inf_{\lambda\in\Lambda_n(\vg)}\Phi_n(\lambda)
 =\min_{\lambda\in\Lambda_n(\vg)}\Phi_n(\lambda).
\end{equation}
The last equality follows from Lemma~\ref{lem:all-ternary-compact}.
Because \eqref{eq:all-ternary-primal-support} uses the exact image of the
strictly Lorentzian locus, this equality also proves sharpness.

Finally, the coefficient conversion in the proof of
Theorem~\ref{thm:constants33} applies unchanged:
\[
 P^{\vg}=\widehat P^{\vg}
 \prod_{\va\in H(n,3)}(\va!)^{-\gamma_{\va}}.
\]
Exponentiating \eqref{eq:all-ternary-duality} proves
\eqref{eq:all-ternary-constant}. The nonattainment argument in the final
paragraphs of that proof also applies. Indeed, $\mathcal Z_n$ is open
because positivity of the coefficients and the signatures of the finitely
many derivative Hessians persist under sufficiently small perturbations.
For $\vg\ne0$, replacing $z$ by $z+\varepsilon\vg$ for sufficiently
small $\varepsilon>0$ increases $\langle\vg,z\rangle$. Hence the
supremum is not attained. For $\vg=0$, the ratio is identically $1$.
\end{proof}

\begin{remark}\label{rem:all-ternary-size}
For $n=3$, the rows are indexed by $\ve_1,\ve_2,\ve_3$, and
Theorem~\ref{thm:all-ternary-constants} recovers
Theorem~\ref{thm:constants33}. For $n=4$, there are six rows, indexed by
$2\ve_1,2\ve_2,2\ve_3,\ve_1+\ve_2,\ve_1+\ve_3,\ve_2+\ve_3$.
In general the optimization has $3\binom n2$ nonnegative weights.
The logarithm of the objective is convex, and the constraints in
\eqref{eq:all-ternary-representations} are linear. The one-Hessian
function does not change with the degree.
\end{remark}

\section{Parametrized inequality for quaternary cubics}
\label{weighted-star-subsection}

We compute the optimal constant for a weighted product of three triangular
ratios of a quaternary Lorentzian cubic.

Fix pairwise distinct indices $i,j,k,\ell\in\{1,2,3,4\}$. For
$\vP\in\mathring L(3,4)$, consider the three ratios
\begin{align}
 X(\vP)
 &:=R_{\vrho(\ve_i;i\mid k\ell)}(\vP)
 =\frac{P_{\ve_i+\ve_k+\ve_\ell}P_{3\ve_i}}
 {P_{2\ve_i+\ve_k}P_{2\ve_i+\ve_\ell}},\notag\\
 Y(\vP)
 &:=R_{\vrho(\ve_i;i\mid j\ell)}(\vP)
 =\frac{P_{\ve_i+\ve_j+\ve_\ell}P_{3\ve_i}}
 {P_{2\ve_i+\ve_j}P_{2\ve_i+\ve_\ell}},\label{weighted-star-ratios}\\
 Z(\vP)
 &:=R_{\vrho(\ve_i;i\mid jk)}(\vP)
 =\frac{P_{\ve_i+\ve_j+\ve_k}P_{3\ve_i}}
 {P_{2\ve_i+\ve_j}P_{2\ve_i+\ve_k}}.\notag
\end{align}
All three ratios have the same derivative index $\ve_i$ and the same
central index $i$. Their joint optimal bound is smaller than the product
of their individual optimal bounds in general. We first solve the
three-dimensional matrix optimization that gives the joint bound. We
then construct strictly Lorentzian cubics that approach it while
satisfying the condition that all four Hessians are strictly Lorentzian matrices.

\begin{lemma}\label{weighted-star-correlation}
Let $a,b,c>0$, and put $m=a+b+c$. Among real symmetric positive
semidefinite matrices $G=(g_{rs})_{r,s=1}^3$ with $g_{11}=g_{22}=g_{33}=1$,
the maximum of
\[
 (1-g_{23})^a(1-g_{13})^b(1-g_{12})^c
\]
is
\begin{equation}\label{weighted-star-correlation-value}
 2^m\frac{m^m a^a b^b c^c}
 {(a+b)^{a+b}(a+c)^{a+c}(b+c)^{b+c}}.
\end{equation}
It is attained at the matrix $G^*$ with diagonal entries $1$ and
\begin{equation}\label{weighted-star-candidate}
 1-g^*_{23}=\frac{2am}{(a+b)(a+c)},\qquad
 1-g^*_{13}=\frac{2bm}{(a+b)(b+c)},\qquad
 1-g^*_{12}=\frac{2cm}{(a+c)(b+c)}.
\end{equation}
The matrix $G^*$ has rank two.
\end{lemma}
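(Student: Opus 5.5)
The plan is to pass to a Gram representation, reduce to rank two, and solve a one-constraint trigonometric optimization. Write $G$ as a Gram matrix of unit vectors $u_1,u_2,u_3$, so that $1-g_{rs}=\tfrac12\|u_r-u_s\|^2\ge0$. The feasible set, the elliptope, is compact and convex. The logarithm of the objective, $a\log(1-g_{23})+b\log(1-g_{13})+c\log(1-g_{12})$, is concave in $G$ as a sum of logarithms of affine functions. Hence a maximizer exists, and it suffices to locate a critical point on the correct stratum and then certify it.

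\emph{Step 1: rank reduction.} Let $J$ be the all-ones matrix. Suppose $G$ is positive definite with all $g_{rs}<1$, which is needed for the objective to be positive. Then $G-\varepsilon(J-I)$ remains positive definite for small $\varepsilon>0$ and keeps the unit diagonal. This perturbation lowers every off-diagonal entry, so it strictly increases the objective. Hence every maximizer is singular. Since $a,b,c>0$, the vectors $u_r$ are pairwise distinct at a maximizer, so its rank is exactly two. Thus $u_1,u_2,u_3$ lie on a unit circle.

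\emph{Step 2: reduce to a constrained trigonometric problem.} Place $u_1,u_2,u_3$ at angles on the circle. The three arcs between consecutive points have lengths $2x,2y,2z$ with $x+y+z=\pi$ and $x,y,z\ge0$. Label them so that $1-g_{23}=2\sin^2x$, $1-g_{13}=2\sin^2y$, and $1-g_{12}=2\sin^2z$. Any cyclic arrangement gives these formulas, since $\sin^2$ of a half-chord angle is symmetric under $\theta\mapsto\pi-\theta$. The problem becomes: maximize $a\log\sin x+b\log\sin y+c\log\sin z$ on the open simplex $x+y+z=\pi$. Since $\log\sin$ is strictly concave on $(0,\pi)$, any critical point is the unique global maximizer. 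The Lagrange condition is $a\cot x=b\cot y=c\cot z$.

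\emph{Step 3: verify the candidate.} I will show that $\sin^2x=\frac{am}{(a+b)(a+c)}$, and the cyclic analogues for $y,z$, solve the Lagrange system with $x+y+z=\pi$. One route is to set $\cot x=t/a$ and so on. The angle-sum identity $\cot x\cot y+\cot y\cot z+\cot z\cot x=1$, valid when $x+y+z=\pi$, gives $t^2(a+b+c)=abc$, i.e.\ $t^2=abc/m$. Then $\sin^2x=1/(1+\cot^2x)=a^2/(a^2+t^2)$, which simplifies to $am/((a+b)(a+c))$ after substituting $t^2=abc/m$. This identifies $G^*$ in \eqref{weighted-star-candidate}. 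It also shows $G^*$ is PSD of rank two, since it is a Gram matrix of three coplanar, pairwise distinct unit vectors.

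Substituting into the objective gives
\[
\prod\Bigl(\tfrac{2am}{(a+b)(a+c)}\Bigr)^a\cdots
=2^m\frac{m^m a^ab^bc^c}{(a+b)^{a+b}(a+c)^{a+c}(b+c)^{b+c}},
\]
since $(a+b)$ appears with total exponent $a+b$, and similarly for the other pairs. This is \eqref{weighted-star-correlation-value}.

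The main obstacle is Step 3. The algebra solving $a\cot x=b\cot y=c\cot z$ together with $x+y+z=\pi$ requires care with signs. In particular, at most one angle can be obtuse, so $t$ must have the correct sign, and one must confirm that the choice $t=+\sqrt{abc/m}$ gives all three angles in $(0,\pi)$ with sum exactly $\pi$ rather than $0$ or $2\pi$. I would check this through the identity $\cot$-sum $=\cot x\cot y\cot z$ after fixing $t>0$.
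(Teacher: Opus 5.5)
Your proof is correct, but it takes a different route from the paper. The paper never searches for the maximizer. It writes down $G^*$, checks that $G^*v=0$ for $v=(b+c,a+c,a+b)^{\mathsf T}$, and checks that the principal $2\times2$ minors are positive, so $G^*$ is positive semidefinite of rank two. It then certifies global optimality in one line: the bound $\log u\le\log u_0+(u-u_0)/u_0$ controls the log-objective gap by $-v^{\mathsf T}Gv/(4m)\le0$, so in effect $vv^{\mathsf T}$ is a dual certificate.

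You instead \emph{derive} the candidate. Compactness and the perturbation $G-\varepsilon(J-I)$ force every maximizer to be singular, hence of rank exactly two. The Gram vectors then lie on a circle, and the problem becomes the strictly concave problem of maximizing $a\log\sin x+b\log\sin y+c\log\sin z$ on $x+y+z=\pi$. Its stationarity condition $a\cot x=b\cot y=c\cot z$, together with $\sum\cot x\cot y=1$, gives the entries. Your route explains where the formula and the rank-two structure come from, and it gives uniqueness of the maximizer. The paper's route is shorter and needs no existence or Lagrange arguments, but it presupposes the candidate. The two are linked: for three unit vectors on a circle, $\sin2x\,u_1+\sin2y\,u_2+\sin2z\,u_3=0$, and at your critical point $(\sin2x,\sin2y,\sin2z)$ is proportional to $v$.

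Your final remark needs one correction. The identity $\cot x+\cot y+\cot z=\cot x\cot y\cot z$ holds for $x+y+z=\pi/2$, not $\pi$ (try $x=y=z=\pi/3$), so it cannot be used for the sign check. The sign issue is harmless anyway. You already know a maximizer exists in the open simplex, so it is a critical point. Your formula $\sin^2x=a^2/(a^2+t^2)$ depends only on $t^2=abc/m$, so the entries of $G^*$ are forced without choosing a sign. For a direct check, take $t=\sqrt{abc/m}$; negative $t$ would make all three angles obtuse. Then $x,y,z\in(0,\pi/2)$, and you can run your identity backwards: $\cot(x+y)=(\cot x\cot y-1)/(\cot x+\cot y)=-\cot z=\cot(\pi-z)$. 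Injectivity of $\cot$ on $(0,\pi)$ then gives $x+y+z=\pi$.
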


\begin{proof}
Every principal $2\times2$ submatrix of a feasible $G$ is positive
semidefinite, so $-1\le g_{rs}\le1$. In particular all three factors in
the objective are nonnegative.

Write $x^*_{rs}=1-g^*_{rs}$ for the quantities in
\eqref{weighted-star-candidate}. For example,
\[
 (a+b)(a+c)-am=bc>0,
\]
which shows that $0<x^*_{23}<2$. The same conclusion holds for the other
two quantities. Put
\[
 v=\begin{pmatrix}b+c\\a+c\\a+b\end{pmatrix}.
\]
We claim that $G^*v=0$. Its first coordinate is
\begin{align*}
 (b+c)+g^*_{12}(a+c)+g^*_{13}(a+b)
 &=2m-x^*_{12}(a+c)-x^*_{13}(a+b)\\
 &=2m-\frac{2cm}{b+c}-\frac{2bm}{b+c}=0.
\end{align*}
The other two coordinates vanish by the same calculation with the
indices permuted. Thus $\det G^*=0$. Its principal $2\times2$ minors are
strictly positive. For instance,
\[
 1-(g^*_{23})^2
 =x^*_{23}(2-x^*_{23})
 =\frac{4abcm}{(a+b)^2(a+c)^2}>0.
\]
The other two formulas are obtained by permuting $a,b,c$. Every
principal minor of $G^*$ is therefore nonnegative, so $G^*$ is positive
semidefinite. The positive $2\times2$ minors and the zero determinant
show that its rank is exactly two.

We prove that this feasible matrix is a global maximizer. If one of the
three factors for another feasible matrix $G$ is zero, its objective is
zero and there is nothing to prove. Otherwise put $x_{rs}=1-g_{rs}>0$.
The inequality $\log u\le\log u_0+(u-u_0)/u_0$ for $u,u_0>0$ gives
\begin{align*}
 &a\log\frac{x_{23}}{x^*_{23}}
 +b\log\frac{x_{13}}{x^*_{13}}
 +c\log\frac{x_{12}}{x^*_{12}}\\
 &\qquad\le
 \frac{a}{x^*_{23}}(x_{23}-x^*_{23})
 +\frac{b}{x^*_{13}}(x_{13}-x^*_{13})
 +\frac{c}{x^*_{12}}(x_{12}-x^*_{12}).
\end{align*}
The three coefficients on the right are
\[
 \frac{a}{x^*_{23}}=\frac{v_2v_3}{2m},\qquad
 \frac{b}{x^*_{13}}=\frac{v_1v_3}{2m},\qquad
 \frac{c}{x^*_{12}}=\frac{v_1v_2}{2m}.
\]
Since $G$ and $G^*$ have the same diagonal, the right-hand side is
\[
 -\frac1{2m}\sum_{1\le r<s\le3}v_rv_s(g_{rs}-g^*_{rs})
 =-\frac{v^T(G-G^*)v}{4m}
 =-\frac{v^TGv}{4m}\le0.
\]
The second equality uses the fact that each off-diagonal entry occurs
twice in the quadratic form. The third uses $G^*v=0$. The inequality
follows because $G$ is positive semidefinite. Exponentiation proves global
optimality. Finally,
substituting \eqref{weighted-star-candidate} into the objective gives
\eqref{weighted-star-correlation-value}, because the factor $a+b$ occurs
with total exponent $a+b$, and likewise for the other two pair sums.
\end{proof}

\begin{lemma}\label{weighted-star-compatible-cubics}
Let $a,b,c>0$ and $m=a+b+c$. There is a family of strictly Lorentzian
cubics $F_t$ in four variables, defined for all sufficiently small $t>0$,
whose coefficient vectors $\vP(t)$ satisfy
\begin{align}
 \lim_{t\to0^{+}}X(\vP(t))
 &=\frac{4am}{3(a+b)(a+c)},\notag\\
 \lim_{t\to0^{+}}Y(\vP(t))
 &=\frac{4bm}{3(a+b)(b+c)},\label{weighted-star-limits}\\
 \lim_{t\to0^{+}}Z(\vP(t))
 &=\frac{4cm}{3(a+c)(b+c)}.\notag
\end{align}
\end{lemma}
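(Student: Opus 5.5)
Take $i=1$ and $\{j,k,\ell\}=\{2,3,4\}$; the general case follows by relabeling variables. The plan is to build the cubic so that the single Hessian $\mathcal H=\operatorname{Hess}(\partial_1F)$ realizes, in the limit, the rank-two optimizer $G^*$ of Lemma~\ref{weighted-star-correlation}. Then complete the remaining coefficients so that the other three derivative Hessians are also Lorentzian.

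\emph{Step 1: reduce the target to one Hessian.} By Definition~\ref{def:triangular} with $\vb=\ve_1$ and $\beta_1=1$, we have $T_{(\ve_1;1\mid k\ell)}=\tfrac32 X$, and similarly for $Y$ and $Z$. So \eqref{weighted-star-limits} is equivalent to
\[
 T_{(\ve_1;1\mid 34)}\to x^*_{23},\qquad T_{(\ve_1;1\mid 24)}\to x^*_{13},\qquad T_{(\ve_1;1\mid 23)}\to x^*_{12},
\]
with $x^*_{rs}=1-g^*_{rs}$ as in \eqref{weighted-star-candidate}. Here the rows and columns $2,3,4$ of $\mathcal H$ correspond to the labels $1,2,3$ of $G^*$.

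I would normalize $\mathcal H_{11}=1$ and $\mathcal H_{1u}=1$ for $u=2,3,4$. Then $T_{(\ve_1;1\mid rs)}=\mathcal H_{rs}$. For a symmetric matrix with $\mathcal H_{11}=1>0$, Lorentzian signature is equivalent to the Schur complement $M=(\mathcal H_{rs}-1)_{r,s=2}^4$ being negative definite.

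Choose
\[
 M=-(1-2t)G^*-tI.
\]
This is negative definite for $0<t<\tfrac12$, because $G^*$ is positive semidefinite. Its diagonal is $-(1-t)$, so $\mathcal H_{uu}=t>0$. Its off-diagonal entries give $\mathcal H_{rs}=1-(1-2t)g^*_{rs}$, which is positive since $|g^*_{rs}|<1$ and tends to $x^*_{rs}$ as $t\to0^+$. This fixes every coefficient $\widehat P_{\va}$ with $\alpha_1\ge1$. In particular, it fixes $\widehat P_{3\ve_1}$, $\widehat P_{2\ve_1+\ve_u}$, $\widehat P_{\ve_1+2\ve_u}=t$, and $\widehat P_{\ve_1+\ve_r+\ve_s}$.

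\emph{Step 2: complete the cubic.} It remains to choose the ten coefficients $\widehat P_{\va}$ with $\alpha_1=0$, which are the coefficients of a cubic $E$ in $x_2,x_3,x_4$. These must make $\operatorname{Hess}(\partial_uF)$ Lorentzian for $u=2,3,4$. In $\operatorname{Hess}(\partial_uF)$, the first row is $(\mathcal H_{1u},\mathcal H_{u2},\mathcal H_{u3},\mathcal H_{u4})$ with $\mathcal H_{1u}=1$. Its lower block is $\operatorname{Hess}(\partial_u E)$. So the condition is that
\[
 \operatorname{Hess}(\partial_uE)-h^{(u)}(h^{(u)})^{\mathsf T},\qquad h^{(u)}_r=\mathcal H_{ur},
\]
is negative definite for each $u$.

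Since $h^{(u)}(h^{(u)})^{\mathsf T}$ has rank one, simply taking $E$ small will not work. Instead I would try to make $E$ itself encode a Lorentzian signature. The first attempt is $E=s\,E_0$, where $E_0$ is a fixed strictly Lorentzian ternary cubic, for instance $(x_2+x_3+x_4)^3$ perturbed slightly. The idea is that $\operatorname{Hess}(\partial_uE_0)$ already has one positive eigenvalue and two negative ones. One then checks that subtracting the rank-one positive matrix $h^{(u)}(h^{(u)})^{\mathsf T}$ kills the positive direction, which amounts to a determinant sign condition: $h^{(u)}$ must not be too small compared with $E_0$ in that direction.

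If this fails, the fallback is to exploit the $t$-dependence. Note that $h^{(u)}_u=t$ while the other entries of $h^{(u)}$ tend to $x^*>0$. Choose $E$ so that the coefficients $\widehat P_{2\ve_u+\ve_r}$ and $\widehat P_{3\ve_u}$ also scale with $t$, and verify the negative definiteness by an explicit $3\times3$ determinant expansion in powers of $t$.

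\emph{Step 3: conclude.} All coefficients are positive. Each of the four derivative Hessians has Lorentzian signature: the Hessian of $\partial_1F$ by the Schur criterion in Step~1, and the other three by Step~2. Hence $F_t$ is strictly Lorentzian, and the three ratios converge to the values in \eqref{weighted-star-limits} by Step~1.

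The main obstacle is Step~2. The three Hessians $\operatorname{Hess}(\partial_uF)$ for $u=2,3,4$ share the coefficients of $E$ and also share first-row data with $\mathcal H$. Moreover, $\mathcal H$ degenerates as $t\to0$ toward the rank-two matrix $G^*$. So the completion must be checked uniformly for small $t$ and cannot be obtained by a soft openness argument.
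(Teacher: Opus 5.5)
Your Step~1 is correct, but Step~2 is the whole content of the lemma: it has to produce one cubic all four of whose derivative Hessians are Lorentzian, and the proposal does not supply it. Moreover, your first attempt cannot succeed for any fixed strictly Lorentzian $E_0$. Put $Q_0=\operatorname{Hess}(\partial_uE_0)$ and $h_0=\lim_{t\to0}h^{(u)}$. The $u$-th entry of $h_0$ is $\lim\mathcal H_{uu}=0$, and its $r$- and $s$-entries $p,q$ are positive. Expanding by cofactors gives $h_0^{\mathsf T}\operatorname{adj}(Q_0)h_0=2pqX-p^2B-q^2A$. Here $X=(Q_0)_{ur}(Q_0)_{us}-(Q_0)_{uu}(Q_0)_{rs}$, and $A,B$ are minus the principal $2\times2$ minors on $\{u,r\}$ and $\{u,s\}$, which are positive by interlacing. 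One checks that $X^2-AB=-(Q_0)_{uu}\det Q_0<0$, so by AM--GM the expression is at most $2pq(X-\sqrt{AB})<0$. Hence $h^{(u)\mathsf T}Q_0^{-1}h^{(u)}<0$ for all small $t$. The matrix determinant lemma then gives $\det(sQ_0-h^{(u)}h^{(u)\mathsf T})>0$ for every $s>0$. The rank-one subtraction cannot raise the two negative eigenvalues of $sQ_0$, so a positive eigenvalue survives, and $\operatorname{Hess}(\partial_uF)$ has two positive eigenvalues. Thus $E$ must degenerate with $t$. Your fallback says coefficients should scale with $t$, but it fixes none of them and verifies nothing, so no cubic has been produced.

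The missing idea has two parts. First, take the Schur complement of $\operatorname{Hess}(\partial_uF)$ with respect to the $3\times3$ block on the indices other than $u$, not with respect to the corner entry $1$. Second, use the pure-cube coefficient $\widehat P_{3\ve_u}$ as the free parameter; it occurs in no other derivative Hessian, so it decouples the four conditions.

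The paper builds its family around $x_1x_2x_3+x_1x_2x_4+x_1x_3x_4+x_2x_3x_4$. Each block $C_u(t)$ tends to the Lorentzian matrix $J-I$, and all $\widehat P_{2\ve_x+\ve_y}$ are of order $t$, with $\widehat P_{2\ve_i+\ve_y}=tB_y$. It sets $\widehat P_{3\ve_u}=(1-t)\,b_u^{\mathsf T}C_u^{-1}b_u$, so the complement equals $-t\,b_u^{\mathsf T}C_u^{-1}b_u<0$. The target limits come from $B^{\mathsf T}(-I+\tfrac12J)B=2abcm$, not from realizing $G^*$ exactly.

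The same device completes your own Step~1. Let $\{r,s\}=\{2,3,4\}\setminus\{u\}$, and take $\widehat P_{\ve_2+\ve_3+\ve_4}=\epsilon$, $\widehat P_{2\ve_x+\ve_y}=t\,\mathcal H_{xy}$ for distinct $x,y\in\{2,3,4\}$, and $\widehat P_{3\ve_u}=t^2/2$. On the columns $1,r,s$, the $u$-th row of $\operatorname{Hess}(\partial_uF)$ is then exactly $t$ times its first row. So this Hessian is congruent to $(\widehat P_{3\ve_u}-t^2)\oplus C^{(u)}$, where $C^{(u)}$ is its principal block on $\{1,r,s\}$. That block has Lorentzian signature for small $t$ whenever $0<\epsilon<2\lim_{t\to0}\mathcal H_{ur}\mathcal H_{us}$ for each $u$. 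Consistent with the obstruction above, this $E$ degenerates to $\epsilon x_2x_3x_4$.
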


\begin{proof}
The order of the three indices different from $i$ will be $(j,k,\ell)$.
Set
\[
 B_j=a(b+c),\qquad B_k=b(a+c),\qquad B_\ell=c(a+b),
 \qquad B=\begin{pmatrix}B_j\\B_k\\B_\ell\end{pmatrix}.
\]
For distinct $r,s\in\{1,2,3,4\}$, define positive numbers
\begin{equation}\label{weighted-star-mixed-coefficients}
 a_{rs}(t)=
 \begin{cases}
  tB_s,&r=i,\\
  t,&r\ne i.
 \end{cases}
\end{equation}
The subscripts distinguish this array from the scalar weight $a$.
For each $s\in\{1,2,3,4\}$, order the other three indices consistently
and let
\[
 b_s(t)=(a_{sr}(t))_{r\ne s},\qquad
 C_s(t)_{rr}=a_{rs}(t)\quad(r\ne s),\qquad
 C_s(t)_{ru}=1\quad(r,u\ne s,\ r\ne u).
\]
Thus the diagonal of $C_s(t)$ uses the entries with second index $s$,
whereas $b_s(t)$ uses the entries with first index $s$. Let $J$ denote
the $3\times3$ matrix whose entries are all $1$. For every $s$,
\[
 C_s(t)\longrightarrow C_0:=J-I
 \qquad(t\to0^{+}).
\]
The eigenvalues of $C_0$ are $2,-1,-1$. Continuity of eigenvalues shows
that all four matrices $C_s(t)$ are invertible and have signature
$(1,2)$ for sufficiently small $t>0$. Their inverses converge to
\[
 C_0^{-1}=-I+\frac12J.
\]
Put
\[
 q_s(t)=b_s(t)^TC_s(t)^{-1}b_s(t).
\]
For $s\ne i$ the vector $b_s(t)$ is $t\mathbf1$, where
$\mathbf1=(1,1,1)^T$. Consequently,
\[
 \lim_{t\to0^{+}}\frac{q_s(t)}{t^2}
 =\mathbf1^T\left(-I+\frac12J\right)\mathbf1
 =\frac32>0.
\]
For $s=i$ we have $b_i(t)=tB$, and hence
\begin{equation}\label{weighted-star-q-limit}
 \lim_{t\to0^{+}}\frac{q_i(t)}{t^2}
 =B^T\left(-I+\frac12J\right)B
 =2abcm>0.
\end{equation}
For completeness, let $A=ab$, $D=ac$, and $E=bc$. The entries of $B$
are then $A+D,A+E,D+E$. It follows that
\begin{align*}
 \frac12(B_j+B_k+B_\ell)^2-(B_j^2+B_k^2+B_\ell^2)
 &=2(AD+AE+DE)\\
 &=2abc(a+b+c),
\end{align*}
which proves the last equality in \eqref{weighted-star-q-limit}.
In particular every $q_s(t)$ is positive when $t>0$ is sufficiently
small.

Restrict to such $t$ with $t<1$, set $u_s(t)=(1-t)q_s(t)$, and define
\begin{equation}\label{weighted-star-cubic-construction}
 F_t(\vx)=\frac16\sum_{s=1}^4u_s(t)x_s^3
 +\frac12\sum_{r\ne s}a_{rs}(t)x_r^2x_s
 +\sum_{r<s<u}x_rx_sx_u.
\end{equation}
Every coefficient is positive. We now check the four derivative
Hessians, which is the necessary compatibility check for this
construction. With index $s$ first, direct differentiation gives
\[
 \operatorname{Hess}(\partial_sF_t)
 =\begin{pmatrix}
  u_s(t)&b_s(t)^T\\
  b_s(t)&C_s(t)
 \end{pmatrix}.
\]
Since $C_s(t)$ is invertible, block elimination by an invertible
congruence changes this matrix into
\[
 \begin{pmatrix}
  u_s(t)-b_s(t)^TC_s(t)^{-1}b_s(t)&0\\
  0&C_s(t)
 \end{pmatrix}
 =\begin{pmatrix}-tq_s(t)&0\\0&C_s(t)\end{pmatrix}.
\]
Its first entry is negative and $C_s(t)$ has one positive and two
negative eigenvalues. Sylvester's law of inertia therefore shows that
the derivative Hessian has one positive and three negative eigenvalues.
This holds for every $s$, so $F_t$ is strictly Lorentzian.

The coefficients relevant to $X$ are
\[
 P_{3\ve_i}=\frac{u_i(t)}6,\qquad
 P_{\ve_i+\ve_k+\ve_\ell}=1,\qquad
 P_{2\ve_i+\ve_k}=\frac{tB_k}2,\qquad
 P_{2\ve_i+\ve_\ell}=\frac{tB_\ell}2.
\]
It follows from \eqref{weighted-star-q-limit} that
\[
 X(\vP(t))=\frac{2(1-t)q_i(t)}{3t^2B_kB_\ell}
 \longrightarrow
 \frac{4abcm}{3bc(a+c)(a+b)}
 =\frac{4am}{3(a+b)(a+c)}.
\]
The two other limits follow by using $B_jB_\ell$ and $B_jB_k$ in the
denominator. This proves \eqref{weighted-star-limits}.
\end{proof}

\begin{theorem}\label{weighted-star-optimal-constant}
For every $a,b,c\ge0$, put $m=a+b+c$ and use the convention $0^0=1$.
Then the ratios in \eqref{weighted-star-ratios} satisfy
\begin{equation}\label{weighted-star-main-formula}
 \sup_{\vP\in\mathring L(3,4)}X(\vP)^aY(\vP)^bZ(\vP)^c
 =\left(\frac43\right)^m
 \frac{m^m a^a b^b c^c}
 {(a+b)^{a+b}(a+c)^{a+c}(b+c)^{b+c}}.
\end{equation}
If $m>0$, the supremum is not attained in $\mathring L(3,4)$.
\end{theorem}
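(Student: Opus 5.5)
The proof splits into an upper bound from the single Hessian $\mathcal H=\operatorname{Hess}(\partial_iF)$, a matching lower bound from Lemma~\ref{weighted-star-compatible-cubics}, and an openness argument for nonattainment.

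\emph{Upper bound.} Let $F$ be strictly Lorentzian and $\mathcal H=\operatorname{Hess}(\partial_iF)$, a positive $4\times4$ matrix of Lorentzian signature. By the normalization in Definition~\ref{def:triangular},
\[
X=\tfrac23 T_{(\ve_i;i\mid k\ell)}(\mathcal H),\qquad Y=\tfrac23 T_{(\ve_i;i\mid j\ell)}(\mathcal H),\qquad Z=\tfrac23 T_{(\ve_i;i\mid jk)}(\mathcal H).
\]
This accounts for the factor $(2/3)^m$. Rescale $\mathcal H$ by a positive diagonal congruence so that its diagonal is $1$; this preserves triangular ratios and signature. Each $T$ then becomes $h_{rs}/(h_{ir}h_{is})$.

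Set $w_r=h_{ir}$ for $r\ne i$. The Schur complement
\[
M=(h_{rs}-w_rw_s)_{r,s\ne i}
\]
is negative definite, because $\mathcal H$ has exactly one positive eigenvalue and $h_{ii}=1>0$. Put $G_{rs}=-M_{rs}/\sqrt{M_{rr}M_{ss}}$. Then $G$ is positive definite with unit diagonal, and
\[
\frac{h_{rs}}{w_rw_s}=1-\frac{\sqrt{M_{rr}M_{ss}}}{w_rw_s}\,G_{rs}.
\]
The diagonal entries $M_{rr}=1-w_r^2$ are negative, so $w_r>1$ and $|M_{rr}|/w_r^2=1-w_r^{-2}\in(0,1)$. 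Hence
\[
T_{rs}=1-\rho_r\rho_sG_{rs},\qquad \rho_r\in(0,1).
\]
To reduce to Lemma~\ref{weighted-star-correlation}, I need $1-\rho_r\rho_sG_{rs}\le 1-G'_{rs}$ for some positive semidefinite unit-diagonal $G'$. The natural choice is $G'=DGD+(I-D^2)$ with $D=\operatorname{diag}(\rho_r)$, which is PSD with unit diagonal and satisfies $G'_{rs}=\rho_r\rho_sG_{rs}$ off the diagonal.

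Lemma~\ref{weighted-star-correlation} then bounds $T_{(k\ell)}^aT_{(j\ell)}^bT_{(jk)}^c$ by $2^m\,m^ma^ab^bc^c/\prod(\text{pair sums})^{\text{pair sums}}$. Multiplying by $(2/3)^m$ gives the right-hand side of \eqref{weighted-star-main-formula}. When some of $a,b,c$ vanish, the lemma (stated for $a,b,c>0$) is replaced by a limiting argument, or by direct use of the bound $T\le2$ combined with the two-variable case.

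I expect this reduction from the Lorentzian signature to the PSD correlation problem to be the main obstacle. The delicate points are the sign conventions in the Schur complement and checking that the extra diagonal padding keeps $G'$ feasible.

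\emph{Lower bound.} Lemma~\ref{weighted-star-compatible-cubics} gives strictly Lorentzian cubics whose $X,Y,Z$ tend to $\frac23(1-g^*_{23})$, $\frac23(1-g^*_{13})$, $\frac23(1-g^*_{12})$ in the notation of \eqref{weighted-star-candidate}. The weighted product therefore tends to $(2/3)^m$ times \eqref{weighted-star-correlation-value}, which is exactly the claimed value. Degenerate weights are handled by continuity in $(a,b,c)$ together with the fixed-$\vP$ monotonicity of the product, or by reapplying the construction with slightly perturbed positive weights.

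\emph{Nonattainment.} The argument is the same openness argument as in Theorem~\ref{thm:constants33}. The log-coefficient locus is open, and the exponent vector $\vg=a\vrho_X+b\vrho_Y+c\vrho_Z$ is nonzero when $m>0$ because the three rays are distinct extreme rays. Perturbing $z$ to $z+\varepsilon\vg$ strictly increases the objective, so no point of $\mathring L(3,4)$ attains the supremum.
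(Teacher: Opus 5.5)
Your proposal is correct and is essentially the paper's proof. Your $G'=DGD+(I-D^2)$ coincides entrywise with the paper's $G=(J-C)+\operatorname{diag}(C_{rr})$, which the paper obtains by scaling row and column $i$ of $\mathcal H$ to ones. The lower bound uses the same cubic family, and zero weights are handled by the same $\varepsilon$-perturbation; the monotonicity needed there comes from $X,Y,Z<\tfrac43$.

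For nonattainment, the paper enlarges only $P_{3\ve_i}$ by a factor $1+\delta$ rather than moving along $\vg$; either works. Also, $\vg\ne0$ is immediate from $\gamma_{3\ve_i}=m>0$, so you do not need extremality of the three rays.
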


\begin{proof}
First suppose $a,b,c>0$. Let $F$ be a strictly Lorentzian cubic with
coefficient vector $\vP$, and put
$\mathcal H=\operatorname{Hess}(\partial_iF)$. This matrix has positive
entries and signature $(1,3)$. As in Definition~\ref{def:triangular},
write $\widehat P_{\va}=\va!P_{\va}$. Then
\[
 \mathcal H_{rs}=\widehat P_{\ve_i+\ve_r+\ve_s},
\]
and the factorials in the three ratios give
\begin{equation}\label{weighted-star-factorials}
 \frac{\mathcal H_{ii}\mathcal H_{k\ell}}
 {\mathcal H_{ik}\mathcal H_{i\ell}}=\frac32X,\qquad
 \frac{\mathcal H_{ii}\mathcal H_{j\ell}}
 {\mathcal H_{ij}\mathcal H_{i\ell}}=\frac32Y,\qquad
 \frac{\mathcal H_{ii}\mathcal H_{jk}}
 {\mathcal H_{ij}\mathcal H_{ik}}=\frac32Z.
\end{equation}
For example, the pure-cube entry has factor $6$, each repeated-index
denominator entry has factor $2$, and the squarefree entry has factor
$1$, giving $6/(2\cdot2)=3/2$.

Define a positive diagonal matrix $D$ by
\[
 D_{ii}=\frac1{\sqrt{\mathcal H_{ii}}},\qquad
 D_{rr}=\frac{\sqrt{\mathcal H_{ii}}}{\mathcal H_{ir}}\quad(r\ne i).
\]
The matrix $D\mathcal HD$ has every entry in row and column $i$ equal to
$1$. With the remaining indices ordered as $(j,k,\ell)$, write it as
\[
 D\mathcal HD=\begin{pmatrix}1&\mathbf1^T\\\mathbf1&C\end{pmatrix}.
\]
Diagonal congruence preserves both signature and triangular ratios.
Taking the Schur complement of the upper-left entry $1$ shows that
$C-J$ is negative definite. Thus $G_0=J-C$ is positive definite.
Each diagonal entry $C_{rr}$ is positive. The matrix
\[
 G=G_0+\operatorname{diag}(C_{11},C_{22},C_{33})
\]
is therefore positive definite and has diagonal entries $1$. Its
off-diagonal entries satisfy $g_{rs}=1-C_{rs}$. By
\eqref{weighted-star-factorials},
\begin{equation}\label{weighted-star-correlation-ratios}
 1-g_{23}=C_{23}=\frac32X,\qquad
 1-g_{13}=C_{13}=\frac32Y,\qquad
 1-g_{12}=C_{12}=\frac32Z.
\end{equation}
Applying Lemma~\ref{weighted-star-correlation} and multiplying by
$(2/3)^m$ proves the upper bound in
\eqref{weighted-star-main-formula}. On the other hand, the simultaneous
limits in Lemma~\ref{weighted-star-compatible-cubics} have weighted
product equal to the right-hand side of
\eqref{weighted-star-main-formula}. They prove that the upper bound is
sharp for positive weights.

We now treat weights that may be zero. Denote the explicit right-hand
side of \eqref{weighted-star-main-formula} by $K(a,b,c)$ and the
supremum on its left by $S(a,b,c)$. The convention $0^0=1$ makes $K$
continuous on $\Rge^3$. Indeed, its logarithm is
\begin{align*}
 \log K(a,b,c)
 ={}&m\log(4/3)+m\log m+a\log a+b\log b+c\log c\\
 &-(a+b)\log(a+b)-(a+c)\log(a+c)-(b+c)\log(b+c),
\end{align*}
where $u\log u$ extends continuously to $0$ with value $0$.

Fix $\vP\in\mathring L(3,4)$. Its three ratios are positive, so the
already proved upper bound with weights
$(a+\varepsilon,b+\varepsilon,c+\varepsilon)$ tends, as
$\varepsilon\to0^{+}$, to
\[
 X(\vP)^aY(\vP)^bZ(\vP)^c\le K(a,b,c).
\]
Taking the supremum proves $S(a,b,c)\le K(a,b,c)$. For the reverse
inequality, the positive definite matrix $G$ constructed above has
$|g_{rs}|<1$. Equation~\eqref{weighted-star-correlation-ratios} therefore
gives
\[
 0<X(\vP),Y(\vP),Z(\vP)<\frac43.
\]
Consequently, for every $\varepsilon>0$,
\[
 X^{a+\varepsilon}Y^{b+\varepsilon}Z^{c+\varepsilon}
 \le\left(\frac43\right)^{3\varepsilon}X^aY^bZ^c.
\]
Taking suprema and using the formula for positive weights yields
\[
 K(a+\varepsilon,b+\varepsilon,c+\varepsilon)
 \le\left(\frac43\right)^{3\varepsilon}S(a,b,c).
\]
Letting $\varepsilon\to0^{+}$ proves $K(a,b,c)\le S(a,b,c)$.
This argument does not require continuity of the supremum as a function
of the weights. In particular, if $a=b=c=0$, the objective and the
formula both equal $1$.

Finally assume $m>0$. The locus of strictly Lorentzian cubics is open
in the space of coefficient vectors: positivity of the coefficients
and the strict signs of the finitely many Hessian eigenvalues persist
under sufficiently small perturbations. Given any
$\vP\in\mathring L(3,4)$, we may therefore increase $P_{3\ve_i}$ by a
small positive factor $1+\delta$ while keeping every other coefficient
fixed and remaining in $\mathring L(3,4)$. Each of $X,Y,Z$ is then
multiplied by $1+\delta$, so their weighted product increases by
$(1+\delta)^m>1$. Hence no coefficient vector in the strictly
Lorentzian locus attains the supremum.
\end{proof}

\begin{corollary}\label{weighted-star-product}
For every $a,b,c\ge0$ and $\vP\in\mathring L(3,4)$,
\begin{equation}\label{weighted-star-product-ratio}
 \frac{P_{\ve_i+\ve_k+\ve_\ell}^{\,a}
 P_{\ve_i+\ve_j+\ve_\ell}^{\,b}
 P_{\ve_i+\ve_j+\ve_k}^{\,c}
 P_{3\ve_i}^{\,a+b+c}}
 {P_{2\ve_i+\ve_j}^{\,b+c}
 P_{2\ve_i+\ve_k}^{\,a+c}
 P_{2\ve_i+\ve_\ell}^{\,a+b}}
 \le
 \left(\frac43\right)^{a+b+c}\cdot
 \frac{(a+b+c)^{a+b+c}a^ab^bc^c}
 {(a+b)^{a+b}(a+c)^{a+c}(b+c)^{b+c}}.
\end{equation}
Here $0^0=1$. For each fixed $a,b,c$, the constant on the right is
optimal. The inequality is strict on $\mathring L(3,4)$ whenever
$a+b+c>0$.
\end{corollary}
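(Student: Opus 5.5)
The corollary is a rewriting of Theorem~\ref{weighted-star-optimal-constant} in terms of the coefficients themselves. First I will check that the left-hand side of \eqref{weighted-star-product-ratio} equals $X(\vP)^aY(\vP)^bZ(\vP)^c$ for $\vP\in\mathring L(3,4)$, where $X,Y,Z$ are the ratios in \eqref{weighted-star-ratios} with the fixed indices $i,j,k,\ell$. Bookkeeping of exponents does this. In the numerator, $P_{3\ve_i}$ appears once in each of $X,Y,Z$, so its total exponent is $a+b+c$. The squarefree coefficients $P_{\ve_i+\ve_k+\ve_\ell}$, $P_{\ve_i+\ve_j+\ve_\ell}$ and $P_{\ve_i+\ve_j+\ve_k}$ come only from $X$, $Y$ and $Z$ respectively, so they carry exponents $a$, $b$, $c$. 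In the denominator, $P_{2\ve_i+\ve_j}$ occurs in $Y$ and $Z$, giving exponent $b+c$. Likewise $P_{2\ve_i+\ve_k}$ occurs in $X$ and $Z$, giving $a+c$, and $P_{2\ve_i+\ve_\ell}$ occurs in $X$ and $Y$, giving $a+b$. All coefficients are positive on $\mathring L(3,4)$, so the quotient is defined and the identity holds termwise.

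Next I will invoke \eqref{weighted-star-main-formula}. The supremum of $X^aY^bZ^c$ over $\mathring L(3,4)$ equals $\left(\frac43\right)^m\frac{m^m a^a b^b c^c}{(a+b)^{a+b}(a+c)^{a+c}(b+c)^{b+c}}$, which is exactly the constant on the right of \eqref{weighted-star-product-ratio} with $m=a+b+c$ and the same convention $0^0=1$. Since this value is the supremum, it is an upper bound, which proves the inequality. Because it is the least upper bound, no smaller constant works, so the constant is optimal for each fixed $(a,b,c)$.

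For strictness when $a+b+c>0$, I will use the last sentence of Theorem~\ref{weighted-star-optimal-constant}: the supremum is not attained on $\mathring L(3,4)$. Hence for every $\vP$ in that set the product lies strictly below the supremum, so the inequality is strict.

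The statement fixes a labeling of the four indices, and the theorem was proved for an arbitrary ordered choice of pairwise distinct $i,j,k,\ell$, so no symmetry argument is needed. I do not expect a real obstacle here. The only point needing care is the exponent bookkeeping, in particular making sure each $P_{2\ve_i+\ve_\bullet}$ collects the weights of exactly the two ratios whose central pair contains $\bullet$.
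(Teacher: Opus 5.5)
Your proposal is correct and follows the paper's proof: expanding $X^aY^bZ^c$ via \eqref{weighted-star-ratios} gives exactly the left-hand side, and Theorem~\ref{weighted-star-optimal-constant} then supplies the bound, its optimality, and strictness through non-attainment when $a+b+c>0$. Your exponent bookkeeping is accurate.
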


\begin{proof}
Expanding $X^aY^bZ^c$ using \eqref{weighted-star-ratios} gives the ratio
on the left. Apply Theorem~\ref{weighted-star-optimal-constant} with
$m=a+b+c$.
\end{proof}

\begin{corollary}\label{weighted-star-product-lorentzian}
Let $k\ge4$, and let
$F(\vx)=\sum_{\va\in H(3,k)}P_{\va}\vx^{\va}$ be a Lorentzian cubic.
For every $a,b,c\ge0$ and every four pairwise
distinct indices $i,j,\ell,r\in\{1,\ldots,k\}$,
\[
\begin{gathered}
 P_{\ve_i+\ve_\ell+\ve_r}^{\,a}
 P_{\ve_i+\ve_j+\ve_r}^{\,b}
 P_{\ve_i+\ve_j+\ve_\ell}^{\,c}
 P_{3\ve_i}^{\,a+b+c}
 \le
 C\,P_{2\ve_i+\ve_j}^{\,b+c}
 P_{2\ve_i+\ve_\ell}^{\,a+c}
 P_{2\ve_i+\ve_r}^{\,a+b},\\[4pt]
 \text{where}\quad C=\left(\frac43\right)^{a+b+c}\cdot
 \frac{(a+b+c)^{a+b+c}a^ab^bc^c}
 {(a+b)^{a+b}(a+c)^{a+c}(b+c)^{b+c}}.
\end{gathered}
\]
Here $0^0=1$. For each fixed $a,b,c$, the constant $C$ is optimal
for every $k\ge4$.
\end{corollary}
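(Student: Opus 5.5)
We reduce Corollary~\ref{weighted-star-product-lorentzian} to Corollary~\ref{weighted-star-product} in two steps: first restrict from $k$ variables to the four variables $x_i,x_j,x_\ell,x_r$, then pass from strictly Lorentzian to Lorentzian cubics by continuity. Optimality for every $k\ge4$ then comes from extending the extremal family of Lemma~\ref{weighted-star-compatible-cubics}.

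\emph{Upper bound.} Let $F$ be a Lorentzian cubic in $k\ge4$ variables. Set every variable other than $x_i,x_j,x_\ell,x_r$ to zero. Lorentzian polynomials are preserved under this specialization (setting variables to zero is a nonnegative linear change of variables, which preserves Lorentzianity by \cite{BH20}). The restriction $G$ is therefore a Lorentzian cubic in four variables, and the coefficients appearing in the inequality are coefficients of $G$.

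Write $G$ as a coefficientwise limit of strictly Lorentzian quaternary cubics $G_m$. Apply Corollary~\ref{weighted-star-product} to $G_m$ after relabeling $(i,j,k,\ell)\mapsto(i,j,\ell,r)$. Clear the denominator, so the inequality reads \emph{numerator} $\le C\cdot$ \emph{denominator}. Both sides are monomials in the coefficients with nonnegative exponents, hence continuous, so we can let $m\to\infty$. This also covers vanishing coefficients, exactly as in part~(viii) of the proof of Corollary~\ref{cor:paired-ratios33}.

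\emph{Optimality.} For $k=4$, optimality is Theorem~\ref{weighted-star-optimal-constant}. For $k>4$ we must embed near-extremal quaternary cubics into strictly Lorentzian cubics in $k$ variables without changing the relevant ratios much. We would try $F_t^\varepsilon=F_t(x_i,x_j,x_\ell,x_r)+\varepsilon\,E(\vx)$, where $E$ is a fixed strictly Lorentzian cubic in all $k$ variables, for instance $(x_1+\cdots+x_k)^3$. The relevant coefficients of $F_t$ are all positive and independent of $\varepsilon$ up to $O(\varepsilon)$.

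The main obstacle is strict Lorentzianity of $F_t^\varepsilon$. The Lorentzian cone is not convex in general. However, the sum of a Lorentzian polynomial and a strictly Lorentzian one should still be handled as follows. $F_t$, viewed in $k$ variables, is Lorentzian because it has M-convex support (all monomials in four variables) and its derivative Hessians are obtained by zero-padding. Rather than relying on sums, I would instead use that $\mathring L(3,k)$ is open and dense in the Lorentzian cubics (Definition~\ref{def:strictly-lorentzian}). The padded $F_t$ is Lorentzian, so it is a limit of strictly Lorentzian cubics $H$. Since the ratio in question involves only coefficients that are positive at $F_t$, it is continuous there. Hence strictly Lorentzian $H$ near $F_t$ give ratio values arbitrarily close to those of $F_t$, which tend to $C$ as $t\to0^+$ by Lemma~\ref{weighted-star-compatible-cubics}. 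The zero-weight cases follow as in Theorem~\ref{weighted-star-optimal-constant}, since the supremum there already equals $C$.
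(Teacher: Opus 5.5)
Your proposal is correct and follows the paper's proof. You restrict to the four selected variables by a nonnegative linear substitution (\cite[Theorem~2.10]{BH20}), apply Corollary~\ref{weighted-star-product} to strictly Lorentzian approximants and pass to the limit, and obtain optimality by viewing near-extremal quaternary cubics as Lorentzian cubics in $k$ variables; the only differences are cosmetic, namely that you justify Lorentzianity of the padded cubic via its M-convex support and zero-padded derivative Hessians instead of \cite[Theorem~2.10]{BH20}, and that your extra approximation by strictly Lorentzian $k$-variable cubics is harmless but unnecessary because the corollary is stated for all Lorentzian cubics.
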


\begin{proof}
Fix $a,b,c\ge0$. For $k=4$, approximate $F$ coefficientwise by strictly
Lorentzian cubics and apply \eqref{weighted-star-product-ratio} after
clearing denominators. For every fixed exponent $d\ge0$, the function
$u\mapsto u^d$ is continuous on $\Rge$, with $u^0=1$. Both sides are
therefore continuous functions of the nonnegative coefficients, so the
inequality passes to the limit.

For $k>4$, set all variables except the four selected ones equal to zero.
The resulting polynomial is Lorentzian by \cite[Theorem~2.10]{BH20} and
has the same coefficients appearing in the inequality. Relabeling these
four variables and applying the case $k=4$ proves the result.

Optimality follows from Corollary~\ref{weighted-star-product}, since a
four-variable Lorentzian cubic can be viewed as a Lorentzian cubic in
$k$ variables independent of the remaining variables, again by
\cite[Theorem~2.10]{BH20}.
\end{proof}

\section{\texorpdfstring{Products of linear forms and the cone
$\BR_{\mathring{Q}}(n,k)$}{Products of linear forms and the cone BRQ(n,k)}}

\subsection{\texorpdfstring{The forms $Q(n,k)$}{The forms Q(n,k)}}

\begin{definition}[{\rm \defn{Product of linear forms}}]\label{def:Q}
Let $Q(n,k)$ denote a product of $n$ linear forms in $k$ variables with
nonnegative coefficients:
\[
  Q(n,k) \;=\; \prod_{r=1}^{n} \ell_r,
  \qquad
  \ell_r \;=\; \sum_{i=1}^{k} a_{ri}\,x_i,
  \qquad
  a_{ri} \ge 0 \quad (1 \le r \le n,\ 1 \le i \le k).
\]
We collect the coefficients in the matrix
$A = (a_{ri}) \in \Rge^{\,n \times k}$.  The $r$th row records the coefficients
of the linear form $\ell_r$, and the $i$th column records the coefficients of
$x_i$ in $\ell_1,\dots,\ell_n$.  Thus $Q(n,k)$ is a homogeneous form of degree
$n$ in $x_1, \dots, x_k$, depending on the $nk$ nonnegative parameters
$a_{ri}$.
\end{definition}

\begin{example}[$n = 2$, $k = 3$]\label{ex:Q23}
Writing $a_i = a_{1i}$ and $b_i = a_{2i}$,
\[
  Q(2,3) \;=\; (a_1x_1 + a_2x_2 + a_3x_3)(b_1x_1 + b_2x_2 + b_3x_3),
  \qquad \text{all $a$'s and $b$'s} \ \ge 0 .
\]
\end{example}

\begin{theorem}\label{thm:Q-lorentzian}
Every product of linear forms with nonnegative coefficients is a Lorentzian
polynomial.  In particular, $Q(n,k)$ is Lorentzian for every
$A \in \Rge^{\,n \times k}$.
\end{theorem}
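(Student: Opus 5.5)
The plan is to reduce to the strictly positive case and use closure of Lorentzianity under products, with a direct fallback via mixed volumes.

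\emph{Step 1: linear forms.} A linear form $\ell=\sum_i a_i x_i$ with $a_i\ge0$ is Lorentzian. Its coefficients are nonnegative, and its support $\{\ve_i:a_i>0\}$ is M-convex, being a subset of $H(1,k)$ where every exchange stays inside $H(1,k)$. Definition~\ref{def:lorK} puts $\mathrm L^1_k=\mathrm M^1_k$, and for $F=\mathbb R$ this agrees with the closure definition by \cite[Theorem~2.25]{BH20}. If one prefers to stay with the closure definition throughout, perturb to $a_{ri}+\varepsilon>0$. Because each side of every inequality in Definition~\ref{def:strictly-lorentzian} is continuous in the coefficients, it suffices to prove the claim for strictly positive coefficient matrices $A$ and then let $\varepsilon\to0$.

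\emph{Step 2: products.} I would invoke the theorem of Br\"and\'en--Huh that the product of two Lorentzian polynomials is Lorentzian \cite[Corollary~2.32]{BH20}. This is proved via the multiaffine/polarization characterization and the fact that linear operators preserving stable polynomials preserve Lorentzianity. Induction on $n$ then gives that $\prod_{r=1}^n\ell_r$ is Lorentzian.

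\emph{Step 3: alternative geometric route.} Should one want to avoid citing the product theorem, a self-contained argument uses the mixed-volume interpretation from the introduction. Let $K_i$ be the box $\prod_{r=1}^n[0,a_{ri}]\subset\mathbb R^n$. Then $x_1K_1+\cdots+x_kK_k=\prod_r[0,\sum_i a_{ri}x_i]$ for $x\ge0$, so $\operatorname{Vol}_n=\prod_r\ell_r(\vx)$. Thus $Q(n,k)=n!\,f_{\mathbf K}$, and $f_{\mathbf K}$ is Lorentzian by \cite[Theorem~4.1]{BH20}. This is also the reason the cone is named after Cartesian products of intervals.

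\emph{Main obstacle.} The only delicate point is degenerate coefficients: zero columns, or rows vanishing identically, which make $Q$ zero. These are handled by the convention that the zero polynomial and limits of strictly Lorentzian polynomials are Lorentzian. Concretely, one approximates with $A+\varepsilon J$ and uses that the set of Lorentzian polynomials is closed. I expect this limiting step to be routine once the positive case follows from either Step~2 or Step~3.
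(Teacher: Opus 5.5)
Your proof is correct, but it uses different key inputs from the paper's. The paper argues through real stability. A nonzero linear form with nonnegative coefficients is stable, since its imaginary part is positive whenever all variables have positive imaginary part. Products of stable polynomials are stable, because nonvanishing is multiplicative. Homogeneous stable polynomials with nonnegative coefficients are Lorentzian by \cite[Proposition~2.2]{BH20}, and a zero factor is handled by closure.

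Your Step~2 instead uses closure of the Lorentzian class itself under multiplication \cite[Corollary~2.32]{BH20}. Both are one-citation arguments. The paper's route also yields stability of $Q(n,k)$ as a byproduct. Yours applies to products of arbitrary Lorentzian factors, not just linear forms; the paper itself relies on that fact later, e.g.\ for $x_3^{n-3}F_q^{\nu_A}$.

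Your Step~3 is also valid. Axis-parallel boxes add coordinatewise, so $\operatorname{Vol}_n(x_1K_1+\cdots+x_kK_k)=\prod_r\ell_r(\vx)$ on $\Rge^k$. Since this set has nonempty interior, $Q=n!\,f_{\mathbf K}$ as polynomials. Degenerate boxes are still convex bodies, so this route needs no perturbation when $A$ has zero entries. It is, however, much heavier than necessary, since \cite[Theorem~4.1]{BH20} rests on the Alexandrov--Fenchel inequality. Its merit is that it makes explicit the ``Cartesian products of intervals'' interpretation mentioned in the introduction.

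Two justifications in Step~1 are misstated, although the conclusions hold.
\begin{enumerate}
\item The support $\{\ve_i:a_i>0\}$ is M-convex not because exchanges stay inside $H(1,k)$; that is not the relevant condition for a subset. The reason is that the exchange applied to $\ve_p,\ve_q$ returns $\ve_q,\ve_p$, which already lie in the support.
\item The reduction to $A+\varepsilon J$ works because the Lorentzian set is closed, being defined as a closure. It is not because the strictly Lorentzian conditions are continuous inequalities; they are open conditions. You give the correct reason in your last paragraph.
\end{enumerate}
With the algebraic definition $\mathrm L^1_k=\mathrm M^1_k$, no perturbation is needed in Steps~1--2 at all.
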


\begin{proof}
Suppose first that no factor is zero. Each factor is stable: every nonzero
linear form with nonnegative coefficients has positive imaginary part whenever
all the variables have positive imaginary parts. Stability is preserved under
products, and every
homogeneous stable polynomial with nonnegative coefficients is Lorentzian
\cite[Proposition~2.2]{BH20}.  If a factor is zero, the product is the zero
polynomial, which is Lorentzian by coefficientwise closure.
\end{proof}

\begin{definition}[{\rm \defn{Coefficients of $Q(n,k)$}}]\label{def:P}
An \defn{assignment of type $\va$} is a map
$\phi \colon \{1,\dots,n\} \to \{1,\dots,k\}$ with $|\phi^{-1}(i)| = \alpha_i$
for every $i$.  There are $\binom{n}{\va} = n!/\va!$ such assignments.  For
$\va \in H(n,k)$, let $P_{\va} = P_{\va}(A)$ denote the coefficient of
$\vx^{\va}$ in $Q(n,k)$, so that
\[
  Q(n,k) \;=\; \sum_{\va \in H(n,k)} P_{\va}\, \vx^{\va}.
\]
Thus each $P_{\va}$ is a homogeneous polynomial of degree $n$ in the
nonnegative parameters $a_{ri}$, with one monomial for each assignment of
type $\va$:
\[
  P_{\va}
  \;=\;
  \sum_{\phi \ \text{of type } \va}
  \ \prod_{r=1}^{n} a_{r,\,\phi(r)} .
\]
\end{definition}

\begin{definition}[{\rm \defn{The cone $\BR_{\mathring{Q}}(n,k)$}}]\label{def:BR}
For $A \in \Rpos^{\,n \times k}$, write
$\vP(A) = \bigl(P_{\va}(A)\bigr)_{\va \in H(n,k)}$, a vector with strictly
positive entries, and let
\[
  \mathcal{P}(n,k)
  \;=\;
  \bigl\{\, \vP(A) \;:\; A \in \Rpos^{\,n \times k} \,\bigr\}
  \;\subseteq\;
  \Rpos^{H(n,k)} .
\]
In the notation of Definition~\ref{def:BR-general}, we set
$\BR_{\mathring{Q}}(n,k) \coloneqq \BR\bigl(\mathcal{P}(n,k)\bigr)$.
\end{definition}

\subsection{Assignment valuations}

Fix $D = (d_{ri}) \in \mathbb{R}^{\,n\times k}$ and $0 < t < 1$. Under the
substitution $a_{ri} = t^{d_{ri}}$, the monomial of $P_{\va}$ indexed by an
assignment $\phi$ of type $\va$ becomes
$t^{\,\sum_r d_{r,\phi(r)}}$. Consequently, the lowest-order term of
$P_{\va}(t^D)$ as $t \to 0^{+}$ is determined by the minimum of these
exponents. Define the \defn{assignment valuation}
$\mu_D \colon H(n,k) \to \mathbb{R}$ by
\begin{equation}\label{eq:assignment-valuation}
 \mu_D(\va)
 \;=\;
 \min_{\phi \ \text{of type } \va} \ \sum_{r=1}^{n} d_{r,\phi(r)} ,
\end{equation}
Thus $\mu_D(\va)$ is the least exponent of $t$ occurring in
$P_{\va}(t^{D})$. We also set
\begin{equation}\label{eq:Ank}
 \mathcal A_{n,k}
 \;=\;
 \operatorname{cone}\bigl\{[\mu_D] : D \in \mathbb R^{\,n\times k}\bigr\}
 \;\subseteq\; \mathbb R^{H(n,k)}/\operatorname{Aff}(H(n,k)).
\end{equation}

A \defn{choice of minimizing assignments} is a tuple
$\bm\phi = (\phi_{\va})_{\va \in H(n,k)}$ in which $\phi_{\va}$ has type
$\va$. The matrices realizing a given choice form the polyhedral cone
\[
  C(\bm\phi)
  \;=\;
  \Bigl\{\, D \in \mathbb{R}^{\,n\times k} \;:\;
  \textstyle\sum_r d_{r,\phi_{\va}(r)} \le \sum_r d_{r,\psi(r)}
  \ \text{ for all } \va \in H(n,k) \text{ and all } \psi \text{ of type }
  \va \,\Bigr\}.
\]
The resulting finite collection of rational polyhedral cones covers
$\mathbb{R}^{\,n\times k}$. On $C(\bm\phi)$, each $\mu_D(\va)$ agrees with
the linear functional $D \mapsto \sum_r d_{r,\phi_{\va}(r)}$. We will use
two consequences of this description. First,
$D \mapsto \mu_D$ is rational linear on each $C(\bm\phi)$, so
$\mathcal{A}_{n,k}$ is a finite Minkowski sum of rational polyhedral cones and
is therefore closed and rational polyhedral.  Second, for a fixed exponent
vector $\vg$, the function $D \mapsto \langle \vg, \mu_D \rangle$ is linear on
each $C(\bm\phi)$, hence nonnegative there if and only if it is nonnegative at
every extreme ray of $C(\bm\phi)$.

\subsection{The polar description}

\begin{proposition}[Polar description of $\BR_{\mathring{Q}}$]
\label{prop:BRQ-polar}
The cone of bounded ratios for products of linear forms is the positive dual of
the assignment-valuation cone. More precisely,
\[
 \BR_{\mathring Q}(n,k)
 =\mathcal A_{n,k}^{\vee}
 =\Bigl\{\vg\in V_{n,k}:
   \langle\vg,\mu_D\rangle\ge0
   \text{ for every }D\in\mathbb R^{\,n\times k}\Bigr\}.
\]
\end{proposition}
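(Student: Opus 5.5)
We prove the two inclusions separately, following the pattern of the proof of Theorem~\ref{thm:BRL-duality}; the set $\mathcal P(n,k)$ is simply the image of an explicit polynomial map, which makes several steps easier. The second equality (the description of $\mathcal A_{n,k}^\vee$ inside $V_{n,k}$) is formal once we check that affine functions lie in $\mathcal A_{n,k}$ up to sign, or equivalently that bounded ratios are balanced. To see this, take $D$ with $d_{ri}=c_i$ independent of $r$. Every assignment of type $\va$ then gives the value $\sum_i c_i\alpha_i$, so $\mu_D$ is the linear function $\va\mapsto\sum_ic_i\alpha_i$, and $-\mu_D=\mu_{-D}$. Hence both $\pm$ every affine class lies in $\mathcal A_{n,k}$. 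A vector pairing nonnegatively with all $\mu_D$ therefore annihilates $\operatorname{Aff}(H(n,k))$ and lies in $V_{n,k}$ by Lemma~\ref{lem:quotientdual}.

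\emph{Necessity.} Let $\vg\in\BR_{\mathring Q}(n,k)$ with $R_{\vg}\le C$ on $\mathcal P(n,k)$, and fix $D$. Substitute $a_{ri}=t^{d_{ri}}$ with $0<t<1$. For each $\va$, $P_{\va}(t^D)$ is a finite sum of positive powers of $t$ whose minimal exponent is $\mu_D(\va)$. Hence
\[
P_{\va}(t^D)=c_{\va}t^{\mu_D(\va)}(1+o(1)),
\]
where $c_{\va}\ge1$ counts the minimizing assignments. Therefore $R_{\vg}(\vP(t^D))=c\,t^{\langle\vg,\mu_D\rangle}(1+o(1))$ with $c>0$. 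If $\langle\vg,\mu_D\rangle<0$, this tends to infinity as $t\to0^+$, contradicting the bound. Applying the same argument to $\pm$ linear $\mu_D$ recovers balancedness.

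\emph{Sufficiency.} This is the main obstacle. We mimic Lemma~\ref{lem:escape}, but in the parameter space $\Rpos^{n\times k}$ rather than the coefficient space. First assume $\vg$ is integral and balanced, and suppose $R_{\vg}$ is unbounded on $\mathcal P(n,k)$. Since $\vg$ is balanced, $R_{\vg}(\vP(A))$ is invariant under scaling each row of $A$, because $P_{\va}$ is multihomogeneous of degree one in each row. We may therefore normalize each row of $A$ to sum to $1$, working in a compact product of simplices. Curve selection (Lemma~\ref{lem:curve-selection}), applied to $\{(u,A):uN(\vP(A))=D(\vP(A)),\,0<u<1\}$ as in the proof of Lemma~\ref{lem:escape}, gives a semialgebraic curve $A(t)$ with $R_{\vg}(\vP(A(t)))\to\infty$.

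By growth dichotomy (Lemma~\ref{lem:growth-dichotomy}) we have $a_{ri}(t)=c_{ri}t^{d_{ri}}(1+o(1))$ with $c_{ri}>0$ and rational $d_{ri}$. Since every $a_{ri}(t)>0$ and every $P_{\va}$ has nonnegative coefficients, no cancellation occurs in $P_{\va}(A(t))$. Its leading exponent is therefore exactly $\mu_D(\va)$ for the matrix $D=(d_{ri})$, with a positive leading coefficient. Comparing exponents in $uN=D$ as before gives $\langle\vg,\mu_D\rangle=-r<0$. This is the key point: the absence of cancellation replaces the tropicalization theorem used in the Lorentzian case.

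To finish, pass from integral vectors to all of $\mathcal A_{n,k}^\vee$. The cone $\mathcal A_{n,k}$ is rational polyhedral, as noted before the proposition, so its dual is generated by finitely many integral vectors. Each generator is bounded by the argument above, and $\BR_{\mathring Q}(n,k)$ is a convex cone by the argument of Lemma~\ref{lem:BRcone}. This gives $\mathcal A_{n,k}^\vee\subseteq\BR_{\mathring Q}(n,k)$, completing the proof.
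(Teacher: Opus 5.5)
Your proof is correct. Your necessity direction, and your check that the pairing conditions force balance, match the paper's. The paper obtains balance by rescaling a column of $A$, which amounts to your $D$ with $d_{ri}=c_i$.

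For sufficiency, however, you take a much longer route than the paper. The paper argues pointwise. Given any positive $A$, it sets $d_{ri}=-\log a_{ri}$. Then $P_{\va}(A)$ is a sum of $\binom n{\va}$ positive monomials whose largest one equals $e^{-\mu_D(\va)}$, so $e^{-\mu_D(\va)}\le P_{\va}(A)\le\binom n{\va}e^{-\mu_D(\va)}$, and therefore
\[
 R_{\vg}(A)\le e^{-\langle\vg,\mu_D\rangle}\prod_{\va}\binom n{\va}^{\max\{\gamma_{\va},0\}}\le\prod_{\va}\binom n{\va}^{\max\{\gamma_{\va},0\}}.
\]
Your observation that positivity rules out cancellation is exactly the key point. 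But it holds uniformly at every point of the parameter space, not only asymptotically along a curve. As a result, curve selection, growth dichotomy, the reduction to integral $\vg$, and the appeal to rational polyhedrality of $\mathcal A_{n,k}$ are all unnecessary. The pointwise argument also handles real $\vg$ directly and gives an explicit bounding constant, which your escape-curve argument does not.

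What your route buys is uniformity with Lemma~\ref{lem:escape}. That argument is what one needs when no pointwise comparison with a tropical quantity is available, as on the Lorentzian locus. For products of linear forms, though, the direct estimate is both shorter and stronger.

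A minor point of phrasing: affine classes are zero in the quotient, so ``$\pm$ every affine class lies in $\mathcal A_{n,k}$'' is vacuous. What you actually use is that $\mu_D$ ranges over $\pm$ every linear function on $H(n,k)$. That fact does correctly force any $\vg$ satisfying the pairing conditions into $V_{n,k}$.
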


\begin{proof}
We prove the two inclusions separately.

\smallskip
\noindent
\emph{Necessity.}
Suppose that $R_{\vg}$ is bounded on products of positive linear forms.  We
first show that $\vg$ is balanced.  Fix $i\in\{1,\dots,k\}$ and multiply the
$i$th column of a positive parameter matrix $A$ by a scalar $s>0$.  Every
assignment of type $\va$ uses that column exactly $\alpha_i$ times, so this
rescaling sends
\[
 P_{\va}(A)\longmapsto s^{\alpha_i}P_{\va}(A).
\]
Consequently, it multiplies the ratio by
\[
 s^{\sum_{\va}\gamma_{\va}\alpha_i}.
\]
The rescaled matrix is positive for every $s>0$.  If the exponent in this
display were positive, the ratio would become unbounded as $s\to\infty$.  If
it were negative, the ratio would become unbounded as $s\to0^+$.  Hence
\[
 \sum_{\va\in H(n,k)}\gamma_{\va}\alpha_i=0
 \qquad(i=1,\dots,k),
\]
which is precisely the condition $\vg\in V_{n,k}$.

Now fix $D=(d_{ri})\in\mathbb R^{\,n\times k}$ and $0<t<1$.  Substitute
$a_{ri}=t^{d_{ri}}$.  For an assignment $\phi$ of type $\va$, the
corresponding exponent and monomial are
\[
 c_D(\phi)=\sum_{r=1}^n d_{r,\phi(r)},
 \qquad
 \prod_{r=1}^n t^{d_{r,\phi(r)}}=t^{c_D(\phi)}.
\]
By definition, the smallest exponent among these monomials is
$\mu_D(\va)=\min_\phi c_D(\phi)$.  Factoring out the contribution of this
smallest exponent gives
\[
 P_{\va}(t^D)
 =t^{\mu_D(\va)}q_{\va,D}(t),
 \qquad
 q_{\va,D}(t)
 =\sum_{\phi\text{ of type }\va}
   t^{c_D(\phi)-\mu_D(\va)}.
\]
Every exponent in the last sum is nonnegative and at least one is zero.
Since $0<t<1$ and there are $\binom n\va$ assignments of type $\va$, we have
\[
 1\le q_{\va,D}(t)\le\binom n\va.
\]
Equivalently,
\begin{equation}\label{eq:mu-sandwich}
 t^{\mu_D(\va)}
 \le P_{\va}(t^D)
 \le\binom n\va t^{\mu_D(\va)}.
\end{equation}
Then
\[
 R_{\vg}(t^D)
 =t^{\langle\vg,\mu_D\rangle}
  \prod_{\va\in H(n,k)}q_{\va,D}(t)^{\gamma_{\va}}.
\]
The second factor is bounded above and below by positive constants independent
of $t$:
\[
 \prod_{\va}\binom n\va^{\min\{\gamma_{\va},0\}}
 \le
 \prod_{\va}q_{\va,D}(t)^{\gamma_{\va}}
 \le
 \prod_{\va}\binom n\va^{\max\{\gamma_{\va},0\}}.
\]
If $\langle\vg,\mu_D\rangle<0$, the first factor tends to infinity as
$t\to0^+$, contradicting boundedness.  Therefore
$\langle\vg,\mu_D\rangle\ge0$ for every $D$.  Since $\vg$ is balanced and
$\mathcal A_{n,k}$ is generated by the classes $[\mu_D]$, this says exactly
that $\vg\in\mathcal A_{n,k}^{\vee}$.

\smallskip
\noindent
\emph{Sufficiency.}
Conversely, suppose that $\vg\in V_{n,k}$ and
$\langle\vg,\mu_D\rangle\ge0$ for every $D$.  Let
$A=(a_{ri})\in\mathbb R_{>0}^{\,n\times k}$ be arbitrary and set
\[
 d_{ri}=-\log a_{ri},
 \qquad\text{so that}\qquad
 a_{ri}=e^{-d_{ri}}.
\]
For an assignment $\phi$ of type $\va$, its monomial in $P_{\va}(A)$ is
\[
 \prod_{r=1}^n a_{r,\phi(r)}
 =e^{-c_D(\phi)}.
\]
The map $u\mapsto e^{-u}$ is decreasing.  Hence the largest assignment
monomial is obtained when the sum in the exponent is smallest, and its value
is
\[
 \exp\bigl(-\mu_D(\va)\bigr).
\]
Factoring this largest monomial out of the coefficient gives
\[
 P_{\va}(A)=e^{-\mu_D(\va)}q_{\va},
 \qquad
 q_{\va}
 =\sum_{\phi\text{ of type }\va}
   e^{-(c_D(\phi)-\mu_D(\va))}.
\]
Every difference $c_D(\phi)-\mu_D(\va)$ is nonnegative, with equality for at
least one minimizing assignment. Hence
\[
 1\le q_{\va}\le\binom n\va.
\]
Equivalently,
\[
 e^{-\mu_D(\va)}
 \le P_{\va}(A)
 \le\binom n\va e^{-\mu_D(\va)}.
\]
With $\gamma_{\va}^{+}=\max\{\gamma_{\va},0\}$, it follows that
\begin{align}
 R_{\vg}(A)
 &=\exp\bigl(-\langle\vg,\mu_D\rangle\bigr)
   \prod_{\va}q_{\va}^{\gamma_{\va}}\notag\\
 &\le
 \exp\bigl(-\langle\vg,\mu_D\rangle\bigr)
 \prod_{\va}\binom n\va^{\gamma_{\va}^{+}}
 \le\prod_{\va}\binom n\va^{\gamma_{\va}^{+}}.
 \label{eq:BRQ-bound}
\end{align}
The final bound depends only on $\vg$, not on $A$.  Hence $R_{\vg}$ is
bounded on all products of positive linear forms, so
$\vg\in\BR_{\mathring Q}(n,k)$.
\end{proof}

\subsection{The algorithm}

\begin{algorithm}[Computation of $\BR_{\mathring{Q}}(n,k)$]\label{alg:BR}
\leavevmode
\begin{enumerate}
  \item Substitute $a_{ri} = t^{d_{ri}}$, so that every monomial of every
        $P_{\va}$ becomes a power of $t$ with exponent linear in $D$.
  \item Enumerate the choices of minimizing assignments
        $\bm\phi = (\phi_{\va})$ and form the corresponding systems of linear
        inequalities.  Discard the infeasible ones.
  \item For each feasible $\bm\phi$, compute the extreme rays $V = (v_{ri})$ of
        $C(\bm\phi)$ and record the linear inequality
        $\sum_{\va} \gamma_{\va} \sum_r v_{r,\phi_{\va}(r)} \ge 0$ in the
        unknowns $\gamma_{\va}$.
  \item By Proposition~\ref{prop:BRQ-polar}, the collected equations and
        inequalities cut out $\BR_{\mathring{Q}}(n,k)$.
\end{enumerate}
\end{algorithm}

\section{Computational results}\label{sec:computational}

\subsection{\texorpdfstring{The case $n=3$, $k=3$}{The case n=3, k=3}}

\begin{theorem}\label{thm:BRQ33}
The cone $\BR_{\mathring Q}(3,3)=\BR_{\mathring L}(3,3)$ is generated by
the exponent vectors of the nine triangular ratios
\[
 R_{\vrho(\ve_s;i\mid jk)}
 =\frac{P_{\ve_s+2\ve_i}P_{\ve_s+\ve_j+\ve_k}}
 {P_{\ve_s+\ve_i+\ve_j}P_{\ve_s+\ve_i+\ve_k}},
 \qquad s,i\in\{1,2,3\},
\]
where $\{j,k\}=\{1,2,3\}\setminus\{i\}$. It has dimension seven, nine
extreme rays, and eight facets. Each ratio is subtraction-free with $C=1$
and has supremum $1$ on $\mathcal P(3,3)$, not attained there.
\end{theorem}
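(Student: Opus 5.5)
The plan is to establish the equality $\BR_{\mathring Q}(3,3)=\BR_{\mathring L}(3,3)$ first and read off the combinatorial data from Corollary~\ref{cor:threevariable}. After that, the bound $C=1$ is treated separately by a direct combinatorial argument on assignments.

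For the inclusion $\BR_{\mathring L}(3,3)\subseteq\BR_{\mathring Q}(3,3)$, I use Theorem~\ref{thm:Q-lorentzian}: every $\vP\in\mathcal P(3,3)$ is the coefficient vector of a Lorentzian cubic with positive coefficients. A uniform bound on $\mathring L(3,3)$ extends by continuity to all Lorentzian polynomials with positive coefficients, so every $\vg\in\BR_{\mathring L}(3,3)$ is bounded on $\mathcal P(3,3)$.

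For the reverse inclusion I dualize. By Theorem~\ref{thm:BRL-duality} and Proposition~\ref{prop:BRQ-polar}, it suffices to show $\mathcal M_{3,3}\subseteq\mathcal A_{3,3}$. Both cones are closed in the $7$-dimensional quotient $\mathbb R^{H(3,3)}/\operatorname{Aff}(H(3,3))$, and $\mathcal M_{3,3}$ is pointed by Lemma~\ref{lem:Mpointed}. So it is enough to realize one representative of each extreme ray of $\mathcal M_{3,3}=\pi(\mathcal C_{3,3})$ as some $[\mu_D]$. These extreme rays correspond to the facets of $\BR_{\mathring L}(3,3)$ by Corollary~\ref{cor:facets}.

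By Proposition~\ref{prop:splitclasses}, six of these rays are the split classes $[\sigma_{S,r}]$. They are realized by one-column degenerations. For instance, taking the rows of $D$ to be $(c_r,0,0)$ with $(c_1,c_2,c_3)=(0,1,1)$ gives $\mu_D(\va)=\max\{0,\alpha_1-1\}$, because $\mu_D(\va)$ is the sum of the $\alpha_1$ smallest $c_r$. Taking $(c_r)=(0,0,1)$ gives $\sigma_{\{1\},2}$, and permuting columns handles the remaining splits.

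The main obstacle is identifying the remaining extreme rays, which I expect to be two. I would enumerate the extreme rays of the hive cone $\mathcal C_{3,3}$ modulo affine functions directly: it is cut out by the nine rhombus inequalities of Lemma~\ref{lem:three-rhombus-cone} in a $7$-dimensional quotient, so this is a small finite computation. I then expect the non-split rays to be the two cyclically symmetric classes. As a first guess I would try $D=\pm I$, that is, $d_{ri}=\mp\delta_{ri}$. For $D=-I$ this gives $\mu_D(\va)=-\#\{i:\alpha_i\ge1\}$, and I would check its local Hessians against the non-split rays, adjusting $D$ (for example $D$ a cyclic shift) if needed. Once all extreme rays are realized, $\mathcal M_{3,3}\subseteq\mathcal A_{3,3}$, hence $\BR_{\mathring Q}(3,3)=\BR_{\mathring L}(3,3)$.

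The remaining structural data then follow. Corollary~\ref{cor:threevariable} gives the nine triangular generators and the nine extreme rays. The dimension is $\dim V_{3,3}=7$, as computed in the proof of Lemma~\ref{lem:compatible-hessian-triples}. The number of facets equals the number of extreme rays found above, which I expect to be eight.

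For the constant, I show that $R_{\vrho(\ve_s;i\mid jk)}\le1$ on $\mathcal P(3,3)$ in subtraction-free form. I will construct an injection from pairs of assignments contributing to $P_{\ve_s+2\ve_i}P_{\ve_s+\ve_j+\ve_k}$ into pairs contributing to $P_{\ve_s+\ve_i+\ve_j}P_{\ve_s+\ve_i+\ve_k}$ that preserves the product of the $a_{r,\phi(r)}$. The natural map swaps the targets $i$ and $j$ on one row. The injection will not be surjective, and the unmatched monomials are positive when $A>0$; this gives a strict inequality, so the supremum is not attained.

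The supremum equals $1$: choose $D$ with $\langle\vrho,\mu_D\rangle=0$, such as $D=0$ perturbed so that each $P_{\va}$ has a unique minimizing assignment. In the sandwich \eqref{eq:mu-sandwich}, all $q_{\va,D}(t)\to1$ as $t\to0^+$, so the ratio tends to $1$.
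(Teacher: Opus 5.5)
Your route differs from the paper's. The paper runs Algorithm~\ref{alg:BR} at $(n,k)=(3,3)$ and matches the output with Corollary~\ref{cor:threevariable}, proves $C=1$ by expanding the orbit representatives $R_6^*,R_3^*$, and proves sharpness with the explicit families $F_{6,t},F_{3,t}$. Proving $\mathcal M_{3,3}\subseteq\mathcal A_{3,3}$ ray by ray would instead give a computer-free proof of the equality and of the facet count. Your swap injection is also a good substitute for expansion. When $s\ne i$ you must swap $i$ with $j=s$: the needed row exists because $\phi^{-1}(i)$ and $\psi^{-1}(s)$ are two-element subsets of $\{1,2,3\}$, and the map is injective because $\psi'$ sends exactly one row to $i$.

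However, the key realization step is not carried out, and your candidates fail. The map $L$ of Definition~\ref{def:compatible-hessian-arrays} identifies $\mathbb R^{H(3,3)}/\operatorname{Aff}(H(3,3))$ with $S$ (Lemma~\ref{lem:compatible-hessian-triples}). Under it, $\mathcal M_{3,3}$ becomes $S\cap\mathbb R_{\ge0}^{3\times3}$, which has eight extreme rays:
\begin{itemize}
\item the six split rays $E_{1,1},E_{2,2},E_{3,3},E_{2,1}+E_{3,1},E_{1,2}+E_{3,2},E_{1,3}+E_{2,3}$;
\item the two cyclic rays $E_{1,3}+E_{2,1}+E_{3,2}$ and $E_{1,2}+E_{2,3}+E_{3,1}$.
\end{itemize}
But $\mu_{-I}=\sum_i\sigma_{\{i\},1}-3$ and $\mu_I=\sum_i\sigma_{\{i\},2}$. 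So $L(\mu_{-I})$ is the sum of the two cyclic rays and $L(\mu_I)=E_{1,1}+E_{2,2}+E_{3,3}$; neither spans an extreme ray. A cyclic shift does not help either. Permuting the rows of $D$ does not change $\mu_D$, so $\mu_{\pm P}=\mu_{\pm I}$ for every permutation matrix $P$.

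What works is the circulant $d_{rc}=(r-c)\bmod 3$. For it, with indices mod $3$,
\[
\mu_D(3\ve_i)=3,\quad \mu_D(2\ve_i+\ve_{i+1})=2,\quad \mu_D(\ve_i+2\ve_{i+1})=1,\quad \mu_D(\ve_1+\ve_2+\ve_3)=0,
\]
and $L(\mu_D)=3(E_{1,3}+E_{2,1}+E_{3,2})$. Its transpose gives the other cyclic ray. Without explicit matrices of this kind, the inclusion $\mathcal M_{3,3}\subseteq\mathcal A_{3,3}$, and hence the equality of cones, remains unproved.

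The sharpness step also fails as written. Since $D\mapsto\mu_D$ is positively homogeneous, $\langle\vrho,\mu_{\varepsilon D'}\rangle=\varepsilon\langle\vrho,\mu_{D'}\rangle$. Perturbing $D=0$ therefore buys nothing; everything depends on the direction $D'$. For instance $\langle\vrho(\ve_3;3\mid12),\mu_I\rangle=1$, and by continuity the pairing stays positive for $D'$ near $I$. Along such directions the ratio tends to $0$, not $1$.

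You need to exhibit, for each of the two orbits, a specific $D$ with $\langle\vrho,\mu_D\rangle=0$ whose leading coefficients cancel in the ratio. The paper's $F_{6,t}$ and $F_{3,t}$ do exactly this. In $F_{6,t}$ the leading coefficients of $P_{021}$ (numerator) and $P_{111}$ (denominator) are both $2$, so unique minimizers are sufficient but not necessary.
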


\begin{proof}
Algorithm~\ref{alg:BR} with $n=k=3$ gives the nine displayed generators,
dimension seven, and eight facets. Corollary~\ref{cor:threevariable}
identifies their conical hull with $\BR_{\mathring L}(3,3)$ and shows
that all nine rays are extreme.

Variable permutations give two orbits of sizes six and three, according
as $s\ne i$ or $s=i$, with representatives
\[
  R_6^*:=\frac{P_{021}P_{102}}{P_{012}P_{111}},
  \qquad
  R_3^*:=\frac{P_{003}P_{111}}{P_{012}P_{102}}.
\]
Write each representative as $N/D$. Exact expansion in the parameters
$a_{ri}$ of the three linear factors shows that $D-N$ is a nonzero
polynomial with nonnegative coefficients. Thus both representatives are
subtraction-free with $C=1$ and strictly less than $1$ on
$\mathcal P(3,3)$. By symmetry, the same holds for all nine ratios.

To prove sharpness, let $t>0$ and consider
\begin{align*}
  F_{6,t}
  &=(tx_1+x_2+x_3)(t^2x_1+x_2+x_3)(x_1+x_2+tx_3),\\
  F_{3,t}
  &=(x_1+tx_2+x_3)(tx_1+x_2+x_3)(tx_1+tx_2+x_3).
\end{align*}
By symmetry, sharpness follows from the direct coefficient computations
\begin{align*}
  R_6^*(F_{6,t})
  &=\frac{1+t^2+t^3}{1+2t+t^2+2t^3}\longrightarrow1,\\
  R_3^*(F_{3,t})
  &=\frac{1+2t+3t^2}{(1+2t)^2}\longrightarrow1
  \qquad (t\to0^+).\qedhere
\end{align*}
\end{proof}

\subsection{\texorpdfstring{The case $n = 3$, $k = 4$}{The case n=3, k=4}}

\begin{theorem}\label{thm:BRQ34}
$\BR_{\mathring{Q}}(3,4)$ is a pointed rational polyhedral cone of dimension sixteen with $80$
extreme rays and $38$ facets. The extreme rays form five $S_4$-orbits of sizes
$24$, $8$, $24$, $12$, $12$, with representatives
\[
  R_A = \frac{P_{0030} P_{0201} P_{1002} P_{1110} P_{2001}}
             {P_{0012} P_{0210} P_{1011} P_{1101} P_{2010}},
  \qquad
  R_B = \frac{P_{0030} P_{0201} P_{1002} P_{2100}}
             {P_{0012} P_{0210} P_{1101} P_{2010}},
\]
\[
  R_C = \frac{P_{0021} P_{1002}}{P_{0012} P_{1011}},
  \qquad
  R_D = \frac{P_{0030} P_{1011}}{P_{0021} P_{1020}},
  \qquad
  R_E = \frac{P_{0210} P_{1011}}{P_{0111} P_{1110}} .
\]
Each of the $80$ ratios satisfies $R_{\vg} < 1$ on $\mathcal{P}(3,4)$, and
$\sup_{\mathcal{P}(3,4)} R_{\vg} = 1$.
\end{theorem}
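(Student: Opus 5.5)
The proof combines three ingredients: Algorithm~\ref{alg:BR} for the combinatorics of the cone, explicit polynomial expansions for the bound $R_{\vg}<1$, and explicit families of linear forms for sharpness.

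\emph{Step 1: the cone.} Run Algorithm~\ref{alg:BR} with $n=3$, $k=4$. Substitute $a_{ri}=t^{d_{ri}}$ with $D\in\mathbb R^{3\times4}$, enumerate the feasible choices $\bm\phi$ of minimizing assignments (one assignment for each of the twenty $\va\in H(3,4)$), and compute the extreme rays of each cone $C(\bm\phi)$. Collecting the inequalities $\langle\vg,\mu_V\rangle\ge0$ for these rays $V$ cuts out $\BR_{\mathring Q}(3,4)$ inside $V_{3,4}$, by Proposition~\ref{prop:BRQ-polar}. Then $\dim V_{3,4}=20-4=16$, and we check full dimensionality by exhibiting sixteen linearly independent bounded ratios among the five orbits. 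For pointedness, the inclusion $\BR_{\mathring Q}(3,4)\subseteq\BR_{\mathring L}(3,4)$ holds by Theorem~\ref{thm:Q-lorentzian}, and $\BR_{\mathring L}(3,4)=\mathcal M_{3,4}^\vee$ is pointed because $\mathcal M_{3,4}$ is full dimensional. Full dimensionality can be certified by a strictly M-convex interior point, such as $\nu(\va)=\sum\alpha_i^2$, whose local Hessians are $2(J-I)$, a strict tree metric. A double-description (Fourier--Motzkin) conversion from the facet description then yields the $80$ extreme rays and the $38$ irredundant facets, and the extreme rays are sorted into $S_4$-orbits. The orbit sizes $24,8,24,12,12$ are checked by computing the stabilizer of each representative; for example, $R_D$ is fixed only by the transposition of the two variables absent from its support pattern, giving $24/2=12$. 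The fact that these rays are extreme (not merely generators) comes from the facet–ray incidence: each ray lies on at least fifteen facets whose normals span a fifteen-dimensional space.

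\emph{Step 2: $R_{\vg}<1$ on $\mathcal P(3,4)$.} By $S_4$-symmetry it suffices to treat the five representatives. Write each as $N/D$ with $N,D$ products of coefficients $P_{\va}(A)$, which are polynomials in the $a_{ri}$ with nonnegative coefficients, and expand exactly. The goal is to show that $D-N$ is a nonzero polynomial with nonnegative coefficients, as in the proof of Theorem~\ref{thm:BRQ33}. Strict positivity on $\Rpos^{3\times4}$ then gives $R<1$. For $R_C,R_D,R_E$ this is a small expansion. $R_C$ and $R_D$ are essentially triangular ratios supported on three variables, so they reduce to Theorem~\ref{thm:BRQ33} after setting the fourth column to zero; note that $R_C$ involves only the variables $1,3,4$. $R_E$ needs its own expansion. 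For $R_A$ and $R_B$ the products have degree $15$ and $12$ in the $a_{ri}$, and the subtraction-free check is a large but mechanical computer-algebra verification. I expect this step to be the main obstacle. If $D-N$ fails to be coefficientwise nonnegative, I would instead write $R_A$ as a product of lower-degree ratios already known to be at most $1$, for instance factoring $R_B=R'\cdot R''$ with triangular or $R_E$-type factors. Alternatively I would prove $N\le D$ by an injection between assignment multisets: an explicit injection from the tuples of assignments contributing to $N$ into those contributing to $D$ that preserves the multiset of used entries $a_{ri}$.

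\emph{Step 3: sharpness.} The value $\sup R_{\vg}=1$ is at least the limit along a curve. Each extreme ray $\vg$ lies on facets of $\BR_{\mathring Q}(3,4)$ defined by normals $[\mu_D]$ with $\langle\vg,\mu_D\rangle=0$. Choose $D$ in the relative interior of a cone $C(\bm\phi)$ on which this pairing vanishes, and take $a_{ri}=t^{d_{ri}}$. Then by \eqref{eq:mu-sandwich} the ratio tends to $\prod_{\va}q_{\va}(0)^{\gamma_{\va}}$, where $q_{\va}(0)$ counts the minimizing assignments. We choose $D$ generically inside the zero face so that every relevant minimizer is unique, which makes the limit exactly $1$, as in the explicit families $F_{6,t},F_{3,t}$ of Theorem~\ref{thm:BRQ33}. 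I would write down one such explicit family of three linear forms per representative and verify the limit directly.
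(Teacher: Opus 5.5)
Your plan contains one genuine error, in the pointedness argument. You have the inclusion backwards. Products of nonnegative linear forms lie in the closure of the strictly Lorentzian locus (Theorem~\ref{thm:Q-lorentzian}), so a ratio bounded on $\mathring L(3,4)$ is bounded on $\mathcal P(3,4)$. This gives $\BR_{\mathring L}(3,4)\subseteq\BR_{\mathring Q}(3,4)$, not the reverse. By Theorem~\ref{thm:BRLor34} the inclusion is strict: $48$ versus $80$ extreme rays, and $R_A,R_B$ are unbounded on $\mathring L(3,4)$.

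Pointedness of the smaller cone $\mathcal M_{3,4}^\vee$ therefore tells you nothing about $\BR_{\mathring Q}(3,4)=\mathcal A_{3,4}^\vee$. Full-dimensionality of $\mathcal M_{3,4}$, including your interior-point certificate, is the wrong input, because $\mathcal A_{3,4}\subseteq\mathcal M_{3,4}$ only bounds $\dim\mathcal A_{3,4}$ from above. What you must verify is that $\mathcal A_{3,4}$ is full-dimensional in the $16$-dimensional affine quotient. Concretely, check that the normals $[\mu_V]$ collected by Algorithm~\ref{alg:BR} span that quotient, or equivalently that the double-description output has trivial lineality space. This is how the paper's ``exact ray and facet enumeration'' covers the word ``pointed.''

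The rest of your plan follows the paper's proof. The cone data and $S_4$-orbits come from the algorithm plus exact enumeration. The bound $R_X<1$ comes from checking that $D_X-N_X$ is a nonzero polynomial with nonnegative coefficients for the five representatives. Sharpness comes from explicit monomial curves: the paper uses $F_{\vd,t}$ with first factor $x_1+x_2+x_3+x_4$ and specific vectors $\vd_X$, along which the lowest-order terms of numerator and denominator coincide, which is your mechanism.

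Your reduction of $R_C$ and $R_D$ to the ternary ratios $R_6^*$ and $R_3^*$ of Theorem~\ref{thm:BRQ33} is valid. Two small slips in that part:
\begin{itemize}
\item The column these ratios ignore is the second, not the fourth.
\item $R_D$ is fixed by the transposition of $x_1$ and $x_4$, its pair $\{j,k\}$; only $x_2$ is absent. The orbit count $24/2=12$ is still correct.
\end{itemize}

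One further caution on sharpness. A tie-free $D$ with $\langle\vg,\mu_D\rangle=0$ need not exist a priori. The zero set of $D\mapsto\langle\vg,\mu_D\rangle$ could lie entirely in walls of the chambers $C(\bm\phi)$, where minimizers for indices in the support of $\vg$ are tied. So it is your direct verification of one explicit curve per representative, as in the paper, that actually proves $\sup R_{\vg}=1$.
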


\begin{proof}
Applying Algorithm~\ref{alg:BR} with $(n,k)=(3,4)$ and then carrying out exact
ray and facet enumeration yields the stated cone data and the five
$S_4$-orbits.

For $X\in\{A,B,C,D,E\}$, write $R_X=N_X/D_X$. In each case, $D_X-N_X$ is a
nonzero polynomial in the parameters of the three linear factors, and all its
coefficients are nonnegative. Hence $R_X<1$ on $\mathcal P(3,4)$. By symmetry,
the same holds for all $80$ ratios.

It remains to prove sharpness. For $\vd=(d_1,\ldots,d_6)$ and $t>0$, set
\[
  F_{\vd,t}
  =(x_1+x_2+x_3+x_4)
   (t^{d_1}x_1+t^{d_2}x_2+t^{d_3}x_3+x_4)
   (t^{d_4}x_1+t^{d_5}x_2+t^{d_6}x_3+x_4).
\]
Take
\[
\begin{array}{lll}
 \vd_A=(-2,2,1,2,-1,1),&
 \vd_B=(3,-4,1,-3,-4,-2),&
 \vd_C=(-4,-3,-2,3,-4,-1),\\
\vd_D=(3,2,1,-3,2,-1),&
 \vd_E=(4,3,5,-5,-1,2).&
\end{array}
\]
For $t>0$, each $F_{\vd_X,t}$ belongs to $\mathcal P(3,4)$.
For each $X$, the lowest powers of $t$ appearing in the numerator and
denominator of $R_X(F_{\vd_X,t})$ have the same exponent and the same
coefficient. Therefore
\[
  R_X(F_{\vd_X,t})\longrightarrow1
  \qquad(t\to0^+).
\]
Variable permutations give the same limit for every ratio in the
corresponding orbit. Thus all $80$ ratios have supremum $1$.
\end{proof}

\begin{theorem}\label{thm:BRLor34}
The cone $\BR_{\mathring L}(3,4)$ has dimension $16$ and exactly $48$
extreme rays, namely all triangular-ratio rays
\[
 \vrho(\ve_s;i\mid jk),
 \qquad s,i,j,k\in\{1,2,3,4\},
\]
with $i,j,k$ pairwise distinct and $\{j,k\}$ unordered. They form the
$R_D$-, $R_C$-, and $R_E$-orbits of Theorem~\ref{thm:BRQ34}, of sizes
$12$, $24$, and $12$, with sharp constants $4/3$, $1$, and $1$,
respectively. None of these suprema is attained on $\mathring L(3,4)$.

The $32$ ratios in the $R_A$- and $R_B$-orbits are unbounded on
$\mathring L(3,4)$. Thus
$\BR_{\mathring L}(3,4)\subsetneq\BR_{\mathring Q}(3,4)$.
\end{theorem}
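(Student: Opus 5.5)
Theorem~\ref{thm:BRLor34} has three parts: identifying the cone, computing the sharp constants, and proving the two unboundedness claims. The plan handles them in that order.

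\emph{The cone.} By Theorem~\ref{thm:BRL-duality}, $\BR_{\mathring L}(3,4)=\mathcal M_{3,4}^\vee$. By Lemma~\ref{lem:localslices} and Proposition~\ref{prop:treemetric}, $\nu$ is M-convex exactly when each of the four local Hessians $d^{\ve_s}(\nu)$ is a tree metric on four labels. So the plan is to show that $\mathcal M_{3,4}$ equals the cone cut out by all triangle inequalities $d^{\ve_s}_{ij}+d^{\ve_s}_{ik}-d^{\ve_s}_{jk}\ge0$. These are exactly the pairings $\langle\vrho(\ve_s;i\mid jk),\nu\rangle\ge0$, by the computation in Step~3 of Lemma~\ref{lem:three-rhombus}. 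Two inclusions are needed.
\begin{enumerate}
\item M-convex functions satisfy all triangle inequalities, hence so does their cone.
\item Every function all of whose local Hessians are metrics is a nonnegative sum of M-convex functions.
\end{enumerate}
For (ii), the key fact is that on four labels the metric cone is generated by the split metrics together with cut metrics. Since every metric on four points is $\ell_1$-embeddable, the metric cone equals the cut cone $\mathrm{Cut}_4$, generated by the $7$ split metrics $\delta_S$. The naive attempt is to decompose each $d^{\ve_s}(\nu)=\sum_S c_{s,S}\delta_S$ and lift each piece via Lemma~\ref{lem:split}. This fails, because a split field must be constant on layers $\vb(S)$.

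Instead, the plan is to verify (ii) computationally, in the same way as Theorem~\ref{thm:BRQ34}. Enumerate the extreme rays of the rational polyhedral cone $\{[\nu]: \text{all } 48 \text{ triangle pairings}\ge0\}$ in the $20-4=16$ dimensional quotient, and check that each extreme ray has an M-convex representative, i.e.\ that all four local Hessians of the ray satisfy the four-point condition. Given that, dualizing shows $\BR_{\mathring L}(3,4)$ is generated by the $48$ triangular vectors. Irredundancy follows as in Lemma~\ref{lem:three-irredundant}: for each triangular pairing, exhibit a witness function in the cone on which that pairing vanishes and all others are strict, using separable functions $\sum_m f_m(\alpha_m)$ minus an indicator. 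Full dimension $16$ follows from pointedness (Lemma~\ref{lem:Mpointed}). The three orbit sizes $12,24,12$ are a direct count: $s=i$ gives $4\cdot3$, $s\notin\{i,j,k\}$ gives $4\cdot3$, and the remaining case gives $24$. Each orbit is then matched with $R_D,R_C,R_E$ by inspection.

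\emph{Sharp constants.} For an extreme triangular ray, the Hessian bound $T\le2$ (Definition~\ref{def:triangular}) combined with the factor $\kappa_{\ve_s,i}$ gives constants $2/\tfrac32=4/3$ when $s=i$ and $2/2=1$ otherwise. Sharpness comes from embedding the ternary extremal families of Corollary~\ref{cor:paired-ratios33} into four variables, with the fourth variable weighted by a small parameter. Nonattainment follows from the openness argument at the end of Theorem~\ref{thm:constants33}: perturb the coefficient $P_{\va}$ with $\gamma_{\va}>0$.

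\emph{Unboundedness, the main obstacle.} By Lemma~\ref{lem:escape} and Theorem~\ref{thm:BRL-duality}, it suffices to exhibit, for the representatives $R_A$ and $R_B$, an M-convex $\nu$ on $H(3,4)$ with $\langle\vg,\nu\rangle<0$. Symmetry then covers both orbits. The first attempt will be a sum of two non-compatible split functions corrected to be M-convex, or a function whose slice at $\ve_4$ carries the tree metric $\delta_{\{1,2\}}+\delta_{\{1,3\}}$ truncated appropriately. Failing that, the plan is to solve the LP $\min\langle\vg,\nu\rangle$ over each of the finitely many rational polyhedral cones $C_a$ of Lemma~\ref{lem:Mpolyhedral} and extract an integer certificate. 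Then $F^\nu_q$ from Definition~\ref{def:Fq} satisfies $R_{\vg}(F^\nu_q)\to\infty$ as $q\to0^+$, and approximating by strictly Lorentzian polynomials gives unboundedness. Strict inclusion follows, since $R_A,R_B\in\BR_{\mathring Q}(3,4)$ by Theorem~\ref{thm:BRQ34}.
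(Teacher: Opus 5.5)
Your identification of the cone is the paper's argument: enumerate the extreme rays of the cone $K$ cut out by the $48$ rhombus inequalities in the $16$-dimensional quotient, check that each has an M-convex representative via Proposition~\ref{prop:treemetric}, and dualize. The dimension count and the orbit sizes $12,24,12$ are also fine. However, two surrounding steps do not work as you describe them, and a third is much harder than it needs to be.

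\emph{Irredundancy.} The witness of Lemma~\ref{lem:three-irredundant} does not transfer literally. For a separable $\nu_0=\sum_m f_m(\alpha_m)$, the pairing with $\vrho(\bm\eta;\ell\mid pq)$ is the second difference of $f_\ell$ at $\eta_\ell$, so it does not depend on $\{p,q\}$. In four variables there are three choices of $\{p,q\}$ for each $(\bm\eta,\ell)$. All three rhombi $\vrho(\ve_s;i\mid pq)$ have $\ve_s+2\ve_i$ as a positive point and the same slack, so subtracting $\mathbf 1_{\ve_s+2\ve_i}$ makes three inequalities tight rather than one. Subtracting $\mathbf 1_{\ve_s+\ve_j+\ve_k}$ instead does isolate the chosen inequality when $s=i$ or $s\in\{j,k\}$. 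When $s\notin\{i,j,k\}$, however, that point is also positive in $\vrho(\ve_k;i\mid js)$ and $\vrho(\ve_j;i\mid ks)$, which have the same slack; there one needs, e.g., $\nu_0-\tfrac12(\mathbf 1_{\ve_s+2\ve_i}+\mathbf 1_{\ve_s+\ve_j+\ve_k})$. The paper avoids this entirely. By Theorem~\ref{thm:BRQ34}, the $48$ vectors span distinct extreme rays of the larger cone $\BR_{\mathring Q}(3,4)\supseteq\BR_{\mathring L}(3,4)$, so they remain extreme in their own conical hull.

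\emph{Sharpness on the $R_E$-orbit.} Your upper bounds $2/\kappa_{\ve_s,i}$ are correct. For $R_D$ and $R_C$, which involve only three variables, ternary near-extremal cubics regarded as quaternary forms do give sharpness. But $\vrho(\ve_s;i\mid jk)$ with $s,i,j,k$ pairwise distinct involves all four variables. For a form independent of any one variable, one of its four coefficients therefore vanishes, and an unspecified small perturbation in the fourth variable gives no control over the limit. You need an explicit construction. For instance, for a ternary form $F(x_i,x_j,x_s)$, the form $G=F(x_i,x_j,x_s+\epsilon x_k)$ is Lorentzian by \cite[Theorem~2.10]{BH20}. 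The binomial factors $2$ and the powers of $\epsilon$ cancel, giving $R_{\vrho(\ve_s;i\mid jk)}(G)=R_{\vrho(\ve_s;i\mid js)}(F)$, an $R_C$-type ternary ratio with sharp constant $1$. The paper instead uses the product families $F_{\vd_E,t}$ of Theorem~\ref{thm:BRQ34}.

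\emph{Unboundedness.} This is not the main obstacle once the cone is identified. The paper just notes that $R_A$ and $R_B$ span extreme rays of $\BR_{\mathring Q}(3,4)$ different from the $48$ triangular rays. They therefore cannot lie in the conical hull of those $48$ elements of $\BR_{\mathring Q}(3,4)$. If you want an explicit certificate, pair their exponent vectors with the M-convex extreme rays of $K$ you already computed; some pairing is negative. Then Theorem~\ref{lem:duality-necessary} (not Lemma~\ref{lem:escape}) converts it into unboundedness via $F_q^{\nu}$.

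Your first attempt rests on a false premise: $\delta_{\{1,2\}}+\delta_{\{1,3\}}$ is not a tree metric (its four-point sums are $2,2,4$), so it is never a local Hessian of an M-convex function. An LP over the cones $C_a$ of Lemma~\ref{lem:Mpolyhedral} is also impractical, since there is one cone for each choice function on several hundred exchange triples.
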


\begin{proof}
Definition~\ref{def:triangular} already shows that every displayed ratio
is bounded on $\mathring L(3,4)$. It remains to show that their exponent
vectors generate the entire cone. In the quotient by affine functions,
let $K$ be the cone defined by
\[
 \langle\vrho(\ve_s;i\mid jk),\nu\rangle\ge0
\]
for all indices as in the statement. Theorem~\ref{thm:BRL-duality}
therefore gives $\mathcal M_{3,4}\subseteq K$.

The reverse inclusion is the special computation needed here.
Choose the representative of each affine class satisfying
$\nu(3\ve_r)=0$ for $r=1,2,3,4$, leaving $20-4=16$ coordinates.
Exact ray enumeration for the $48$ inequalities defining $K$ gives a
pointed, full-dimensional cone with $42$ extreme rays in six
$S_4$-orbits. Each ray has an M-convex representative, as checked by
the local tree-metric criterion of Proposition~\ref{prop:treemetric}.
Consequently every extreme ray of $K$ belongs to $\mathcal M_{3,4}$,
so $K\subseteq\mathcal M_{3,4}$. Taking polars and using
Theorem~\ref{thm:BRL-duality} yields
\[
 \BR_{\mathring L}(3,4)
 =K^\vee
 =\operatorname{cone}\bigl\{\vrho(\ve_s;i\mid jk)\bigr\}.
\]
Since $K$ is pointed, its polar has dimension $16$.
The displayed generators are exactly the $R_D$-, $R_C$-, and $R_E$-orbits,
according as $s=i$, $s\in\{j,k\}$, or $s,i,j,k$ are pairwise distinct.
By Theorem~\ref{thm:BRQ34}, these are $48$ distinct extreme rays of
$\BR_{\mathring Q}(3,4)$, so they remain extreme in their conical hull.
The same theorem shows that none of the other $32$ extreme rays belongs
to this hull. This proves their unboundedness and the strict inclusion.

The constants follow from earlier results. For $s=i$,
Theorem~\ref{weighted-star-optimal-constant} with weights $(1,0,0)$
gives the sharp constant $4/3$ and nonattainment. For $s\ne i$ and
a strictly Lorentzian cubic $F$, the
principal submatrix of $\operatorname{Hess}(\partial_sF)$ on $i,j,k$
has positive entries and signature $(1,2)$ by eigenvalue interlacing.
Lemma~\ref{lem:one-hessian-region} gives $T_{(\ve_s;i\mid jk)}<2$:
the triple of logarithmic triangular ratios lies in the open region
$\mathcal C$, whose coordinate suprema equal
$\log\mathfrak m(1,0,0)=\log2$.
The conversion in Definition~\ref{def:triangular} is
$T_{(\ve_s;i\mid jk)}=2R_{\vrho(\ve_s;i\mid jk)}$, so the latter ratio
is strictly less than $1$. Sharpness follows from the product families
of Theorem~\ref{thm:BRQ34}. They are Lorentzian by
Theorem~\ref{thm:Q-lorentzian}, and coefficientwise approximation by
strictly Lorentzian forms preserves their limiting ratio, since their
denominator coefficients are positive. Thus the supremum is $1$ and
is not attained.
\end{proof}

\section{\texorpdfstring{When $\BR_{\mathring L}(n,k)\stackrel{?}{=}
\BR_{\mathring Q}(n,k)$}{BRL(n,k) =? BRQ(n,k)}}
\label{sec:comparison}

By Theorem~\ref{thm:Q-lorentzian}, every product of linear forms with
nonnegative coefficients is Lorentzian, and its positive coefficient vector
lies in the coefficientwise closure of the strictly Lorentzian locus.  Thus
one always has
\[
 \BR_{\mathring L}(n,k)\subseteq \BR_{\mathring Q}(n,k).
\]
We determine exactly when this inclusion is an equality. The classification
rests on the polar reduction below, a persistence lemma for the number of
variables, and two families of explicit witnesses.

The resulting classification is summarized in the following table.
\[
\begin{gathered}
 \BR_{\mathring L}(n,k)\stackrel{?}{=}\BR_{\mathring Q}(n,k)
 \\[4pt]
\begin{array}{c|ccccccc}
 k\backslash n & 2 & 3 & 4 & 5 & 6 & 7 & \cdots \\ \hline
 2 & = & = & = & = & = & = & \cdots \\
 3 & = & = & = & = & \ne & \ne & \cdots \\
 4 & = & \ne & \ne & \ne & \ne & \ne & \cdots \\
 5 & = & \ne & \ne & \ne & \ne & \ne & \cdots \\
 6 & = & \ne & \ne & \ne & \ne & \ne & \cdots \\
 7 & = & \ne & \ne & \ne & \ne & \ne & \cdots \\
 \vdots & \vdots & \vdots & \vdots & \vdots & \vdots & \vdots & \ddots
\end{array}
\end{gathered}
\]

\subsection{The comparison criterion}

\begin{proposition}[The two polar descriptions]
\label{prop:comparison-polars}
One has
\begin{equation}\label{eq:two-polars}
 \BR_{\mathring Q}(n,k)=\mathcal A_{n,k}^{\vee},
 \qquad
 \BR_{\mathring L}(n,k)=\mathcal M_{n,k}^{\vee},
 \qquad
 \mathcal A_{n,k}\subseteq\mathcal M_{n,k},
\end{equation}
and consequently
\begin{equation}\label{eq:equality-criterion}
 \BR_{\mathring L}(n,k)=\BR_{\mathring Q}(n,k)
 \quad\Longleftrightarrow\quad
 \mathcal A_{n,k}=\mathcal M_{n,k}.
\end{equation}
\end{proposition}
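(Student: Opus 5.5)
The first two equalities in \eqref{eq:two-polars} are already in hand: $\BR_{\mathring Q}(n,k)=\mathcal A_{n,k}^{\vee}$ is Proposition~\ref{prop:BRQ-polar}, and $\BR_{\mathring L}(n,k)=\mathcal M_{n,k}^{\vee}$ is Theorem~\ref{thm:BRL-duality}. Both polars are taken in the same paired space of Lemma~\ref{lem:quotientdual}, namely $V_{n,k}$ against $\mathbb R^{H(n,k)}/\operatorname{Aff}(H(n,k))$. So the plan is to prove the inclusion $\mathcal A_{n,k}\subseteq\mathcal M_{n,k}$ and then deduce \eqref{eq:equality-criterion} by biduality.

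For the inclusion it suffices to show that each generator $[\mu_D]$ lies in $\mathcal M_{n,k}$. We will show directly that $\mu_D$ is an M-convex function. Over the Puiseux field $K$, take the product $\prod_r\bigl(\sum_i t^{d_{ri}}x_i\bigr)$, after first clearing denominators when $D$ is rational. This product has nonnegative coefficients in $K$ and is a product of linear forms. For $0<q<1$ its real specializations are Lorentzian by Theorem~\ref{thm:Q-lorentzian}. Proposition~\ref{prop:lor-firstorder} and Proposition~\ref{prop:eventual-evaluation} then make it Lorentzian over $K$. Its tropicalization is exactly $\mu_D$, since all summands of $P_{\va}(t^D)$ are positive and so the leading terms cannot cancel. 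The tropicalization theorem \cite[Theorem~3.20]{BH20} therefore gives that $\mu_D$ is M-convex.

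For real $D$ we have two options. We can use that the M-convex functions form a closed set, a finite union of polyhedral cones (Lemma~\ref{lem:Mpolyhedral}), and that $D\mapsto\mu_D$ is continuous, then approximate $D$ by rational matrices. Alternatively, the rational case suffices outright, because $\mathcal A_{n,k}$ is generated by the images of the rational extreme rays of the cones $C(\bm\phi)$, on which $\mu_D$ is linear. A more elementary route would also work: $\mu_D$ is an infimal convolution of the linear functions $\va\mapsto d_{r,\cdot}$ on $H(1,k)$, and infimal convolutions of M-convex functions are M-convex \cite{Mur03}. Either argument gives $[\mu_D]\in\mathcal M_{n,k}$, hence $\mathcal A_{n,k}\subseteq\mathcal M_{n,k}$.

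For \eqref{eq:equality-criterion}, the direction ($\Leftarrow$) is immediate from the two polar formulas. For ($\Rightarrow$), both cones are closed: $\mathcal M_{n,k}$ by Lemma~\ref{lem:Mpolyhedral}, and $\mathcal A_{n,k}$ because it is rational polyhedral as a finite Minkowski sum. The bipolar theorem then gives $\mathcal A_{n,k}=\mathcal A_{n,k}^{\vee\vee}=\mathcal M_{n,k}^{\vee\vee}=\mathcal M_{n,k}$ whenever the polars agree.

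The main obstacle is the M-convexity of $\mu_D$, specifically checking that the tropicalization step really applies with nonnegative coefficients and no cancellation. That check is routine because every monomial of $P_{\va}$ enters with coefficient $+1$.
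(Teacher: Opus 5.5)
Your proposal is correct, but your main argument for $\mathcal A_{n,k}\subseteq\mathcal M_{n,k}$ differs from the paper's.

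The paper checks the exchange axiom for $\mu_D$ directly, for every real $D$ at once. Given minimizing assignments $\phi,\psi$ of types $\va,\vb$, it draws the edges $\phi(r)\to\psi(r)$. A flow-balance count shows that the set reachable from $i$ contains some $j$ with $\alpha_j<\beta_j$. It then swaps $\phi$ and $\psi$ along a shortest $i\to j$ path, which preserves the total cost row by row.

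You instead realize $\mu_D$, for rational $D$, as the tropicalization of a product of linear forms over $K$. You transfer Lorentzianity from the real specializations exactly as in Lemma~\ref{lem:escape} and then invoke \cite[Theorem~3.20]{BH20}. This is sound: there is no cancellation, since every assignment monomial has coefficient $+1$. It also explains the inclusion conceptually: products of linear forms are Lorentzian, and Lorentzian polynomials tropicalize to M-convex functions. The cost is that it rests on the semialgebraic transfer machinery and forces an extra rational-to-real step. Your closedness argument handles that step correctly: the M-convex functions form the closed set $C_1\cup\cdots\cup C_N$, and $D\mapsto\mu_D$ is continuous.

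Your generator alternative also works, with two caveats. The cones $C(\bm\phi)$ are not pointed, because row and column shifts of $D$ lie in their lineality space, so ``extreme rays'' should read ``a finite set of rational generators.'' That route also yields only $[\mu_D]\in\mathcal M_{n,k}$ rather than M-convexity of $\mu_D$ itself, which suffices here.

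Your infimal-convolution remark is the closest to the paper. There $\mu_D$ is the $n$-fold infimal convolution of the row functions on $H(1,k)$, each trivially M-convex. The paper's path-swapping argument is essentially a self-contained proof of Murota's infimal-convolution theorem in this special case.

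Your closure and bipolar step for \eqref{eq:equality-criterion} matches the paper's.
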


\begin{proof}
The first two identities in \eqref{eq:two-polars} follow from
Proposition~\ref{prop:BRQ-polar} and Theorem~\ref{thm:BRL-duality},
respectively. It remains
to prove that $\mathcal A_{n,k}\subseteq\mathcal M_{n,k}$.

Fix $D\in\mathbb R^{n\times k}$. We verify the M-convex exchange axiom for
$\mu_D$. Let $\va,\vb\in H(n,k)$, and suppose that $\alpha_i>\beta_i$.
Choose minimizing assignments $\phi$ and $\psi$ of types $\va$ and $\vb$.
Form a directed multigraph on $\{1,\ldots,k\}$ by drawing the edge
$\phi(r)\to\psi(r)$ for each row $r$. At a vertex $h$, the number of outgoing
edges is $\alpha_h$, and the number of incoming edges is $\beta_h$.

Let $U$ be the set of vertices reachable from $i$ by directed edges, including
$i$ itself. We claim that $U$ contains an index $j$ with
$\alpha_j<\beta_j$. Otherwise every
$\alpha_h-\beta_h$ with $h\in U$ would be nonnegative, and the term with
$h=i$ would be positive. Hence
\[
 \sum_{h\in U}(\alpha_h-\beta_h)>0.
\]
On the other hand, no edge leaves $U$, since its endpoint would then also be
reachable from $i$. When the outdegrees minus the indegrees are summed over
$U$, the edges inside $U$ cancel. The only remaining edges enter $U$, so
\[
 \sum_{h\in U}(\alpha_h-\beta_h)
 =-\#\{r:\phi(r)\notin U,\ \psi(r)\in U\}\le0,
\]
a contradiction. This proves the claim.

Choose a shortest directed path from $i$ to such an index $j$, and label its
edges by their rows:
\[
 i=i_0\xrightarrow{\,r_1\,}i_1
 \xrightarrow{\,r_2\,}\cdots
 \xrightarrow{\,r_m\,}i_m=j.
\]
Thus $\phi(r_s)=i_{s-1}$ and $\psi(r_s)=i_s$ for $1\le s\le m$. The rows
$r_1,\ldots,r_m$ are distinct. Define
$\phi'$ and $\psi'$ by interchanging the values of $\phi$ and $\psi$ on the
rows $r_1,\ldots,r_m$, and leaving all other rows unchanged. In $\phi'$, the
changes at every intermediate vertex cancel. Only one occurrence of $i$ is
lost and one occurrence of $j$ is gained. Therefore $\phi'$ has type
$\va-\ve_i+\ve_j$. Similarly, $\psi'$ has type
$\vb+\ve_i-\ve_j$.

The interchange preserves the sum of the two assignment costs in each row.
Since $\phi$ and $\psi$ are minimizing, the definition of $\mu_D$ gives
\begin{align*}
 \mu_D(\va)+\mu_D(\vb)
 &=\sum_{r=1}^n d_{r,\phi(r)}+\sum_{r=1}^n d_{r,\psi(r)}\\
 &=\sum_{r=1}^n d_{r,\phi'(r)}+\sum_{r=1}^n d_{r,\psi'(r)}\\
 &\ge \mu_D(\va-\ve_i+\ve_j)
       +\mu_D(\vb+\ve_i-\ve_j).
\end{align*}
This is the M-convex exchange inequality. Thus every assignment valuation is
M-convex, and $\mathcal A_{n,k}\subseteq\mathcal M_{n,k}$.

If $\mathcal A_{n,k}=\mathcal M_{n,k}$, then their positive dual cones are
equal. Conversely, if their positive dual cones are equal, the bipolar theorem
for the two closed convex cones gives
\[
 \mathcal A_{n,k}
 =\mathcal A_{n,k}^{\vee\vee}
 =\mathcal M_{n,k}^{\vee\vee}
 =\mathcal M_{n,k}.
\]
This proves \eqref{eq:equality-criterion}.
\end{proof}

\subsection{Variable persistence}

\begin{lemma}[Variable persistence]
\label{lem:variable-persistence}
If $\mathcal A_{n,k}\subsetneq\mathcal M_{n,k}$, then
$\mathcal A_{n,K}\subsetneq\mathcal M_{n,K}$ for every $K\ge k$.
\end{lemma}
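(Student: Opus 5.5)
The plan is to compare the two cones through the restriction map to a coordinate face, and to use an extension of M-convex functions to produce a witness in more variables. Embed $H(n,k)\hookrightarrow H(n,K)$ by padding with zeros in coordinates $k+1,\dots,K$. Let
\[
 \rho\colon\mathbb R^{H(n,K)}\to\mathbb R^{H(n,k)}
\]
be restriction of functions to this face. An affine function restricts to an affine function, so $\rho$ descends to a linear map of the affine quotients.

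\emph{Step 1: $\rho(\mathcal A_{n,K})\subseteq\mathcal A_{n,k}$.} Take $D\in\mathbb R^{n\times K}$ and let $D'$ be its first $k$ columns. An assignment of type $\va$ with $\va$ supported on $\{1,\dots,k\}$ uses only those columns. Hence $\rho(\mu_D)=\mu_{D'}$. Since $\rho$ is linear, it carries the generators $[\mu_D]$ of $\mathcal A_{n,K}$ into $\mathcal A_{n,k}$, and so carries the whole cone there.

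\emph{Step 2: choose a witness in $k$ variables.} Since $\mathcal A_{n,k}\subsetneq\mathcal M_{n,k}$ and $\mathcal A_{n,k}$ is a closed convex cone, some extreme ray of $\mathcal M_{n,k}$ is not contained in $\mathcal A_{n,k}$. Otherwise $\mathcal M_{n,k}$, being pointed (Lemma~\ref{lem:Mpointed}) and polyhedral (Lemma~\ref{lem:Mpolyhedral}), would be generated by rays lying in $\mathcal A_{n,k}$. By Lemma~\ref{lem:Mextremerep}, this ray has an M-convex representative $\nu\colon H(n,k)\to\mathbb R$ with $[\nu]\notin\mathcal A_{n,k}$.

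\emph{Step 3: extend $\nu$ to $H(n,K)$.} This is the main obstacle: we need an M-convex $\widetilde\nu\colon H(n,K)\to\mathbb R$ with $\rho(\widetilde\nu)=\nu$. It suffices to do $K=k+1$ and iterate.

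My first attempt is an infimal-convolution construction. View $\nu$ as an M-convex function on $\mathbb Z^{K}$ supported on the face $\{\alpha_{k+1}=\dots=\alpha_K=0\}$. Convolve it with the M-convex function $g$ that is finite exactly on vectors $\ve_p-\ve_q$ (with $p>k\ge q$, or $p=q$), where it takes the value $C$ or $0$ respectively. Murota's theorem says that the convolution of M-convex functions is M-convex. The convolution is then finite on all of $H(n,K)$ once we allow enough repeated moves, i.e.\ convolve with $n$ copies of $g$. For $C$ large, the value at a point of the face is attained with no moves, so it equals $\nu$ there.

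A fallback is to verify M-convexity directly via the local tree-metric criterion of Proposition~\ref{prop:treemetric}. Here the candidate is
\[
 \widetilde\nu(\va)=\min_{\vb}\Bigl\{\nu(\vb)+C\sum_{p>k}\alpha_p\Bigr\},
\]
the minimum taken over $\vb\in H(n,k)$ obtained by redistributing the extra mass of $\va$ onto the first $k$ coordinates. One would then check that each local Hessian is a tree metric in which the new labels hang off the existing tree by long pendant edges.

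\emph{Step 4: conclude.} Lemma~\ref{lem:Maffine} together with Step 3 gives $[\widetilde\nu]\in\mathcal M_{n,K}$. If $[\widetilde\nu]$ lay in $\mathcal A_{n,K}$, then Step 1 would give $[\nu]=\rho[\widetilde\nu]\in\mathcal A_{n,k}$, contradicting Step 2. Hence $[\widetilde\nu]\in\mathcal M_{n,K}\setminus\mathcal A_{n,K}$, while $\mathcal A_{n,K}\subseteq\mathcal M_{n,K}$ holds by Proposition~\ref{prop:comparison-polars}. This proves the strict inclusion $\mathcal A_{n,K}\subsetneq\mathcal M_{n,K}$.
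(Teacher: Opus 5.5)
Your proof is correct, but it runs on the primal side, while the paper's runs on the dual side.

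The paper takes $\vg\in\BR_{\mathring Q}(n,k)\setminus\BR_{\mathring L}(n,k)$ and extends it by zero to $H(n,k+1)$. It gets boundedness on products because the face coefficients of a product of positive linear forms are those of its specialization at $x_{k+1}=0$; this is the dual of your Step~1. It gets unboundedness on $\mathring L(n,k+1)$ by regarding a strictly Lorentzian form in $k$ variables as a Lorentzian form in $k+1$ variables, using Theorem~2.10 of Br\"and\'en--Huh with $A=(I_k\ 0)$. It then approximates by strictly Lorentzian forms and uses continuity of $R_{\vg}$ at the positive face coefficients.

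Your Step~3 is the exact counterpart of this last step. Since $\mathcal M_{n,K}$ is built from finite-valued functions, you need a full-domain M-convex extension, and your infimal convolution supplies one. In fact $\widetilde\nu(\va',a)=Ca+\min_{c\in H(a,k)}\nu(\va'+c)$ is the tropicalization of $F(x_1+sx_{k+1},\dots,x_k+sx_{k+1})$ with $\operatorname{val}(s)=C$. That is the same Br\"and\'en--Huh theorem, applied with a positive rather than a zero last column.

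The convolution argument is sound as written, and the tree-metric fallback is unnecessary. In a write-up, note three small points:
\begin{enumerate}
\item $g$ is M-convex because it is linear on the M-convex set $\{0\}\cup\{\ve_p-\ve_q:p>k\ge q\}$.
\item The domain of the $n$-fold convolution contains points with negative coordinates. Restricting to $H(n,K)$ still preserves M-convexity, because an exchange between two nonnegative points yields nonnegative points.
\item The size of $C$ plays no role. Every nonzero move adds mass to the new coordinates, so points of the face admit only the trivial decomposition, for any $C$.
\end{enumerate}

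As for what each route buys: the paper's is shorter, reuses Theorem~\ref{thm:BRL-duality} and standard closure properties of Lorentzian polynomials, and yields an explicit witness ratio in more variables. Yours stays inside discrete convex analysis and produces an explicit M-convex certificate $\widetilde\nu\notin\mathcal A_{n,K}$. It also essentially shows that restriction maps $\mathcal M_{n,K}$ onto $\mathcal M_{n,k}$ and $\mathcal A_{n,K}$ onto $\mathcal A_{n,k}$. The price is invoking Murota's infimal-convolution theorem.
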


\begin{proof}
It suffices to prove the assertion for $K=k+1$ and then iterate. By
Proposition~\ref{prop:comparison-polars}, the assumption is equivalent to
\[
 \BR_{\mathring L}(n,k)\subsetneq\BR_{\mathring Q}(n,k).
\]
Choose
\[
 \vg\in\BR_{\mathring Q}(n,k)\setminus\BR_{\mathring L}(n,k).
\]
Identify $H(n,k)$ with the face of $H(n,k+1)$ on which the last coordinate is
zero. Extend $\vg$ by zero outside this face, and continue to denote the
extended vector by $\vg$.

Let $G$ be a product of $n$ positive linear forms in $k+1$ variables. The
coefficients of $G$ indexed by $(\va,0)$ are exactly the coefficients of
\[
 G(x_1,\ldots,x_k,0),
\]
which is a product of $n$ positive linear forms in $k$ variables. Since
$\vg\in\BR_{\mathring Q}(n,k)$, it follows that $R_{\vg}$ is bounded on all
such $G$. Therefore
\[
 \vg\in\BR_{\mathring Q}(n,k+1).
\]

We claim that $\vg\notin\BR_{\mathring L}(n,k+1)$. Otherwise, let $C$ be an
upper bound for $R_{\vg}$ on $\mathring L(n,k+1)$. Take any strictly
Lorentzian polynomial $F$ in $k$ variables and regard it as a polynomial in
$k+1$ variables independent of $x_{k+1}$. By
\cite[Theorem~2.10]{BH20}, if $f$ is Lorentzian in $k$ variables and $A$ is a
$k\times m$ matrix with nonnegative entries, then $f(A\vx)$ is Lorentzian in
$m$ variables. Apply this with $m=k+1$ and $A=(I_k\ \ 0)$. It follows that $F$,
viewed as independent of $x_{k+1}$, is Lorentzian in $k+1$ variables. Hence
$F$ is a coefficientwise limit of strictly Lorentzian polynomials in $k+1$
variables. Since all coefficients of $F$ occurring in $R_{\vg}$ are positive,
continuity gives
\[
 R_{\vg}(F)\le C.
\]
This would bound $R_{\vg}$ on $\mathring L(n,k)$, contrary to the choice of
$\vg$. Thus
\[
 \vg\in
 \BR_{\mathring Q}(n,k+1)\setminus\BR_{\mathring L}(n,k+1).
\]
Proposition~\ref{prop:comparison-polars} now gives
$\mathcal A_{n,k+1}\subsetneq\mathcal M_{n,k+1}$. Iterating proves the result
for every $K\ge k$.
\end{proof}

\subsection{The equality cases}

We begin with an explicit formula for the restriction of an assignment
valuation to an edge of the simplex.

Let $D=(d_{rh})\in\mathbb R^{n\times k}$, and fix distinct indices $i,j$. For
each row $r$, put
\[
 s_r=d_{ri}-d_{rj},
\]
and let $s_{(1)}\le\cdots\le s_{(n)}$ be these $n$ numbers listed in
nondecreasing order. We use the convention $\sum_{t=1}^{0}s_{(t)}=0$.

\begin{lemma}[Restriction to an edge]
\label{lem:edge-restriction}
For every $a\in\{0,\ldots,n\}$,
\begin{equation}\label{eq:edge-restriction}
 \mu_D\bigl(a\ve_i+(n-a)\ve_j\bigr)
 =\sum_{r=1}^{n}d_{rj}+\sum_{t=1}^{a}s_{(t)}.
\end{equation}
\end{lemma}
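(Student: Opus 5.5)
The plan is to reduce the minimization defining $\mu_D$ at an edge point to a selection problem on the rows. An assignment $\phi$ of type $a\ve_i+(n-a)\ve_j$ takes only the values $i$ and $j$. Hence it is determined by the set $I=\phi^{-1}(i)\subseteq\{1,\dots,n\}$, which has $|I|=a$; the remaining $n-a$ rows go to $j$. Conversely, every $a$-subset $I$ gives such an assignment. So the minimum in \eqref{eq:assignment-valuation} runs over all $a$-subsets of rows.

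For a fixed $I$ we rewrite the cost relative to the all-$j$ baseline:
\[
 \sum_{r\in I}d_{ri}+\sum_{r\notin I}d_{rj}
 =\sum_{r=1}^n d_{rj}+\sum_{r\in I}(d_{ri}-d_{rj})
 =\sum_{r=1}^n d_{rj}+\sum_{r\in I}s_r .
\]
The first sum does not depend on $I$. Therefore
\[
 \mu_D\bigl(a\ve_i+(n-a)\ve_j\bigr)=\sum_r d_{rj}+\min_{|I|=a}\sum_{r\in I}s_r .
\]

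It remains to show that the minimum of $\sum_{r\in I}s_r$ over $a$-subsets equals $\sum_{t=1}^a s_{(t)}$. For the upper bound, take $I$ to be the indices of the $a$ smallest values. For the lower bound, list the elements of an arbitrary $I$ in nondecreasing order as $s_{r_1}\le\cdots\le s_{r_a}$. Then the $t$-th of these is at least $s_{(t)}$, since at least $t$ of the numbers $s_1,\dots,s_n$ are $\le s_{r_t}$. Summing over $t$ gives the bound.

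The cases $a=0$ and $a=n$ fit this argument: the only subsets are $\varnothing$ and the full set, and the empty-sum convention handles $a=0$. No step is a real obstacle. The only point needing care is the order-statistic inequality, which is the standard rearrangement fact just described.
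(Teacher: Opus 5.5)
Your proof is correct and follows the paper's argument. As there, you identify assignments of type $a\ve_i+(n-a)\ve_j$ with $a$-subsets of rows, factor out $\sum_r d_{rj}$, and observe that the remaining minimum of $\sum_{r\in I}s_r$ is the sum of the $a$ smallest values. The one addition is that you justify this order-statistic step explicitly, whereas the paper simply asserts it.
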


\begin{proof}
Fix $a\in\{0,\ldots,n\}$. An assignment of type
$a\ve_i+(n-a)\ve_j$ is determined by the set $S$ of rows that it sends to
$i$. The set $S$ has cardinality $a$, and every row outside $S$ is sent to
$j$. Therefore
\begin{align*}
 \mu_D\bigl(a\ve_i+(n-a)\ve_j\bigr)
 &=\min_{\substack{S\subseteq\{1,\ldots,n\}\\ |S|=a}}
   \left(\sum_{r\in S}d_{ri}+\sum_{r\notin S}d_{rj}\right)\\
 &=\sum_{r=1}^{n}d_{rj}
   +\min_{\substack{S\subseteq\{1,\ldots,n\}\\ |S|=a}}
    \sum_{r\in S}(d_{ri}-d_{rj}).
\end{align*}
The last minimum is the sum of the $a$ smallest numbers among
$s_1,\ldots,s_n$. This proves \eqref{eq:edge-restriction}. Subtracting the
instance of \eqref{eq:edge-restriction} for $a-1$ from the instance for $a$
shows that the $a$th successive difference along the edge joining $n\ve_i$
and $n\ve_j$ is $s_{(a)}$.
\end{proof}

\begin{proposition}[Equality]
\label{prop:comparison-equality-cases}
The equality $\BR_{\mathring L}(n,k)=\BR_{\mathring Q}(n,k)$ holds when $k=2$,
when $n=2$, and when $k=3$ and $n\le5$.
\end{proposition}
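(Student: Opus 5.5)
By Proposition~\ref{prop:comparison-polars} it suffices in each case to show $\mathcal M_{n,k}\subseteq\mathcal A_{n,k}$. Since $\mathcal A_{n,k}$ is a closed convex cone, it is enough to show that every extreme ray of $\mathcal M_{n,k}$, or a generating set of it, is represented modulo affine functions by an assignment valuation $\mu_D$ or by a nonnegative combination of such valuations.

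\emph{Case $k=2$.} Here $H(n,2)$ is a path $a\ve_1+(n-a)\ve_2$. A function $\nu$ is M-convex exactly when its successive differences $\nu(a)-\nu(a-1)$ are nondecreasing in $a$, which is discrete convexity along the edge. Modulo affine functions, we may assume $\nu(0)=0$. Then Lemma~\ref{lem:edge-restriction} with $i=1$, $j=2$ produces the sequence of successive differences $s_{(1)}\le\dots\le s_{(n)}$. We choose $d_{r2}=0$ and $d_{r1}=s_r$, with the $s_r$ equal to the prescribed nondecreasing differences. This realizes every M-convex $\nu$ exactly as a single $\mu_D$.

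\emph{Case $n=2$.} Remark~\ref{rem:recover-HHSW} identifies $\mathcal M_{2,k}$ with the cut cone. It is therefore generated by split metrics $\delta_S$, which by Lemma~\ref{lem:splitmetric} come from the classes $[\sigma_{S,1}]$. It suffices to realize each $\sigma_{S,1}$, up to an affine function and positive scaling, as some $\mu_D$. We take row $1$ to be $d_{1h}=0$ for all $h$. For row $2$ we take $d_{2h}=0$ for $h\in S^c$ and $d_{2h}=1$ for $h\in S$. Then $\mu_D(\ve_h+\ve_{h'})=\min(d_{2h},d_{2h'})$, and the local Hessian is $\delta_S$, possibly up to sign conventions. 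If the resulting Hessian is not exactly $\delta_S$, we swap the roles of $S$ and $S^c$ or adjust the entries. By Lemma~\ref{lem:hessiankernel}, matching the Hessian field determines the class, so this gives the required representative.

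\emph{Case $k=3$, $n\le5$.} This is the main obstacle. By Lemma~\ref{lem:three-rhombus-cone}, $\mathcal M_{n,3}$ is the hive cone modulo affine functions, a rational polyhedral cone cut out by the rhombus inequalities. The plan is to enumerate its extreme rays for $n=3,4,5$, using exact ray enumeration after normalizing $\nu(n\ve_r)=0$, in the manner of the proof of Theorem~\ref{thm:BRLor34}. For each ray we then exhibit an explicit integer matrix $D\in\mathbb Z^{n\times 3}$ with $[\mu_D]$ on that ray, and verify this by comparing the local Hessian fields, again via Lemma~\ref{lem:hessiankernel}.

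Many rays should be split functions $\sigma_{S,r}$. For these, Lemma~\ref{lem:edge-restriction} suggests taking $r$ rows equal to $0$ and $n-r$ rows that are $0$ on $S$ and equal to $1$ on $S^c$ (or the complementary pattern), giving a kink exactly at layer $r$. The remaining rays, up to $S_3$ symmetry, will require finding $D$ by a small search. An alternative would be to reduce larger $n$ to smaller $n$ by adding constant rows, but that reduction will not cover the new rays, so the computational certificate for $n=4,5$ is the expected crux. The case $n=3$ is already covered by Theorem~\ref{thm:BRQ33}.
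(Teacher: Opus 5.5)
Your proof is correct. The case $k=2$ is exactly the paper's argument. The other two cases take a different route, and your route is at least as rigorous as the paper's.

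\emph{The case $n=2$.} The paper simply cites \cite[Propositions~5.3 and~5.4]{HHSW}. You instead stay inside the paper's framework. Every M-convex $\nu$ on $H(2,k)$ has $d^{\bm 0}(\nu)=\sum_S c_S\delta_S$ with $c_S\ge0$, so $[\nu]=\sum_S c_S[\sigma_{S,1}]$ by Lemma~\ref{lem:hessiankernel}. Your two-row matrix gives $\mu_D=\sigma_{S,1}$ exactly, since $\mu_D(2\ve_h)=\mathbf 1_S(h)$ and $\mu_D(\ve_h+\ve_{h'})=\min(\mathbf 1_S(h),\mathbf 1_S(h'))$; your hedge about signs is unnecessary.

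The same construction works in general. Taking $r$ zero rows and $n-r$ rows equal to $\mathbf 1_S$ gives $\mu_D=\sigma_{S,r}$ for all $n,k$, whereas rows equal to $\mathbf 1_{S^c}$ give $\sigma_{S^c,r}$. This yields something the paper does not state. Each $[\sigma_{S,r}]$ spans an extreme ray of the pointed cone $\mathcal M_{n,k}$ and lies in $\mathcal A_{n,k}\subseteq\mathcal M_{n,k}$, so it spans an extreme ray of $\mathcal A_{n,k}$. Hence every split facet of Theorem~\ref{thm:splits} is also a facet of $\BR_{\mathring Q}(n,k)$.

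\emph{The case $k=3$, $n\le5$.} The paper works on the dual side. It runs Algorithm~\ref{alg:BR}, enumerating all choices of minimizing assignments, and compares the resulting description of $\BR_{\mathring Q}(n,3)$ with the triangular-ratio cone. You work on the primal side: you enumerate the extreme rays of the hive cone and realize each by an assignment valuation. Both arguments rest on an exact computation that is not displayed. Yours has two advantages: the computation is much smaller, and its output, a list of rays with matrices $D$, is a certificate anyone can recheck by evaluating $\mu_D$ and comparing local Hessian fields.

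Your search target is also exactly right. Suppose $[x]$ spans an extreme ray of $\mathcal M_{n,k}$ and $[x]=\sum_a c_a[\mu_{D_a}]$ with $c_a>0$. Then every summand lies on that ray, and since $\mu_{sD}=s\mu_D$ for $s>0$, one gets $[x]=[\mu_D]$ for a single $D$. So nonnegative combinations are never needed on extreme rays. In return, the paper's algorithm is a decision procedure that would also certify a negative answer, whereas a failed search for $D$ proves nothing.

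As a check in the smallest case, $\mathcal M_{3,3}$ has, besides its six split rays, a chiral extreme ray. Its local Hessians at $\ve_1,\ve_2,\ve_3$ are $3\delta_{\{3\}},3\delta_{\{1\}},3\delta_{\{2\}}$. The circulant matrix with rows $(0,2,1),(1,0,2),(2,1,0)$ realizes it, and its mirror image handles the other chirality. This confirms that your method succeeds for $n=3$ without appealing to Theorem~\ref{thm:BRQ33}.
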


\begin{proof}
Suppose that $k=2$, and let $\nu$ be M-convex. Write
\[
 \nu_r=\nu(r,n-r),
 \qquad
 s_r=\nu_r-\nu_{r-1}
 \qquad (1\le r\le n).
\]
We first verify that these successive differences are nondecreasing. For
$1\le r\le n-1$, apply the exchange axiom to
\[
 (r+1,n-r-1)
 \qquad\text{and}\qquad
 (r-1,n-r+1).
\]
The first vector has the larger first coordinate, and the second has the
larger second coordinate. Since there are only two coordinates, the exchange
moves one unit from the first coordinate of the first vector to its second
coordinate and makes the reverse move in the other vector. Both resulting
vectors are $(r,n-r)$. Hence
\[
 \nu_{r+1}+\nu_{r-1}\ge2\nu_r.
\]
This inequality is equivalent to $s_r\le s_{r+1}$. Therefore
\[
 s_1\le s_2\le\cdots\le s_n.
\]

Let the $r$th row of $D$ be $(s_r,0)$. Its row differences
$d_{r1}-d_{r2}=s_r$ are already nondecreasing.
Lemma~\ref{lem:edge-restriction} therefore gives, for every $0\le a\le n$,
\[
 \mu_D(a,n-a)
 =\sum_{r=1}^{a}s_r
 =\sum_{r=1}^{a}(\nu_r-\nu_{r-1})
 =\nu_a-\nu_0.
\]
Thus $\nu-\mu_D$ is the constant function with value $\nu_0$. A constant
function is affine on $H(n,2)$, so $[\nu]=[\mu_D]$. It follows that every
M-convex class belongs to $\mathcal A_{n,2}$. By
Lemma~\ref{lem:Mpolyhedral}, every element of $\mathcal M_{n,2}$ is a finite
sum of such classes. Hence
\[
 \mathcal M_{n,2}\subseteq\mathcal A_{n,2}.
\]
The reverse inclusion is Proposition~\ref{prop:comparison-polars}. Therefore
$\mathcal A_{n,2}=\mathcal M_{n,2}$, and
\eqref{eq:equality-criterion} gives the asserted equality of bounded-ratio
cones.

For $n=2$, the asserted equality follows from
\cite[Propositions~5.3 and~5.4]{HHSW}.

It remains to consider $k=3$ and $n\in\{3,4,5\}$. By
Corollary~\ref{cor:threevariable},
\[
 \BR_{\mathring L}(n,3)
 =\operatorname{cone}\bigl\{
 \vrho(\vb;i\mid jk):
 \vb\in H(n-2,3),\ i\in\{1,2,3\},\
 \{j,k\}=\{1,2,3\}\setminus\{i\}
 \bigr\}.
\]
The displayed rays are precisely the distinct extreme rays of this cone. Their
numbers for $n=3,4,5$ are, respectively, $9$, $18$, and $30$.

We computed $\BR_{\mathring Q}(n,3)$ for these three values of $n$ in exact
rational arithmetic using Algorithm~\ref{alg:BR}. In each case, the
computation gives the same conical description. Therefore
$\BR_{\mathring L}(n,3)=\BR_{\mathring Q}(n,3)$ for $n=3,4,5$.
\end{proof}

\subsection{The two base cases}

\begin{proposition}[Four variables in degrees three, four, and five]
\label{prop:base-34}
Let $R_A$ be the ratio of Theorem~\ref{thm:BRQ34}, and let $R_4$ and $R_5$
be obtained from it by adding $\ve_3$ and $2\ve_3$, respectively, to each of
its ten multi-indices:
\[
 R_4=\frac{P_{0040}P_{0211}P_{1012}P_{1120}P_{2011}}
          {P_{0022}P_{0220}P_{1021}P_{1111}P_{2020}},
 \qquad
 R_5=\frac{P_{0050}P_{0221}P_{1022}P_{1130}P_{2021}}
          {P_{0032}P_{0230}P_{1031}P_{1121}P_{2030}}.
\]
Then $\BR_{\mathring L}(n,4)\subsetneq\BR_{\mathring Q}(n,4)$ for $n=3,4,5$.
\end{proposition}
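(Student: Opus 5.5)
For each $n\in\{3,4,5\}$ I will show that the relevant ratio ($R_A$, $R_4$, $R_5$) lies in $\BR_{\mathring Q}(n,4)$ but not in $\BR_{\mathring L}(n,4)$. By Proposition~\ref{prop:comparison-polars} this is the same as $\mathcal A_{n,4}\subsetneq\mathcal M_{n,4}$.

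For $n=3$ the statement is exactly Theorem~\ref{thm:BRLor34}. There $R_A$ is an extreme ray of $\BR_{\mathring Q}(3,4)$ and is unbounded on $\mathring L(3,4)$, so nothing new is needed.

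\emph{Bounded on $\mathcal P(n,4)$ for $n=4,5$.} The key observation is that shifting by $\ve_3$ corresponds to multiplying by a linear form. If $G=\ell_1\cdots\ell_n$ with positive coefficients and $\ell_n=\sum_h a_{nh}x_h$, I will compare $P_{\va+\ve_3}(G)$ with $a_{n3}P_{\va}(\ell_1\cdots\ell_{n-1})$. That comparison goes only one way, so I will not use it directly. Instead I will use Proposition~\ref{prop:BRQ-polar}: it suffices to check $\langle\vg,\mu_D\rangle\ge0$ for every $D\in\mathbb R^{n\times4}$.

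Let $\vg_A$ be the exponent vector of $R_A$, and $\vg_n$ its translate by $(n-3)\ve_3$. Then $\langle\vg_n,\mu_D\rangle=\langle\vg_A,\nu\rangle$, where $\nu(\va)=\mu_D(\va+(n-3)\ve_3)$ is a function on $H(3,4)$. I plan to show that $\nu$ is, up to an affine function, an assignment valuation $\mu_{D'}$ with $D'\in\mathbb R^{3\times4}$ (or a limit or nonnegative combination of such). If that holds, $\langle\vg_A,\mu_{D'}\rangle\ge0$ by Theorem~\ref{thm:BRQ34} finishes this step.

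This reduction is the main obstacle. Restricting an assignment valuation to the translated slice $\{\va\ge(n-3)\ve_3\}$ means forcing $n-3$ rows to go to column $3$, minimized over which rows. This is a transversal (gross-substitute) valuation, not obviously a product-of-linear-forms valuation.

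If the reduction fails, I will fall back on the computational route the paper uses elsewhere. Run Algorithm~\ref{alg:BR} for $(n,k)=(4,4),(5,4)$, or more cheaply, enumerate the finitely many cones $C(\bm\phi)$ restricted to the ten relevant multi-indices. Then check $\langle\vg_n,\mu_V\rangle\ge0$ on every extreme ray $V$, in exact arithmetic. A symbolic alternative is to show that $D_n-N_n$ has nonnegative coefficients in the $a_{ri}$, exactly as was done for $R_A$. That gives $R_n<1$ on $\mathcal P(n,4)$ and hence boundedness.

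\emph{Unbounded on $\mathring L(n,4)$.} By Theorem~\ref{thm:BRL-duality} it suffices to exhibit an M-convex $\nu_n$ on $H(n,4)$ with $\langle\vg_n,\nu_n\rangle<0$. From the $n=3$ case, Theorem~\ref{thm:BRL-duality} already gives an M-convex $\nu$ on $H(3,4)$ with $\langle\vg_A,\nu\rangle<0$.

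I will lift $\nu$ to degree $n$ by
\[
 \nu_n(\va)=\min\bigl\{\nu(\va-\bm c):\bm c\in H(n-3,4),\ \bm c\le\va\bigr\}+\lambda\bigl(\va\cdot\ve_3\bigr),
\]
or, more simply, by
\[
 \nu_n(\va)=\nu(\va-(n-3)\ve_3)
\]
on the slice $\alpha_3\ge n-3$, extended by a steep convex penalty $M\max\{0,(n-3)-\alpha_3\}$ on the rest of $H(n,4)$. The convolution form is M-convex because M-convex functions are closed under infimal convolution, and the linear-in-$\alpha_3$ penalty keeps this. With the penalty large, the minimum is attained at $\bm c=(n-3)\ve_3$ on the ten support points, so $\langle\vg_n,\nu_n\rangle=\langle\vg_A,\nu\rangle<0$.

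To confirm M-convexity I will verify the local tree-metric criterion of Proposition~\ref{prop:treemetric}. The nonzero local Hessians come from those of $\nu$ together with split-type contributions, which are tree metrics.
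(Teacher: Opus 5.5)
For $n=3$, and for boundedness on $\mathcal P(n,4)$ via your fallback, you end up where the paper does. The paper never attempts a reduction to $\mathcal A_{3,4}$. It reports that $D_n-N_n$ expands in the $a_{ri}$ with only nonnegative coefficients ($9244$ monomials for $n=4$, $89380$ for $n=5$), so $R_n\le1$ on $\mathcal P(n,4)$. You can drop your first idea. The map $\va\mapsto\mu_D(\va+(n-3)\ve_3)$ is a pointwise minimum, over which $n-3$ rows are sent to $x_3$, of $3$-row assignment valuations. Nothing suggests it lies in $\mathcal A_{3,4}\subsetneq\mathcal M_{3,4}$. That half of the argument therefore rests entirely on the finite computation, which you still have to carry out.

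The genuine gap is in the unboundedness half. In your convolution formula the term $\lambda(\va\cdot\ve_3)=\lambda\alpha_3$ sits outside the minimum. It is affine on $H(n,4)$, so it changes neither the minimizing $\bm c$ nor the pairing with the balanced vector $\vg_n$. The minimum of $\nu(\va-\bm c)$ is attained away from $(n-3)\ve_3$ whenever $\nu(\va-\bm c)<\nu(\va-(n-3)\ve_3)$ for some other admissible $\bm c$. In that case the ten relevant values of $\nu_n$ are not a translate of $\nu$.

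Your ``slice plus steep penalty'' version leaves $\nu_n$ undefined off the slice. A large penalty also cannot rescue the tree-metric check. On the layer $\beta_3=n-4$ it adds exactly $M\delta_{\{3\}}$ to $d^{\vb}(\nu_n)$, and this cancels from all three four-point sums and from the triangle inequalities $d_{i3}\le d_{ij}+d_{j3}$. Moreover, sums of tree metrics are not tree metrics in general (cf.\ $\delta_{\{1,2\}}+\delta_{\{1,3\}}$).

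The repair is to put the reward inside the minimum:
\[
 \nu_n(\va)=\min\bigl\{\nu(\va-\bm c)-\lambda c_3:\ \bm c\in H(n-3,4),\ \bm c\le\va\bigr\},\qquad \lambda\ge\max\nu-\min\nu .
\]
This function is real-valued on $H(n,4)$. It is M-convex by Murota's infimal-convolution theorem, convolving $\nu$ with the linear (hence M-convex) function $\bm c\mapsto-\lambda c_3$ on $H(n-3,4)$. On the slice $\alpha_3\ge n-3$ it equals $\nu(\va-(n-3)\ve_3)-\lambda(n-3)$. Since $\sum_{\va}(\gamma_A)_{\va}=0$, this gives $\langle\vg_n,\nu_n\rangle=\langle\vg_A,\nu\rangle<0$, and Theorem~\ref{thm:BRL-duality} finishes.

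The paper's route is shorter because it stays with polynomials. The form $G_q=x_3^{n-3}F_q^{\nu_A}$ is Lorentzian by closure under products, and its ten relevant coefficients are those of $F_q^{\nu_A}$. Hence $R_n(G_q)=R_A(F_q^{\nu_A})=1/(6q)\to\infty$. Approximation by strictly Lorentzian forms then only needs those ten coefficients to be positive. No finite M-convex function on all of $H(n,4)$ is required. Your corrected $\nu_n$ is just the tropicalization of $F_q^{\nu}\,(x_1+x_2+q^{-\lambda}x_3+x_4)^{n-3}$, a finite-valued substitute for the factor $x_3^{n-3}$.
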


\begin{proof}
For $n=3$, this is Theorem~\ref{thm:BRLor34}. Let $\vg_A$ be the exponent
vector of $R_A$. Let $n\in\{4,5\}$ and write $R_n$ for the corresponding
ratio, with exponent vector $\vg_n$. Adding
$\ve_3$ or $2\ve_3$ to every multi-index preserves balance, with index sums
$(4,3,9,4)$ and $(4,3,14,4)$, respectively.

\emph{Boundedness on products.} Expanding the numerator and denominator in the
parameters $a_{ri}$ of Definition~\ref{def:Q} gives
\[
 D_n-N_n\in\mathbb Z_{\ge0}[a_{11},\dots,a_{n4}],
\]
where the difference has $9244$ nonzero monomials for $n=4$ and $89380$ for
$n=5$. All of them have positive coefficients, and the largest coefficients
are $72$ and $93$, respectively. Thus $R_n$ is subtraction-free with $C=1$ in
the sense of
Definition~\ref{def:sf}, so $R_n\le1$ on $\mathcal P(n,4)$ and
$\vg_n\in\BR_{\mathring Q}(n,4)$.

\emph{Unboundedness on the Lorentzian locus.}
By Theorem~\ref{thm:BRLor34}, $\vg_A\notin\BR_{\mathring L}(3,4)$.
Theorem~\ref{thm:BRL-duality} therefore gives an M-convex function $\nu_A$
with $\langle\vg_A,\nu_A\rangle<0$. After positive rescaling, assume that
$\langle\vg_A,\nu_A\rangle=-1$. By Definition~\ref{def:Fq}, the associated
cubic is
\[
 F_q^{\nu_A}
 =\sum_{\va\in H(3,4)}
   q^{\nu_A(\va)}\frac{\vx^{\va}}{\va!},
 \qquad 0<q\le1.
\]
Thus its ordinary and normalized coefficients are, respectively,
\[
 P_{\va}(F_q^{\nu_A})
 =\frac{q^{\nu_A(\va)}}{\va!},
 \qquad
 \widehat P_{\va}(F_q^{\nu_A})
 =q^{\nu_A(\va)}.
\]
Since $\nu_A$ is M-convex, \cite[Theorem~3.14]{BH20} shows that
$F_q^{\nu_A}$ is Lorentzian.

Put $G_q=x_3^{\,n-3}F^{\nu_A}_q$. This polynomial is Lorentzian because
$x_3^{\,n-3}$ is Lorentzian and Lorentzian polynomials are closed under
products \cite[Corollary~2.32]{BH20}. Every index occurring in $R_n$ exceeds
the corresponding index of $R_A$ by $(n-3)\ve_3$. Hence
$P_{\va+(n-3)\ve_3}(G_q)=P_{\va}(F^{\nu_A}_q)$ at all ten indices.

Evaluating $R_A$ on $F_q^{\nu_A}$ now gives
\[
\begin{aligned}
 R_A(F_q^{\nu_A})
 &=\prod_{\va\in H(3,4)}
   \left(\frac{q^{\nu_A(\va)}}{\va!}\right)^{(\gamma_A)_{\va}}\\
 &=\left(\prod_{\va\in H(3,4)}
   (\va!)^{-(\gamma_A)_{\va}}\right)
   q^{\langle\vg_A,\nu_A\rangle}\\
 &=\frac16q^{-1}
 =\frac1{6q}.
\end{aligned}
\]
Here the factor $1/6$ follows directly from the ten multi-indices in $R_A$.
Indeed, the products of the factorials of the numerator and denominator
multi-indices are $48$ and $8$, respectively, so the factorial correction is
$8/48=1/6$.
Consequently,
\[
 R_n(G_q)=R_A(F^{\nu_A}_q)=\frac1{6q}
 \ \longrightarrow\ \infty
 \qquad(q\to0^{+}).
\]
Although $G_q$ vanishes outside its translated cubic support, the ten
coefficients occurring in $R_n$ are positive. Thus $R_n$ is continuous there.
Moreover, $G_q$ is a coefficientwise limit of strictly Lorentzian polynomials
by Definition~\ref{def:strictly-lorentzian}. Hence $R_n$ is unbounded on
$\mathring L(n,4)$ and $\vg_n\notin\BR_{\mathring L}(n,4)$.
\end{proof}

\begin{proposition}[A uniform ternary obstruction]
\label{prop:base-63}
For every $N\ge6$, let
\begin{equation}\label{eq:RN}
 R_N=
 \frac{P_{(N-2,2,0)}P_{(N-2,0,2)}P_{(N-4,4,0)}P_{(N-4,0,4)}P_{(N-6,3,3)}}
      {P_{(N-2,1,1)}P_{(N-3,3,0)}P_{(N-3,0,3)}P_{(N-5,4,1)}P_{(N-5,1,4)}},
\end{equation}
with exponent vector $\vg_N$. Then the following statements hold.
\begin{enumerate}
 \item $R_N$ is bounded on products of $N$ positive linear forms, so
 $\vg_N\in\BR_{\mathring Q}(N,3)$.
 \item $R_N$ is unbounded on strictly Lorentzian ternary forms of degree $N$,
 so $\vg_N\notin\BR_{\mathring L}(N,3)$.
\end{enumerate}
Consequently, $\mathcal A_{N,3}\subsetneq\mathcal M_{N,3}$ for every
$N\ge6$.
\end{proposition}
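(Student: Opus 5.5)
The plan is to treat the two assertions separately. In both, the key device is that every multi-index in \eqref{eq:RN} has first coordinate at least $N-6$, and the multi-indices of $R_N$ are those of $R_6$ shifted by $(N-6)\ve_1$. The final consequence $\mathcal A_{N,3}\subsetneq\mathcal M_{N,3}$ then follows at once from (i), (ii) and the criterion \eqref{eq:equality-criterion} of Proposition~\ref{prop:comparison-polars}.

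For (ii) I would copy the mechanism of Proposition~\ref{prop:base-34}. First settle $N=6$: by Lemma~\ref{lem:three-rhombus}, the M-convex functions on $H(6,3)$ are exactly the solutions of the $3\binom62=45$ rhombus inequalities. I would solve the rational linear program
\[
\min\langle\vg_6,\nu\rangle\quad\text{subject to the 45 rhombus inequalities and } \nu(6\ve_r)=0\ (r=1,2,3).
\]
It is unbounded exactly when $\vg_6\notin\BR_{\mathring L}(6,3)$. From it I would extract an explicit integer-valued M-convex $\nu_6$ with $\langle\vg_6,\nu_6\rangle=-1$, and record it as a certificate that can be checked by hand. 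Natural candidates are sums of split functions $\sigma_{S,r}$, since they are M-convex, and in three variables sums of M-convex functions remain M-convex by Lemma~\ref{lem:three-rhombus-cone}; I would try combinations of $\sigma_{\{2\},r}$, $\sigma_{\{3\},r}$ and $\sigma_{\{2,3\},r}$ first. Then $R_6(F^{\nu_6}_q)=c\,q^{-1}\to\infty$ by Definition~\ref{def:Fq}. For general $N$ put $G_q=x_1^{N-6}F^{\nu_6}_q$, which is Lorentzian by \cite[Corollary~2.32]{BH20}. We have $P_{\va+(N-6)\ve_1}(G_q)=P_{\va}(F^{\nu_6}_q)$, so $R_N(G_q)=R_6(F_q^{\nu_6})$. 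Approximation by strictly Lorentzian forms, with the ten relevant coefficients positive, gives unboundedness on $\mathring L(N,3)$.

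For (i), shifting does not transfer boundedness, because a general product of $N$ forms is not $x_1^{N-6}$ times a product of six forms. I would therefore use Proposition~\ref{prop:BRQ-polar} and show $\langle\vg_N,\mu_D\rangle\ge0$ for every $D$, which says that the denominator $\mu_D$-values sum to at most the numerator ones. The approach is a row-exchange argument in the spirit of the proof of $\mathcal A\subseteq\mathcal M$. Fix minimizing assignments $\phi_1,\dots,\phi_5$ for the five numerator types. Each row $r$ then carries a multiset of five labels. Balance of $\vg_N$ gives equal global label counts on both sides. I would then redistribute, within each row separately, the five labels among five new assignments so that these have exactly the five denominator types. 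Each row's cost sum is unchanged, so the denominator $\mu_D$-sum is at most the numerator one.

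The main obstacle is the feasibility of this redistribution. It amounts to decomposing an $N\times5$ label array, row by row, into columns of prescribed types. I would handle it by classifying rows according to their label multiset. Rows whose five labels are all $1$ are irrelevant, and only at most $6$ units of labels $2,3$ occur per numerator column, so only boundedly many patterns arise, independently of $N$. I would then verify a Hall-type/flow condition on this finite configuration. If a direct combinatorial proof resists, the fallback is the subtraction-free route used in Proposition~\ref{prop:base-34}: verify $D_N-N_N\ge0$ coefficientwise for $N=6$ by computer, and reduce general $N$ to the same finite pattern analysis.
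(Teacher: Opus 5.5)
Your reduction of (i) to $\langle\vg_N,\mu_D\rangle\ge0$ is exactly the paper's strategy, and so is your plan to redistribute, row by row, the labels of five numerator minimizers into five assignments of the denominator types. However, the feasibility of that redistribution is the whole content of (i), and you leave it as ``verify a Hall-type/flow condition.'' That is a genuine gap, not a routine check.

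After normalizing the $x$-column of $D$ to zero, an assignment of type $(b,c)$ is a pair of disjoint row sets $(U,V)$ with $|U|=b$ and $|V|=c$. The mixed denominator types $(1,1)$, $(4,1)$, $(1,4)$ must therefore draw their $y$'s and $z$'s from different rows. So the $y$- and $z$-labels have to be placed simultaneously under a per-cell exclusivity constraint. That is a two-commodity problem, not a single matching or flow problem, and compatibility of the $y$-margins and $z$-margins separately does not settle it.

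You also fix arbitrary minimizers, whereas the paper's argument relies on nested ones. It takes $A\subset C$ to be the rows of the two and four smallest $y$-values and $B\subset G$ those of the two and four smallest $z$-values, under one fixed tie-break, with $(Y,Z)$ an optimal disjoint pair for type $(3,3)$. The missing idea is a $3\times3$ grid-covering argument. It produces $\eta\in Y$ and $\zeta\in Z$ such that $S=(Y\setminus\{\eta\})\cup\{\zeta\}$ and $T=(Y\cup Z)\setminus S$ satisfy $A\not\subseteq S$, $|C\cap S|\le2$, $B\not\subseteq T$, and $|G\cap T|\le2$. After that, the five denominator assignments can be written down explicitly, with $S$ and $T$ feeding the types $(4,1)$ and $(1,4)$.

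Your fallback does not close the gap. A coefficientwise check of $D_6-N_6$ proves (i) only for $N=6$; as you note yourself, boundedness on products does not pass along the shift by $(N-6)\ve_1$. Subtraction-freeness of $R_N$ for general $N$ is not established.

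For (ii), your route differs from the paper's and is sound in principle. The paper exhibits an explicit degree-$6$ Lorentzian family
$F_t=\bigl(xy+xz+yz+t(x^2+y^2+z^2)\bigr)(x+y+tz)\bigl(t(x+y)+z\bigr)(x+z+ty)\bigl(t(x+z)+y\bigr)$.
It expands the ten relevant coefficients to first order in $t$ and lifts by the factor $(x+ty+t^2z)^{N-6}$. You instead go through an M-convex certificate and $F_q^{\nu_6}$ and lift by the monomial $x_1^{N-6}$, which makes the lift exact.

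Your suggested first candidates, however, are guaranteed to fail. Every split function $\sigma_{S,r}$ is an assignment valuation: it equals $\mu_D$ for the $D$ with $r$ zero rows and $n-r$ rows equal to the indicator vector of $S$. So every nonnegative sum of split functions has its class in $\mathcal A_{6,3}$ and pairs nonnegatively with $\vg_6$, by your own part (i). A certificate must have its class in $\mathcal M_{6,3}\setminus\mathcal A_{6,3}$. One such certificate is $\nu(\va)=\operatorname{val}P_{\va}(F_t)$, which is M-convex by tropicalization and gives $\langle\vg_6,\nu\rangle=(1+1+1+1+0)-(1+1+1+1+1)=-1$. Until you exhibit such a $\nu_6$, (ii) rests on an LP you have not run. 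The concluding deduction from (i), (ii) and \eqref{eq:equality-criterion} is fine.
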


\begin{proof}
The numerator and denominator multi-indices in \eqref{eq:RN} both sum to
$(5N-18,9,9)$. Thus $\vg_N$ is balanced. We prove the two assertions
separately.

\smallskip
\noindent
\emph{Step 1: boundedness on products.}
Fix a matrix $D=(d_{ri})\in\mathbb R^{N\times3}$, whose columns correspond
to $x,y,z$, and abbreviate
\[
 \mu_D(b,c)=\mu_D(N-b-c,b,c).
\]
An assignment contributing to $\mu_D(b,c)$ selects exactly one entry from
each row of $D$. Replace $d_{ri}$ by
\[
 d'_{ri}=d_{ri}-d_{r1}.
\]
For every assignment $\phi$, this replacement gives
\[
 \sum_{r=1}^{N}d'_{r,\phi(r)}
 =\sum_{r=1}^{N}d_{r,\phi(r)}-\sum_{r=1}^{N}d_{r1}.
\]
Consequently,
\[
 \mu_{D'}(b,c)=\mu_D(b,c)-\sum_{r=1}^{N}d_{r1}
\]
for every $b,c$. The ratio
\eqref{eq:RN} has five numerator factors and five denominator factors, so
this common shift cancels in $\langle\vg_N,\mu_D\rangle$. Relabeling $D'$ as
$D$, we may therefore assume that
\[
 d_{r1}=0\qquad\text{for every }r\in[N].
\]
In other words, the first column of $D$, corresponding to $x$, is zero.

Write $y_r=d_{r2}$ and $z_r=d_{r3}$ for the other two entries in row $r$,
and set
\[
 y(U)=\sum_{r\in U}y_r,
 \qquad
 z(V)=\sum_{r\in V}z_r.
\]
To form an assignment with $b$ copies of $y$ and $c$ copies of $z$, choose
disjoint row sets $U,V$ of sizes $b,c$ and assign all remaining rows to $x$.
Hence
\begin{equation}\label{eq:mu-bc}
 \mu_D(b,c)=
 \min_{\substack{U\cap V=\varnothing\\ |U|=b,\ |V|=c}}
 \bigl(y(U)+z(V)\bigr).
\end{equation}

Let $A$ and $C$ be the sets of indices of the two and four smallest values
$y_r$, respectively, where ties are resolved using one fixed order. Thus
$A\subset C$. Define $B\subset G$ in the same way using the values $z_r$.
Then
\[
 y(A)=\mu_D(2,0),\quad y(C)=\mu_D(4,0),\qquad
 z(B)=\mu_D(0,2),\quad z(G)=\mu_D(0,4).
\]
Choose disjoint three-element sets $Y,Z$ that attain the finite minimum
defining $\mu_D(3,3)$. Thus
\[
 y(Y)+z(Z)=\mu_D(3,3).
\]
The five assignments just chosen realize the five minima associated with the
numerator of $R_N$. Counting occurrences with multiplicity, they use $y$ on
$A\uplus C\uplus Y$ and $z$ on $B\uplus G\uplus Z$. We shall redistribute
these occurrences among assignments of the five denominator types
\[
 (1,1),\quad(3,0),\quad(0,3),\quad(4,1),\quad(1,4).
\]
This redistribution will preserve their total sum.

\smallskip
\noindent
\emph{Step 2: constructing the denominator assignments.}
For $\eta\in Y$ and $\zeta\in Z$, let
\[
 S=(Y\setminus\{\eta\})\cup\{\zeta\},
 \qquad
 T=(Y\cup Z)\setminus S.
\]
Thus $S$ contains two rows of $Y$ and one of $Z$, while $T$ contains one row
of $Y$ and two of $Z$.  We claim that $\eta,\zeta$ can be chosen so that
\begin{equation}\label{eq:good-split}
 A\not\subseteq S,\qquad |C\cap S|\le2,\qquad
 B\not\subseteq T,\qquad |G\cap T|\le2.
\end{equation}
In other words, $S$ leaves available at least one row of $A$ and two rows of
$C$, and $T$ does the same for $B$ and $G$.

To prove the claim, arrange the nine choices $(\eta,\zeta)$ in a $3\times3$
grid whose rows are indexed by $\eta\in Y$ and whose columns are indexed by
$\zeta\in Z$. Call a choice $y$-forbidden if $A\subseteq S$ or
$|C\cap S|=3$, and put $p=|C\cap Y|$.
\begin{itemize}
 \item If $p\le1$, then
 removing $\eta$ from $Y$ cannot increase the number of elements in $C$, and
 adding $\zeta$ can increase that number by at most one. Hence
 \[
  |C\cap S|\le |C\cap Y|+1=p+1\le2.
 \]
 Thus $|C\cap S|=3$ never occurs. Since $A\subset C$, we also have
 $|A\cap Y|\le1$. If $A\cap Y=\varnothing$, then $A\not\subseteq S$ because
 $S$ contains only one element outside $Y$. Suppose that $A\cap Y$ has one
 element. Then $A\subseteq S$ exactly when the other element of $A$ belongs
 to $Z$, $\zeta$ equals that element, and $\eta$ is not the element of
 $A\cap Y$. Thus $\zeta$ is fixed, while $\eta$ has two possible values.
 Hence at most two cells are $y$-forbidden.

 \item Suppose that $p=2$, and let $\eta_0$ be the unique element of
 $Y\setminus C$. If $\eta\ne\eta_0$, then removing $\eta$ from $Y$ leaves
 only one element of $C$ in $Y\setminus\{\eta\}$. Adding $\zeta$ contributes
 at most one further element of $C$. Therefore $|C\cap S|\le2$. If
 $\eta=\eta_0$, then $Y\setminus\{\eta\}$ contains both elements of
 $C\cap Y$. In this case $|C\cap S|=3$ exactly when $\zeta\in C$. Since
 $C$ already contains two elements of $Y$, at most two elements of $Z$ can
 belong to $C$. Thus $|C\cap S|=3$ holds in at most two cells, all in the
 grid row indexed by $\eta_0$.

 We next determine when $A\subseteq S$. If both elements of $A$ lie in $Y$,
 then $A=C\cap Y$, and $A\subseteq S$ exactly when $\eta=\eta_0$. These
 cells form the grid row indexed by $\eta_0$. If exactly one element of $A$
 lies in $Y$, then $A\subseteq S$ exactly when the other element of $A$
 belongs to $Z$, $\zeta$ equals that element, and $\eta$ is not the element
 of $A\cap Y$. When this is possible, it gives two cells in one grid column.
 One of the two choices is $\eta=\eta_0$, so this column meets the row indexed
 by $\eta_0$. At the intersection, $\zeta$ is the other element of $A$, which
 belongs to $C$ because $A\subset C$. Hence this intersection also satisfies
 $|C\cap S|=3$. The two cells in the column and the at most two cells in the
 row therefore have at most three cells in their union. If neither element of
 $A$ lies in $Y$, then $A\not\subseteq S$ because $S$ contains only one
 element outside $Y$. In every case, at most three cells are $y$-forbidden.

 \item Suppose that $p=3$. Then every element of $Y$ belongs to $C$.
 Removing $\eta$ leaves two elements of $C$ in $Y\setminus\{\eta\}$.
 Therefore $|C\cap S|=3$ exactly when $\zeta\in C$. Since $C$ contains the
 three elements of $Y$ and only one further element, at most one element of
 $Z$ belongs to $C$. Thus the cells satisfying $|C\cap S|=3$ lie in at most
 one grid column.

 Since $A\subset C$, at least one element of $A$ lies in $Y$. If both
 elements of $A$ lie in $Y$, then $A\subseteq S$ exactly when $\eta$ is the
 unique element of $Y\setminus A$. These cells form one grid row. If exactly
 one element of $A$ lies in $Y$, then $A\subseteq S$ exactly when the other
 element of $A$ belongs to $Z$, $\zeta$ equals that element, and $\eta$ is
 not the element of $A\cap Y$. When this is possible, the two resulting cells
 lie in the column indexed by the other element of $A$.
 Because this element belongs to $A\subset C$, this is the same column in
 which $|C\cap S|=3$. Consequently, all $y$-forbidden cells lie in one grid
 row together with one grid column.
\end{itemize}
The same argument applies to
\[
 T=(Z\setminus\{\zeta\})\cup\{\eta\}.
\]
It shows that the choices for which $B\subseteq T$ or $|G\cap T|=3$ either
consist of at most three cells or lie in one grid row together with one grid
column. Call these choices $z$-forbidden. The two forbidden sets cannot fill
the grid. If both contain at most three cells, their union contains at most
six cells. If only one is contained in a row together with a column, the
other has at most
three cells and cannot cover the four cells left outside that row and column.
If both are contained in a row together with a column, choose a grid row
different from the two specified rows and a grid column different from the
two specified columns. The cell at their intersection belongs to neither
forbidden set.
Hence a choice satisfying \eqref{eq:good-split} exists.

Fix such a choice. Choose $a\in A\setminus S$ and $b\in B\setminus T$, and
denote the other elements of $A$ and $B$ by $a'$ and $b'$, respectively. We
may choose $a$ and $b$ so that $a'\ne b'$. Indeed, if both choices are unique,
then $a'\in S$ and $b'\in T$, so $a'\ne b'$ because $S\cap T=\varnothing$.
If at least one choice is not unique, make that choice so that the two
remaining elements are distinct.

Choose a two-element set $C_{\rm out}\subseteq C\setminus S$ containing
$a$, and let $C_{\rm rest}=C\setminus C_{\rm out}$.  Similarly, choose a
two-element set $G_{\rm out}\subseteq G\setminus T$ containing $b$, and let
$G_{\rm rest}=G\setminus G_{\rm out}$.  The five required assignments are
now explicit:
\[
\begin{array}{c|c|c|c}
 &\text{type}&\text{rows assigned to $y$}&\text{rows assigned to $z$}\\ \hline
 1&(1,1)&\{a'\}&\{b'\}\\
 2&(3,0)&\{a\}\cup C_{\rm rest}&\varnothing\\
 3&(0,3)&\varnothing&\{b\}\cup G_{\rm rest}\\
 4&(4,1)&C_{\rm out}\cup(S\cap Y)&S\cap Z\\
 5&(1,4)&T\cap Y&G_{\rm out}\cup(T\cap Z).
\end{array}
\]
All rows not listed in a line of the table are assigned to $x$. Each
line therefore gives an admissible assignment of the indicated type. Indeed,
its two displayed sets are disjoint and have the required sizes. Counting
occurrences with multiplicity, the sets assigned to $y$ are exactly
$A\uplus C\uplus Y$, and the sets assigned to $z$ are exactly
$B\uplus G\uplus Z$. Hence the sum of the values of these five assignments
is
\[
 \mu_D(2,0)+\mu_D(0,2)+\mu_D(4,0)+\mu_D(0,4)+\mu_D(3,3).
\]
For each line of the table, the minimum in \eqref{eq:mu-bc} is at most the
value of the displayed assignment. Adding these five inequalities gives
\begin{align*}
 &\mu_D(1,1)+\mu_D(3,0)+\mu_D(0,3)
   +\mu_D(4,1)+\mu_D(1,4)\\
 &\qquad\le
 \mu_D(2,0)+\mu_D(0,2)+\mu_D(4,0)
   +\mu_D(0,4)+\mu_D(3,3).
\end{align*}
This says exactly that $\langle\vg_N,\mu_D\rangle\ge0$.  Since $D$ was
arbitrary, Proposition~\ref{prop:comparison-polars} gives
$\vg_N\in\BR_{\mathring Q}(N,3)$. This proves assertion~(i).

\smallskip
\noindent
\emph{Step 3: a Lorentzian family in degree six.}
For $0<t<\tfrac12$, let
\begin{equation}\label{eq:F6}
 F_t=\bigl(xy+xz+yz+t(x^2+y^2+z^2)\bigr)
 (x+y+tz)\bigl(t(x+y)+z\bigr)(x+z+ty)\bigl(t(x+z)+y\bigr).
\end{equation}
The Hessian of the quadratic factor is
\[
 \begin{pmatrix}
  2t&1&1\\
  1&2t&1\\
  1&1&2t
 \end{pmatrix}.
\]
It has eigenvalue $2t+2$ in the direction $(1,1,1)$ and eigenvalue $2t-1$
on the two-dimensional plane $a+b+c=0$. Thus it has exactly one positive
eigenvalue when $0<t<\tfrac12$, and the quadratic factor is strictly
Lorentzian. The other four factors are positive linear forms. Closure under
products \cite[Corollary~2.32]{BH20} therefore shows that $F_t$ is
Lorentzian. All its coefficients are positive.

After expanding the product in powers of $t$, the term independent of $t$ is
\[
 yz(x+y)(x+z)(xy+xz+yz).
\]
The coefficient of $t$ is obtained by choosing the $t$-part from exactly one
factor. Together, these two terms determine the ten coefficients needed
below:
\[
\begin{array}{c|ccccc}
 \va&420&402&240&204&033\\ \hline
 P_{\va}(F_t)&t+O(t^2)&t+O(t^2)&t+O(t^2)&t+O(t^2)&1+O(t^2)
\end{array}
\]
and
\[
\begin{array}{c|ccccc}
 \va&411&330&303&141&114\\ \hline
 P_{\va}(F_t)&3t+O(t^2)&2t+O(t^2)&2t+O(t^2)&4t+O(t^2)&4t+O(t^2).
\end{array}
\]

\smallskip
\noindent
\emph{Step 4: the same family in every degree $N\ge6$.}
Put $r=N-6$ and define
\[
 F_{N,t}=F_t\cdot(x+ty+t^2z)^r.
\]
For $r=0$ this is simply $F_t$. The extra factor is a product of positive
linear forms, so $F_{N,t}$ is Lorentzian and all its coefficients are
positive.

Only the first-order expansion of the extra factor is needed:
\[
 (x+ty+t^2z)^r=x^r+rtx^{r-1}y+O(t^2),
\]
where the second term is omitted when $r=0$. Consequently,
for each of the ten pairs $(b,c)$ occurring in \eqref{eq:RN},
\begin{equation}\label{eq:first-order-lift}
 \begin{split}
 P_{(N-b-c,b,c)}(F_{N,t})
 ={}&P_{(6-b-c,b,c)}(F_t)\\
 &+rtP_{(7-b-c,b-1,c)}(F_t)+O(t^2),
 \end{split}
\end{equation}
where the second term is also omitted when $b=0$.

There are only two shifted coefficients in \eqref{eq:first-order-lift} with
a nonzero constant term. Indeed, the displayed constant term shows directly
that
\[
 P_{123}(F_t)=2+O(t),
 \qquad
 P_{231}(F_t)=1+O(t),
\]
while every other shifted coefficient appearing in
\eqref{eq:first-order-lift} is $O(t)$. The first of these contributes only to
$P_{(N-6,3,3)}(F_{N,t})$ and gives
\[
 P_{(N-6,3,3)}(F_{N,t})=1+2rt+O(t^2)=1+O(t).
\]
The second changes the first-order expansion of
$P_{(N-5,4,1)}(F_{N,t})$ as follows:
\[
 \begin{split}
 P_{(N-5,4,1)}(F_{N,t})
 &=4t+rt+O(t^2)\\
 &=(N-2)t+O(t^2).
 \end{split}
\]
All other corrections are $O(t^2)$. Thus the five numerator coefficients
in \eqref{eq:RN} have leading terms
\[
 t,\quad t,\quad t,\quad t,\quad 1,
\]
and the five denominator coefficients have leading terms
\[
 3t,\quad2t,\quad2t,\quad(N-2)t,\quad4t.
\]
It follows that
\[
 R_N(F_{N,t})
 =\frac1{48(N-2)}\,t^{-1}\bigl(1+O(t)\bigr)
 \longrightarrow\infty
 \qquad(t\to0^+).
\]
All error terms here are for fixed $N$.

\smallskip
\noindent
\emph{Step 5: passage to strictly Lorentzian forms.}
Let $M>0$ and choose $t$ so small that $R_N(F_{N,t})>2M$. Since
$F_{N,t}$ is Lorentzian, it is a coefficientwise limit of strictly
Lorentzian polynomials. The five denominator coefficients in \eqref{eq:RN}
are positive at $F_{N,t}$, so $R_N$ is continuous there. A sufficiently close
strictly Lorentzian approximation therefore has ratio greater than $M$.
Since $M$ was arbitrary, $R_N$ is unbounded on $\mathring L(N,3)$, and hence
$\vg_N\notin\BR_{\mathring L}(N,3)$. Together with assertion~(i) and
Proposition~\ref{prop:comparison-polars}, this proves
$\mathcal A_{N,3}\subsetneq\mathcal M_{N,3}$.
\end{proof}

\subsection{The classification}

\begin{theoremC}
\label{thm:comparison-classification}
For all $n,k\ge2$,
\[
 \BR_{\mathring L}(n,k)=\BR_{\mathring Q}(n,k)
\]
if and only if $n=2$, or $k=2$, or $k=3$ and $n\le5$.  Equivalently, the
inclusion is strict precisely when $n\ge3$ and $k\ge4$, or $n\ge6$ and
$k\ge3$.
\end{theoremC}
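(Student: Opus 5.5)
The proof assembles the results of this section, together with the persistence lemma in the variable count, into the two directions of the classification.

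For the equality direction, Proposition~\ref{prop:comparison-equality-cases} already gives $\BR_{\mathring L}(n,k)=\BR_{\mathring Q}(n,k)$ in three cases: $k=2$ for every $n$, $n=2$ for every $k$, and $k=3$ with $n\in\{3,4,5\}$. These are exactly the pairs listed in the statement, so nothing further is needed there.

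For the strictness direction, we use Proposition~\ref{prop:comparison-polars}. It turns strict inclusion of the bounded-ratio cones into strict inclusion $\mathcal A_{n,k}\subsetneq\mathcal M_{n,k}$. By Lemma~\ref{lem:variable-persistence}, such a strict inclusion persists when the number of variables increases. It therefore suffices to produce base cases in the smallest number of variables.

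For $n\ge6$, Proposition~\ref{prop:base-63} gives $\mathcal A_{n,3}\subsetneq\mathcal M_{n,3}$. Persistence then covers every $k\ge3$, and in particular every $k\ge4$ when $n\ge6$.

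For $n\in\{3,4,5\}$, Proposition~\ref{prop:base-34} gives strictness at $k=4$. Persistence extends this to all $k\ge4$.

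Together these cover all pairs with $n\ge3$ and $k\ge4$, and all pairs with $n\ge6$ and $k\ge3$.

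It remains to check that the two lists are complementary within $n,k\ge2$. Suppose a pair is not of the form $n=2$, $k=2$, or ($k=3$ and $n\le5$). Then $n\ge3$ and $k\ge3$. If $k=3$, then $n\ge6$. If $k\ge4$, then $n\ge3$. Either way the pair falls into the strict list. This gives the stated equivalence, and its reformulation as the second sentence of the theorem is a restatement of the same complement.

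All of the substantive work is in the cited propositions, so the only real obstacle is making sure the base cases and persistence cover every pair with no gaps. In particular, $n\ge6$ needs $k=3$ as its base case, and $n=3,4,5$ needs $k=4$; the case analysis above confirms both are in place.
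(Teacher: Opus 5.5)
Your proposal is correct and matches the paper's proof. Both take equality from Proposition~\ref{prop:comparison-equality-cases}, and both get strictness from the base cases Proposition~\ref{prop:base-63} ($k=3$, $n\ge6$) and Proposition~\ref{prop:base-34} ($k=4$, $3\le n\le5$), propagated by Lemma~\ref{lem:variable-persistence} through the polar criterion; your explicit complementarity check spells out what the paper asserts in one line.
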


\begin{proof}
Proposition~\ref{prop:comparison-equality-cases} gives equality in the listed
cases.

For strictness, suppose first that $n\ge6$. Then
Proposition~\ref{prop:base-63} gives
$\BR_{\mathring L}(n,3)\subsetneq\BR_{\mathring Q}(n,3)$, and
Lemma~\ref{lem:variable-persistence} propagates this to every $k\ge3$.
Suppose next that $3\le n\le5$ and $k\ge4$. Then
Proposition~\ref{prop:base-34} gives
$\BR_{\mathring L}(n,4)\subsetneq\BR_{\mathring Q}(n,4)$, with the three
degrees covered by $R_A$, $R_4$, and $R_5$, respectively.
Lemma~\ref{lem:variable-persistence} then propagates this to every $k\ge4$.
These cases constitute precisely the complement of the equality cases.
Equation~\eqref{eq:equality-criterion} then translates the statements back to
the two cones of bounded ratios.
\end{proof}

\section{A necessary majorization condition}

We conclude with a necessary condition that is particularly quick to check.
Recall the \defn{majorization order} on partitions. For
$\lambda = (\lambda_1 \ge \lambda_2 \ge \cdots \ge \lambda_l \ge 0)$ and
$\mu = (\mu_1 \ge \mu_2 \ge \cdots \ge \mu_l \ge 0)$,
\[
  \lambda \preceq \mu
  \quad\text{if and only if}\quad
  \sum_{j \le r} \lambda_j \;\le\; \sum_{j \le r} \mu_j \ \ \text{for all } r,
  \qquad\text{and}\qquad
  \sum_{j \le l} \lambda_j \;=\; \sum_{j \le l} \mu_j .
\]

\begin{definition}[{\rm \defn{Partitions $f_i^{\pm}$}}]\label{def:fpm}
Let $\vg \in \mathbb{Z}^{H(n,k)}$ be an integral exponent vector, and fix
$1 \le i \le k$. Form the partition $f_i^{+}(\vg)$ by taking, for each $\va$
with $\gamma_{\va}>0$, the part $\alpha_i$ with multiplicity
$\gamma_{\va}$. Define $f_i^{-}(\vg)$ analogously from the $\va$ with
$\gamma_{\va}<0$, taking $\alpha_i$ with multiplicity $-\gamma_{\va}$.
Pad both partitions with parts equal to $0$ so that they have the same number
of parts.
\end{definition}

\begin{proposition}\label{prop:majorization}
Let $\vg \in \BR_{\mathring{Q}}(n,k)$. Then, for every $1 \le i \le k$,
\[
  \sum_{\va \in H(n,k)} \gamma_{\va} \min(\alpha_i, r) \;\le\; 0
  \quad\text{for } r = 1, \dots, n-1,
  \qquad\text{and}\qquad
  \sum_{\va \in H(n,k)} \gamma_{\va}\, \alpha_i \;=\; 0 .
\]
If $\vg$ is integral, these conditions say exactly that
$f_i^{-}(\vg) \preceq f_i^{+}(\vg)$ for every $i$.
\end{proposition}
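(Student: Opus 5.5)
We will deduce the statement from the polar description in Proposition~\ref{prop:BRQ-polar}, namely $\BR_{\mathring Q}(n,k)=\mathcal A_{n,k}^{\vee}$. The equality $\sum_{\va}\gamma_{\va}\alpha_i=0$ is simply the balancing condition $\vg\in V_{n,k}$. For the inequalities, we will exhibit, for each $i$ and each $1\le r\le n-1$, a matrix $D$ whose assignment valuation is $\mu_D(\va)=-\min(\alpha_i,r)$, up to an affine function. Pairing $\vg$ with this $\mu_D$ then yields $-\sum_{\va}\gamma_{\va}\min(\alpha_i,r)\ge0$, which is the claimed inequality.

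The construction takes the $i$th column of $D$ to have entries $-1$ in rows $1,\dots,r$ and $0$ in the remaining rows, and every other column to be identically $0$. An assignment of type $\va$ sends exactly $\alpha_i$ rows to column $i$, and any such assignment has cost $-|\phi^{-1}(i)\cap\{1,\dots,r\}|$. This cost is minimized by placing as many of those $\alpha_i$ rows as possible among the first $r$ rows, so $\mu_D(\va)=-\min(\alpha_i,r)$. The same value also follows from Lemma~\ref{lem:edge-restriction}, but the direct count is immediate. No affine correction is needed, since $\vg$ is balanced and pairs to zero with affine functions in any case.

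For the majorization reformulation, let $\vg$ be integral. Let $\lambda^+=f_i^+(\vg)$ and $\lambda^-=f_i^-(\vg)$, padded to a common length $L$. Expanding multiplicities gives
\[
\sum_{\va}\gamma_{\va}\min(\alpha_i,r)=\sum_{p}\min(\lambda^+_p,r)-\sum_{p}\min(\lambda^-_p,r).
\]
The equality condition gives $|\lambda^+|=|\lambda^-|$. Write $\lambda'$ for the conjugate partition. Then $\sum_p\min(\lambda_p,r)=\sum_{q\le r}\lambda'_q$, so the inequalities for $r=1,\dots,n-1$ say $\sum_{q\le r}(\lambda^+)'_q\le\sum_{q\le r}(\lambda^-)'_q$ for every $r$. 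Since all parts are at most $n$, the inequality at $r=n$ is the equality of total sizes. This is $(\lambda^+)'\preceq(\lambda^-)'$.

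The step we expect to need the most care is the final one: invoking the standard fact that conjugation reverses dominance for partitions of the same size, which turns $(\lambda^+)'\preceq(\lambda^-)'$ into $\lambda^-\preceq\lambda^+$. That fact applies here because the padding with zeros changes neither the sizes nor the conjugates. Since the argument is reversible, the conditions are exactly equivalent to $f_i^-(\vg)\preceq f_i^+(\vg)$ for every $i$.
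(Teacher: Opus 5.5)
Your proof is correct and is essentially the paper's argument: your matrix $D$ (entries $-1$ on $r$ rows of column $i$, zero elsewhere) is exactly the exponent matrix of the paper's specialization $a_{si}=t$ for $s\in R$, $t\to\infty$. The paper carries out that asymptotic computation directly rather than citing Proposition~\ref{prop:BRQ-polar}, and it converts to majorization through conjugate partitions exactly as you do. Your aside that Lemma~\ref{lem:edge-restriction} also yields $\mu_D(\va)$ applies only to $\va$ on an edge of the simplex, but the direct count you actually use covers every $\va\in H(n,k)$.
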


\begin{proof}
Fix $i$ and $r$. Choose a subset $R \subseteq \{1, \dots, n\}$ with $|R| = r$
and specialize
\[
  a_{si} = t \ \ (s \in R),
  \qquad
  a_{si'} = 1 \ \ \text{in all other cases},
  \qquad t > 0.
\]
This specialization lies in $\Rpos^{\,n \times k}$. A monomial of $P_{\va}$ is
indexed by an assignment $\phi$ of type $\va$, as in Definition~\ref{def:P},
and evaluates to $t^{|\{ s \in R \,:\, \phi(s) = i \}|}$. The largest such
exponent is $\min(\alpha_i, r)$ and is attained by at least one $\phi$. Hence
$P_{\va} \sim c_{\va} t^{\min(\alpha_i, r)}$ with $c_{\va}$ a positive integer,
and
\[
  R_{\vg} \;\sim\; C\, t^{\,\sum_{\va} \gamma_{\va} \min(\alpha_i, r)},
  \qquad C = \prod_{\va} c_{\va}^{\,\gamma_{\va}} > 0,
  \qquad t \to \infty ,
\]
so boundedness forces the exponent to be nonpositive. Only $|R|$ matters
because the linear forms may be permuted. For $r = n$, every monomial of
$P_{\va}$ has the same degree $\alpha_i$, and therefore
$R_{\vg} = C t^{\sum_{\va} \gamma_{\va} \alpha_i}$ exactly. Letting $t \to 0^{+}$
as well gives the stated equality.

To identify these inequalities with majorization, let $\lambda^{T}$ denote
the conjugate partition of $\lambda$. Recall that $\lambda \preceq \mu$ if and only if
$\mu^{T} \preceq \lambda^{T}$. Since
$\sum_{j} \min(\lambda_j, r) = \sum_{c \le r} \lambda^{T}_{c}$, the displayed
inequality for a given $r$ is precisely
$\sum_{c \le r} f_i^{+}(\vg)^{T}_{c} \le \sum_{c \le r} f_i^{-}(\vg)^{T}_{c}$,
and the displayed equality says that the two partitions have equal size.
\end{proof}

\begin{example}\label{ex:majorization-holds}
Let $n = k = 3$ and consider
\[
  R_{\vg} \;=\; \frac{P_{300}\, P_{030}\, P_{003}}{P_{111}\, P_{210}\, P_{012}} .
\]
Reading the coordinate $\alpha_i$ from each of the six multi-indices gives the
partitions
\[
  \begin{array}{lll}
    f_1^{+} = (3,0,0), &\qquad f_2^{+} = (3,0,0), &\qquad f_3^{+} = (3,0,0), \\[3pt]
    f_1^{-} = (2,1,0), &\qquad f_2^{-} = (1,1,1), &\qquad f_3^{-} = (2,1,0).
  \end{array}
\]
For instance, $f_1^{-}$ reorders the first entries $1$, $2$, $0$ of
$(1,1,1)$, $(2,1,0)$, and $(0,1,2)$. Hence
$f_i^{-} \preceq f_i^{+}$ for every $i$, strictly in each case. Equivalently,
in the linear form of
Proposition~\ref{prop:majorization}, the quantity
$\sum_{\va} \gamma_{\va} \min(\alpha_i, r)$ equals
\[
  -1, \ -1 \quad (i = 1), \qquad
  -2, \ -1 \quad (i = 2), \qquad
  -1, \ -1 \quad (i = 3), \qquad\text{for } r = 1, 2,
\]
and vanishes for $r = 3$, which is the balancing equation.
\end{example}

Thus majorization imposes $k(n-1)$ linear inequalities and $k$ linear
equations on $\vg$. These conditions define a cone containing
$\BR_{\mathring{Q}}(n,k)$, but the containment can be strict.

\begin{example}\label{ex:majorization-not-sufficient}
Let $n = 2$ and $k = 4$, and consider
\[
  R_{\vg} \;=\;
  \frac{P_{2000}\, P_{0200}\, P_{0020}\, P_{0002}\, P_{0110}\, P_{1001}}
       {P_{1100}^{\,3}\, P_{0011}^{\,3}} .
\]
For every $i$ one has
\[
  f_i^{+}(\vg) = (2,1,0,0,0,0)
  \qquad\text{and}\qquad
  f_i^{-}(\vg) = (1,1,1,0,0,0),
\]
so $f_i^{-} \preceq f_i^{+}$ strictly at every index. Nevertheless, $R_{\vg}$
is unbounded. Indeed, for the two linear forms with coefficient vectors
$(t,t,1,1)$ and $(1,1,1,1)$, one has
\[
  P_{2000} = P_{0200} = t, \quad
  P_{0020} = P_{0002} = 1, \quad
  P_{0110} = P_{1001} = t+1, \quad
  P_{1100} = 2t, \quad
  P_{0011} = 2,
\]
so that
\[
  R_{\vg} \;=\; \frac{t^{2}(t+1)^{2}}{(2t)^{3} \cdot 2^{3}}
  \;=\; \frac{(t+1)^{2}}{64\,t}
  \;\longrightarrow\; \infty,
  \qquad t \to \infty .
\]
\end{example}

\begin{corollary}\label{cor:maj-lorentzian}
Every bounded ratio on the strictly Lorentzian polynomials of degree $n$ in $k$
variables satisfies the majorization conditions. More precisely, if
$\vg \in \BR_{\mathring{L}}(n,k)$, then, for every $1 \le i \le k$,
\[
  \sum_{\va \in H(n,k)} \gamma_{\va} \min(\alpha_i, r) \;\le\; 0
  \quad\text{for } r = 1, \dots, n-1,
  \qquad\text{and}\qquad
  \sum_{\va \in H(n,k)} \gamma_{\va}\, \alpha_i \;=\; 0 .
\]
\end{corollary}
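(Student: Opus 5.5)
The corollary follows from Proposition~\ref{prop:majorization} and the inclusion $\BR_{\mathring L}(n,k)\subseteq\BR_{\mathring Q}(n,k)$ recorded at the start of Section~\ref{sec:comparison}. That inclusion comes from Theorem~\ref{thm:Q-lorentzian}. Every product of $n$ positive linear forms is Lorentzian, so it is a coefficientwise limit of strictly Lorentzian polynomials. Its coefficient vector $\vP(A)$ has strictly positive entries, so $R_{\vg}$ is continuous at $\vP(A)$. Hence any uniform bound $C$ for $R_{\vg}$ on $\mathring L(n,k)$ passes to the limit and gives $R_{\vg}(\vP(A))\le C$ for every $A\in\Rpos^{\,n\times k}$. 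In other words, $\vg\in\BR_{\mathring Q}(n,k)$. Proposition~\ref{prop:majorization} then yields both the inequalities $\sum_{\va}\gamma_{\va}\min(\alpha_i,r)\le0$ for $1\le r\le n-1$ and the balancing equations.

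As a cross-check that avoids products of linear forms, one can also argue directly from Theorem~\ref{thm:BRL-duality}. The balancing equation is exactly $\vg\in V_{n,k}$. For the inequalities, fix $i$ and $r$ and set
\[
\nu(\va)=-\min(\alpha_i,r)=\max\{0,\alpha_i-r\}-\alpha_i .
\]
This is the split function $\sigma_{\{i\},r}$ minus the affine function $\va\mapsto\alpha_i$. The split function is M-convex by Lemma~\ref{lem:splitmetric}, and Lemma~\ref{lem:Maffine} shows that subtracting an affine function keeps it M-convex, so $\nu$ is M-convex. Theorem~\ref{thm:BRL-duality} then gives $\langle\vg,\nu\rangle\ge0$, which is the claimed inequality.

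Neither route has a real obstacle. In the first, the only point needing care is continuity of $R_{\vg}$ at the limit, which holds because all coefficients of products of strictly positive linear forms are positive. The second route needs only the routine identity $\min(u,r)=u-\max\{0,u-r\}$.
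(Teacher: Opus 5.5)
Your first route is correct and is essentially the paper's own proof. The paper only inlines the inclusion $\BR_{\mathring L}(n,k)\subseteq\BR_{\mathring Q}(n,k)$: it applies the Lorentzian-limit and continuity argument directly to the specialized products $Q_t$ from the proof of Proposition~\ref{prop:majorization}. Your second route is also valid and needs only the necessary direction, Theorem~\ref{lem:duality-necessary}; as a bonus, it shows that each majorization inequality is, on $V_{n,k}$, the singleton split inequality $\langle\vg,\sigma_{\{i\},r}\rangle\ge0$, and hence defines a facet of $\BR_{\mathring L}(n,k)$ by Theorem~\ref{thm:splits}.
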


\begin{proof}
Let $C$ be a positive constant such that
$\prod_{\va} P_{\va}^{\,\gamma_{\va}} \le C$ for all
$(P_{\va}) \in \mathring{L}(n,k)$. Let $Q_t$ be the product of linear forms
specialized as in the proof of Proposition~\ref{prop:majorization}. All its
coefficients are positive. Each nonzero linear factor has nonnegative
coefficients and is stable. Their product $Q_t$ is therefore stable and hence
Lorentzian \cite[Proposition~2.2]{BH20}. By
Definition~\ref{def:strictly-lorentzian}, it is a limit of strictly Lorentzian
polynomials. Since $R_{\vg}$ is continuous at coefficient vectors with
positive entries, it follows
that $R_{\vg}(Q_t) \le C$ for every $t > 0$. The proof of
Proposition~\ref{prop:majorization} shows that $R_{\vg}(Q_t)$ grows like
$t^{\sum_{\va} \gamma_{\va} \min(\alpha_i, r)}$ as $t \to \infty$, so the
exponent is nonpositive. The case $r = n$ with $t \to 0^{+}$ gives the stated
equality.
\end{proof}

\begin{remark}\label{rem:fast-check}
The majorization condition consists of $k(n-1)$ inequalities and $k$ equations,
one for each pair $(i,r)$ with $1 \le i \le k$ and $1 \le r \le n$. Each is a
single linear functional of $\vg$. The condition can be tested for a given
$\vg$ in
$O(ks \log s)$ arithmetic operations, where $s$ is the number of nonzero
entries of $\vg$, that is, the number of factors of the ratio.

No comparable test is available for boundedness itself. For $n=2$, write
$a_{ij}=\gamma_{\ve_i+\ve_j}$. As explained in
Remark~\ref{rem:recover-HHSW}, membership in
$\BR_{\mathring L}(2,k)$ is equivalent to
\[
 \sum_{i<j}a_{ij}d_{ij}\le0
 \qquad\text{for every }d\in\mathrm{Cut}_k.
\]
Thus membership amounts to testing the sign of the associated linear
functional on all cuts, which is a weighted cut optimization problem. See
\cite{DL97} for background on cut and metric cones.
\end{remark}

\section{Concluding remarks}

We finish by recording a subtraction-free strengthening suggested by the
computations above.

\begin{definition}[{\rm \defn{Subtraction-free ratio}}]\label{def:sf}
Let $\vg \in \mathbb{Z}^{H(n,k)}$ and write
\[
  N_{\vg} \;=\; \prod_{\gamma_{\va} > 0} P_{\va}^{\,\gamma_{\va}},
  \qquad
  D_{\vg} \;=\; \prod_{\gamma_{\va} < 0} P_{\va}^{\,-\gamma_{\va}},
\]
so that $R_{\vg} = N_{\vg} / D_{\vg}$. The ratio $R_{\vg}$ is
\defn{subtraction-free} if there exists a positive constant $C$ such that
$C\, D_{\vg} - N_{\vg}$, expanded as a polynomial in the parameters $a_{ri}$,
has only nonnegative coefficients. This extends the definition $C=1$ used
in \cite[Definition~2.7]{SG25}.
\end{definition}

By definition, every subtraction-free ratio is bounded, with
$f(\vg) \le C$, because
$C\, D_{\vg} - N_{\vg} \ge 0$ on $\Rge^{\,n \times k}$.

Conjecture~\ref{conj:sf} extends to arbitrary $n$ the corresponding conjecture
of \cite[Conjecture~5.5]{HHSW} for $n = 2$ in a weaker form. That conjecture is stated in terms
of the normalized coefficients $\va!\,P_{\va}$. The required constant is the
power of two $2^{\sum_i \gamma_{2\ve_i}}$, which is exactly the factorial
correction $\prod_{\va} (\va!)^{\gamma_{\va}}$. Indeed, in degree two,
$\va! = 2$ for $\va = 2\ve_i$ and $\va! = 1$ otherwise. In degree $n$, the
numbers $\va! = \alpha_1! \cdots \alpha_k!$ are no longer powers of two, so no
such constant is available. The correction is absorbed into the coefficients
$P_{\va}$ used here. This is why Definition~\ref{def:sf} allows an unspecified
constant $C$.

\begin{conjecture}\label{conj:sf}
If
$\vg \in \BR_{\mathring{Q}}(n,k) \cap \mathbb{Z}^{H(n,k)}$, then $R_{\vg}$ is
subtraction-free.
\end{conjecture}

The conjecture holds with $C = 1$ for all nine extreme rays of
$\BR_{\mathring{Q}}(3,3)$ and for all eighty extreme rays of
$\BR_{\mathring{Q}}(3,4)$.

\subsection*{Acknowledgements}

We thank the UCLA Olga Radko Endowed Math Circle (ORMC) for its financial and
logistical support of our research through the Vertical Research Integration
(VRI) program. We are grateful to Oleg Gleizer, Igor Pak, and Dimitri
Shlyakhtenko for organizing a research group for high school students in ORMC.

Special thanks to June Huh for stating and motivating the problem while the
authors were working on \cite{HHSW}, during the thematic year at IAS. We thank
Igor Pak for countless discussions and for his generous guidance on both
research and mathematical writing. We also thank Swee Hong Chan, Daoji Huang,
Mateusz Micha\l{}ek and Botong Wang for many fruitful discussions and their
support.

Claude Opus 5, Anthropic and ChatGPT Sol5.6 were used for calculations, proof
ideas, and editorial assistance. Some of their suggestions were helpful, while
others were misleading. The authors independently verified all computations
and take full responsibility for the contents of this paper.

\end{document}